\newenvironment{system}[1]
{
\left\{\begin{aligned}{#1}} 
{ \end{aligned}\right.}
\newcommand\dd{\mathrm{d}}
\newcommand\vect[1]{{#1}}
\newtheorem{thm}{Theorem}[section]
\newtheorem{lem}[thm]{Lemma}
\newtheorem{prop}[thm]{Proposition}
\theoremstyle{definition}
\newtheorem{defi}{\textbf{Definition}}
\newtheorem{defn}[defi]{\textbf{Definition}}
\theoremstyle{remark}
\newtheorem{rem}[thm]{Remark}
\newtheorem{assumption}{\textbf{Assumption}}
\newcommand\parmatrix[1]{\begin{pmatrix}#1\end{pmatrix}}
\newcommand\diag{\mathrm{diag}}
\newcounter{stepnum}
\newcommand\step{\typeout{Use of step outside a stepping environment ! }}
\newenvironment{stepping}{
	\setcounter{stepnum}{1}
	\renewcommand\step{{\bf Step \thestepnum:~}\stepcounter{stepnum}}
}{\renewcommand\step{\alert{Use of step outside a stepping environment !}}}
\newcommand{\writefoot}[1]{
    \renewcommand{\thefootnote}{}
    \footnotetext{\hspace{-16.5pt}\scriptsize#1}
    \renewcommand{\thefootnote}{\arabic{footnote}}
}
\begin{document}
\writefoot{\small \textbf{AMS subject classifications (2020).} Primary: 35K40; Secondary: 35K57, 35K58, 35C07, 92D25. \smallskip}
\writefoot{\small \textbf{Keywords.} Periodic reaction-diffusion systems, spreading speed, cooperative system, KPP-type equations, anisotropic propagation, homogenization, strong coupling, Darwinian evolution.\smallskip}
\writefoot{\small \textbf{Acknowledgements:} The initial part of the research was conducted when QG was a JSPS International Research Fellow (FY2017; Graduate School of Mathematical Sciences, The University of Tokyo and MIMS, Meiji University). The research was partially supported by JSPS KAKENHI 16H02151. QG was partially supported by a PEPS-JCJC grant from CNRS (2019).}

\begin{center}
    \begin{minipage}{0.8\textwidth}
	\centering
    \LARGE{\bf Propagation dynamics of solutions to spatially periodic reaction-diffusion systems with hybrid nonlinearity}\bigskip
    \end{minipage}

    \Large
    Quentin Griette\medskip \\
    \normalsize
    {\it Institut de Math\'ematiques de Bordeaux, Universit\'e de Bordeaux, \\
    CNRS, IMB, UMR 5251, \\ 
    351, cours de la Lib\'eration, F-33400 Talence, France.}\\
    {\tt quentin.griette@u-bordeaux.fr}
    \bigskip

    \Large
    Hiroshi Matano \medskip \\ 
    \normalsize
    {\it Meiji Institute for Advanced Study of Mathematical Sciences, Meiji University, \\
    4-21-1 Nakano, Tokyo 164-8525, Japan 
    }\\
    {\tt matano@meiji.ac.jp}

\end{center}

\begin{abstract}
    In this paper we investigate the dynamical properties of a {spatially periodic} reaction-diffusion system {whose reaction terms are of hybrid nature in the sense that they are partly competitive and partly cooperative depending on the value of the solution. This class of problems includes various biologically relevant models and in particular many models focusing on the Darwinian evolution of species. We start by studying the principal eigenvalue of the associated differential operator and establishing a minimal speed formula for linear monotone systems. In particular, we show that the {\it generalized Dirichlet principal eigenvalue} and the periodic principal eigenvalue may not coincide when the reaction matrix is not symmetric, in sharp contrast with the case of scalar equations. We establish a sufficient condition under which equality holds for the two notions. We also show that the propagation speed may be different depending on the  direction of propagation, even in the absence of a first-order advection term, again in a sharp contrast with scalar equations. 
Next we reveal the relation between the hair-trigger property of front propagation and the sign of the periodic principal eigenvalue.  
    Finally, we discuss the linear determinacy of the propagation speed and also establish the existence of travelling waves travelling whose speeds greater than the minimal speed is also proved. We apply our results to an important class of epidemiological models with genetic mutations.
    }
\end{abstract}
\section{Introduction}

In this paper we are interested in the following reaction-diffusion system:
\begin{equation}\label{eq:gen-sys}
	\begin{system}
	    \relax &\vect{u}_t=\mathcal L\vect{u}+\vect{f}(x, \vect{u}),& t>0, x\in\mathbb R, \\
		\relax &\vect{u}(t=0, x)=\vect{u}_0(x), & x\in\mathbb R,
	\end{system}
\end{equation}
where $u(t, x)\in\mathbb R^d$ is a nonnegative vector-valued function of a space variable $x\in\mathbb R$ and a time variable $t\geq 0$; $\mathcal L$ is a diagonal matrix of second-order elliptic differential operators with {spatially} $L$-periodic coefficients and $f(x, u)$ is a reaction term {that} is $L$-periodic {in} $x\in\mathbb R$. We will assume throughout the article that $f(x, u)$ is cooperative when $u$ {lies in the vicinity of} the boundary of the positive cone of $\mathbb R^d$. An important example, which {has motivated} the current study, is the {following} two-components system 
\begin{equation}\label{eq:main-sys}
	\begin{system}
	    \relax &u_t=\sigma_u(x) u_{xx}+(r_u(x)-\kappa_u(x)(u+v))u+\mu_v(x)v-\mu_u(x)u, & t>0, x\in\mathbb R, \\
		\relax &v_t=\sigma_v(x) v_{xx}+(r_v(x)-\kappa_v(x)(u+v))v+\mu_u(x)u-\mu_v(x)v, &t>0, x\in\mathbb R.
	\end{system}
\end{equation}
Here $u(t, x)$, $v(t,x) $ stand for the density of a population of individuals living in  a periodically heterogeneous environment. We assume that  the reproduction rates $r_u(x) $ and $ r_v(x)$ are $L$-periodic functions, and that the competition coefficients $\kappa_u(x)$ and $\kappa_v(x)$ are $L$-periodic and positive. Finally, the coefficients $\mu_u(x)>0$, $\mu_v(x)>0$ {(also $L$-periodic)} denote the mutation rates between the two populations, {which creates an effect of}  cooperative coupling in the region where both $u$ and $v$ are small.

{In the context of epidemiology, System \eqref{eq:main-sys} describes}  the propagation of a genetically unstable pathogen in a population of hosts which {exhibits} heterogeneity in space. This heterogeneity may {simply come from a heterogeneous repartition of the host population \cite{Sat-Mat-Sas-94}.} {Spatial heterogeneity in the use of antibiotics, fungicides or insecticides affects the transmission of pathogens and pests and is explored as a way to minimize the risk of emergence of drug resistance \cite{Deb-Len-Gan-09}.}    
{Beaumont et al \cite{Bea-Bur-Duc-Zon-12} study a related model of propagation of salmonella in an industrial hen house. In their study the heterogeneity comes from the alignment of cages separated by free space that allow farmers to take care of the animals.}

System \eqref{eq:main-sys} has some similarity with Fisher-KPP equations. In their seminal work of 1937, Fisher \cite{Fis-1937} and Kolmogorov, Petrovsky, and Piskunov \cite{Kol-Pet-Pis-1937} introduced the following model, later called Fisher-KPP equation,
\begin{equation}\label{eq:KPP}
    u_t-d u_{xx}=ru(1-u).
\end{equation}
They observed that there exists  traveling wave solutions of speed $c$ for $c\geq c^*:=2\sqrt{dr}$. They claimed that the spreading speed from localized initial data should coincide with $c^*$, the minimal speed of traveling waves.  
The spreading property starting from localized initial data was analysed more rigorously by Aronson and Weinberger \cite{Aro-Wei-75, Aro-Wei-78} and  Weinberger \cite{Wei-82}. {Not long after, people started to consider  equations in periodically heterogeneous environments; among others, the paper of Shigesada, Kawasaki and Teramoto \cite{Shi-Kaw-Ter-86} was a pioneer. A more systematic mathematical theory was developed later (see Xin \cite{Xin-00}, Berestycki and Hamel \cite{Ber-Ham-02} and Weinberger \cite{Wei-02}).} 

It is sometimes possible to compute the propagation speed of initially localized solutions to a reaction-diffusion equation by analyzing the one of the  linearized equation in a neighborhood of zero. When this happens, the equation is said to be {\it linearly deteminate}, {or we say that {\it linear determinacy} holds. This property has been studied for scalar equations (see, for instance, \cite{Wei-82}) but also in the context of homogeneous systems (see Lui \cite{Lui-89} and Weinberger, Lewis and Li \cite{Wei-Lew-Li-02}). In the case of systems, it is required that both  the nonlinear and the linear equations be order-preserving in time. An important example equations that are monotone in time are reaction-diffusion systems which are coupled only in their zero-order term, and which coupling is {\it cooperative}. In the case of matrices, this means that all off-diagonal entries are nonnegative.} Traveling wave for cooperative systems have also been studied, by Volpert, Volpert and Volpert \cite{Vol-Vol-Vol-94} and Ogiwara and Matano \cite{Ogi-Mat-99-DCDS, Ogi-Mat-99-PRSE}, among others.
Spreading speeds and linear determinacy have been generalized to Banach lattices in the work of Liang and Zhao \cite{Lia-Zha-10}.

In the case of System \eqref{eq:main-sys} with homogeneous coefficients, it can be seen that it is cooperative near 0 because the quadratic term can be neglected. However,  far away from the unstable equilibrium $\vect{0}$, the nonlinearity becomes competitive in general (that is, if $\mu_u$ and $\mu_v$ are not too big). This means that equation \eqref{eq:main-sys} is of hybrid nature, so that the theory developed in \cite{Lui-89, Wei-Lew-Li-02} cannot be used directly.  In fact, solutions to System \eqref{eq:main-sys} actually reach a non-monotone regime (see, in particular, \cite{Gri-Rao-16} in which non-monotone waves are constructed).   Several other models consider traveling waves in a non-monotone setting. Hsu and Zhao \cite{Hsu-Zha-08}, for instance, considered a non-monotone integro-difference equation that is different from ours and proved the existence of traveling waves of speed $c$ for any $c$ greater than a minimal speed $c^*$ --- as is the case in many KPP situations. Their idea is to construct super and sub-solutions by replacing the nonlinearity by its monotone envelope from above and from below. Such a method cannot be applied directly to our system, unfortunately, since our equation is vector-valued. 

In a spatially homogeneous setting, several results exist already for systems of reaction-diffusion equations for which the monotone theory does not apply directly.  Let us mention the work of Wang \cite{Wan-11}, who studied spreading speeds and traveling waves for non-monotone systems in a case where the nonlinearity can be framed by two cooperative functions. Morris, B\"orger and Crooks \cite{Mor-Bor-Cro-19} studied a two-component system quite similar to \eqref{eq:main-sys} and gave precise estimates on the tails of the fronts. Girardin \cite{Gir-18, Gir-18-MMMAS} proved the existence of traveling waves and studied their asymptotic behavior in a quite general setting of homogeneous KPP-type systems similar to \eqref{eq:gen-sys}. Our approach here is different, since we want our argument to work for periodic coefficients and the canonical equation for traveling waves is not elliptic in this context. We use the Poincar\'e map of the  time-dependent problem and a fixed-point theorem to construct the traveling waves. In the process we use a monotone subsystem to obtain a lower estimate on the solution.

In the case of a system with spatially homogeneous coefficients, we can further prove the convergence of the traveling waves and time-dependent problem to the unique constant stationary solution in many cases (Theorem \ref{thm:ltb}). Previously there were results on the existence of traveling waves \cite{Gri-Rao-16, Gir-18} and their qualitative behavior \cite{Gir-18-MMMAS, Mor-Bor-Cro-19} but the long-time convergence to a stationary solution was only studied in a bounded domain \cite{Can-Cos-Yu-18}.

In the case of periodically heterogeneous equations, pulsating traveling waves for \eqref{eq:main-sys} traveling at the candidate minimal speed were constructed in \cite{Alf-Gri-18} by a vanishing viscosity method applied to the equation in the moving frame, but the minimality of the speed was not known. Here we not only show that this speed is indeed minimal, but also prove that it corresponds to the spreading speed of front-like initial data and construct traveling waves for larger speeds. The crucial remark which allows such a construction is that one can identify a cooperative system to which any solution of \eqref{eq:main-sys} is a supersolution, which provides a way to estimate the solutions to \eqref{eq:main-sys} from below. 

{Before stating our results, let us discuss some  technical notions. One of the first natural questions that one might ask when investigating models like \eqref{eq:gen-sys}  is whether a population can survive in time. Indeed, in equation \eqref{eq:main-sys} for instance, taking $r_u(x)\leq-\delta<0$ and $r_v(x)\leq -\delta$ leads to the global extinction of any solution starting from a bounded initial data.  It turns out that, for our class of problems, the answer to this question only depends on  the linearization of \eqref{eq:gen-sys} and, more precisely, on the {\it generalized principal eigenvalue} $\lambda_1^\infty$ defined as the limit, as $R\to\infty$, of the principal eigenvalue in the bounded domain $(-R, R)$,
\begin{equation}\label{eq:Dir-princ-eig}
    \left\{\begin{aligned}\relax
	&-\mathcal L \varphi^R = Df(x, 0)\varphi^R + \lambda_1^{R}\varphi^R, \\
	&\varphi^R(-R)=\varphi(R)=0, 
    \end{aligned}\right.
\end{equation}
where $Df(x, u)$ is the Jacobian matrix of $f$ in the variable $u$ only, 
under the requirement that $\varphi^R(x)> 0$ componentwise on $(-R, R)$. That $\lambda_1^R$ is unique and that it admits a limit when $R\to+\infty$ is classical but will be recalled in the present paper (Proposition \ref{prop:gen-princ-eig}). To distinguish it from other notions of principal eigenvalues (see Berestycki and Rossi \cite{Ber-Ros-06, Ber-Ros-15} and Nadin \cite{Nad-09} for an overview of these notions) we will often call $\lambda_1^\infty$ the {\it generalized Dirichlet principal eigenvalue}.  

The generalized Dirichlet principal eigenvalue characterizes the survival of {\em compactly supported initial data}. More precisely, any solution starting from nontrivial compactly supported initial data becomes uniformly positive as $t\to +\infty$ when $\lambda_1^\infty<0$, and some solution gets extinct when $\lambda_1^\infty>0$. Another important notion of principal eigenvalue that will be used in the present paper is the {\it periodic principal eigenvalue}, defined as the solution to 
\begin{equation}\label{eq:main-def-k}
    \left\{\begin{aligned}\relax
	&-\mathcal L\varphi^{per} = Df(x, 0)\varphi^{per} + \lambda_1^{per} \varphi^{per}, \\
	&\varphi^{per} \text{ is $L$-periodic}, 
    \end{aligned}\right.
\end{equation}
under the requirement that $\varphi^{per}(x)> 0$ componentwise on $\mathbb R$. This notion characterizes the survival of {\it periodic initial data}. More precisely, any solution starting from nontrivial periodic initial data becomes uniformly positive as $t\to +\infty$ when $\lambda_1^{per}<0$, and any bounded solution gets extinct when $\lambda_1^{per}>0$. In a way, $\lambda_1^\infty$ characterizes the {\it survival of the species in an initially empty space} (compactly supported initial data) and $\lambda_1^{per}$ characterizes the {\it survival of the species in an already invaded space} (periodic initial data).

There is no necessity in general that these two notions be equal; the most that can be said is that
\begin{equation*}
    \lambda_1^{per}\leq \lambda_1^\infty.
\end{equation*}
An interpretation of this inequality is that {\it it is more difficult to survive in an empty space than in an already invaded space}. It turns out that, for scalar reaction-diffusions without a first-order (advection) term, the equality $\lambda_1^{per}=\lambda_1^\infty$ is always true. This fact was remarked by Nadin \cite[Proposition 3.2]{Nad-09}. As we will see in the present paper (Proposition \ref{prop:strong-coupling}), the situation for systems is in sharp contrast with what happens for scalar equations, as it is possible to construct a system with no advection and $\lambda_1^{per}<\lambda_1^\infty$. We recover the equality $\lambda_1^{per}=\lambda_1^\infty$ between the two notions under some symmetry assumption on the coefficients of the equation, detailed in Assumption \ref{as:isotropic} (in particular, it is true for constant coefficients).

Next we turn to the formula for the propagation speed. It involves again spectral notions related to the linearized problem, this time the function $k(\lambda)$ defined as 
\begin{equation}\label{eq:main-def-k}
    \left\{\begin{aligned}\relax
	& -e^{\lambda x}\mathcal L\big( \varphi^\lambda e^{-\lambda x}\big) = Df(x, 0)\varphi^\lambda + k(\lambda) \varphi^\lambda, \\
	&\varphi^{\lambda} \text{ is $L$-periodic}, 
    \end{aligned}\right.
\end{equation}
under the requirement that $\varphi^\lambda(x)> 0$ componentwise on $\mathbb R$. This function will be extensively studied in Proposition \ref{prop:k(lambda)}. The propagation speed towards $+\infty$ of solutions starting from  front-like initial data supported in $(-\infty, 0)$  to \eqref{eq:gen-sys} can then be expressed as
\begin{equation}\label{eq:formula-speed}
	c^*=\inf_{\lambda>0}\frac{-k(\lambda)}{\lambda},
\end{equation}
which is a well-known formula in the scalar case \cite{Xin-00, Wei-02, Ber-Ham-02, Ber-Ham-Roq-05-II}. However, once again, systems do not behave exactly like scalar equations. When investigating the speed of towards $-\infty$ of solutions starting from  front-like initial data supported in $(+\infty, 0)$  to \eqref{eq:gen-sys}, we naturally arrive at the formula 
\begin{equation}\label{eq:formula-speed-left}
    c^*_{\text{left}}=\inf_{\lambda>0}\frac{-k(-\lambda)}{\lambda},
\end{equation}
which is not necessarily equal to $c^*_{\text{right}}$ defined by \eqref{eq:formula-speed}. For scalar equations without advection, it turns our that the equality $c^*_{\text{left}}=c^*_{\text{right}}$ is always true, because the function $k(\lambda)$ is even (this can be seen from \cite[proof of Proposition 3.2]{Nad-09}). In the context of systems it is possible to construct counterexamples in which $c^*_{\text{left}}\neq c^*_{\text{right}}$ even though there is no advection (Remark \ref{rem:strong-coupling-speed}). Thus the situation for systems is, once again, in sharp contrast with the one of scalar equations. We recover the equality $c^*_{\text{left}}=c^*_{\text{right}}$ (Proposition \ref{prop:isotropic-speed}) under an additional symmetry assumption on the coefficients of the equation (Assumption \ref{as:isotropic}).
}

{Among other main results of the paper, we show the linear determinacy (Theorem \ref{thm:lin-det}) and existence of traveling waves (Theorem \ref{thm:TW}) for solutions to \eqref{eq:gen-sys} with sublinear nonlinearity, under some additional requirements. We require in particular that the Jacobian matrix be cooperative and irreducible.
We also} study the case of rapidly oscillating coefficients and show that the qualitative properties of such systems are very close to the ones of homogeneous systems. {Regarding the general system \eqref{eq:gen-sys}, we prove a homogenization formula for the speed (Theorem \ref{thm:large-diff}). The homogenization limit allows us to study the particular case of \eqref{eq:main-sys} in more details.} In particular, we prove the existence, uniqueness and global stability of the equilibrium for rapidly oscillating coefficients under some conditions,  by using dynamical system arguments (see Theorem \ref{thm:rapidosc}). This gives a non-trivial example of non-homogeneous systems for which the global behavior can be determined. This part of the study is based on the homogenization theory for elliptic and parabolic operators, see {\it e.g.} \cite{Ben-Lio-Pap-11} for an introduction to the theory. In the case of scalar equations, the homogenization limits of spreading speeds and pulsating traveling waves have been studied by El Smaily \cite{ElS-08, ElS-10} and El Smaily, Hamel and Roques \cite{ElS-Ham-Roq-09}.

The structure of the paper is as follows. In Section \ref{sec:main-results} we state our main results, concerning the original system \eqref{eq:gen-sys} with sub-linear nonlinearity and the particular case \eqref{eq:main-sys}. In Section \ref{sec:KPP-type} we prove the results in the general framework of KPP-type nonlinearities for $d$-dimensional systems. In Section \ref{sec:singular-limits} we propose two singular limits of our systems.  In Section \ref{sec:long-time}, we prove the results which are specific to the model \eqref{eq:main-sys}, including the local stability of the constant equilibrium, global stability under more restrictive assumptions and the homogenization limit.

\section{Main results}
\label{sec:main-results}

In this Section we state the main results presented in the paper. We first state the results we obtain on the specific equation \eqref{eq:main-sys}, then present the more general results on generic one-dimensional systems.

Our interest lies in systems of the form
\begin{equation}\label{eq:gen-syst}
	\vect{u}_t =\mathcal L\vect{u} + \vect{f}(x, \vect{u}), 
\end{equation}
set on the real line, 
where $\mathcal L$ is an elliptic differential operator written either in divergence form 
\begin{equation}\label{eq:divergence-form}
	\mathcal L\vect{u}=\mathcal L^d\vect{u}:=(\vect{\sigma}(x)\vect{u}_{x})_x + \vect{q}(x)\vect{u}_x,
\end{equation}
 or in nondivergence form 
\begin{equation}\label{eq:nondivergence-form}
	 \mathcal L\vect{u}=\mathcal L^{nd}\vect{u}:=\vect{\sigma}(x)\vect{u}_{xx} + \vect{q}(x)\vect{u}_x,
\end{equation}
where  $\vect{\sigma}\in C^{1, \alpha}_{per}(\mathbb R,  M_d(\mathbb R))$ is a positive diagonal matrix field,  $\vect{q}\in C^{\alpha}_{per}(\mathbb R,  M_d(\mathbb R))$ is a diagonal matrix field, and $\vect{f}\in Lip(\mathbb R\times\mathbb R^d, \mathbb R^d)$ are $L$-periodic in the variable $x$.
First we introduce some definitions and notations.

\paragraph{Order on $\mathbb R^d$} Let $u=(u_1, \ldots, u_d)^T\in \mathbb R^d $ and $ v=(v_1, \ldots, v_d)^T\in\mathbb R^d $. We denote by  $u\leq v$ the component-wise order of $\mathbb R^d$, that is to say
\begin{equation*}
	u\leq v \quad \Longleftrightarrow \quad \left( u_i\leq v_i \text{ for all } i\in\{1, \ldots, d\}\right).
\end{equation*}
Recall that $(\mathbb R^d, \leq) $ is a Banach lattice which positive cone is $\mathbb R^d_+:=\{u\in\mathbb R^d\,|\, u\geq 0\}$. We will use the notation $u\ll v$ to denote the component-wise strict order
\begin{equation*}
	u\ll v \quad \Longleftrightarrow \quad \left( u_i< v_i \text{ for all } i\in\{1, \ldots, d\}\right).
\end{equation*}
When $u$ and $v$ are two vectors, then $\min(u,v) $ and $\max(u, v) $ are the usual component-wise minimum and maximum of $u$ and $v$:
\begin{equation*}
	\min(u, v) = \big(\min(u_1, v_1), \ldots, \min(u_d, v_d)\big)^T, \quad \max(u, v) = \big(\max(u_1, v_1), \ldots, \max(u_d, v_d)\big)^T.
\end{equation*}
Finally we denote $\mathbf{1}:=(1, 1, \ldots, 1)^T\in \mathbb R^d$ the $d$-dimensional vector with all components equal to $1$.

\subsection{The linear problem. Principal eigenvalues and spreading speeds.}

We first focus on the linear part of System \eqref{eq:gen-syst}, that is when $f(x, u)$ is a linear function of $u$. Our interest lies in systems which preserve the canonical partial order on $\mathbb R^d$.  
\begin{defn}[Cooperative matrix]
Let $A(x)=(a_{ij}(x))_{1\leq i, j\leq d} $ be a matrix-valued function (from $\mathbb R$ to $M_d(\mathbb R)$).  $A(x)$ is \textit{cooperative} if $a_{ij}(x)\geq 0$ for all $i\neq j$ and $x\in\mathbb R$.
\end{defn}

Next we introduce the notion of fully coupled system. This corresponds, in a way,  to systems that cannot be split into two independent subsystems. 

\begin{defn}[Fully coupled matrix.]\label{def:fully-coupled} 
	Let $A(x)=(a_{ij}(x))_{1\leq i, j\leq d} $ be a matrix-valued function (from $\mathbb R$ to $M_d(\mathbb R)$). We say that $A(x)$  {\it fully coupled} if there exists $\nu>0$  and  $r>0$ such that for any non-trivial partition $ I, J\subset \{1, \ldots, d\}$ (i.e. $I\cup J=\{1,\ldots, d\}$ and $I\cap J=\varnothing$), there exists $i\in I$, $j\in J$, and a ball $B(x_{ij}, r)$ for some $x_{ij}\in\mathbb R$, such that 
			\begin{equation}\label{eq:local-irreducible}
				\underset{x\in B(x_{ij}, r)}{\inf}a_{ij}(x)\geq \nu>0. 
			\end{equation}
\end{defn}
Note that, if $A(x) $ is a constant matrix, then it is fully coupled in the sense introduced above if and only if it is cooperative and irreducible. 

We suspect that the ball $B(x_0, r)$ above could be replaced by a measurable set of positive Lebesgue measure,  as is done in \cite{Bus-Sir-04}, but we will not pursue such generality as it would add unnecessary complexity to the proofs; moreover it is not essential in our analysis.

As usual in {sublinear} situations, the principal eigenvalue of the system under consideration plays a crucial role in the survival of the population. We define the notion of {\it periodic principal eigenvalue} in the case of systems with $d$ components
\begin{defn}[Principal eigenpairs]\label{def:princ-eig}
	Let $A(x) $ be a fully coupled cooperative matrix function and $\mathcal L$ be a diagonal uniformly elliptic operator.

	By  a {\it periodic principal eigenpair} associated with system \eqref{eq:gen-syst}, we refer to a solution pair $(\lambda_1^{per}, \vect{\varphi}^{per}(x))$ to the system
	\begin{equation}\label{eq:def-princ-per}
		-\mathcal L\vect{\varphi}^{per} = A(x)\vect{\varphi}^{per}(x) + \lambda_1^{per}\vect{\varphi}^{per}(x)
	\end{equation}
	under the $L$-periodic boundary conditions, satisfying $\varphi^{per}(x)\gg 0$.
	Here $\lambda_1^{per}$ is called the {\it periodic principal eigenvalue} and $\vect{\varphi}^{per}(x)$  a {\it periodic principal eigenfunction} or {\it periodic principal eigenvector}.

	Similarly, by {\it Dirichlet principal eigenpair} associated with system \eqref{eq:gen-syst} in the interval of radius $R>0$ we refer to a solution pair $(\lambda_1^{R}, \vect{\varphi}^R(x))$ to the system
	\begin{equation}\label{eq:def-princ-Dir}
		-\mathcal L\vect{\varphi}^R = A(x)\vect{\varphi}^R(x) + \lambda_1^{R}\vect{\varphi}^R(x)\quad \text{ in } (-R, R),
	\end{equation}
	satisfying the  Dirichlet  boundary conditions $\vect{\varphi}^R(-R)=\vect{\varphi}^R(R)=\vect{0}$ and $\varphi^R(x)\gg 0$ in $(-R, R)$. 
	Here $\lambda_1^{R}$ is called the {\it Dirichlet principal eigenvalue} and $\vect{\varphi}^R(x)$ a {\it Dirichlet principal eigenfunction} or {\it Dirichlet principal eigenvector}. We denote 
	\begin{equation}
		\lambda_1^\infty:=\lim_{R\to\infty}\lambda_1^R.
	\end{equation}
\end{defn}
The existence of the principal eigenpairs $(\lambda_1^{per}, \varphi^{per})$ and $(\lambda_1^R, \varphi^R)$ and the uniqueness of $\lambda_1^{per} $ and $\lambda_1^R$ follow immediately from the Krein-Rutman Theorem. Moreover there always holds $\lambda_1^{R'}>\lambda_1^R>\lambda_1^{per}$ for $0<R'<R$. Consequently, we have
\begin{equation}\label{eq:order-eigenvalues}
	\lambda_1^\infty\leq \lambda_1^{per}.
\end{equation}
The above two notions of principal eigenvalue  correspond to very different qualitative properties of the solutions to \eqref{eq:gen-syst}. The Dirichlet eigenvalue $\lambda_1^R$ gives a criterion for the {survival in the bounded domain $ (-R, R)$} under the Dirichlet boundary conditions, and $\lambda_1^\infty=\lim_{R\to\infty}\lambda_1^R$ characterizes, in a sense, the {survival of solutions with compactly supported initial conditions} on the real line. More precisely, the species does not get extinct if $\lambda_1^\infty<0$ (see Theorem \ref{thm:hair-trigger} below), while it  converges to $0$ (extinction) as $t\to\infty$ if $\lambda_1^\infty>0$. On the other hand, $\lambda_1^{per} $ characterises the {survival of solutions starting from positive periodic initial conditions} or, more generally, the {sustainability of an already invaded space}. We will state a condition under which both eigenvalues have the same sign in Proposition \ref{prop:isotropic}. 

{Whether or not equality holds in \eqref{eq:order-eigenvalues} depends on the situation. {Proposition \ref{prop:strong-coupling} provides a counterexample to the equality in \eqref{eq:order-eigenvalues} in the case of a system of two equations with no advection term and  strong coupling.}  These properties, along with some other properties of those eigenpairs, will be proved in subsection \ref{sec:eigenpb} and in Appendix B. Here we collect some useful properties of the principal eigenvalues.}
\begin{prop}[On the  Dirichlet principal eigenvalue for cooperative systems]\label{prop:gen-princ-eig}
	Let $ A(x) $ be a cooperative and fully coupled $d$-dimensional $L$-periodic matrix field, $\mathcal L$ be a $L$-periodic diagonal uniformly elliptic operator.
	Then:
	\begin{enumerate}[label={\rm(\roman*)}]
		\item \label{item:eigenfun}
			For any $R\in (0, +\infty)$, there exists a principal eigenfunction ${\varphi}^R>0$ associated with $\lambda_1^R$, which is unique up to the multiplication by a positive scalar. 
		\item \label{item:princ-eig}
			For any $R\in(0,+\infty)$, we have 	
			\begin{equation}\label{eq:gen-princ-eig}
				\lambda_1^R:=\sup\{\lambda\in\mathbb R\,|\,\exists {\phi}\in C^2((-R, R), \mathbb R^d)\cap C^1([-R, R], \mathbb R^d), {\phi}>0, -\mathcal L{\phi} - A(x){\phi} -\lambda{\phi} \geq { 0}\}. 
			\end{equation}
		\item\label{item:princ-eig-dec-R}
			The mapping $R\mapsto \lambda_1^R$ is decreasing.
		\item \label{item:limitRbig}
			There exists a positive eigenfunction associated with $\lambda_1^\infty$.
		\item \label{item:minimax}
			For $R\in (0, +\infty]$, we have 
			\begin{equation}\label{eq:eigprinc-minimax}
				\lambda_1^R=\underset{\phi>0}\max\underset{x\in (-R, R)}{\inf} \underset{1\leq i\leq d}{\min}\frac{(-\mathcal L\phi -A(x)\phi)_i}{\phi_i}
			\end{equation}
	\end{enumerate}
\end{prop}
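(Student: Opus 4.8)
The plan is to reduce the whole statement to two facts about cooperative, fully coupled elliptic systems that I would isolate first (and, if not quotable off the shelf, prove in Appendix B): the \emph{strong maximum principle together with the Hopf boundary lemma}, and a \emph{Harnack inequality} whose constant depends only on ellipticity and sup-norm bounds of the coefficients (see e.g.~\cite{Bus-Sir-04}). The key mechanism is that ``fully coupled'' promotes the scalar theory to a system statement: if $w\ge 0$ satisfies $-\mathcal Lw-B(x)w\ge 0$ on an interval with $B$ cooperative, then, $\mathcal L$ being diagonal, each component obeys the scalar inequality $-\mathcal L_iw_i-b_{ii}w_i\ge\sum_{j\ne i}b_{ij}w_j\ge 0$, hence by the scalar strong maximum principle $w_i\equiv 0$ or $w_i>0$ throughout the interior; and if $w_i\equiv 0$ then $\sum_{j\ne i}b_{ij}w_j\equiv 0$, so $w_j\equiv 0$ on any ball where $b_{ij}\ge\nu>0$ and hence everywhere. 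Feeding the partition $\big(\{k:w_k\equiv 0\},\ \{k:w_k\not\equiv 0\}\big)$ into Definition \ref{def:fully-coupled} then forces $w\equiv 0$ as soon as $w$ has an interior zero; applying the scalar Hopf lemma componentwise gives the same conclusion from a simultaneous zero of $w_i$ and $\partial_\nu w_i$ at a boundary point.

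Item \ref{item:eigenfun} is then the Krein--Rutman theorem. Fix $R$ and pick $M$ so large that $M\,\mathrm{Id}-\mathcal L-A$ is invertible with the Dirichlet condition; the Dirichlet solution operator $\mathcal T:=(M\,\mathrm{Id}-\mathcal L-A)^{-1}$ on $C_0([-R,R],\mathbb R^d)$ is compact by Schauder estimates and strongly positive by the strong maximum principle and Hopf lemma of the previous paragraph --- it sends a nonzero nonnegative function to one that is $\gg 0$ in the interior with strictly inward-pointing normal derivative. Krein--Rutman then yields that $\rho(\mathcal T)>0$ is a simple eigenvalue with a positive eigenfunction, unique up to a positive multiple, and is the only eigenvalue possessing a nonnegative eigenfunction; writing $\lambda_1^R=1/\rho(\mathcal T)-M$ gives \ref{item:eigenfun}, the uniqueness of $\lambda_1^R$ being automatic.

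For \ref{item:princ-eig} and \ref{item:minimax}, note first that the two are equivalent: for $\phi>0$ one has $\inf_x\min_i\frac{(-\mathcal L\phi-A\phi)_i}{\phi_i}\ge\lambda$ if and only if $-\mathcal L\phi-A\phi-\lambda\phi\ge 0$, so \eqref{eq:eigprinc-minimax} is just a rewriting of \eqref{eq:gen-princ-eig}; moreover $\varphi^R$ itself --- positive on $(-R,R)$, of class $C^2\cap C^1$, and an exact solution --- is an admissible competitor with $\lambda=\lambda_1^R$, so $\lambda_1^R$ lies in the set of \eqref{eq:gen-princ-eig} and the supremum/maximum is attained at $\varphi^R$. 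It remains to show every admissible $\lambda$ satisfies $\lambda\le\lambda_1^R$. For this I would slide $\varphi^R$ under an arbitrary admissible $\phi$: the componentwise Hopf lemma applied to $\phi$ and $\varphi^R$ shows that the ratios $\phi_i/\varphi_i^R$ extend continuously and positively to $[-R,R]$, so $\kappa^\ast:=\min_{i,x}\phi_i/\varphi_i^R\in(0,\infty)$ is attained; then $w:=\phi-\kappa^\ast\varphi^R\ge 0$ satisfies $-\mathcal Lw-Aw-\lambda w\ge(\lambda-\lambda_1^R)\kappa^\ast\varphi^R$, and if $\lambda>\lambda_1^R$ the right-hand side is $\ge 0$ componentwise while $w$ vanishes somewhere --- in value at an interior point, or in value and normal derivative at a boundary point --- so the system maximum principle/Hopf lemma of the first paragraph forces $w\equiv 0$; but then $\phi=\kappa^\ast\varphi^R$, which reinserted in $-\mathcal L\phi-A\phi-\lambda\phi\ge 0$ gives $(\lambda_1^R-\lambda)\kappa^\ast\varphi^R\ge 0$, contradicting $\lambda>\lambda_1^R$ and $\varphi^R\gg 0$. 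Item \ref{item:princ-eig-dec-R} now follows: restricting an admissible $\phi$ for radius $R$ to $(-R',R')$ with $R'<R$ keeps it positive and a subsolution, so $\lambda_1^{R'}\ge\lambda_1^R$; the strict inequality comes from running the sliding argument with $\varphi^{R'}$ under $\varphi^R$ on $[-R',R']$ and noting that $\lambda_1^{R'}=\lambda_1^R$ would force, via the system maximum principle, $\varphi^R\equiv\kappa^\ast\varphi^{R'}$, impossible since $\varphi^R\gg 0$ at $\pm R'$ while $\varphi^{R'}$ vanishes there.

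Finally, for \ref{item:limitRbig}: using \eqref{eq:gen-princ-eig} with the constant competitor $\phi\equiv\mathbf 1$ gives $\lambda_1^R\ge-\sup_x\max_i\sum_j a_{ij}(x)$, so by monotonicity $\lambda_1^R$ stays in a fixed compact interval for $R\ge 1$ and $\lambda_1^\infty$ is a finite real number. Normalizing $\max_{x\in[-1,1],\,1\le i\le d}\varphi_i^R(x)=1$, the Harnack inequality for cooperative fully coupled systems --- with a constant uniform in $R$ because the coefficients are uniformly elliptic and bounded and $\lambda_1^R$ is bounded --- controls $\varphi^R$ on every compact subset of $\mathbb R$, and Schauder estimates upgrade this to a uniform $C^{2,\alpha}_{loc}$ bound; Arzel\`a--Ascoli and a diagonal extraction give $\varphi^R\to\varphi^\infty$ in $C^2_{loc}(\mathbb R)$ along a subsequence, with $\varphi^\infty\ge 0$, $\varphi^\infty\not\equiv 0$ (the normalization survives), and $-\mathcal L\varphi^\infty-A\varphi^\infty=\lambda_1^\infty\varphi^\infty$ on $\mathbb R$; the system strong maximum principle then upgrades $\varphi^\infty\ge 0$ to $\varphi^\infty\gg 0$. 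Taking $\phi=\varphi^\infty$ in \eqref{eq:eigprinc-minimax} together with the restriction argument above also yields \eqref{eq:eigprinc-minimax} for $R=+\infty$. I expect the genuine obstacle to be exactly this maximum-principle-and-Harnack package for \emph{non-symmetric} cooperative periodic systems --- it is what makes ``fully coupled'' do the work of ``cooperative and irreducible'' --- together with the boundary bookkeeping in the sliding argument when several components of $\phi$ vanish simultaneously at $\pm R$; both reduce, as indicated, to the scalar theory plus the connectivity argument of Definition \ref{def:fully-coupled}.
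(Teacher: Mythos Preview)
Your proposal is correct and follows essentially the same route as the paper: Krein--Rutman for \ref{item:eigenfun}, a sliding/touching argument comparing an admissible $\phi$ with $\varphi^R$ for \ref{item:princ-eig} (the paper defines $\eta=\sup\{\zeta>0:\zeta\varphi^R\le\phi\}$ and evaluates the $j$-th equation at the contact point, which is your $\kappa^\ast$ and your $w$ analysed at its zero), restriction plus the strong maximum principle for the strict monotonicity in \ref{item:princ-eig-dec-R}, and Harnack (citing \cite{Bus-Sir-04}) plus Schauder and diagonal extraction for \ref{item:limitRbig}; the equivalence of \ref{item:princ-eig} and \ref{item:minimax} is observed in both. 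Your preamble spelling out why the scalar strong maximum principle together with full coupling yields the system version is more explicit than the paper, which simply invokes \cite{Bus-Sir-04}, but the underlying mechanism is the same.
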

Next we introduce an important object for the study of the spatial behavior of the solutions to  \eqref{eq:gen-syst}. Given $\lambda\in\mathbb R$, we let $L_\lambda {\phi}(x):=e^{\lambda x}\mathcal L(e^{-\lambda x}{\phi}(x))$ and $k(\lambda)$  be the principal eigenvalue of the operator $-L_\lambda -A(x)$ restricted on $L$-periodic functions. Equivalently, we have
\begin{equation*}
	L_\lambda \vect{\phi}=(\vect{\sigma}(x)\vect{\phi}_{x})_x +(-2\lambda \vect{\sigma}(x) +\vect{q}(x))\vect{\phi}_x+(-\lambda\vect{\sigma}_x(x)-\lambda \vect{q}(x)+\lambda^2 \vect{\sigma}(x))\vect{\phi},
\end{equation*}
if $\mathcal L$ is written in divergence form, or
\begin{equation*}
	L_\lambda \vect{\phi}=\vect{\sigma}(x)\phi_{xx} +(-2\lambda \vect{\sigma}(x) +\vect{q}(x))\vect{\phi}_x+(-\lambda \vect{q}(x)+\lambda^2 \vect{\sigma}(x))\vect{\phi},
\end{equation*}
if $\mathcal L $ is written in nondivergence form, and $k(\lambda)$ is the unique real number for which there exists a solution $\phi>0$ to
\begin{equation}\label{eq:defk(lambda)}
	\begin{system}
		\relax &-L_\lambda\vect{\phi}(x)-A(x)\vect{\phi}(x)=k(\lambda)\vect{\phi}(x),& x&\in\mathbb R, \\
		\relax &\vect{\phi}>\vect{0},\qquad \vect{\phi}\text{ is }L\text{-periodic}.
	\end{system}
\end{equation}
The map $k(\lambda)$ plays a crucial role regarding the spatial properties of \eqref{eq:gen-syst} as it the center of a formula for the spreading speed associated with \eqref{eq:gen-syst}. It also provides a connection between the generalized Dirichlet an periodic principal eigenvalue, as will be stated in the next Proposition. {However, in order to state our results, we first need to introduce an assumption which ensures that the dynamics of the model is the same in both directions of $\mathbb R$. 
\begin{assumption}[Isotropic behavior]\label{as:isotropic}
	We assume that the operator $\mathcal L$ has no advection: ${q}(x)\equiv {0}$. Furthermore,  we assume that either of the following conditions is satisfied.  
	\begin{enumerate}[label={\alph*)}]
		\item  Both ${\sigma}(x)$ and $A(x)$ are even in $x$,
		\item  $\mathcal L=\mathcal L^d$ is written in divergence form \eqref{eq:divergence-form} and $A(x)$ is a symmetric matrix.
	\end{enumerate}
\end{assumption}
}
\begin{prop}[On $k(\lambda)$]\label{prop:k(lambda)}
	Let $\mathcal L$ be a $L$-periodic diagonal uniformly elliptic operator, $A(x)$ be a cooperative and fully coupled $L$-periodic matrix field. Then:
	\begin{enumerate}[label={\rm(\roman*)}]
		\item \label{item:eigenpairk(lambda)}
			For each $\lambda\in\mathbb R$, there exists a principal eigenpair $(k(\lambda), \vect{\phi}^\lambda(x))$ with $\phi^\lambda\gg 0$ which solves \eqref{eq:defk(lambda)}, and $\vect{\phi}^\lambda$ is unique up to the multiplication by a positive scalar.
		\item \label{item:minimaxk(lambda)}
			The following characterization of $k(\lambda)$ is valid:
			\begin{equation}\label{eq:minimax-k(lambda)}
				k(\lambda)=
				\underset{\phi\in C^2_{per}(\mathbb R), \mathbb R^d}{\underset{{{\phi}>{0}}}{\max}}\underset{x\in \mathbb R}{\inf}\, \underset{1\leq i\leq d}{\min}\frac{(-L_\lambda{\phi} -A(x){\phi})_i}{\phi_i}
			\end{equation}
		\item \label{item:k(lambda)concave}
			The map $\lambda\mapsto k(\lambda) $ is analytic and strictly concave. {Furthermore, there exist constants $\alpha>0$ and $\beta>0$ such that 
			\begin{equation}\label{eq:klesssquare}
				k(\lambda) \leq \alpha -\beta \lambda^2 \text{ for all } \lambda\in\mathbb R.
			\end{equation}
			}
		\item \label{item:lambda_1k(lambda)}
			The following equality holds:
			\begin{equation*}
				\lambda_1^\infty=\max_{\lambda\in\mathbb R}k(\lambda).
			\end{equation*}
		\item \label{item:k(lambda)even}
			If $\mathcal L$ satisfies Assumption \ref{as:isotropic},  then the mapping $\lambda\mapsto k(\lambda)$ is even.
	\end{enumerate}
\end{prop}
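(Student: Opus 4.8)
The plan is to establish the five items in the stated order, since (iii) uses the min-max formula of (ii) and the proof of (iv) combines all the earlier parts with a Harnack inequality for cooperative, fully coupled periodic systems.

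\emph{Parts (i)--(ii).} For each fixed $\lambda$, the operator $-L_\lambda-A(x)$ is again a diagonal, uniformly elliptic operator coupled through the \emph{same} cooperative and fully coupled matrix field $A(x)$; only its first- and zeroth-order (diagonal) coefficients have changed. Hence the existence of a positive $L$-periodic eigenfunction $\phi^\lambda\gg0$, the algebraic simplicity of $k(\lambda)$ and the uniqueness of $\phi^\lambda$ up to a positive multiple follow verbatim from the Krein--Rutman argument already invoked for the periodic principal eigenpair of $-\mathcal L-A$ in Definition \ref{def:princ-eig}. For the characterization \eqref{eq:minimax-k(lambda)}, inserting $\phi=\phi^\lambda$ (all ratios equal $k(\lambda)$) shows the supremum is $\ge k(\lambda)$; conversely, given an admissible $\phi>0$ and $m:=\inf_x\min_i(-L_\lambda\phi-A\phi)_i/\phi_i$, one slides $t\phi^\lambda$ under $\phi$ until first contact at some $(x_0,i_0)$ and reads off the inequality there: with $z:=(\phi-t\phi^\lambda)_{i_0}$ one has $z(x_0)=0$, $z'(x_0)=0$, $z''(x_0)\ge0$, so the principal and advection parts of $(-L_\lambda(\phi-t\phi^\lambda))_{i_0}$ are $\le0$ at $x_0$, while $\big(-A(\phi-t\phi^\lambda)\big)_{i_0}(x_0)\le0$ because the off-diagonal entries of $A$ are nonnegative and $\phi\ge t\phi^\lambda$ everywhere; this gives $m\le k(\lambda)$, and equality holds (the maximum being attained at $\phi^\lambda$).

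\emph{Part (iii).} Analyticity follows from Kato's analytic perturbation theory: $\lambda\mapsto -L_\lambda-A$ is a holomorphic family of type (A) (polynomial of degree two in $\lambda$ on the fixed domain $H^2_{per}$) and $k(\lambda)$ is, for each real $\lambda$, a simple isolated eigenvalue. Strict concavity I would prove through \eqref{eq:minimax-k(lambda)}: for $\lambda_0\ne\lambda_1$, $t\in(0,1)$, $\lambda_t:=(1-t)\lambda_0+t\lambda_1$, test \eqref{eq:minimax-k(lambda)} with the componentwise geometric mean $\chi_t:=(\phi^{\lambda_0})^{1-t}(\phi^{\lambda_1})^t$. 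It is convenient to pass through $\psi_j:=e^{-\lambda_j x}\phi^{\lambda_j}$, which solve $-\mathcal L\psi_j-A\psi_j=k(\lambda_j)\psi_j$ on $\mathbb R$; using the convexity of $s\mapsto s^2$ for the second-order part of $\mathcal L$ and weighted AM--GM for the nonnegative off-diagonal entries of $A$ one obtains, componentwise,
\[
\frac{(-L_{\lambda_t}\chi_t-A\chi_t)_i}{(\chi_t)_i}\ \ge\ (1-t)k(\lambda_0)+t\,k(\lambda_1)\ +\ \sigma_i\,t(1-t)\Big[(\lambda_1-\lambda_0)+(\log\phi^{\lambda_0}_i)'-(\log\phi^{\lambda_1}_i)'\Big]^2,
\]
so $k(\lambda_t)\ge(1-t)k(\lambda_0)+t\,k(\lambda_1)$; strictness follows since the bracket cannot vanish identically (integrate over one period: the logarithmic-derivative terms have zero mean, forcing $\lambda_0=\lambda_1$) and equality in \eqref{eq:minimax-k(lambda)} forces the test function to be the principal eigenfunction. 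For \eqref{eq:klesssquare}, evaluate \eqref{eq:defk(lambda)} at a point $x_*$ and index $i_*$ where $\phi^\lambda$ attains its global minimum: there $(\phi^\lambda_{i_*})'(x_*)=0$, $(\phi^\lambda_{i_*})''(x_*)\ge0$ and the off-diagonal part of $A\phi^\lambda$ is $\ge0$, whence $k(\lambda)\le-\lambda^2\sigma_{i_*}(x_*)+O(|\lambda|)\le\alpha-\beta\lambda^2$ with $\beta=\tfrac12\min_{i,x}\sigma_i(x)>0$, the linear remainder (including the $\lambda\sigma_x$ term in divergence form) being absorbed by Young's inequality.

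\emph{Part (iv).} One inequality is immediate: $\Psi_\lambda(x):=e^{-\lambda x}\phi^\lambda(x)$ is positive on $\mathbb R$ and satisfies $-\mathcal L\Psi_\lambda-A\Psi_\lambda-k(\lambda)\Psi_\lambda=0$, so by the supersolution characterization of $\lambda_1^R$ (Proposition \ref{prop:gen-princ-eig}\ref{item:princ-eig}) we get $\lambda_1^R\ge k(\lambda)$ for all $R$, hence $\lambda_1^\infty\ge\max_\lambda k(\lambda)$ (the maximum is attained by \eqref{eq:klesssquare}). For the converse, let $\Phi\gg0$ on $\mathbb R$ be the generalized principal eigenfunction (Proposition \ref{prop:gen-princ-eig}\ref{item:limitRbig}), $-\mathcal L\Phi-A\Phi=\lambda_1^\infty\Phi$. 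By the Harnack inequality for cooperative fully coupled periodic systems, each $\log\Phi_i$ is globally Lipschitz and grows at most linearly, uniformly in $i$. Form the two-sided geometric averages $\Phi_N(x):=\big(\prod_{|j|\le N}\Phi(x+jL)\big)^{1/(2N+1)}$; by the convexity computation of (iii), applied now to the fixed operator $-\mathcal L-A$, each $\Phi_N$ is again a supersolution, $-\mathcal L\Phi_N-A\Phi_N\ge\lambda_1^\infty\Phi_N$. The telescoping identity $\Phi_N(x+L)/\Phi_N(x)=\big(\Phi(x+(N+1)L)/\Phi(x-NL)\big)^{1/(2N+1)}$, together with the growth bound on $\log\Phi$, shows that $\log(\Phi_N(x+L)/\Phi_N(x))$ converges along a subsequence to a constant $\ell$ independent of $x$ and of the component. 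Normalizing by $\|\Phi_N(0)\|$ and passing to the $C^2_{loc}$ limit (Harnack plus interior elliptic estimates) yields $\phi\gg0$ with $\phi(x+L)=e^\ell\phi(x)$ and $-\mathcal L\phi-A\phi\ge\lambda_1^\infty\phi$; writing $\phi=e^{-\lambda x}\chi$ with $\lambda:=-\ell/L$ and $\chi$ positive $L$-periodic gives $-L_\lambda\chi-A\chi\ge\lambda_1^\infty\chi$, so $k(\lambda)\ge\lambda_1^\infty$ by \eqref{eq:minimax-k(lambda)}, whence $\max_\mu k(\mu)\ge\lambda_1^\infty$.

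\emph{Part (v) and the main difficulty.} Under Assumption \ref{as:isotropic} I would derive $k(-\lambda)=k(\lambda)$ by producing from $\phi^\lambda$ a positive $L$-periodic principal eigenfunction of $-L_{-\lambda}-A$ with the same eigenvalue. In case (a) ($q\equiv0$, $\sigma$ and $A$ even) one takes $\widetilde\phi(x):=\phi^\lambda(-x)$ and checks, via the substitution $x\mapsto-x$ which turns $L_\lambda$ into $L_{-\lambda}$ thanks to the evenness of $\sigma$ (and the oddness of $\sigma_x$ in the divergence case), that $-L_{-\lambda}\widetilde\phi-A\widetilde\phi=k(\lambda)\widetilde\phi$; uniqueness of the periodic principal eigenvalue then gives $k(-\lambda)=k(\lambda)$. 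In case (b) ($q\equiv0$, divergence form, $A$ symmetric) one uses that $\mathcal L^d$ is formally self-adjoint on $L^2_{per}$, so $(L_\lambda)^*=L_{-\lambda}$ and $(-L_\lambda-A)^*=-L_{-\lambda}-A$; since a cooperative fully coupled operator and its $L^2_{per}$-adjoint share the same principal eigenvalue (pair the two positive eigenfunctions and integrate over a period), $k(\lambda)=k(-\lambda)$. The genuinely delicate point in the whole proposition is the reverse inequality in (iv) — upgrading an arbitrary positive solution of the linearized system on $\mathbb R$ into an (asymptotically) Floquet solution $e^{-\lambda x}\times(\text{periodic})$ at the same spectral level; it rests on a Harnack inequality for cooperative, fully coupled periodic systems (which I would establish separately, e.g. in an appendix) and on the fact that geometric averaging over $L$-translates simultaneously preserves the supersolution property and, by telescoping, flattens the solution toward a single Floquet exponent.
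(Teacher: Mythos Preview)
Your argument is correct. Parts (i)--(iii) and (v) follow essentially the same path as the paper: Krein--Rutman for existence, the min--max carried over from Proposition~\ref{prop:gen-princ-eig}, Kato's analytic perturbation theory, the geometric-mean test function for concavity, evaluation at a global minimum of $\phi^\lambda$ for the quadratic upper bound, and the reflection/adjoint arguments under Assumption~\ref{as:isotropic}.

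The one genuine difference is the reverse inequality in (iv). You build approximate Floquet solutions by taking componentwise geometric averages $\Phi_N=\big(\prod_{|j|\le N}\Phi(\cdot+jL)\big)^{1/(2N+1)}$ of the generalized eigenfunction, invoke the convexity computation from (iii) to keep the supersolution property, use telescoping plus the system Harnack inequality to show the period ratio flattens to a constant $e^\ell$ (independent of $x$ and of the component), and finally feed the periodic limit into \eqref{eq:minimax-k(lambda)} to obtain $k(-\ell/L)\ge\lambda_1^\infty$. The paper instead studies the componentwise quotient $\psi_i(x)=\Phi_i(x+L)/\Phi_i(x)$ directly: this function satisfies a genuine cooperative elliptic \emph{equation} (with drift $2\sigma_i\Phi_{i,x}/\Phi_i$ and coupling $a_{ij}\Phi_j/\Phi_i$), so after shifting to a point where $\psi$ nearly attains its supremum one applies the strong maximum principle to force the limit $\psi^\infty\equiv m$; the corresponding $\Phi^\infty$ is then an exact Floquet \emph{eigenfunction}, giving $k(\lambda)=\lambda_1^\infty$ immediately. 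The paper's route is shorter and produces an actual eigenfunction rather than a supersolution, without needing higher-derivative control of the averages; your route is a nice reuse of the concavity machinery and avoids deriving and analysing the auxiliary equation for $\psi$.
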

Finally, we introduce a formula which gives the minimal speed of traveling waves to \eqref{eq:gen-syst}, and show some related properties. 
\begin{prop}[On the formula for the minimal speed]\label{prop:minspeed}
    Let $\mathcal L$ be a $L$-periodic diagonal uniformly elliptic operator, $A(x)$ be a cooperative and fully coupled $L$-periodic matrix field. Suppose that $\lambda_1^{per}<0$ and let 
	\begin{equation}\label{eq:defc*}
		c^*:=\inf_{\lambda>0}\frac{-k(\lambda)}{\lambda}.
	\end{equation}
	Then:
	\begin{enumerate}[label={\rm(\roman*)}]
		\item if $c<c^*$, then for any $\lambda>0$, we have
			$c\lambda<-k(\lambda)$,
		\item if $c=c^*$, then there exists a unique $\lambda^*>0$ such that $\lambda^*c^*=k(\lambda^*)$, and for any $\lambda>0$ with $\lambda\neq\lambda^*$ we have $\lambda c^*<-k(\lambda)$, 
		\item if $c>c^*$, there exists $\lambda_1^* <\lambda_2^*$ such that $\lambda_1^*c=-k(\lambda_1^*) $ and $\lambda_2^*c=-k(\lambda_2^*) $. For $\lambda \in (\lambda_1^*, \lambda_2^*) $ we have $\lambda c>-k(\lambda)$, while for $\lambda \not\in [\lambda_1^*, \lambda_2^*]$ we have $\lambda c < -k(\lambda)$.
		\item $c^*$ is continuous in $A$ with respect to the supremum norm.
	\end{enumerate}
\end{prop}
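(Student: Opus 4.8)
The plan is to reduce the entire statement to elementary convex analysis applied to the function $g(\lambda):=-k(\lambda)$. I would first record the properties of $g$ that drive the argument. By Proposition~\ref{prop:k(lambda)}, item~\ref{item:k(lambda)concave}, $g$ is real-analytic and \emph{strictly convex} on $\mathbb R$, and $g(\lambda)\geq\beta\lambda^{2}-\alpha\to+\infty$ as $|\lambda|\to\infty$ by \eqref{eq:klesssquare}. Moreover, taking $\lambda=0$ in \eqref{eq:defk(lambda)} turns that problem into the periodic principal eigenvalue problem \eqref{eq:def-princ-per}, so by uniqueness of the periodic principal eigenvalue $k(0)=\lambda_{1}^{per}$, and hence $g(0)=-\lambda_{1}^{per}>0$ under the standing hypothesis $\lambda_{1}^{per}<0$. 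It follows that $\psi(\lambda):=g(\lambda)/\lambda$ is continuous on $(0,\infty)$ with $\psi(\lambda)\to+\infty$ both as $\lambda\to0^{+}$ (because $g(0)>0$) and as $\lambda\to+\infty$ (quadratic growth), so $c^{*}=\inf_{\lambda>0}\psi(\lambda)$ is in fact a \emph{minimum}, attained at some $\lambda^{*}\in(0,\infty)$. I would then prove that the minimiser is unique: if $\psi(\lambda_{1})=\psi(\lambda_{2})=c^{*}$ with $\lambda_{1}<\lambda_{2}$, the segment joining $(\lambda_{1},c^{*}\lambda_{1})$ and $(\lambda_{2},c^{*}\lambda_{2})$ is exactly the line $\{y=c^{*}\lambda\}$, so strict convexity of $g$ forces $g(\lambda)<c^{*}\lambda$, i.e.\ $\psi(\lambda)<c^{*}$, on $(\lambda_{1},\lambda_{2})$ --- contradicting minimality.

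The three alternatives are then read off from $G_{c}(\lambda):=g(\lambda)-c\lambda=-k(\lambda)-c\lambda$, which for every fixed $c$ is analytic, strictly convex and coercive, with $G_{c}(0)=-\lambda_{1}^{per}>0$. If $c<c^{*}$, then $\psi(\lambda)\geq c^{*}>c$ on $(0,\infty)$, so $g(\lambda)>c\lambda$, i.e.\ $c\lambda<-k(\lambda)$, which is (i). If $c=c^{*}$, then $G_{c^{*}}(\lambda)=(\psi(\lambda)-c^{*})\lambda\geq0$ for $\lambda>0$, with equality exactly at $\lambda=\lambda^{*}$ by the uniqueness above; this is (ii), with $\lambda^{*}c^{*}=-k(\lambda^{*})$ and $\lambda c^{*}<-k(\lambda)$ for all $\lambda>0$, $\lambda\neq\lambda^{*}$ (and $\lambda^{*}$ is in fact the unique global minimiser of $G_{c^{*}}$ on $\mathbb R$, being an interior critical point of a strictly convex coercive function). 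If $c>c^{*}$, then $G_{c}(\lambda^{*})=(c^{*}-c)\lambda^{*}<0$, while $G_{c}$ is positive at $0$ and at $\pm\infty$; a strictly convex coercive function that takes a negative value has a zero set consisting of exactly two points $\lambda_{1}^{*}<\lambda_{2}^{*}$ (at most two by strict convexity, at least one on each side of $\lambda^{*}$ by the intermediate value theorem), is negative on $(\lambda_{1}^{*},\lambda_{2}^{*})$ and positive outside $[\lambda_{1}^{*},\lambda_{2}^{*}]$; since $G_{c}(0)>0$ while $\lambda^{*}>0$ lies in the negative interval, we must have $0<\lambda_{1}^{*}$. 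Rewriting $G_{c}<0\Leftrightarrow\lambda c>-k(\lambda)$ and $G_{c}>0\Leftrightarrow\lambda c<-k(\lambda)$ gives (iii).

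For the continuity statement (iv), the plan has two ingredients. First, \emph{joint continuity} of $(A,\lambda)\mapsto k_{A}(\lambda)$: given cooperative, fully coupled, $L$-periodic matrix fields $A_{n}\to A_{0}$ uniformly and $\lambda_{n}\to\lambda_{0}$, I would normalise the principal eigenfunctions $\phi_{n}$ of $-L_{\lambda_{n}}-A_{n}$ by $\|\phi_{n}\|_{\infty}=1$; the numbers $k_{A_{n}}(\lambda_{n})$ are bounded --- below by testing the min--max formula \eqref{eq:minimax-k(lambda)} with the constant vector $\mathbf{1}$, above by \eqref{eq:klesssquare} (whose constants depend only on the ellipticity bounds and $\|A_{n}\|_{\infty}$, hence may be chosen uniform over the bounded family $\{A_{n}\}$) --- so elliptic Schauder estimates bound $\phi_{n}$ in $C^{2,\alpha'}_{per}$; passing to a subsequence, $k_{A_{n}}(\lambda_{n})\to\kappa$ and $\phi_{n}\to\psi\geq0$, $\psi\not\equiv0$, with $-L_{\lambda_{0}}\psi-A_{0}\psi=\kappa\psi$, and since $A_{0}$ is cooperative and fully coupled the strong maximum principle gives $\psi\gg0$, so $(\kappa,\psi)$ is a principal eigenpair and $\kappa=k_{A_{0}}(\lambda_{0})$ by uniqueness; as every subsequential limit agrees, $k_{A_{n}}(\lambda_{n})\to k_{A_{0}}(\lambda_{0})$. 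Second, \emph{localisation} of the minimiser: since $\lambda_{1}^{per}(A_{n})=k_{A_{n}}(0)\to\lambda_{1}^{per}(A_{0})<0$, one cannot have $\lambda^{*}(A_{n})\to0$ (otherwise $c^{*}(A_{n})=-k_{A_{n}}(\lambda^{*}(A_{n}))/\lambda^{*}(A_{n})\to+\infty$, contradicting $c^{*}(A_{n})\leq-k_{A_{n}}(1)\to-k_{A_{0}}(1)<\infty$), and the uniform bound $-k_{A_{n}}(\lambda)\geq\beta\lambda^{2}-\alpha$ likewise forbids $\lambda^{*}(A_{n})\to\infty$; hence $\lambda^{*}(A_{n})\in[a,b]$ for some fixed $0<a<b$ and all large $n$, and (enlarging $[a,b]$) also $\lambda^{*}(A_{0})\in[a,b]$. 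Joint continuity then upgrades, by a standard compactness argument, to uniform convergence $k_{A_{n}}\to k_{A_{0}}$ on $[a,b]$, whence $-k_{A_{n}}(\lambda)/\lambda\to-k_{A_{0}}(\lambda)/\lambda$ uniformly on $[a,b]$, and therefore $c^{*}(A_{n})=\min_{\lambda\in[a,b]}(-k_{A_{n}}(\lambda)/\lambda)\to\min_{\lambda\in[a,b]}(-k_{A_{0}}(\lambda)/\lambda)=c^{*}(A_{0})$.

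The step I expect to be delicate is the localisation in (iv): one has to exploit the hypothesis $\lambda_{1}^{per}<0$ (which persists under small perturbations, by the $\lambda=0$ case of joint continuity) together with the uniform quadratic decay \eqref{eq:klesssquare} in order to confine the minimiser $\lambda^{*}(A)$ to a compact subset of $(0,\infty)$ as $A$ varies near $A_{0}$ --- without this one only obtains upper semicontinuity of $c^{*}$, since $c^{*}$ is automatically an infimum of the continuous functions $A\mapsto-k_{A}(\lambda)/\lambda$. Parts (i)--(iii), and the joint-continuity ingredient of (iv), are then routine once the analyticity and strict concavity of $k$ (Proposition~\ref{prop:k(lambda)}, item~\ref{item:k(lambda)concave}) and the usual continuity of principal eigenvalues with respect to the zero-order coefficients are available.
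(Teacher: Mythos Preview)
Your proposal is correct and follows essentially the same route as the paper: parts (i)--(iii) are deduced from the strict concavity of $k$, the quadratic bound \eqref{eq:klesssquare}, and $k(0)=\lambda_1^{per}<0$, while (iv) uses sequential compactness via elliptic regularity. The paper's own proof is a terse few sentences (``follows directly from the strict concavity'' for (i)--(iii); ``follows easily from the sequential characterisation of continuity and the regularising properties of elliptic operators'' for (iv)), so your write-up simply fills in the convex-analysis and localisation details that the paper leaves implicit.
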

As we will discuss in Theorem \ref{thm:lin-det}, the speed $c^*$ defined by \eqref{eq:defc*} is the natural speed of propagation of solutions to \eqref{eq:gen-syst} starting from  front-like initial data $u_0$ supported in $(-\infty, 0]$ and with $\liminf_{x\to -\infty}u_0(x)>0$. In order to catch the propagation speed of solutions starting from initial data supported in  $[0, +\infty)$ and with $\liminf_{x\to +\infty}u_0(x)>0$, it suffices to introduce the quantity
\begin{equation}\label{eq:defc*L}
    c^*_{\text{left}} := \inf_{\lambda<0}\frac{k(\lambda)}{\lambda}=\inf_{\lambda>0}\frac{-k(-\lambda)}{\lambda}. 
\end{equation}
Whether the rightward speed $c^*$ and the leftward speed $c^*_{\text{left}}$ are equal depends, {again,} on the situation. In many cases, including the case of constant coefficients, such a property is true. A sufficient condition for this property to hold is given in Assumption \ref{as:isotropic}. However it is false in general {as can be seen as a straightforward consequence of Proposition \ref{prop:strong-coupling}; see Remark \ref{rem:strong-coupling-speed}.}

A consequence of the above results is the following Proposition.
\begin{prop}\label{prop:speed-survival}
    Suppose that $\lambda_1^{per}<0$ and that the rightward speed $c^*=:c^*_{\text{right}}$ (defined by \eqref{eq:defc*}) and the leftward speed (defined by \eqref{eq:defc*L}) are both positive.  Then
	\begin{equation*}
		\lambda_1^\infty<0.
	\end{equation*}
\end{prop}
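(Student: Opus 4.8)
The plan is to reduce the statement to the spectral identity $\lambda_1^\infty=\max_{\lambda\in\mathbb R}k(\lambda)$ provided by Proposition~\ref{prop:k(lambda)}, and then to check that, under the three hypotheses, $k(\lambda)<0$ for \emph{every} real $\lambda$. The two speed conditions will take care of the signs on $\{\lambda>0\}$ and $\{\lambda<0\}$ respectively, while the hypothesis $\lambda_1^{per}<0$ will dispose of the single remaining value $\lambda=0$.

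First I would record the elementary identity $k(0)=\lambda_1^{per}$. Indeed, taking $\lambda=0$ in the definition of $L_\lambda$ gives $L_0=\mathcal L$, so the periodic eigenvalue problem \eqref{eq:defk(lambda)} defining $k(0)$ is exactly the problem \eqref{eq:def-princ-per} defining $\lambda_1^{per}$; uniqueness of the periodic principal eigenvalue then forces $k(0)=\lambda_1^{per}$, which is negative by assumption.

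Next, for $\lambda>0$ the definition \eqref{eq:defc*} of $c^*_{\text{right}}$ gives $\frac{-k(\lambda)}{\lambda}\ge c^*_{\text{right}}>0$, hence $k(\lambda)\le -c^*_{\text{right}}\lambda<0$. Symmetrically, for $\lambda<0$ I write $\lambda=-\mu$ with $\mu>0$ and use the definition \eqref{eq:defc*L} of $c^*_{\text{left}}$ to obtain $\frac{-k(-\mu)}{\mu}\ge c^*_{\text{left}}>0$, i.e. $k(\lambda)=k(-\mu)\le -c^*_{\text{left}}\mu<0$. Combined with $k(0)=\lambda_1^{per}<0$, this yields $k(\lambda)<0$ for all $\lambda\in\mathbb R$. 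Since the maximum $\lambda_1^\infty=\max_{\lambda\in\mathbb R}k(\lambda)$ is attained at a finite point $\lambda_0$ — guaranteed by the continuity of $k$ together with the upper bound $k(\lambda)\le\alpha-\beta\lambda^2$ from \eqref{eq:klesssquare} — we conclude $\lambda_1^\infty=k(\lambda_0)<0$.

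I do not anticipate any real difficulty: the proposition is essentially a bookkeeping consequence of Proposition~\ref{prop:k(lambda)} and of the definitions of $c^*_{\text{right}}$ and $c^*_{\text{left}}$. The only step worth spelling out is the identification $k(0)=\lambda_1^{per}$, which is exactly what patches the value $\lambda=0$ that the two speed conditions leave uncovered; note that strict concavity of $k$ by itself does not help here, as it only gives a lower bound for $k(0)$ in terms of $k(\pm\mu)$ with $\mu>0$. Hence the hypothesis $\lambda_1^{per}<0$ is genuinely used.
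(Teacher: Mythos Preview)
Your proof is correct and follows essentially the same route as the paper: use the two speed hypotheses to get $k(\lambda)<0$ on $\lambda>0$ and $\lambda<0$ respectively, patch $\lambda=0$ via $k(0)=\lambda_1^{per}<0$, and conclude by $\lambda_1^\infty=\max_{\lambda\in\mathbb R}k(\lambda)$ together with the quadratic upper bound \eqref{eq:klesssquare}. If anything, your write-up is slightly more explicit about why $k(0)=\lambda_1^{per}$ and why the maximum is attained.
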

The proof is immediate so we sketch it here. When $c^*_{\text{right}}>0$, it follows from the definition of $c^*_{\text{right}}$ that $0>-\lambda c^* \geq k(\lambda)$ for all $\lambda>0$. Similarly since $c^*_L>0$ there holds that $0>\lambda c^* \geq k(\lambda)$ for all $\lambda<0$. Finally since $k(0)=\lambda_1^{per}<0$, we have that $k(\lambda)<0$ for all $\lambda\in\mathbb R$ and it follows from \eqref{eq:klesssquare} that  
\begin{equation*}
	\lambda_1^\infty=\max_{\lambda\in\mathbb R}k(\lambda)<0. 
\end{equation*}
Proposition \ref{prop:speed-survival} is proved.

{When $\lambda_1^{per}=0$, it is known the hair-trigger property may fail for nonlinear problems even in the case of a scalar equation. This can be shown by considering the classical Fisher-KPP equation : 
\begin{equation*}
    u_t(t, x) = u_{xx}(t, x) +u(t, x)\big(r- u(t, x)\big). 
\end{equation*}
When $r=0$, any  bounded nonnegative solution to the above equation converges to $0$ as $t\to\infty$. This can be shown by comparison with the ODE $u_t=-u^2$.}

Next we derive a condition under which there is equality between the periodic principal eigenvalue and the limit of the Dirichlet eigenvalues in large balls. As in the scalar case (see \cite{Nad-09, Nad-10}), it may happen that the two principal eigenvalues $\lambda_1^{per} $ and $\lambda_1^\infty$  differ if for instance individuals are ``blown away'' to $\pm \infty$, even when the population is capable of surviving. Similarly, because of the dependency in $x$ in the diffusion coefficient, the speed of propagation may differ when looking at solutions spreading to the right or to the left. 
Under this assumption we have the following properties:
\begin{prop}[Dirichlet and periodic principal eigenvalues]\label{prop:isotropic}
	Let Assumption \ref{as:isotropic} hold. Then
	\begin{equation}\label{eq:isotropic}
		\lambda_1^\infty:=\lim_{R\to\infty}\lambda_1^R=\lambda_1^{per}.
	\end{equation}
\end{prop}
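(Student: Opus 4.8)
The plan is to read off the identity \eqref{eq:isotropic} from the spectral function $k(\lambda)$ analysed in Proposition \ref{prop:k(lambda)}, via the chain
\begin{equation*}
	\lambda_1^\infty=\max_{\lambda\in\mathbb R}k(\lambda)=k(0)=\lambda_1^{per}.
\end{equation*}
The leftmost equality is exactly Proposition \ref{prop:k(lambda)}\,\ref{item:lambda_1k(lambda)}. For the rightmost equality I would note that at $\lambda=0$ one has $L_0\vect{\phi}=e^{0\cdot x}\mathcal L\big(e^{-0\cdot x}\vect{\phi}\big)=\mathcal L\vect{\phi}$, so the defining problem \eqref{eq:defk(lambda)} for $k(0)$ becomes precisely the periodic principal eigenvalue problem \eqref{eq:def-princ-per}; the uniqueness part of Definition \ref{def:princ-eig} (a consequence of Krein--Rutman, as recalled after that definition) then gives $k(0)=\lambda_1^{per}$ and $\vect{\phi}^0=\vect{\varphi}^{per}$ up to a positive scalar. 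Since $\lambda_1^{per}\le\lambda_1^\infty$ holds unconditionally (cf.\ \eqref{eq:order-eigenvalues}), the whole statement reduces to showing that the middle equality $\max_{\lambda\in\mathbb R}k(\lambda)=k(0)$ holds under Assumption \ref{as:isotropic}.

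For that middle equality I would combine two facts from Proposition \ref{prop:k(lambda)}: the map $\lambda\mapsto k(\lambda)$ is concave (part \ref{item:k(lambda)concave}), and under Assumption \ref{as:isotropic} it is even (part \ref{item:k(lambda)even}). An even concave function $g$ is maximised at the origin, since $g(0)=g\big(\tfrac{\lambda}{2}+\tfrac{-\lambda}{2}\big)\ge \tfrac12 g(\lambda)+\tfrac12 g(-\lambda)=g(\lambda)$ for every $\lambda\in\mathbb R$; applying this to $g=k$ gives $\max_{\lambda\in\mathbb R}k(\lambda)=k(0)$, and the proposition follows. (Strict concavity, also asserted in \ref{item:k(lambda)concave}, would moreover show that $\lambda=0$ is the \emph{only} maximiser, but this is not needed here.)

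Essentially all the substance is therefore hidden in the evenness of $k$ under Assumption \ref{as:isotropic}, which is the step I expect to be the real obstacle and which is established in Proposition \ref{prop:k(lambda)}\,\ref{item:k(lambda)even}; it is precisely where the symmetry hypothesis is used. In case a) of Assumption \ref{as:isotropic}, one applies the reflection $x\mapsto -x$ to a positive periodic principal eigenfunction $\vect{\phi}^\lambda$ of $-L_\lambda-A(x)$ and checks, using that $\vect{\sigma}$ and $A$ are even and $\vect{q}\equiv 0$, that $x\mapsto\vect{\phi}^\lambda(-x)$ is a positive periodic principal eigenfunction of $-L_{-\lambda}-A(x)$ with the same eigenvalue; uniqueness of $k$ then yields $k(-\lambda)=k(\lambda)$. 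In case b), one uses instead that for $\mathcal L=\mathcal L^d$ in divergence form with no advection and $A(x)$ symmetric, the operator $-L_{-\lambda}-A(x)$ is the $L^2_{per}$-adjoint of $-L_{\lambda}-A(x)$ (integration by parts moves the weight $e^{\pm\lambda x}$ from one side to the other and the zeroth-order term $A$ is unchanged since $A(x)^T=A(x)$), and an operator and its adjoint have the same principal eigenvalue. Granting Proposition \ref{prop:k(lambda)}, the proof of Proposition \ref{prop:isotropic} as outlined above is then immediate.
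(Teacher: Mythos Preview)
Your proof is correct and follows essentially the same route as the paper: both argue via the chain $\lambda_1^{per}=k(0)=\max_{\lambda\in\mathbb R}k(\lambda)=\lambda_1^\infty$, invoking the concavity and evenness of $k(\lambda)$ from Proposition~\ref{prop:k(lambda)}\,\ref{item:k(lambda)concave} and~\ref{item:k(lambda)even} together with the identity $\lambda_1^\infty=\max_\lambda k(\lambda)$ from~\ref{item:lambda_1k(lambda)}. Your additional sketch of why $k$ is even under each case of Assumption~\ref{as:isotropic} also matches the paper's argument for~\ref{item:k(lambda)even}.
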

Since $\lambda_1^{per}$ is generally easier to estimate than $\lambda_1^\infty$, the above proposition gives a useful criterion for the survival of solutions whose initial data is compactly supported in view of Theorem \ref{thm:hair-trigger}.
\begin{prop}\label{prop:isotropic-speed}
	Under Assumption \ref{as:isotropic}, the rightward and leftward spreading speeds are the same.
\end{prop}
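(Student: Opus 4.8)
The plan is straightforward: reduce everything to the evenness of $k$. By the very definitions \eqref{eq:defc*} and \eqref{eq:defc*L}, the rightward speed is $c^*=c^*_{\text{right}}=\inf_{\lambda>0}\frac{-k(\lambda)}{\lambda}$ and the leftward speed is $c^*_{\text{left}}=\inf_{\lambda>0}\frac{-k(-\lambda)}{\lambda}$. Under Assumption \ref{as:isotropic}, Proposition \ref{prop:k(lambda)}\ref{item:k(lambda)even} asserts that $k(-\lambda)=k(\lambda)$ for every $\lambda\in\mathbb R$, so the two infima are taken of the same function over the same set; hence $c^*_{\text{left}}=c^*_{\text{right}}$. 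At the level of Proposition \ref{prop:isotropic-speed} there is thus no genuine difficulty — the entire content sits in Proposition \ref{prop:k(lambda)}\ref{item:k(lambda)even}.

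For completeness I would also recall how the evenness of $k$ is obtained, treating the two alternatives of Assumption \ref{as:isotropic} separately. In case a) (no advection, $\sigma$ and $A$ even) I use the reflection $x\mapsto -x$: if $(k(\lambda),\phi^\lambda)$ solves \eqref{eq:defk(lambda)}, then $\tilde\phi(x):=\phi^\lambda(-x)$ is again $L$-periodic and strictly positive, and a direct computation — using $q\equiv 0$, $\sigma(-x)=\sigma(x)$, $\sigma_x(-x)=-\sigma_x(x)$ and $A(-x)=A(x)$ — shows $(L_{-\lambda}\tilde\phi)(x)=(L_\lambda\phi^\lambda)(-x)$, so that $\tilde\phi$ solves $-L_{-\lambda}\tilde\phi-A\tilde\phi=k(\lambda)\tilde\phi$; the uniqueness of the principal eigenvalue (Proposition \ref{prop:k(lambda)}\ref{item:eigenpairk(lambda)}) then forces $k(-\lambda)=k(\lambda)$. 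In case b) ($\mathcal L=\mathcal L^d$ in divergence form, $q\equiv 0$, $A$ symmetric) I argue by duality instead: integrating by parts over one period shows that the formal adjoint (for the $L^2$ inner product over one period) of $-L_\lambda-A$ equals $-L_{-\lambda}-A^T=-L_{-\lambda}-A$, and since an operator and its adjoint have the same principal eigenvalue, $k(\lambda)=k(-\lambda)$.

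The only point requiring care is the bookkeeping of signs in the reflection/adjoint computations, together with the observation that both alternatives in Assumption \ref{as:isotropic} are precisely what makes the reflected (resp. adjoint) operator again of the form $-L_{-\lambda}-(\text{cooperative matrix})$: for a non-symmetric $A$ the adjoint involves $A^T$, which is generally not cooperative, so the argument breaks down — and indeed Remark \ref{rem:strong-coupling-speed} shows that without such a symmetry $k$ need not be even and the rightward and leftward speeds can genuinely differ.
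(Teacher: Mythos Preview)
Your proposal is correct and matches the paper's approach exactly: the paper does not even write out a separate proof of Proposition \ref{prop:isotropic-speed}, treating the equality $c^*_{\text{right}}=c^*_{\text{left}}$ as immediate from the evenness of $k(\lambda)$ established in Proposition \ref{prop:k(lambda)}\ref{item:k(lambda)even}. Your recollection of how that evenness is obtained (reflection in case a), duality in case b)) also coincides with the paper's argument.
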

If Assumption \ref{as:isotropic} fails to hold, the rightward speed and the leftward speed may not be the same, even if there is no advection, i.e. $q(x)\equiv 0$. {As explained in Remark \ref{rem:strong-coupling-speed}, Proposition \ref{prop:strong-coupling} provides a counterexample in the case of strong coupling.}
	This is in {sharp} contrast with the scalar case, where it is known that the two speeds are always the same in the absence of an advection.

Last we turn our interest to systems with rapidly oscillating coefficients and give a description of the asymptotic behavior of the spreading speed. 
\begin{thm}[The speed of rapidly oscillating systems]\label{thm:large-diff}
	Let $\vect{\sigma}(x)>\vect{0}$, $\vect{q}(x)$ and $A(x)$ be $1$-periodic. For each $\varepsilon\in (0,1)$, let 
	\begin{equation*}
		\mathcal L^\varepsilon\vect{u}:= (\vect{\sigma}^\epsilon(x)\vect{u}_x)_x+\vect{q}^\varepsilon(x)\vect{u}=  \left(\vect{\sigma}\left(\frac{x}{\varepsilon}\right)\vect{u}_x\right)_x+\vect{q}\left(\frac{x}{\varepsilon}\right)\vect{u}
	\end{equation*}
	be a uniformly elliptic operator  and $A^\varepsilon(x):=A\left(\frac{x}{\varepsilon}\right)$ be a cooperative fully coupled matrix field. We let $c^*_\varepsilon$ be the spreading speed associated with $\mathcal L^\varepsilon$ and $A^\varepsilon(x)$.  Then, we have 
\begin{equation}\label{eq:lim-hom}
	\lim_{\varepsilon\to 0}{c^*_\varepsilon}=c^*(\overline{\mathcal L}^H+\overline{A}), 
\end{equation}
	where:
	\begin{equation*}
		\overline{\mathcal L}^H\vect{u}:= \overline{\vect{\sigma}}^H\vect{u}_{xx}+\overline{\vect{q}}^H \vect{u}_x, 
	\end{equation*}
	\begin{align*}
		\overline{\sigma}^H_i&:=\left(\int_0^1\frac{1}{\sigma_i(z)}\dd z\right)^{-1}, & \overline{q}^H_i&:= \overline{\sigma}^H_i\int_0^1\frac{q_i(z)}{\sigma_i(z)}\dd z, & \overline{A}:=\int_0^1A(z)\dd z, 
	\end{align*}
	and $c^*(\overline{\mathcal L}^H+\overline{A}) $ is given by:
	\begin{equation*}
		c^*(\overline{\mathcal L}^H+\overline{A})=\inf_{\lambda>0}\frac{\lambda_{PF}\left(\lambda^2\overline{\vect{\sigma}}^H - \lambda\overline{\vect{q}}^H+\overline{A}\right)}{\lambda}
	\end{equation*}
	where $\lambda_{PF}(X)$ is the Perron-Frobenius eigenvalue of an constant irreducible cooperative matrix $X$.
\end{thm}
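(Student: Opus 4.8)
The plan is to reduce \eqref{eq:lim-hom} to a homogenization statement for the spectral function $k(\lambda)$ and then to pass to the limit in the variational formula \eqref{eq:defc*}. Throughout I assume, as is implicit in the statement (so that the spreading speeds are defined, cf.\ Proposition \ref{prop:minspeed}), that $\lambda_1^{per}(\mathcal L^\varepsilon,A^\varepsilon)<0$. Write $k_\varepsilon(\lambda)$ for the number defined by \eqref{eq:defk(lambda)} associated with $\mathcal L^\varepsilon$ and $A^\varepsilon$, i.e.\ the principal eigenvalue of $-L^\varepsilon_\lambda-A^\varepsilon$ on $\varepsilon$-periodic functions, so that $c^*_\varepsilon=\inf_{\lambda>0}\frac{-k_\varepsilon(\lambda)}{\lambda}$. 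On the other hand, for the constant-coefficient operator $\overline{\mathcal L}^H+\overline A$ one may take the eigenfunction in \eqref{eq:defk(lambda)} to be a constant Perron--Frobenius eigenvector, which shows that the associated $k(\lambda)$ equals $\overline k(\lambda):=-\lambda_{PF}\big(\lambda^2\overline{\vect{\sigma}}^H-\lambda\overline{\vect{q}}^H+\overline A\big)$, so $c^*(\overline{\mathcal L}^H+\overline A)=\inf_{\lambda>0}\frac{-\overline k(\lambda)}{\lambda}$. It therefore suffices to prove that $k_\varepsilon(\lambda)\to\overline k(\lambda)$ as $\varepsilon\to 0$, locally uniformly in $\lambda\in\mathbb R$, and then to justify interchanging $\lim_{\varepsilon\to0}$ with $\inf_{\lambda>0}$.

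For the convergence of $k_\varepsilon(\lambda)$ I would use a two-scale expansion. After the substitution $y=x/\varepsilon$, a function $\phi^\varepsilon(x)=\chi(x/\varepsilon)$ with $\chi$ $1$-periodic satisfies
\begin{equation*}
	L^\varepsilon_\lambda\phi^\varepsilon=\frac{1}{\varepsilon^2}\mathcal A_0\chi+\frac{1}{\varepsilon}\mathcal A_1\chi+\mathcal A_2\chi,\qquad \mathcal A_0\chi:=(\vect{\sigma}(y)\chi_y)_y,
\end{equation*}
with $\mathcal A_1\chi=(-2\lambda\vect{\sigma}+\vect{q})\chi_y-\lambda\vect{\sigma}_y\chi$ and $\mathcal A_2\chi=(-\lambda\vect{q}+\lambda^2\vect{\sigma})\chi$ (for $\mathcal L$ in divergence form \eqref{eq:divergence-form}; the nondivergence case is analogous). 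I then look for an approximate eigenpair $\phi^\varepsilon=\psi_0+\varepsilon\psi_1(x/\varepsilon)+\varepsilon^2\psi_2(x/\varepsilon)$, $k_\varepsilon(\lambda)\approx\overline k(\lambda)$, with $\psi_0\in\mathbb R^d$ constant, and solve the cascade obtained by collecting powers of $\varepsilon$: the order $\varepsilon^{-2}$ equation forces $\psi_0$ constant; the order $\varepsilon^{-1}$ equation $\mathcal A_0\psi_1=\lambda(\vect{\sigma}\psi_0)_y$ determines the cell corrector $\psi_1$ by quadrature (this is where the harmonic means $\overline{\sigma}^H_i$ and $\overline{q}^H_i$ appear); finally, the Fredholm solvability condition of the order $\varepsilon^0$ equation, obtained by integrating over one period, simplifies --- after an integration by parts, the arithmetic averages of $\vect\sigma$ and $\vect q$ cancelling --- to
\begin{equation*}
	\big(\lambda^2\overline{\vect{\sigma}}^H-\lambda\overline{\vect{q}}^H+\overline A\big)\psi_0=-\overline k(\lambda)\psi_0.
\end{equation*}
Choosing $\overline k(\lambda):=-\lambda_{PF}\big(\lambda^2\overline{\vect{\sigma}}^H-\lambda\overline{\vect{q}}^H+\overline A\big)$ and $\psi_0$ the associated positive Perron--Frobenius eigenvector --- the matrix being cooperative, and irreducible since full coupling of $A$ forces $\overline A=\int_0^1 A$ to have positive off-diagonal entries --- makes the construction consistent and yields $-L^\varepsilon_\lambda\phi^\varepsilon-A^\varepsilon\phi^\varepsilon=\overline k(\lambda)\phi^\varepsilon+O(\varepsilon)$ with $\phi^\varepsilon\gg 0$ for $\varepsilon$ small, the $O(\varepsilon)$ being uniform on compact sets of $\lambda$. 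Plugging $\phi^\varepsilon$ into the min--max formula of Proposition \ref{prop:k(lambda)}\ref{item:minimaxk(lambda)} gives $k_\varepsilon(\lambda)\ge\overline k(\lambda)-C\varepsilon$; pairing $\phi^\varepsilon$ against the analogous approximate eigenfunction of the (again cooperative) formal adjoint problem, whose principal eigenvalue is also $k_\varepsilon(\lambda)$, gives the matching upper bound. Hence $k_\varepsilon(\lambda)\to\overline k(\lambda)$ locally uniformly.

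It remains to let $\varepsilon\to 0$ in $c^*_\varepsilon=\inf_{\lambda>0}\frac{-k_\varepsilon(\lambda)}{\lambda}$. The bound $\limsup_{\varepsilon\to0}c^*_\varepsilon\le c^*(\overline{\mathcal L}^H+\overline A)$ is immediate by evaluating at a minimiser $\overline\lambda^*>0$ of $\lambda\mapsto\frac{-\overline k(\lambda)}{\lambda}$: $c^*_\varepsilon\le\frac{-k_\varepsilon(\overline\lambda^*)}{\overline\lambda^*}\to\frac{-\overline k(\overline\lambda^*)}{\overline\lambda^*}$. For the reverse inequality, uniform-in-$\varepsilon$ estimates let one replace $\inf_{\lambda>0}$ by an infimum over a fixed compact interval: the coercivity bound \eqref{eq:klesssquare}, $k_\varepsilon(\lambda)\le\alpha-\beta\lambda^2$, holds with $\alpha,\beta$ independent of $\varepsilon$ by uniform ellipticity, so $\frac{-k_\varepsilon(\lambda)}{\lambda}\ge\beta\lambda-\frac{\alpha}{\lambda}$ is large for large $\lambda$, uniformly; and since each $k_\varepsilon$ is concave with $k_\varepsilon(0)=\lambda_1^{per}(\mathcal L^\varepsilon,A^\varepsilon)\to\overline k(0)<0$ and a uniform Lipschitz bound near $0$, one gets $k_\varepsilon(\lambda)\le\overline k(0)/2<0$ for small $\lambda$, uniformly, so $\frac{-k_\varepsilon(\lambda)}{\lambda}$ is large for small $\lambda$ too. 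Thus there is a compact $[\delta,\Lambda]\subset(0,\infty)$, independent of $\varepsilon$, containing all near-minimisers; on $[\delta,\Lambda]$ the convergence $k_\varepsilon\to\overline k$ is uniform, whence $c^*_\varepsilon=\inf_{[\delta,\Lambda]}\frac{-k_\varepsilon(\lambda)}{\lambda}\to\inf_{[\delta,\Lambda]}\frac{-\overline k(\lambda)}{\lambda}=c^*(\overline{\mathcal L}^H+\overline A)$, which is \eqref{eq:lim-hom}.

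The hard part is the rigorous justification of the two-scale expansion: one must track the (polynomial in $\lambda$) dependence of the correctors $\psi_1,\psi_2$ and of the remainder, so the errors are uniform on $\lambda$-compacta, and one must produce a genuinely positive approximate adjoint eigenfunction to get the upper bound on $k_\varepsilon(\lambda)$. A secondary difficulty, easy to overlook, is the uniformity in $\varepsilon$ of the coercivity and near-zero Lipschitz estimates used in the last paragraph: the naive test function $\phi\equiv\mathbf 1$ does \emph{not} give a uniform bound, since $\vect{\sigma}^\varepsilon_x$ blows up as $\varepsilon\to 0$, so these bounds must come from elliptic regularity applied to the rescaled cell problems.
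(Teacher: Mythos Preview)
Your outline is correct and the two main steps --- local-uniform convergence $k_\varepsilon(\lambda)\to\overline k(\lambda)$, then passage to the limit in the variational formula --- match the paper's architecture. The route you take for the first step, however, is genuinely different.

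The paper does write down the formal two-scale expansion you describe, but only as motivation; the rigorous proof proceeds by a \emph{compactness argument} rather than by making the expansion rigorous. It argues by contradiction, takes the exact principal eigenfunctions $\phi^\varepsilon$ normalised in $L^2_{per}$, and shows by an energy estimate that they are bounded in $H^1_{per}$ uniformly in $\varepsilon$. The key device is to introduce the flux variable $\xi^\varepsilon:=\sigma^\varepsilon(\phi^\varepsilon_x-\lambda^\varepsilon\phi^\varepsilon)$, which is itself bounded in $H^1_{per}$; since $\xi^\varepsilon$ converges strongly in $L^2$ while $\phi^\varepsilon_x-\lambda^\varepsilon\phi^\varepsilon=(\sigma^\varepsilon)^{-1}\xi^\varepsilon$ converges weakly, one can identify each term of the limiting equation (this is where $\overline\sigma^H$ and $\overline q^H$ emerge) and conclude $k^0=\overline k(\lambda_0)$ by uniqueness of the Perron--Frobenius eigenvalue. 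This ``soft'' approach sidesteps both of the difficulties you flag at the end: there is no need to construct a positive approximate adjoint eigenfunction, and no need to track $\lambda$-uniform bounds on correctors.

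Your approach, by contrast, is constructive and would give an explicit $O(\varepsilon)$ rate. The point you identify as delicate --- the upper bound on $k_\varepsilon(\lambda)$ --- is real: the min--max \eqref{eq:minimax-k(lambda)} only yields a lower bound from a test function, and the adjoint min--max again yields a lower bound for the same eigenvalue. One clean fix is to pair your approximate $\phi^\varepsilon$ against the \emph{exact} adjoint eigenfunction $\phi^{*,\varepsilon}$ (same eigenvalue by Krein--Rutman), which gives $k_\varepsilon(\lambda)-\overline k(\lambda)=\langle\phi^{*,\varepsilon},R^\varepsilon\rangle/\langle\phi^{*,\varepsilon},\phi^\varepsilon\rangle$; the ratio is $O(\varepsilon)$ because $\phi^\varepsilon$ is uniformly bounded below. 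Alternatively, the dual Collatz--Wielandt characterisation $k_\varepsilon(\lambda)=\min_{\phi>0}\sup_x\max_i(-L_\lambda\phi-A\phi)_i/\phi_i$ gives the upper bound directly from the same $\phi^\varepsilon$.

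For the second step your treatment is actually more careful than the paper's, which simply asserts convergence of the argmin from local-uniform convergence of $k^\varepsilon$. Your observation that the $\varepsilon$-uniform coercivity cannot be read off from $\phi\equiv\mathbf 1$ (because of the $\lambda\sigma^\varepsilon_x$ term in divergence form) is well taken; one gets it instead by evaluating the eigenvalue equation at an extremal point of the exact $\phi^\varepsilon$, exactly as in the proof of \eqref{eq:klesssquare}, where the offending derivative term drops out.
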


\subsection{Spreading in {equations} of hybrid nature and traveling waves.}

In this subsection we derive some proerties of the solutions to the nonlinear equation \eqref{eq:gen-syst}. We first recall some notions that we will use in the statement of our results. 
\paragraph{Lipschitz continuity} Let $U\subset\mathbb R^d$ be given. We say that $f(x,u)=(f_1(x, u), \ldots, f_d(x, u)):\mathbb R\times U\to \mathbb R^d$ is locally Lipschitz continuous with respect to $u$ if, {for all $M>0$,}  there is a constant $K>0$ such that 
\begin{equation*}
	\Vert f(x, u)-f(x, v)\Vert\leq K\Vert u-v\Vert \text{ for all } x\in\mathbb R\text{ and }u,v\in U \text{ with } \Vert u\Vert \leq M \text{ and } \Vert v\Vert\leq M.
\end{equation*}

\begin{defn}[Cooperative function]\label{def:cooperative}
	Let $U\subset\mathbb R^d$ and $\vect{f}:\mathbb R\times U\to\mathbb R^d$. The function $\vect{f}=(f_1,\ldots, f_d)$ is {\it cooperative} (or equivalently, \textit{quasi-monotone}) on $U$ if there is a real number $\gamma>0$ such that $f(x, u)+\gamma u$ is monotone non-decreasing with respect to $u$ for the usual component-wise order. 
\end{defn}
\begin{rem}
	The notion of cooperative function is equivalent to the notion of quasi-monotonicity, which is more commonly used in the dynamical systems community.
\end{rem}
Alternatively, a function  $f$ is {cooperative} if, and only if, for any $x\in \mathbb R$, $\vect{u}=\left(u_1, \ldots, u_d\right)\in U$,  $1\leq i, j\leq d$ such that $i\neq j$,  the function $ v\mapsto f_i(x, u_1, \ldots, u_{j-1}, v, u_{j+1}, \ldots, u_d)$ is nondecreasing.

Next we define the notion of {sublinear} nonlinearity that we will use throughout the paper:
\begin{defn}[{Sublinear} nonlinearity]\label{def:KPP}
    We say the nonlinearity $\vect{f}(x, u) = (f_1(x, u), \ldots, f_d(x, u)) $ is {sublinear} provided it is continuous in both variables,  Lipschitz continuous with respect to $u$ and
	\begin{enumerate}[label={(\roman*)}]
		\item for all $x\in\mathbb R$,  $\vect{f}(x, 0)=0$.
		\item $ f(x, u)$ is differentiable at $u=0$ uniformly in $x$.
		\item for each $x\in\mathbb R$ and each $\vect{u}\geq 0$, we have 
			\begin{equation*}
				\vect{f}(x, \vect{u})\leq D\vect{f}(x, 0) \vect{u}.
			\end{equation*}
	\end{enumerate}
\end{defn}
Finally, in order to compute the spreading speed, we need an additional regularity assumption on the  properties of the nonlinearity in a vicinity of 0.
\begin{assumption}[Regularity in a neighborhood of 0]\label{as:coop}
    We assume that $f$ is a {sublinear} nonlinearity and that the differential matrix field $Df(x, 0)$ is cooperative and fully coupled. Moreover, we assume that there exists a family of cooperative and fully coupled matrix fields $(A^\delta(x))_{\delta\in(0, 1)}$ satisfying
	\begin{equation*}
		\sup_{x\in\mathbb R} \|A^\delta(x)-Df(x, 0)\|_{{M}_d(\mathbb R)}\to 0 \text { as }\delta\to 0,
	\end{equation*}
	and  for each $\delta\in (0,1)$ there exists $\eta>0$ such that whenever $\Vert u\Vert\leq \eta $ and $u>0$, we have
	\begin{equation*}
		f(x,u)\geq A^\delta(x)u.
	\end{equation*}
\end{assumption}
As an example of nonlinearity satisfying Assumption \ref{as:coop}, one can remark that if the Jacobian matrix $Df(x,0) $ has only positive coefficients, then $f$ satisfies Assumption \ref{as:coop} with $A^\delta(x)=(1-\delta)Df(x, 0)$. 

\begin{defn}[Monotone lower barrier] \label{def:super-monotone}
	Let $f=(f_1, \ldots, f_d)\in Lip(\mathbb R\times\mathbb R^d, \mathbb R^d)$ be a $L$-periodic function. We say that $f^-\in Lip(\mathbb R\times\mathbb R^d, \mathbb R^d)$ is a \textit{monotone lower barrier} of $f$  if there exists a constant $\eta>0$ such that 
	\begin{enumerate} 
		\item \label{item:lowercoop} $f(x, u)\geq f^-(x, u)$ for all $u=(u_1, \ldots, u_d)\geq 0$ with $u_i\leq \eta$ for some $i\in\{1, \ldots, d\}$.
		\item \label{item:samediff} $Df^-(x, 0)u=Df(x, 0)u$ for all $(x, u)\in\mathbb R\times \mathbb R^d$. 
		\item for all $i, j\in\{1, \ldots, d\}$ with $j\neq i$, the function $u_j\mapsto f_i^-(x, u_1, \ldots, u_d)$ is non-decreasing whenever $|u_i|\leq \eta$.
	\end{enumerate}
	Note that the above assumptions imply, in particular,  that $f^-$ is cooperative (or equivalently, quasi-monotone) in a neighborhood of $0$, more precisely  on the domain $\mathbb R\times B^+_\infty(0, \eta)$ (where $B_\infty(0, \eta):=\{u\geq 0\,|\,\Vert u\Vert_{\infty}\leq \eta\}$). 
\end{defn}

Equipped with these notions, we now state the first result on nonlinear equations of this paper. Theorem \ref{thm:hair-trigger} showed that there is a hair-trigger effect when the Dirichlet principal eigenvalue $\lambda_1^\infty$ is negative. More precisely, any solution starting from a non-trivial initial data becomes locally uniformly positive when $t\to+\infty$.
\begin{thm}[Hair-Trigger effect]\label{thm:hair-trigger}
    Let $\mathcal L$ be a diagonal uniformly elliptic operator and $f$ be a {sublinear} function. Assume that $f$ admits a monotone lower barrier in the sense of Definition \ref{def:super-monotone} {and} suppose finally that $\lambda_1^{\infty}<0$. Then there exists  $\delta>0$ with such that whenever $u(t, x)$ is a solution of \eqref{eq:gen-syst} with an initial condition $u(0, x):=u_0(x)$ which is non-negative and non-trivial, then
	\begin{equation*}
		{\liminf_{t\to+\infty}u(t, x) \geq \delta\mathbf{1}, }
	\end{equation*}
	uniformly in bounded sets of $\mathbb R$. 
\end{thm}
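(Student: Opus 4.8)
The plan is to use the generalized Dirichlet principal eigenfunction to build a small compactly supported subsolution, combined with a monotone comparison argument using the lower barrier $f^-$. First I would establish that since $\lambda_1^\infty < 0$, by Proposition \ref{prop:gen-princ-eig}\ref{item:limitRbig} (applied to $A(x) = Df(x,0)$) there exists $R_0 > 0$ large enough and a Dirichlet principal eigenpair $(\lambda_1^{R_0}, \varphi^{R_0})$ on $(-R_0, R_0)$ with $\lambda_1^{R_0} < 0$ and $\varphi^{R_0} \gg 0$ in $(-R_0, R_0)$. Extending $\varphi^{R_0}$ by zero outside $(-R_0, R_0)$, for $\epsilon > 0$ small the function $\underline{u} := \epsilon\varphi^{R_0}$ satisfies $\|\underline u\| \le \eta$, so by Definition \ref{def:super-monotone}\ref{item:lowercoop}--\ref{item:samediff} and Assumption \ref{as:coop} (more precisely, using the matrix family $A^\delta$: pick $\delta$ small enough that $\lambda_1^{R_0}(A^\delta) < 0$ still, by continuity of principal eigenvalues in the matrix), we get that $\epsilon\varphi^{R_0(A^\delta)}$ is a (stationary) subsolution of the problem with nonlinearity $f^-$ inside its support, hence a generalized subsolution on all of $\mathbb R$ because it is continuous, vanishes at $\pm R_0$, and is identically zero outside.

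Next I would run the parabolic semiflow: let $w(t,x)$ solve $w_t = \mathcal L w + f^-(x, w)$ with $w(0,\cdot) = \epsilon\varphi$. Since $f \ge f^-$ on the relevant range and $f^-$ is quasi-monotone near $0$ (so the comparison principle for cooperative systems applies as long as orbits stay in $\mathbb R\times B^+_\infty(0,\eta)$), and since $\epsilon\varphi$ is a subsolution, $t\mapsto w(t,\cdot)$ is nondecreasing; it is bounded above by a suitable small constant supersolution of the $f^-$ problem (using that $f^-$ still has the sublinear-type control near zero, or simply truncating), so $w(t,\cdot)$ converges as $t\to\infty$ to a nonnegative stationary solution $W$ of the $f^-$ problem, $W \gg 0$ on $\mathbb R$ by strong maximum principle / irreducibility, and $W$ is bounded away from zero uniformly (e.g. $W$ is $L$-periodic or at least $\inf_{\mathbb R} W > 0$, which follows since $\lambda_1^\infty < 0$ forces the zero state to be linearly unstable with respect to $f^-$). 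Then $u(t,x) \ge w(t,x)$ for the solution $u$ of \eqref{eq:gen-syst} with $u_0 \ge \epsilon\varphi$, again by comparison, giving $\liminf_{t\to\infty} u(t,x) \ge \delta\mathbf 1$ with $\delta := \tfrac12 \inf_{\mathbb R} W > 0$, uniformly on bounded sets.

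The remaining point, and the main obstacle, is to handle an \emph{arbitrary} nontrivial nonnegative $u_0$ — one that need not dominate any $\epsilon\varphi^{R_0}$ at time $0$. The standard fix: by the strong maximum principle for the cooperative linearization near $0$ (or by a short-time parabolic estimate combined with irreducibility/full coupling of $Df(x,0)$), the solution $u$ of \eqref{eq:gen-syst} satisfies $u(t_1, x) \gg 0$ on the compact set $[-R_0, R_0]$ for some $t_1 > 0$; hence $u(t_1,\cdot) \ge \epsilon\varphi^{R_0}$ for $\epsilon$ small, and one applies the monotone argument above starting from time $t_1$. Making this step rigorous for the \emph{system} requires care: positivity must propagate across components, which is exactly where full coupling of $Df(x,0)$ (Definition \ref{def:fully-coupled}) and the lower-barrier structure are used — one typically argues that even if some component of $u_0$ is identically zero, the cooperative coupling makes all components strictly positive for $t>0$. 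I expect this positivity-propagation step, together with verifying the comparison principle stays valid while orbits remain in the quasi-monotone region $\mathbb R\times B^+_\infty(0,\eta)$, to be the technical crux; the eigenfunction subsolution construction and the monotone convergence are then routine.
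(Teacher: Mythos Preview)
Your overall strategy matches the paper's: build a compactly supported subsolution from the Dirichlet eigenfunction $\varphi^{R}$ on an interval where $\lambda_1^R<0$, run the monotone $f^-$--semiflow from it, and compare with $u$ after a short positive time. The positivity--propagation step you flag as the ``technical crux'' is actually routine (strong maximum principle for fully coupled cooperative systems). The genuine gaps are elsewhere.

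First, you invoke Assumption~\ref{as:coop}, which is \emph{not} a hypothesis of the theorem. The paper obtains the subsolution property of $\varepsilon\varphi^R$ directly from the uniform differentiability of $f^-$ at $0$ (Definition~\ref{def:KPP}(ii) together with $Df^-(x,0)=Df(x,0)$): one controls $\|f^-(x,u)-A(x)u\|_\infty\le -\lambda_1^R\kappa\|u\|_\infty$ for $\|u\|$ small, where $\kappa=\inf_{x,i}\varphi^R_i(x)/\|\varphi^R(x)\|_\infty$.

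Second, and more importantly, you assert $\inf_{\mathbb R} W>0$ and take $\delta=\tfrac12\inf_{\mathbb R} W$, but neither the uniform positivity of $W$ on $\mathbb R$ nor the independence of $\delta$ from $u_0$ is justified. The limit $W$ of the $f^-$--flow started from $\varepsilon\varphi^R$ depends on $\varepsilon$, and for arbitrary $u_0$ you must allow $\varepsilon$ to be arbitrarily small (chosen after seeing $u(t_1,\cdot)$). The paper resolves both issues at once: a Serrin sweeping argument shows that the limit satisfies $V^{R,\varepsilon}\ge \varepsilon_0\varphi^R$ for \emph{every} $\varepsilon\in(0,\varepsilon_0]$, with $\varepsilon_0$ fixed; and then, rather than claiming $\inf_{\mathbb R}V>0$, the paper exploits the $L$--periodicity of the coefficients to use the translated subsolutions $\varepsilon_k\varphi^R(\cdot+kL)$ for every $k\in\mathbb Z$, each yielding $\liminf_{t\to\infty}u(t,x)\ge \varepsilon_0\varphi^R(x+kL)$ on $(-kL-R,-kL+R)$. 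The overlap of these intervals produces a single $\delta>0$ valid for all $x$ and all $u_0$.

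Finally, keeping the $f^-$--flow inside the quasi\-monotone region $[0,\eta\mathbf 1]$ is not automatic from ``sublinear control'' or ``truncating''; the paper invokes Proposition~\ref{prop:sub-sub-barrier} to replace $f^-$ by a smaller barrier $f^{*-}$ for which $\eta\mathbf 1$ is a genuine supersolution, so the interval $[0,\eta\mathbf 1]$ is positively invariant.
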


Next we introduce our notion of spreading speed.
\begin{defn}[Spreading speed]\label{def:prop-speed}
	The real number $c^*$ is the {\it spreading speed} associated with system \eqref{eq:gen-syst} if all non-negative solutions  $\vect{u}(t, x) $ of \eqref{eq:gen-syst}  satisfy
	\begin{enumerate}[label={(\roman*)}]
		\item if $\liminf_{x\to -\infty}u \gg 0$, then for each $c<c^*$ we have
			\begin{equation*}
				\liminf_{t\to+\infty}\inf_{ x\leq  ct}  u(t, x) \gg 0.
			\end{equation*}

		\item if there is $K\in\mathbb R$ such that  $u(0, x)\equiv 0$ for all $x\geq K$, then for all $c>c^*$ we have 
			\begin{equation*}
				\limsup_{t\to+\infty}\left[\sup_{x\geq ct}\Vert u(t,x)\Vert\right]=0. 
			\end{equation*}
	\end{enumerate}
\end{defn}
Note the we impose by convention that the propagation happens towards the right. It may happen that the rightward and leftward spreading speeds differ, as remarked in the previous subsection.

Next we prove the linear determinacy of {sublinear} systems  which have a monotone lower barrier. Such systems need not possess a comparison principle (system \eqref{eq:main-sys}, in particular, does not), therefore the classical theory cannot be applied directly. 
\begin{thm}[Linear determinacy]\label{thm:lin-det}
	Let $\mathcal L$ be a $L$-periodic elliptic operator, and $\vect{f}\in Lip(\mathbb R\times \mathbb R^d,\mathbb R^d)$ is $L$-periodic in $x$, admits a monotone lower barrier as defined in Definition \ref{def:super-monotone} and satisfies Assumption \ref{as:coop}. We denote $A(x):=Df(x, 0)$ and assume that the periodic principal eigenvalue $\lambda_1^{per}$ is negative. Then:
	\begin{enumerate}[label={\rm(\roman*)}]
		\item \label{item:prop-speed}
			System \eqref{eq:gen-syst} has a spreading speed $c^*$ as in Definition \ref{def:prop-speed}.
		\item 
			\label{item:lin-det}
			We have 
			\begin{align}
				c^*&= \inf_{\lambda>0}-\frac{k(\lambda)}{\lambda},\label{eq:c*}
			\end{align}
			where $ (k(\lambda), \vect{\varphi_\lambda}(x)\gg\vect{0}) $ is defined by \eqref{eq:defk(lambda)}. 
	\end{enumerate}
\end{thm}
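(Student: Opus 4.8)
The plan is to show that the number $c^{*}:=\inf_{\lambda>0}\bigl(-k(\lambda)/\lambda\bigr)$ satisfies both conditions of Definition \ref{def:prop-speed}, which establishes \ref{item:prop-speed} and \ref{item:lin-det} simultaneously. Observe first that $c^{*}$ is finite and the infimum is attained: by Proposition \ref{prop:k(lambda)}, $k$ is strictly concave with $k(0)=\lambda_{1}^{per}<0$ and $k(\lambda)\le\alpha-\beta\lambda^{2}$, so $-k(\lambda)/\lambda\to+\infty$ both as $\lambda\to0^{+}$ and as $\lambda\to+\infty$. The argument then splits into an \emph{upper bound} (no spreading faster than $c^{*}$, for initial data vanishing to the right) and a \emph{lower bound} (spreading at least $c^{*}$, for front-like data). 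The asymmetry is the usual KPP one: the upper bound uses a genuine comparison with a linear cooperative equation afforded by sublinearity, while the lower bound must circumvent the lack of a comparison principle for the full nonlinear system, which is exactly what the monotone lower barrier $f^{-}$ is for.

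\textbf{Upper bound.} Let $u\ge 0$ be a bounded solution with $u(0,\cdot)\equiv 0$ on $[K,+\infty)$. Since $\mathbb R^{d}_{+}$ is invariant ($f$ being cooperative near $\partial\mathbb R^{d}_{+}$ with $f(x,0)=0$) and $f(x,u)\le A(x)u$ by sublinearity, $u$ is a subsolution of the linear cooperative system $v_{t}=\mathcal Lv+A(x)v$. Fix $c>c^{*}$; Proposition \ref{prop:minspeed} provides $\lambda>0$ with $\lambda c+k(\lambda)>0$. The function $v(t,x):=e^{-\lambda x-k(\lambda)t}\varphi_{\lambda}(x)$, with $(k(\lambda),\varphi_{\lambda})$ solving \eqref{eq:defk(lambda)}, is an exact solution of this linear system; because $e^{-\lambda x}\varphi_{\lambda}(x)$ is bounded below by a positive constant on $(-\infty,K]$ and nonnegative beyond $K$, a large multiple $\bar v:=Cv$ dominates $u(0,\cdot)$, and the comparison principle for cooperative parabolic systems gives $u\le\bar v$ for all $t>0$ (this a priori bound also yields global existence of $u$). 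Hence $\sup_{x\ge ct}\|u(t,x)\|\le C(\max\varphi_{\lambda})\,e^{-(\lambda c+k(\lambda))t}\to 0$, which is Definition \ref{def:prop-speed}(ii).

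\textbf{Lower bound.} Assume $u(0,x)\ge\beta_{0}\mathbf 1$ for $x\le x_{0}$, and fix $c<c^{*}$. The goal is to bound $u$ from below by a subsolution of the monotone problem $w_{t}=\mathcal Lw+f^{-}(x,w)$ that travels rightward at speed $c$ while remaining under the cooperativity threshold $\eta$. Two building blocks: (i) with $(\lambda_{1}^{per},\varphi^{per})$ the periodic principal eigenpair of $A(x)=Df(x,0)$, the function $\theta\varphi^{per}$ is, for $\theta$ small, a strict stationary subsolution of the monotone problem, since $\mathcal L(\theta\varphi^{per})+f^{-}(x,\theta\varphi^{per})=-\theta\lambda_{1}^{per}\varphi^{per}+o(\theta)$ is $\gg 0$ (using $Df^{-}(x,0)=A(x)$ and $\lambda_{1}^{per}<0$); (ii) the exponential profile $\Phi_{\lambda}(t,x):=e^{-\lambda(x-ct)}\varphi_{\lambda}(x)$ satisfies $\partial_{t}\Phi_{\lambda}-\mathcal L\Phi_{\lambda}-A(x)\Phi_{\lambda}=(\lambda c+k(\lambda))\Phi_{\lambda}\ll 0$ for every $\lambda>0$, because $c<c^{*}$ forces $\lambda c+k(\lambda)<0$ (Proposition \ref{prop:minspeed}), so $\Phi_{\lambda}$ is a strict subsolution of the linearization; the family $A^{\delta}$ of Assumption \ref{as:coop} enters here, ensuring that when the profile has size $O(\theta)$ the nonlinear defect $f^{-}(x,w)-A(x)w$ is absorbed into the spectral gap $|\lambda c+k(\lambda)|$. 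Saturating the exponential profile at level $\theta\varphi^{per}$ yields a subsolution $\psi$ of the monotone problem with $0\le\psi\le\eta\mathbf 1$, equal to $\theta\varphi^{per}$ far behind the front and decaying ahead of it, and, after tuning the constants, with $\psi(0,\cdot)\le u(0,\cdot)$ (the saturation confines the support of $\psi(0,\cdot)$ to $\{x\le x_{0}\}$). A first-contact argument then upgrades this to $\psi\le u$ for all $t$: at a putative first contact point one has $u_{i}=\psi_{i}\le\eta$ for the touching index $i$, which is exactly the regime in which $f(x,u)\ge f^{-}(x,u)$ holds and $f^{-}$ is quasimonotone, so the parabolic maximum principle produces a contradiction. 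Finally, for any $c'<c$ and $x\le c't$ one has $x-ct\le(c'-c)t\to-\infty$, hence $\psi(t,x)=\theta\varphi^{per}(x)\ge\theta(\min\varphi^{per})\mathbf 1\gg 0$ for $t$ large; thus $\liminf_{t\to+\infty}\inf_{x\le c't}u(t,x)\gg 0$, and since $c'<c<c^{*}$ were arbitrary this gives Definition \ref{def:prop-speed}(i).

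I expect the lower bound to be the genuine obstacle. The delicate step is building the saturated traveling subsolution $\psi$ rigorously: truncation of subsolutions of a cooperative system must be handled carefully (a naive componentwise minimum of two subsolutions need not be a subsolution), so the exponential profile has to be glued to $\theta\varphi^{per}$ through a bona fide subsolution — for instance a slightly mollified profile, or by running the comparison only up to the moving interface where the profile meets $\theta\varphi^{per}$ — and the parameters $\theta$, the exponential weight, and $\delta$ in $A^{\delta}$ must be fixed in the right order so that the nonlinear error genuinely stays below the spectral gap. A further routine-but-fiddly point is the behaviour at spatial $+\infty$, where $\psi\to0$ and the infimum defining the first-contact time need not be attained; this is dealt with by the standard device of comparing against a slightly translated or rescaled subsolution. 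The remaining ingredients — the spectral facts of Propositions \ref{prop:k(lambda)} and \ref{prop:minspeed}, the comparison principle for the cooperative linear system, and the bookkeeping with $\varphi_{\lambda}$ and $\varphi^{per}$ — are standard.
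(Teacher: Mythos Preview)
Your upper bound is essentially the paper's Lemma~\ref{lem:upper-spreading}: comparison with $Ce^{-\lambda x-k(\lambda)t}\varphi_\lambda(x)$ via sublinearity and the cooperative linear system. No issue there.

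For the lower bound the paper takes a genuinely different route. Rather than building a traveling subsolution directly, it first replaces $f^-$ by a further lower barrier $f^{*-}=f^--\beta u^2$ (Proposition~\ref{prop:sub-sub-barrier}) so that the monotone problem has a globally attracting periodic equilibrium $p(x)\le\eta\mathbf 1$. It then recasts the vector-valued monotone problem as a scalar problem on the ``habitat'' $\mathcal H=\bigcup_{i}\mathbb R\times\{i\}$ and invokes Weinberger's abstract spreading-speed machinery \cite{Wei-02}: Theorem~2.1 there gives the existence of a spreading speed for the time-$\tau$ map, and Theorem~2.4 gives the linear lower bound $c^*(Q)\ge c^*_\delta$ via the approximating matrices $A^\delta$ of Assumption~\ref{as:coop}; letting $\delta\to0$ finishes. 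The passage from the non-monotone $f$ to the monotone $f^{*-}$ is handled by the comparison principle of Theorem~\ref{thm:super-monotone}, exactly as you anticipate, but applied to the \emph{Cauchy problem} for $f^{*-}$ (whose solution stays under $p(x)\le\eta\mathbf 1$ automatically), not to a hand-built subsolution.

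The gap you yourself flag is real and is precisely what the paper's detour avoids. For cooperative \emph{systems}, the componentwise minimum of a stationary subsolution $\theta\varphi^{per}$ and a moving exponential $\Phi_\lambda$ is generally \emph{not} a subsolution: at a point where, say, component $i$ is capped at $\theta\varphi^{per}_i$ but component $j$ is still on the exponential branch with $\Phi_{\lambda,j}>\theta\varphi^{per}_j$, the off-diagonal coupling $a_{ij}(x)$ sees the larger $j$-value, and the $i$-th inequality can fail. Your proposed fixes (mollification, running the comparison only up to the moving interface) are plausible but each requires nontrivial work in the periodic, fully coupled setting; in particular the ``interface'' is not a single point but a family of $d$ curves that need not coincide. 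A more robust direct approach---closer in spirit to what you sketch---would be to use a Dirichlet principal eigenfunction of $-L_\lambda-A^\delta(x)$ on a long moving interval (which is a genuine compactly supported subsolution of the monotone problem when $c<c^*_\delta$), but this too needs care. The paper simply sidesteps all of this by appealing to \cite{Wei-02}, at the cost of importing that paper's hypotheses and the slightly artificial habitat $\mathcal H$.
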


We finally specify what we mean by {\it traveling wave}:
\begin{defn}[Traveling wave]
	A {\it traveling wave} $\vect{u}$ with speed $c>0$ and period $L$ for equation \eqref{eq:gen-syst} is a nonnegative entire solution to \eqref{eq:gen-syst} which satisfies the following condition:
	\begin{equation*}
		\forall x\in\mathbb R, \forall t\in\mathbb R, \vect{u}\left(t+\frac{L}{c}, x\right)=\vect{u}(t, x-L),
	\end{equation*}
	as well as the boundary conditions at $\pm\infty$ for all $t\in\mathbb R$:
	\begin{align*}
		\lim_{x\to+\infty}{u}(t, x)&=\vect{0}, \\
		\liminf_{x\to-\infty}\vect{u}(t, x)&\gg 0.
	\end{align*}
\end{defn}

With a little more regularity on $f$ we get the existence of traveling waves for $c\geq c^*$.
\begin{thm}[Existence of traveling waves]\label{thm:TW}
	In addition to the assumptions of Theorem \ref{thm:lin-det}, suppose that 
	there exist constants $M>0$ and $\beta>0$ such that 
	\begin{equation}\label{eq:conv-f}
		\Vert f^-(x, u)-A(x)u\Vert_\infty\leq M\Vert u\Vert_\infty^{1+\beta} \text{ for all }x\in\mathbb R\text{ and }\Vert u\Vert_{\infty}\leq \eta.
	\end{equation}
	Then, there exists a traveling wave for \eqref{eq:gen-syst} for all $c\geq c^*$.
\end{thm}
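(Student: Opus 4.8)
**Proof proposal for Theorem \ref{thm:TW} (Existence of traveling waves for $c \geq c^*$).**

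The plan is to construct pulsating traveling waves as fixed points of the Poincaré (time-$\frac{L}{c}$) map of the equation written in a moving frame, using a monotone lower barrier to get a nontrivial lower bound and thereby exclude the trivial solution. First I would pass to the pulsating-wave formulation: a traveling wave of speed $c$ corresponds to an entire solution $\vect{u}$ with $\vect{u}(t+\frac{L}{c},x)=\vect{u}(t,x-L)$, so after the change of variables to the moving frame the problem becomes that of finding a fixed point of the map $\Pi$ that evolves \eqref{eq:gen-syst} for time $\frac{L}{c}$ and then shifts space by $L$. The natural space of competitors is a set of profiles squeezed between a subsolution and a supersolution; the supersolution is built from the exponential ansatz $e^{-\lambda x}\vect{\varphi}_\lambda(x)$ where $(k(\lambda),\vect{\varphi}_\lambda)$ solves \eqref{eq:defk(lambda)} — by Proposition \ref{prop:minspeed}, for $c\geq c^*$ there is $\lambda>0$ with $c\lambda = -k(\lambda)$ (for $c=c^*$ take $\lambda=\lambda^*$; for $c>c^*$ take $\lambda=\lambda_1^*$), so the linear profile $e^{-\lambda x}\vect{\varphi}_\lambda$ moves at exactly speed $c$, and sublinearity (Definition \ref{def:KPP}(iii)) makes it a genuine supersolution of the full nonlinear equation. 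The decay rate $1+\beta$ in \eqref{eq:conv-f} is exactly what is needed to construct a matching subsolution of the form $e^{-\lambda x}\vect{\varphi}_\lambda(x) - q\, e^{-(\lambda+\epsilon) x}\vect{\psi}(x)$ (with $\epsilon>0$ small, $\vect{\psi}$ periodic positive, $q$ large) for the monotone lower barrier $f^-$: the correction term decays faster, is dominated near $x=+\infty$, and the $O(\|u\|^{1+\beta})$ error from replacing $f^-$ by its linearization is absorbed by choosing $\lambda+\epsilon < (1+\beta)\lambda$.

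Next I would set up the fixed-point argument on the interval-truncated problems and then pass to the limit. Concretely, fix a large $R$, solve \eqref{eq:gen-syst} (resp. the $f^-$-equation for the lower estimate) on $(-R,R)$ with suitable boundary data pinched between sub- and supersolution, apply the Poincaré map $\Pi_R$, and obtain a fixed point via Schauder's theorem on the order interval $[\,\underline{\vect{u}}, \overline{\vect{u}}\,]$ in the moving frame — compactness comes from parabolic regularity, and $\Pi_R$ maps the order interval into itself because $\overline{\vect{u}}$ is a supersolution of the full (possibly non-monotone) system while $\underline{\vect{u}}$ is a subsolution of the monotone barrier subsystem and any solution of \eqref{eq:gen-syst} is a supersolution of that barrier subsystem (this is the "identify a cooperative system to which any solution is a supersolution" mechanism highlighted in the introduction, using Definition \ref{def:super-monotone}(1),(3)). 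Then let $R\to\infty$: by parabolic estimates and a diagonal extraction one gets an entire solution $\vect{u}$ of \eqref{eq:gen-syst} satisfying the pulsating relation, with $\vect{0}\leq \vect{u}(t,\cdot)\leq e^{-\lambda\cdot}\vect{\varphi}_\lambda$ so that $\vect{u}(t,x)\to 0$ as $x\to+\infty$; the subsolution lower bound forces $\vect{u}$ to be bounded below by a positive multiple of the linear profile on bounded $x$-sets, ruling out $\vect{u}\equiv 0$, and then the hair-trigger mechanism (the monotone lower barrier and Theorem \ref{thm:hair-trigger}-type reasoning, using $\lambda_1^{per}<0$) gives $\liminf_{x\to-\infty}\vect{u}(t,x)\gg 0$.

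The main obstacle, and the place where real care is needed, is the non-monotonicity of \eqref{eq:gen-syst}: the Poincaré map $\Pi$ of the full system is not order-preserving, so one cannot directly invoke the classical monotone iteration scheme producing a wave between ordered super/sub-solutions. The resolution — and the technical heart of the argument — is the asymmetry of the construction: the supersolution controls the \emph{full} system from above (this only needs sublinearity, no monotonicity), while the subsolution only needs to be a subsolution of the \emph{monotone lower barrier} $f^-$, and the comparison $\vect{u} \geq$ (solution of $f^-$-system) holds because $f(x,u)\geq f^-(x,u)$ near the boundary of the cone together with quasi-monotonicity of $f^-$ there. So $\Pi_R$ maps the order interval into itself even though it is not monotone, and Schauder (not Tarski) closes the fixed-point step. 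A secondary delicate point is ensuring the sub- and supersolution barriers are genuinely ordered, $\underline{\vect{u}}\leq\overline{\vect{u}}$, and compatible with the boundary/periodicity conditions in the moving frame; this is where the fast-decay hypothesis \eqref{eq:conv-f} and the uniqueness/simplicity statements in Propositions \ref{prop:k(lambda)} and \ref{prop:minspeed} (in particular the double root structure $\lambda_1^*<\lambda_2^*$ when $c>c^*$, which guarantees a spectral gap for the correction exponent) are used. The limit $R\to\infty$ then requires a uniform-in-$R$ lower bound away from $x=+\infty$ so that no mass is lost; this is supplied by the subsolution of the barrier subsystem, whose construction is uniform in $R$.
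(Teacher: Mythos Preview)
Your overall architecture matches the paper's proof closely: both arguments build an exponential supersolution from the linearized eigenpair $(k(\lambda),\vect{\varphi}_\lambda)$, a compactly-supported subsolution for the monotone lower barrier $f^-$, trap competitors in the resulting order interval, apply Schauder's fixed-point theorem on a spatial truncation (the paper uses half-lines $[-M,+\infty)$ rather than bounded intervals, but this is cosmetic), and pass to the limit. You have also correctly identified the asymmetry that makes the argument work despite non-monotonicity: the supersolution controls the full system via sublinearity, while the subsolution only needs to control the cooperative barrier system, with Theorem~\ref{thm:super-monotone} bridging the two. One minor difference: instead of invoking a hair-trigger argument a posteriori for the boundary condition at $-\infty$, the paper builds it directly into the lower barrier by taking $\underline{u}(t,x)=\max_{n\in\mathbb N}\xi(t,x+nL)$, which is already uniformly positive as $x\to-\infty$.

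There is, however, a genuine gap at the critical speed $c=c^*$. Your subsolution ansatz $e^{-\lambda x}\vect{\varphi}_\lambda - q\,e^{-(\lambda+\epsilon)x}\vect{\psi}$ requires, for the differential inequality to close, a correction exponent $\mu=\lambda+\epsilon$ with $\mu c + k(\mu)>0$. By Proposition~\ref{prop:minspeed}(ii), when $c=c^*$ the root $\lambda^*$ is unique and for every $\lambda\neq\lambda^*$ one has $\lambda c^* + k(\lambda)<0$; there is no spectral gap, and no such $\mu$ exists. The paper handles this with a separate and considerably more delicate construction (Lemma~\ref{lem:lower-barrier-critical}): one differentiates the eigenfunction family $e^{-\lambda(x-c^*t)}\vect{\varphi}_\lambda(x)$ exactly $2m+2$ times in $\lambda$ at $\lambda=\lambda^*$ (where $2m+2$ is the vanishing order of $\lambda c^*+k(\lambda)$ at $\lambda^*$), producing a profile behaving like $(x-c^*t)^{2m+2}e^{-\lambda^*(x-c^*t)}\vect{\varphi}_{\lambda^*}(x)$ near the leading edge, and subtracts $\omega e^{-\lambda^*(x-c^*t)}\vect{\varphi}_{\lambda^*}$. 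The polynomial prefactor replaces the missing second exponential, and the strict concavity $k^{(2m+2)}(\lambda^*)<0$ supplies the sign needed to absorb the $O(\|u\|^{1+\beta})$ error from~\eqref{eq:conv-f}. The paper also pairs this with a supersolution at some $\lambda\in(0,\lambda^*)$ rather than at $\lambda^*$ itself. Without this modification your argument only yields waves for $c>c^*$; recovering the minimal-speed wave then requires either this derivative construction or a separate limiting argument $c\downarrow c^*$ with a normalization preventing degeneration.
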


\begin{rem}[On monotone sub-solutions of \eqref{eq:main-sys}]\label{rem:speed-main-sys}
	Theorem \ref{thm:lin-det} allows us to compute the spreading speed and construct traveling waves for system \eqref{eq:main-sys}.  Indeed the modified system
	\begin{equation}\label{eq:modified-sys}
		\begin{system}
			\relax &u_t=\sigma_u(x) u_{xx}+\big(r_u(x)-\mu_u(x) - \kappa_u(x)u-\beta u\big)u+v\big(\mu_v(x)-\kappa_u(x)u\big), \\
			\relax &v_t=\sigma_v(x) v_{xx}+\big(r_v(x)-\mu_v(x) - \kappa_v(x)v -\beta v\big)v+u\big(\mu_u(x)-\kappa_v(x)\big),
		\end{system}
	\end{equation}
	is a monotone lower barrier for the original system (which corresponds to $\beta=0$). The original system itself is a monotone lower barrier in the region 
	\begin{equation*}
		\left\{0\leq u\leq \inf_{x\in\mathbb R}\left(\frac{\mu_v(x)}{\kappa_u(x)}\right)\right\}\times \left\{0\leq v\leq \inf_{x\in\mathbb R}\left(\frac{\mu_u(x)}{\kappa_v(x)}\right)\right\}.
	\end{equation*}
	However, in order to estimate solutions to \eqref{eq:modified-sys} when $t$ becomes large, we need to construct a monotone lower barrier which leaves the interval $[0, \eta\mathbf{1}]:=\{u\,|\,0\leq u\leq \eta\mathbf{1}\}$ invariant. This is precisely achieved for $\beta>0$ sufficiently large.
	
	In particular, Theorem \ref{thm:main-lindet} {below} is a direct consequence of Theorem \ref{thm:lin-det}.
\end{rem}

\subsection{On System \eqref{eq:main-sys}}

Our first result concerns the formula for the spreading speed of \eqref{eq:main-sys}, which provides a way  to compute the speed of traveling waves for \eqref{eq:main-sys}. The framework in which we prove this linear determinacy property is the following.
\begin{assumption}[Cooperative-competitive system]\label{as:coop-comp}
	We let $\sigma_u(x)>0$, $\sigma_v(x)>0$,   $\kappa_u(x)>0$, $\kappa_v(x)>0$, $\mu_v(x)> 0$, $\mu_u(x)> 0$, be $L$-periodic positive continuous functions  
	and  $r_u(x)$, $r_v(x)$ be $L$-periodic continuous functions of arbitrary sign. 
\end{assumption}
Our first result concerns the propagation of solutions to the parabolic equations \eqref{eq:main-sys}. 

\begin{thm}[Spreading speed for \eqref{eq:main-sys}]\label{thm:main-lindet}
	Let Assumption \ref{as:coop-comp} be satisfied. Assume that the principal eigenvalue of the linearised system \eqref{eq:main-eigenpair} is negative. Then, there exists a real number $c^*$ such that for any nonnegative initial condition $(u_0(x)\geq 0, v_0(x)\geq 0)$,
	\begin{enumerate}[label={\rm(\roman*)}]
		\item if $ \inf_{x\leq K}\min(u_0(x), v_0(x))>0$ for some $K\in\mathbb R$, then 
			\begin{align*}
				&\liminf_{t\to\infty} \left[\inf_{K\leq x\leq ct}\min(u(t, x), v(t,x))\right] > 0 , &\text{ for all } 0&<c<c^*,
			\end{align*}
		\item if there is $K>0$ such that $u_0(x)\equiv 0$ and $v_0(x)\equiv 0$ for all $x\geq K$, then
			\begin{align*}
				&\limsup_{t\to\infty} \left[\sup_{x\geq ct}\max(u(t, x), v(t,x))\right] = 0, & \text{ for all }c&>c^*, 
	\end{align*}
			
	\end{enumerate}
	where $(u(t,x), v(t,x))$ is the solution to the Cauchy problem \eqref{eq:main-sys} starting from the initial condition $(u_0(x), v_0(x))$.

	Moreover, we have the formula
	\begin{equation*}
		c^*=\inf_{\lambda>0}\frac{-k(\lambda)}{\lambda},
	\end{equation*}
	where $k(\lambda)$ is {defined in \eqref{eq:defk(lambda)}.}
\end{thm}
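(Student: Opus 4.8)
The statement is, as announced in Remark~\ref{rem:speed-main-sys}, an application of Theorem~\ref{thm:lin-det} to the hybrid nonlinearity of~\eqref{eq:main-sys}, so my plan is: (1) check that this nonlinearity fits the framework of Theorem~\ref{thm:lin-det}; (2) invoke that theorem; (3) translate its conclusions into the notation of the statement. For step~(1) I would first record that $\mathcal L=\diag\big(\sigma_u(x)\partial_{xx},\sigma_v(x)\partial_{xx}\big)$ is a diagonal, $L$-periodic, uniformly elliptic operator without advection, since under Assumption~\ref{as:coop-comp} the coefficients $\sigma_u,\sigma_v$ are positive continuous $L$-periodic functions, hence bounded away from $0$. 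Writing $\vect{u}=(u,v)^T$ and $\vect{f}(x,\vect{u})=(f_1,f_2)^T$ for the reaction term of~\eqref{eq:main-sys}, one has $\vect{f}(x,0)=0$; $\vect{f}$ is polynomial in $\vect{u}$ with bounded $L$-periodic coefficients, hence locally Lipschitz in $\vect{u}$ uniformly in $x$ and differentiable at $0$ uniformly in $x$; and
\[
	D\vect{f}(x,0)=\begin{pmatrix} r_u(x)-\mu_u(x) & \mu_v(x) \\ \mu_u(x) & r_v(x)-\mu_v(x)\end{pmatrix}.
\]
Since $\mu_v,\mu_u$ are positive continuous $L$-periodic, they are bounded below by a positive constant, and as $d=2$ there is only one nontrivial partition of $\{1,2\}$; hence $D\vect{f}(x,0)$ is cooperative and fully coupled in the sense of Definition~\ref{def:fully-coupled}. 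Finally, for $u,v\ge0$ the competitive quadratic terms are nonpositive, so $f_1(x,u,v)=(D\vect{f}(x,0)\vect{u})_1-\kappa_u(x)(u+v)u\le(D\vect{f}(x,0)\vect{u})_1$ and symmetrically for $f_2$; thus $\vect{f}$ is a sublinear nonlinearity in the sense of Definition~\ref{def:KPP} with $A(x):=D\vect{f}(x,0)$.

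Next I would exhibit a monotone lower barrier and verify Assumption~\ref{as:coop}. For the lower barrier I would take $\vect{f}^-$ to be the reaction term of the modified system~\eqref{eq:modified-sys}, whose on-diagonal parts are $(r_u-\mu_u-\kappa_u u-\beta u)u$ and $(r_v-\mu_v-\kappa_v v-\beta v)v$ and whose off-diagonal couplings are $(\mu_v-\kappa_u u)v$ and $(\mu_u-\kappa_v v)u$, and I would set
\[
	\eta:=\min\Big(\inf_{x\in\mathbb R}\tfrac{\mu_v(x)}{\kappa_u(x)},\ \inf_{x\in\mathbb R}\tfrac{\mu_u(x)}{\kappa_v(x)}\Big)>0,
\]
which is positive by Assumption~\ref{as:coop-comp}. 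The three conditions of Definition~\ref{def:super-monotone} then follow by direct computation: $\vect{f}(x,\vect{u})-\vect{f}^-(x,\vect{u})=(\beta u^2,\beta v^2)^T\ge0$ for $\vect{u}\ge0$, which is condition~\ref{item:lowercoop}; the corrections are $O(|\vect{u}|^2)$, so $D\vect{f}^-(x,0)=D\vect{f}(x,0)$, which is condition~\ref{item:samediff}; and the off-diagonal derivatives $\partial_v f_1^-=\mu_v(x)-\kappa_u(x)u$ and $\partial_u f_2^-=\mu_u(x)-\kappa_v(x)v$ are nonnegative as soon as $u\le\eta$, resp.\ $v\le\eta$, which is the third condition. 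For Assumption~\ref{as:coop}, given $\delta\in(0,1)$ I would pick $\eta_\delta>0$ small and let $A^\delta(x)$ be $D\vect{f}(x,0)$ with the two diagonal entries lowered by $2\eta_\delta\sup_{\mathbb R}\max(\kappa_u,\kappa_v)$; then $A^\delta$ is still cooperative and fully coupled with the same constants, $\sup_{\mathbb R}\|A^\delta(x)-D\vect{f}(x,0)\|\to0$ as $\eta_\delta\to0$, and for $0\le\vect{u}$ with $\|\vect{u}\|_\infty\le\eta_\delta$ one absorbs $-\kappa_u(x)(u+v)u\ge-2\eta_\delta(\sup\kappa_u)u$ (and symmetrically in the second component) to obtain $\vect{f}(x,\vect{u})\ge A^\delta(x)\vect{u}$. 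At this point all hypotheses of Theorem~\ref{thm:lin-det} are met except the spectral one, which is exactly the hypothesis of Theorem~\ref{thm:main-lindet}: the principal eigenvalue of the linearised system~\eqref{eq:main-eigenpair} is the periodic principal eigenvalue $\lambda_1^{per}$ of $-\mathcal L-D\vect{f}(x,0)$ in the sense of Definition~\ref{def:princ-eig}, assumed negative.

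Theorem~\ref{thm:lin-det} then applies and yields, for~\eqref{eq:main-sys}, a spreading speed $c^*$ in the sense of Definition~\ref{def:prop-speed} together with $c^*=\inf_{\lambda>0}-k(\lambda)/\lambda$, where $k(\lambda)$ is defined by~\eqref{eq:defk(lambda)} with $A(x)=D\vect{f}(x,0)$; this is the announced formula. It then remains to rephrase Definition~\ref{def:prop-speed} in the form stated. For the lower bound, the hypothesis $\inf_{x\le K}\min(u_0,v_0)>0$ implies $\liminf_{x\to-\infty}\vect{u}_0\gg0$, so Definition~\ref{def:prop-speed}(i) gives $\liminf_{t\to\infty}\inf_{x\le ct}\vect{u}(t,x)\gg0$ for every $0<c<c^*$, whence in particular $\liminf_{t\to\infty}\inf_{K\le x\le ct}\min(u,v)>0$; that is item~(i). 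For the upper bound, $\|\vect{u}(t,x)\|_\infty=\max(u(t,x),v(t,x))$ for nonnegative solutions, so Definition~\ref{def:prop-speed}(ii) is item~(ii) verbatim.

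The only genuinely load-bearing part of this programme is the construction of the monotone lower barrier. One has to fix $\eta$ small enough that the coupling terms $\mu_v-\kappa_u u$ and $\mu_u-\kappa_v v$ keep a sign---so that $\vect{f}^-$ is quasi-monotone near the origin, i.e.\ the third condition of Definition~\ref{def:super-monotone}---and then, as pointed out in Remark~\ref{rem:speed-main-sys} and as is needed inside the proof of Theorem~\ref{thm:lin-det} in order to follow the lower estimate for all times, fix $\beta$ large enough that the flow of $\vect{w}_t=\mathcal L\vect{w}+\vect{f}^-(x,\vect{w})$ leaves the box $[0,\eta\mathbf{1}]$ invariant. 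The latter holds for $\beta$ large because then $f_1^-(x,\eta,v)\le0$ for $0\le v\le\eta$ and $f_2^-(x,u,\eta)\le0$ for $0\le u\le\eta$, while $f_1^-(x,0,v)=\mu_v(x)v\ge0$ and $f_2^-(x,u,0)=\mu_u(x)u\ge0$ give invariance of the positive cone; since $\eta$ is fixed first and $\beta$ afterwards, the two choices are compatible. Everything else is a direct appeal to Theorem~\ref{thm:lin-det}.
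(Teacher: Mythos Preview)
Your proof is correct and follows exactly the approach the paper intends: Remark~\ref{rem:speed-main-sys} already asserts that Theorem~\ref{thm:main-lindet} is a direct consequence of Theorem~\ref{thm:lin-det}, and the paper gives no further argument beyond sketching the monotone lower barrier~\eqref{eq:modified-sys} and the choice of $\eta$. You have carefully filled in the verifications (sublinearity, the three items of Definition~\ref{def:super-monotone}, Assumption~\ref{as:coop}, and the translation of Definition~\ref{def:prop-speed} into the stated form) that the paper leaves implicit.
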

\begin{rem}
    {As shown in Remark \ref{rem:strong-coupling-speed}, with some particular choice of the parameters, the spatial behavior of System \eqref{eq:main-sys} approaches the one of a scalar KPP-type equation with an arbitrary first-order advection term. In particular, we expect that the spreading speed to the right is different than the spreading speed to the left. One may even reach a situation in which the speed to the right is positive, but the speed to the left is negative. In such a situation, compactly supported initial data would propagate to the right but also regress in the same direction, causing a pulse-like behavior with variable width that doesn't achieve a positive infimum in any bounded interval in the long run, even when the periodic principal eigenvalue is positive. Therefore we have no hope to have a hair-trigger effect in general for our kind of system when $\lambda_1^{per}<0$. The correct notion of principal eigenvalue for a hair-trigger effect is the {\it Dirichlet principal eigenvalue} $\lambda_1^{\infty} $, which will be introduced in Definition \ref{def:princ-eig} in the next Section. We refer to Theorem \ref{thm:hair-trigger} for a precise statement of the hair-trigger effect. }
\end{rem}
Next we introduce the notion of traveling wave solutions, which are entire solutions propagating at a fixed speed $c.$
\begin{defn}[Traveling wave solutions]
	Let $(u(t, x), v(t,x))$ be an entire solution to \eqref{eq:main-sys}, {\it i.e.} a solution that is defined for all $t\in\mathbb R$ and $x\in\mathbb R$. We say that $(u(t, x), v(t, x))$ is a \textit{traveling wave solution} traveling at speed $c$ if it satisfies 
	\begin{equation}\label{eq:TW-propagating}
		u\left(t+\frac{L}{c}, x\right) = u(t, x-l), \; v\left(t+\frac{L}{c}, x\right) = v(t, x-l), \text{ for all } (t,x)\in\mathbb R^2, 
	\end{equation}
	as well as the boundary conditions
	\begin{align*}
		\lim_{x\to+\infty}u(t, x)=0,\; \lim_{x\to+\infty}v(t, x)=0,\;  \text{ for all } t\in\mathbb R, \\ 
		\liminf_{x\to-\infty}u(t, x)>0,\; \liminf_{x\to-\infty}v(t, x)>0,\;  \text{ for all } t\in\mathbb R. 
	\end{align*}
\end{defn}
\begin{thm}[Existence of traveling waves]\label{thm:TW1}
	{Let Assumption \ref{as:coop-comp} hold. There exists a traveling wave for \eqref{eq:main-sys} with speed $c$ if, and only if, $c\geq c^*$. } 
\end{thm}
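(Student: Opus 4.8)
The plan is to deduce Theorem \ref{thm:TW1} from the general Theorem \ref{thm:TW} together with Theorem \ref{thm:lin-det}, following the strategy already outlined in Remark \ref{rem:speed-main-sys}. The first step is to set up the correct functional framework: put $\vect{u}=(u,v)^T$, $\mathcal L\vect{u}=\diag(\sigma_u,\sigma_v)\vect{u}_{xx}$, and let $\vect{f}(x,\vect{u})$ be the reaction term of \eqref{eq:main-sys}. One checks directly that $\vect{f}$ is Lipschitz on bounded sets, $\vect{f}(x,0)=0$, $\vect{f}$ is $C^1$ at $\vect{0}$ with $Df(x,0)=\begin{pmatrix} r_u-\mu_u & \mu_v\\ \mu_u & r_v-\mu_v\end{pmatrix}$, which is cooperative and fully coupled since $\mu_u,\mu_v>0$ are continuous $L$-periodic and hence bounded below on compact sets. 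The sublinearity condition $\vect{f}(x,\vect{u})\le Df(x,0)\vect{u}$ for $\vect{u}\ge0$ is immediate from the explicit $-\kappa_u(u+v)u$ and $-\kappa_v(u+v)v$ terms. Assumption \ref{as:coop} is verified by taking $A^\delta(x):=Df(x,0)-\delta\,\mathrm{Id}$ (or a small off-diagonal perturbation thereof so that full coupling is preserved): near $\vect{0}$ the quadratic terms are dominated by $\delta\Vert\vect{u}\Vert$, so $\vect{f}(x,\vect{u})\ge A^\delta(x)\vect{u}$ for $\Vert\vect{u}\Vert$ small.

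The second step is to produce the monotone lower barrier. Here I take $\vect{f}^-$ to be the reaction term of the modified system \eqref{eq:modified-sys} with $\beta>0$ to be chosen, i.e. with extra terms $-\beta u^2$, $-\beta v^2$ in the two equations respectively. One verifies the three conditions of Definition \ref{def:super-monotone}: (2) holds because the added terms are quadratic, so $Df^-(x,0)=Df(x,0)$; (1) holds since $\vect{f}-\vect{f}^-=(\beta u^2,\beta v^2)^T\ge0$; and (3), the crucial cooperativity of $\vect{f}^-$ off the diagonal for $|u_i|\le\eta$, reduces to requiring $\partial_v f_1^-=\mu_v(x)-\kappa_u(x)u\ge0$ and $\partial_u f_2^-=\mu_u(x)-\kappa_v(x)v\ge0$, which hold as soon as $\eta\le\min\bigl(\inf_x \mu_v/\kappa_u,\ \inf_x\mu_u/\kappa_v\bigr)$; these infima are positive by Assumption \ref{as:coop-comp}. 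Thus $\vect{f}^-$ is a monotone lower barrier for every $\beta\ge0$, and in particular (as noted in Remark \ref{rem:speed-main-sys}) the region where $\vect{f}$ itself is cooperative is exactly $\{0\le u\le\inf\mu_v/\kappa_u\}\times\{0\le v\le\inf\mu_u/\kappa_v\}$.

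The third step checks the extra hypothesis \eqref{eq:conv-f} of Theorem \ref{thm:TW}: since $\vect{f}^-(x,\vect{u})-A(x)\vect{u}$ consists only of terms quadratic and cubic in $(u,v)$ (the $-\kappa_u(u+v)u-\beta u^2$ block and its counterpart), we have $\Vert\vect{f}^-(x,\vect{u})-A(x)\vect{u}\Vert_\infty\le M\Vert\vect{u}\Vert_\infty^{2}$ on $\{\Vert\vect{u}\Vert_\infty\le\eta\}$ for a suitable $M$, so \eqref{eq:conv-f} holds with exponent $\beta=1$ (the label clash with the parameter $\beta$ is harmless). The hypothesis $\lambda_1^{per}<0$ of Theorems \ref{thm:lin-det} and \ref{thm:TW} is exactly the assumption that the principal eigenvalue of \eqref{eq:main-eigenpair} is negative. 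Applying Theorem \ref{thm:TW} then yields, for every $c\ge c^*=\inf_{\lambda>0}(-k(\lambda)/\lambda)$, a traveling wave for the system \eqref{eq:gen-syst} built from $\mathcal L,\vect{f}$; one then records that such a traveling wave, written back in components $(u,v)$, is exactly a traveling wave solution of \eqref{eq:main-sys} in the sense of the preceding definition (the relation $\vect{u}(t+L/c,x)=\vect{u}(t,x-L)$ and the limits at $\pm\infty$ translate component-wise). This proves the ``if'' direction. For the converse, ``only if'': if a traveling wave of speed $c$ exists, then by the front-like structure ($\liminf_{x\to-\infty}\gg0$, $\equiv0$ near $+\infty$ in an appropriate moving frame) it is in particular a nonnegative solution to which part (ii) of the spreading speed property from Theorem \ref{thm:lin-det} applies (after noting that \eqref{eq:main-sys} solutions are, on the invariant region, supersolutions of the monotone barrier system, which is the ``lower estimate'' used throughout); a traveling wave cannot outrun the spreading speed, so $c\ge c^*$ is forced — concretely, a wave of speed $c<c^*$ would, by part (i) applied in the moving frame, have to be bounded below by a positive constant for all large $x$, contradicting $\lim_{x\to+\infty}u=\lim_{x\to+\infty}v=0$.

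The main obstacle is the third step's interplay between the barrier construction and the invariant-region requirement flagged in Remark \ref{rem:speed-main-sys}: to feed \eqref{eq:modified-sys} into Theorem \ref{thm:TW} one needs the monotone barrier dynamics to keep $[0,\eta\mathbf{1}]$ invariant (so that the lower solution used in constructing the wave stays in the cooperative regime), and this is what forces $\beta$ to be taken large — one must verify that with $\eta$ fixed as above and $\beta$ sufficiently large the vector field $\vect{f}^-$ points inward on the faces $\{u=\eta\}$ and $\{v=\eta\}$, i.e. $f_1^-(x,\eta,v)<0$ and $f_2^-(x,u,\eta)<0$ there, uniformly in $x$ and in the other variable on $[0,\eta]$; this is a short but genuine computation using $-\beta\eta^2$ to dominate the (bounded) linear and mixed terms. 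Everything else is bookkeeping: translating between the vector formulation of \eqref{eq:gen-syst} and the scalar-pair formulation of \eqref{eq:main-sys}, and invoking Theorems \ref{thm:lin-det} and \ref{thm:TW} as black boxes.
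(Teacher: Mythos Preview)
Your proposal is correct and follows essentially the same approach as the paper: the paper explicitly states that Theorem \ref{thm:TW1} is a direct consequence of the general Theorem \ref{thm:TW} (together with Theorem \ref{thm:lin-det} for the non-existence direction), with the verification that \eqref{eq:main-sys} satisfies the required hypotheses carried out in Remark \ref{rem:speed-main-sys}. You have faithfully expanded precisely these verifications --- the sublinearity, Assumption \ref{as:coop}, the monotone lower barrier via \eqref{eq:modified-sys} with $\beta$ large, and the quadratic estimate \eqref{eq:conv-f} --- and your handling of the ``only if'' direction via the spreading property matches the one-line justification given in the proof of Theorem \ref{thm:TW}.
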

\begin{rem}
    Just as in the case with the spreading speed, the above theorem implies that the minimal speed of the traveling wave propagating to the right direction coincides with the spreading speed. The speed to the left may not be the same (see Remark \ref{rem:strong-coupling-speed}) and might even be negative.
\end{rem}
As we will see, Theorems \ref{thm:main-lindet} and \ref{thm:TW1} are direct consequences of results on more general cooperative-competitive systems, namely Theorems \ref{thm:lin-det} and \ref{thm:TW}.

Next we turn to the long-time behavior of the solutions to the Cauchy problem \eqref{eq:main-sys}, starting from a bounded nonnegative nontrivial initial condition. In the case where the coefficients are independent of $x$, we were able to show convergence to a unique stationary state.  More precisely, we consider the homogeneous problem
\begin{equation}\label{eq:syst-hom-rd}
	\begin{system}
		\relax &u_t-\sigma_u u_{xx}=(r_u-\kappa_u(u+v))u+\mu_vv-\mu_uu \\
		\relax &v_t-\sigma_v v_{xx}=(r_v-\kappa_v(u+v))v+\mu_uu-\mu_vv,
	\end{system}
\end{equation}
where $r_u\in\mathbb R$, $r_v\in\mathbb R$, $\kappa_u>0$, $\kappa_v>0$, $\mu_u>0$, $\mu_v>0$. The linearization of the right-hand side of \eqref{eq:syst-hom-rd} around $(u, v)=(0,0)$ is given by the matrix 
	\begin{equation}\label{eq:cond-small-mut}
		A:=\begin{pmatrix} r_u-\mu_u & \mu_v \\ \mu_u & r_u-\mu_v \end{pmatrix}.
	\end{equation}
	Since the off-diagonal entries of $A$ are positive, we easily see that $A$ has real eigenvalues. Let $\lambda_A$ denote the largest  eigenvalue of $A$
	\begin{equation} \label{eq:lambdaA}
		\lambda_A:=\max\{\lambda\in\mathbb R\, |\,\lambda \text{ is an eigenvalue of A}\}.
	\end{equation}
	Then by the Perron-Frobenius theory, the eigenvector corresponding to $\lambda_A$ is positive: $(\varphi_A^u, \varphi_A^v)^T$, $\varphi_A^u>0$, $\varphi_A^v>0$.
	\begin{assumption}\label{as:cond-instab-0}
		We assume that $(0, 0)$ is linearly unstable for the ODE problem \eqref{eq:syst-ode},
		\begin{equation}\label{eq:syst-ode}
			\begin{system}
				\relax &u_t=(r_u-\kappa_u(u+v))u+\mu_vv-\mu_uu \\
				\relax &v_t=(r_v-\kappa_v(u+v))v+\mu_uu-\mu_vv.
			\end{system}
		\end{equation}
		{\it i.e.} $\lambda_A>0$.
	\end{assumption}

It can be seen that the condition $\lambda_A>0$ is always satisfied when $r_u>0$ and $r_v>0$, and always fails when $r_u<0$ and $r_v<0$. The situation when $r_u$ and $r_v$ do not have the same sign is more intricate. In this case, there may exist a threshold depending on the values of $\mu_u$, $\mu_v$, such that $(0, 0)$ is stable for small values of $\mu_u$, $\mu_v$, and unstable for larger values. We discuss this threshold later in the article, in Lemma \ref{lem:stab-0-ode}.

Since system \eqref{eq:syst-hom-rd} has a {sublinear} nonlinearity, the sign of the eigenvalue $\lambda_A$ is a sharp condition for the existence of a non-trivial non-negative stationary solution. Indeed, the matrix $A$ is cooperative and therefore admits a unique eigenpair with a positive eigenvector $ (\lambda_A, \varphi_A)$; if $\lambda_A<0$, then for all $M>0$ $(\bar u(t), \bar v(t)):=Me^{\lambda_At}(\varphi_A^u, \varphi_A^v)$ is a super-solution to \eqref{eq:syst-hom-rd} which converges to $0$, and a direct application of the maximum principle shows that the solution $ (u,v)$ satisfies $(u(t, x), v(t,x))\leq (\bar u(t), \bar v(t))$ if $M>\max(\Vert u(0, \cdot)\Vert_{L^\infty}, \Vert v(0, \cdot)\Vert_{L^\infty})$. The non-existence of a stationary solution when $\lambda_A=0$ was treated in \cite[Theorem 1.4 (ii)]{Gir-18} and can also be seen as a direct consequence of \cite[Theorem 13.1 (c)]{Bus-Sir-04}.

Before turning to the PDE problem \eqref{eq:syst-hom-rd}, we first describe the long-time behavior of the associated ODE system\eqref{eq:syst-ode}.
\begin{prop}[Long-time behavior of the ODE system]\label{prop:longtime-ode}
	Let $(u(t), v(t))$ be the solution of \eqref{eq:syst-ode} starting from a non-negative non-trivial initial condition $(u_0, v_0)$.
	\begin{enumerate}[label={(\roman*)}]
		\item If $\lambda_A>0$, there is a unique positive equilibrium $(u^*, v^*)$ for \eqref{eq:syst-ode}, and $(u(t), v(t))$ converges to $(u^*, v^*)$ as $t\to\infty$.
		\item If $\lambda_A\leq 0$, then $(u(t), v(t))$ converges to $(0,0)$ as $t\to+\infty$.
	\end{enumerate}
\end{prop}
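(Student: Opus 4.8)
The plan is to analyze the two-dimensional ODE system \eqref{eq:syst-ode} as a competitive-cooperative planar system by exploiting the fact that, since it is sublinear, the unique positive equilibrium (when it exists) acts as a global attractor in the interior of the positive quadrant. First I would note that $\mathbb{R}^2_+$ is invariant for \eqref{eq:syst-ode}: on the face $\{u=0\}$ the right-hand side of the $u$-equation is $\mu_v v\geq 0$, and symmetrically on $\{v=0\}$, so no trajectory starting nonnegative can leave the quadrant; moreover any nontrivial solution immediately enters the open quadrant $\{u>0, v>0\}$ because $\mu_u, \mu_v>0$ couple the two components. Next I would establish dissipativity: since $r_u, r_v$ are finite and $\kappa_u, \kappa_v>0$, for $u+v$ large the reaction terms are negative, so there is an absorbing box $[0,M]^2$ and all solutions are bounded for $t\geq 0$ with $\omega$-limit sets contained in $[0,M]^2$.

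The core of case (i) is then a Poincaré–Bendixson argument. With $\lambda_A>0$, the origin is a linearly unstable node/saddle and in fact a repeller for the interior dynamics; I would show there is no positive equilibrium other than $(u^*,v^*)$ by a direct computation (setting the right-hand sides to zero and eliminating, using positivity of $\mu_u,\mu_v,\kappa_u,\kappa_v$ — this is the sublinearity giving uniqueness, essentially the argument already invoked in the paragraph preceding the statement). One also checks there is no equilibrium on the open faces: e.g. on $\{v=0, u>0\}$ one needs $\mu_u u=0$, impossible. So the only equilibria in $\mathbb{R}^2_+$ are $(0,0)$ and $(u^*,v^*)$. For a nontrivial solution, its $\omega$-limit set $\omega$ is a nonempty compact connected invariant subset of $[0,M]^2\setminus\{0\}$ (the origin is excluded because it is a repeller and the solution is bounded away from it eventually, by instability plus the absence of other boundary equilibria to which a homoclinic/heteroclinic could connect). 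By Poincaré–Bendixson, $\omega$ is either the equilibrium $(u^*,v^*)$, or a periodic orbit, or a cycle of equilibria and connecting orbits. A cycle of equilibria is impossible since $(u^*,v^*)$ is the only interior equilibrium and any such cycle would have to pass through $(0,0)$, which is a repeller. To rule out periodic orbits I would use a Dulac/Bendixson criterion with multiplier $B(u,v)=\tfrac{1}{uv}$: one computes $\partial_u(B f_1)+\partial_v(B f_2)$ where $(f_1,f_2)$ is the right-hand side of \eqref{eq:syst-ode}, and checks it has a strict sign on the open quadrant. Indeed $\tfrac{f_1}{uv}=\tfrac{r_u-\kappa_u(u+v)-\mu_u}{v}+\tfrac{\mu_v}{u}$, whose $u$-derivative is $-\tfrac{\kappa_u}{v}-\tfrac{\mu_v}{u^2}<0$, and symmetrically the $v$-derivative of $\tfrac{f_2}{uv}$ is $-\tfrac{\kappa_v}{u}-\tfrac{\mu_u}{v^2}<0$; their sum is strictly negative, so no periodic orbit exists. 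Hence $\omega=\{(u^*,v^*)\}$, giving convergence.

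For case (ii), $\lambda_A\le 0$: as already observed in the paragraph preceding the statement, when $\lambda_A<0$ the linear cooperative comparison system $Me^{\lambda_A t}\varphi_A$ is a supersolution dominating $(u,v)$ componentwise for $M$ large (the quadratic competitive terms only help, since they are nonpositive and the remaining coupling is exactly the linear cooperative part), so $(u,v)\to (0,0)$. The borderline case $\lambda_A=0$ requires a small refinement: one uses the positive eigenvector $\varphi_A$ of the (now neutrally stable) matrix $A$ and the Lyapunov-type functional $z(t):=\langle \ell, (u(t),v(t))\rangle$ where $\ell>0$ is the left Perron eigenvector of $A$, so that along solutions $\dot z = \langle \ell, A(u,v)\rangle - \langle \ell, Q(u,v)\rangle = -\langle\ell, Q(u,v)\rangle$ where $Q=(\kappa_u(u+v)u,\kappa_v(u+v)v)$; since $\ell\gg 0$ and $Q\geq 0$ with $Q=0$ only at the axes intersection point $0$, $z$ is nonincreasing and its derivative vanishes asymptotically only if $(u,v)\to 0$, and an argument via LaSalle's invariance principle (the only invariant set in $\{Q=0\}\cap\mathbb R^2_+$ is $\{0\}$) yields $(u,v)\to(0,0)$.

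The main obstacle I anticipate is making the exclusion of the origin from $\omega$-limit sets in case (i) fully rigorous — one must argue that instability of $(0,0)$ together with the structure of the phase plane (no other boundary equilibria, invariance of the faces, and the Dulac criterion ruling out periodic orbits enclosing $(0,0)$) genuinely prevents a nontrivial trajectory from having $(0,0)$ in its $\omega$-limit set, rather than merely being repelled initially; this is a standard but slightly delicate point, handled cleanly by observing that an $\omega$-limit set containing a repelling equilibrium but no other structure would force a homoclinic loop at $(0,0)$, which is incompatible with $(0,0)$ being a repeller, and if $\omega=\{(0,0)\}$ then the solution converges to $(0,0)$ contradicting instability since it started nontrivially in the open quadrant and near $(0,0)$ the linearization pushes it away along $\varphi_A$. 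The borderline $\lambda_A=0$ subcase of (ii) is the second mildly delicate point, resolved by the LaSalle argument above.
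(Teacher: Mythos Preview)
Your approach is correct and takes a genuinely different route from the paper's proof. For case~(i) the paper splits into two subcases according to the sign of $\max(r_u-\mu_u,\,r_v-\mu_v)$: when this is positive it builds an explicit Lyapunov functional of Volterra type, $\mathcal F^K(u,v)=\big(u-u^*-u^*\ln(u/u^*)\big)+K\big(v-v^*-v^*\ln(v/v^*)\big)$ for a suitable $K>0$; when it is nonpositive the system is ultimately cooperative and monotone iteration between explicit sub- and supersolutions gives convergence. Your Dulac--Poincar\'e--Bendixson argument with multiplier $1/(uv)$ handles both subcases at once and is arguably more elementary. One small point: you call the origin a ``repeller,'' but when $\lambda_A>0$ the second eigenvalue $\lambda_2$ can be negative, making $(0,0)$ a saddle; your exclusion of $(0,0)$ from $\omega$-limit sets then rests on the fact that the stable eigenvector lies outside the closed positive quadrant (indeed $r_u-\mu_u-\lambda_2>0$, so the eigenvector has components of opposite sign), which together with forward invariance of $\mathbb R^2_+$ forces the local stable manifold outside the open quadrant. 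You gesture at this in your ``main obstacle'' paragraph, and with that refinement the argument is complete.

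The trade-off is extensibility: the paper's Lyapunov functional and cooperative comparison arguments are reused verbatim for the PDE \eqref{eq:syst-hom-rd} in the next section (Theorem~\ref{thm:entire-sol-hom}), whereas your Poincar\'e--Bendixson route is intrinsically two-dimensional and would not survive the passage to infinite dimensions. For case~(ii) with $\lambda_A=0$, your left-eigenvector linear functional plus LaSalle is cleaner than the paper's construction of a supersolution $M(t)\varphi_A$ with $M'=-\min(\kappa_u,\kappa_v)M^2$, and equally valid.
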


Next we turn to the local asymptotic stability of the PDE, {\it i.e.} the long-time  convergence of the solution to the parabolic equation \eqref{eq:syst-hom-rd} starting from an initial condition  in a vicinity of the constant stationary solution.
\begin{thm}[Local stability of the constant stationary solution]\label{thm:hom-local-stab} 
	Assume that $\lambda_A>0$  and let $(u^*, v^*)$ be the unique stationary solution for the ODE \eqref{eq:syst-ode}. Then $(u^*, v^*)$ is locally asymptotically stable as a stationary solution to \eqref{eq:syst-hom-rd} in the space $BUC(\mathbb R)^2$. More precisely, $(u^*,v^*)$ is stable and there exists $\delta>0$ such that for any $(u_0(x), v_0(x))\in BUC(\mathbb R)^2$ satisfying $\Vert u_0-u^*\Vert_{BUC(\mathbb R)}\leq \delta$ and $\Vert v_0-v^*\Vert_{BUC(\mathbb R)}\leq \delta$, then
	\begin{equation*}
		\lim_{t\to+\infty} \sup_{x\in\mathbb R}|u(t, x)- u^*|=0 \text{ and } \lim_{t\to+\infty} \sup_{x\in\mathbb R}|v(t, x)- v^*|=0, 
	\end{equation*}
	and the convergence is exponential in time.
\end{thm}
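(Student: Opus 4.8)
The plan is to linearise the (constant-coefficient) system \eqref{eq:syst-hom-rd} at the constant state $(u^*,v^*)$ and to carry out a spectral analysis of the resulting parabolic operator. Let $F=(F_1,F_2)$ denote the reaction term on the right-hand side of \eqref{eq:syst-hom-rd}; setting $u=u^*+\phi$, $v=v^*+\psi$ and $w:=(\phi,\psi)^T$, the pair $w$ solves
\begin{equation*}
	w_t=\mathcal M w+N(w),\qquad \mathcal M w:=\begin{pmatrix}\sigma_u\phi_{xx}\\ \sigma_v\psi_{xx}\end{pmatrix}+Jw,\qquad N(w):=\begin{pmatrix}-\kappa_u(\phi+\psi)\phi\\ -\kappa_v(\phi+\psi)\psi\end{pmatrix},
\end{equation*}
where $J$ is the Jacobian matrix of $F$ at $(u^*,v^*)$ and, crucially, the remainder $N$ is \emph{exactly quadratic}. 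Observe that the off-diagonal entries of $J$ are $\mu_v-\kappa_u u^*$ and $\mu_u-\kappa_v v^*$, which need not be nonnegative; hence $\mathcal M$ need not be cooperative, no comparison principle is available near $(u^*,v^*)$, and the argument must proceed through linear semigroup estimates rather than sub/supersolutions.

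The first step is to show that $J$ — and in fact the whole symbol family of $\mathcal M$ — is uniformly Hurwitz. From the equilibrium identities $r_u-\mu_u-\kappa_u(u^*+v^*)=-\mu_v v^*/u^*$ and $r_v-\mu_v-\kappa_v(u^*+v^*)=-\mu_u u^*/v^*$ one computes $\partial_uF_1(u^*,v^*)=-\mu_v v^*/u^*-\kappa_u u^*<0$ and $\partial_vF_2(u^*,v^*)=-\mu_u u^*/v^*-\kappa_v v^*<0$, so $\operatorname{tr}J<0$, while a direct expansion (in which the $\mu_u\mu_v$ and $\kappa_u\kappa_v u^*v^*$ terms cancel) gives $\det J=\kappa_v\mu_v v^{*2}/u^*+\kappa_u\mu_u u^{*2}/v^*+\kappa_v\mu_v v^*+\kappa_u\mu_u u^*>0$. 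Because the spatial domain is all of $\mathbb R$, I would then compute the spectrum of $\mathcal M$ on $X:=BUC(\mathbb R)^2$ via the Fourier transform: $\mathcal M$ is the Fourier multiplier with symbol $M_\xi:=-\xi^2\Sigma+J$, $\Sigma:=\operatorname{diag}(\sigma_u,\sigma_v)$. For every $\xi\in\mathbb R$ one has $\operatorname{tr}M_\xi=-\xi^2(\sigma_u+\sigma_v)+\operatorname{tr}J\le\operatorname{tr}J<0$ and $\det M_\xi=\xi^4\sigma_u\sigma_v-\xi^2\bigl(\sigma_u\partial_vF_2+\sigma_v\partial_uF_1\bigr)+\det J\ge\det J>0$, the middle coefficient being positive since $\partial_uF_1,\partial_vF_2<0$; hence each $M_\xi$ is Hurwitz. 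Inspecting the eigenvalues $\tfrac12\bigl(\operatorname{tr}M_\xi\pm\sqrt{(\operatorname{tr}M_\xi)^2-4\det M_\xi}\bigr)$ — whose real parts tend to $-\infty$ as $|\xi|\to\infty$ and stay strictly negative on any compact $\xi$-interval — yields a uniform gap $\omega>0$ with $\sup_{\xi}\max\{\operatorname{Re}z:z\in\sigma(M_\xi)\}\le-\omega$. Consequently, for $\operatorname{Re}z>-\omega$ the resolvent $(\mathcal M-z)^{-1}$ is the Fourier multiplier with the uniformly bounded, smooth symbol $(M_\xi-z)^{-1}$ and is therefore bounded on $X$, so $\sigma(\mathcal M)\subset\{\operatorname{Re}z\le-\omega\}$.

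Since $\mathcal M$ is a bounded matrix-valued perturbation of the diagonal heat operator, it generates an analytic semigroup $(e^{t\mathcal M})_{t\ge0}$ on $X$; analytic semigroups obey the spectral mapping theorem, so their exponential growth bound equals $\sup\{\operatorname{Re}z:z\in\sigma(\mathcal M)\}\le-\omega$, whence $\|e^{t\mathcal M}\|_{\mathcal L(X)}\le Ce^{-\omega' t}$ for any $0<\omega'<\omega$. The nonlinear conclusion then follows from a classical Duhamel/continuation argument: \eqref{eq:syst-hom-rd} is locally well posed in $X$, $N$ is locally Lipschitz and $\|N(w)\|_X\le C'\|w\|_X^2$ on bounded sets, and from $w(t)=e^{t\mathcal M}w(0)+\int_0^t e^{(t-s)\mathcal M}N(w(s))\,ds$ the quantity $m(t):=\sup_{0\le s\le t}e^{\omega' s}\|w(s)\|_X$ satisfies $m(t)\le C\|w(0)\|_X+(CC'/\omega')\,m(t)^2$; provided $\|w(0)\|_X=\|(u_0-u^*,v_0-v^*)\|_X$ is below a threshold $\delta>0$ depending only on $C,C',\omega'$, a barrier/continuity argument forces $m(t)\le 2C\|w(0)\|_X$ for all $t\ge0$, i.e. $\|(u(t)-u^*,v(t)-v^*)\|_X\le 2C\|w(0)\|_X\,e^{-\omega' t}$, which is exactly the asserted (exponential) local asymptotic stability.

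The main difficulty here is conceptual rather than computational: near $(u^*,v^*)$ the problem is genuinely of hybrid nature, its linearisation fails to be cooperative, and the comparison-based tools of the cooperative theory cannot be used, which forces the spectral route. Within that route the only non-routine point is the uniform-in-$\xi$ Hurwitz property of the symbols $M_\xi$, which — as above — reduces to elementary trace/determinant bounds; the passage from the spectral gap to semigroup decay in the non-reflexive space $BUC(\mathbb R)^2$ is handled by standard analytic-semigroup theory, and the nonlinear closure is routine, made especially clean by the exact quadratic form of $N$.
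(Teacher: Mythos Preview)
Your overall strategy coincides with the paper's: linearise at $(u^*,v^*)$, establish a spectral gap for the linearised operator, deduce exponential decay of the analytic semigroup, and close nonlinearly via Duhamel (the paper cites \cite[Theorem 10.2.2]{Caz-Har-98} for this last step). Your trace/determinant computations for $J$ are exactly those of Lemma~\ref{lem:stat-ode-stability}, and you correctly identify that the absence of a cooperative structure near $(u^*,v^*)$ forces the spectral route.

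The implementation of the spectral step differs. The paper does not pass through Fourier symbols: instead it writes the resolvent equation $(\lambda I-\mathcal A)(g,h)^T=(\varphi,\psi)^T$ as a first-order ODE $Y'=B_\lambda Y+Z$ on $\mathbb R^4$, analyses when $B_\lambda$ has purely imaginary eigenvalues (this is equivalent to your condition ``$\lambda\in\sigma(M_\xi)$ for some real $\xi$''), and then \emph{constructs the resolvent explicitly} on $BUC(\mathbb R)^2$ by variation of constants, treating separately the generic diagonalisable case and the two exceptional Jordan-block values of $\lambda$. Your Fourier-multiplier argument is more streamlined, but the sentence ``uniformly bounded, smooth symbol $(M_\xi-z)^{-1}$ and is therefore bounded on $X$'' is not quite a proof on $BUC(\mathbb R)$: boundedness of the symbol alone does not give $L^\infty$-boundedness of the multiplier. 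You should either invoke the explicit $L^1$ Green's kernel (a sum of decaying exponentials, which is what the paper's ODE construction produces), or cite the standard fact that for constant-coefficient elliptic systems on $\mathbb R$ the $BUC$-spectrum equals the range of the symbol. With that clarified, your route is a valid --- and arguably cleaner --- alternative to the paper's hands-on resolvent construction.
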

Note that the difficulty in this result is to overcome the absence of a comparison principle, even asymptotically (in the case where $\sigma_1\neq \sigma_2$ and Assumption \ref{as:LTPDE} does not hold). To this end we had to introduce an argument coming from semigroup theory \cite{Mag-Rua-18, Caz-Har-98}.

While Assumption \ref{as:cond-instab-0} is sufficient to describe the long-time behavior of the ODE problem, we require a little more for the study of the PDE problem \eqref{eq:syst-hom-rd}. We extend the  Lyapunov argument which was used for the ODE system in the non-cooperative case, though only  when $\sigma_u=\sigma_v$, and in the remaining cases the long-time behavior may be determined by using the comparison principle for either cooperative or two-component competitive systems. The cases under which global stability can be shown are summarized in Assumption \ref{as:LTPDE}.
\begin{assumption}\label{as:LTPDE}
	We assume that either $\max(r_u-\mu_u, r_v-\mu_v)\leq 0$, $\min(r_u-\mu_u-\mu_v, r_v-\mu_v-\mu_u)> 0$, or $\sigma_u=\sigma_v$.
\end{assumption}

Under this assumption, we can prove that the solutions to the Cauchy problem associated with \eqref{eq:syst-hom-rd} converge in long time to the unique nonnegative nontrivial stationary solution.

\begin{thm}[Long-time behavior of the homogeneous problem]\label{thm:ltb}
	Let Assumptions \ref{as:cond-instab-0} and \ref{as:LTPDE} be satisfied. Let $(u_0(x)\geq 0, v_0(x)\geq 0)$ be bounded continuous nontrivial functions, and $c^*$ be the spreading speed associated with \eqref{eq:syst-hom-rd}. Then, the solution $(u(t,x), v(t,x))$ to the Cauchy problem \eqref{eq:syst-hom-rd} converges as $t\to\infty$ to the unique stationary solution $(u^*, v^*)$ to \eqref{eq:syst-hom-rd}, uniformly in the sense that for each $0<c<c^*$ we have:
	\begin{equation*}
		\lim_{t\to+\infty}\sup_{|x|\leq ct}\max(|u(t,x)-u^*|, |v(t,x)-v^*|)=0.
	\end{equation*}
\end{thm}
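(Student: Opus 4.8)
The plan is to combine the spreading result (Theorem \ref{thm:main-lindet}, equivalently Theorem \ref{thm:lin-det}) with the local stability result (Theorem \ref{thm:hom-local-stab}) and a bootstrap argument that upgrades ``local uniform positivity'' into ``local uniform closeness to $(u^*,v^*)$''. First I would use Theorem \ref{thm:main-lindet}(i): since $\lambda_A = \lambda_1^{per} < 0$ (Assumption \ref{as:cond-instab-0}), the solution starting from a nontrivial nonnegative initial datum spreads, so for every $c < c^*$ we have $\liminf_{t\to\infty}\inf_{|x|\le ct}\min(u(t,x),v(t,x)) > 0$. In particular there is $T_0$ and $\varepsilon_0>0$ so that $u(t,x),v(t,x) \ge \varepsilon_0$ on a space-time cone that widens linearly. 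The aim is then to show that on slightly narrower cones the solution actually converges to $(u^*,v^*)$. The main engine here is Assumption \ref{as:LTPDE}, which splits into three regimes: (a) $\max(r_u-\mu_u,r_v-\mu_v)\le 0$, in which case the full system \eqref{eq:syst-hom-rd} is (after noting the quadratic terms push downward) dominated by and dominates two two-component competitive systems — more precisely one can squeeze $(u,v)$ between solutions of monotone auxiliary systems; (b) $\min(r_u-\mu_u-\mu_v, r_v-\mu_v-\mu_u)>0$, where $[0,\eta\mathbf 1]$-type arguments combined with the cooperative lower barrier of Remark \ref{rem:speed-main-sys} give a genuine comparison principle near the relevant range of values; and (c) $\sigma_u=\sigma_v$, where the common diffusion allows a Lyapunov functional argument carried over from the ODE (Proposition \ref{prop:longtime-ode}), as advertised in the paragraph preceding Assumption \ref{as:LTPDE}.

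Here is the order I would carry it out. Step 1: reduce to bounded domains. Fix $0<c<c^*$, pick $c<c'<c''<c^*$; by the spreading property the solution is bounded below by a positive constant on the cone $|x|\le c''t$ for $t\ge T_1$, and it is bounded above uniformly by parabolic estimates (the nonlinearity is dissipative for large $u+v$). Step 2: construct, in each of the three regimes of Assumption \ref{as:LTPDE}, ordered super- and sub-solutions on a moving box $|x - x_0|\le R$ that trap $(u,v)$. In regime (c) one instead uses the ODE comparison: set $\overline w(t)=$ solution of the ODE \eqref{eq:syst-ode} with large initial data and $\underline w(t)=$ solution with small positive initial data; since $\sigma_u=\sigma_v$, spatially constant functions $\overline w(t),\underline w(t)$ are super/sub-solutions of the PDE, and by Proposition \ref{prop:longtime-ode} both converge to $(u^*,v^*)$, so by the comparison principle (valid for $\sigma_u=\sigma_v$ since the ``bad'' sign of the cross terms is handled by the $\mu$-coupling being cooperative near the boundary and the interior dynamics being squeezed between cooperative and competitive ones — this is exactly where one invokes the structure exploited for Theorem \ref{thm:hom-local-stab}) one concludes $\|(u(t,\cdot),v(t,\cdot)) - (u^*,v^*)\|_{L^\infty}\to 0$. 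In regimes (a) and (b), the same conclusion follows because the auxiliary monotone systems one is squeezed between have $(u^*,v^*)$ as their globally attracting equilibrium in the relevant order interval (this uses Theorem \ref{thm:hom-local-stab} to rule out other stable states and a standard monotone-dynamical-systems argument à la Ogiwara--Matano). Step 3: localize the convergence. The trap built in Step 2 can be placed with its center $x_0$ anywhere with $|x_0|\le c't$ once $t$ is large, and the time needed for the trapped solution to enter the $\delta$-neighborhood of $(u^*,v^*)$ (the $\delta$ from Theorem \ref{thm:hom-local-stab}) is uniform in $x_0$ by spatial homogeneity of \eqref{eq:syst-hom-rd}. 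Once inside that neighborhood, Theorem \ref{thm:hom-local-stab} takes over and gives exponential convergence, uniformly. Combining, $\sup_{|x|\le ct}\max(|u-u^*|,|v-v^*|)\to 0$.

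The main obstacle is Step 2 in the regime where no global comparison principle is available for the full system, i.e. making the squeezing rigorous when $\sigma_u\ne\sigma_v$ and only condition (a) or (b) of Assumption \ref{as:LTPDE} holds. The subtlety is that \eqref{eq:syst-hom-rd} is genuinely non-monotone in the interior, so one cannot simply invoke the parabolic maximum principle for the whole vector; instead one must (i) in case (a) rewrite the system so that $(u,-v)$ or a similar change of the order makes the system competitive in the range where $u+v$ is not too small and cooperative near the boundary — the mutation terms $\mu_v v - \mu_u u$ are precisely what obstruct a single global order — and then use the two-component competitive comparison principle on the region where the solution already lives (which we know from Step 1), while handling the thin boundary layer near $u+v$ small by the cooperative lower barrier of Remark \ref{rem:speed-main-sys}; (ii) in case (b), verify that the hypothesis $\min(r_u-\mu_u-\mu_v, r_v-\mu_v-\mu_u)>0$ forces the relevant invariant rectangle $[0,\eta\mathbf 1]$ to sit inside the cooperative region of $f$, so the genuine monotone theory applies there. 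Getting the matching of these two sub-arguments at the interface, with constants uniform in the box center $x_0$, is the delicate bookkeeping; everything else is either a direct citation (Theorems \ref{thm:main-lindet}, \ref{thm:hom-local-stab}, Proposition \ref{prop:longtime-ode}) or standard parabolic regularity.
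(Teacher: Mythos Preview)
Your overall architecture is more elaborate than the paper's, and it has a genuine gap in regime (c). The paper does \emph{not} combine super/sub-solutions with Theorem~\ref{thm:hom-local-stab}; instead it argues by contradiction: given sequences $t_n\to\infty$, $|x_n|\le ct_n$ with $|u(t_n,x_n)-u^*|\ge\varepsilon$, parabolic estimates let one extract a locally uniform limit $(u^\infty,v^\infty)$ of the shifted solutions $(u(t+t_n,x+x_n),v(t+t_n,x+x_n))$, which is a bounded \emph{entire} solution. The spreading theorem gives a uniform lower bound $\delta>0$ on this entire solution, and then a separate classification result (Theorem~\ref{thm:entire-sol-hom}) says any such entire solution is identically $(u^*,v^*)$, contradicting $|u^\infty(0,0)-u^*|\ge\varepsilon$. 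So the case analysis from Assumption~\ref{as:LTPDE} is hidden inside the entire-solution theorem, and local stability is never invoked.

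Your gap is in regime (c), $\sigma_u=\sigma_v$. You claim that ODE solutions $\overline w(t),\underline w(t)$ are super/sub-solutions and that a comparison principle sandwiches the PDE solution between them. This fails: even with equal diffusions the system \eqref{eq:syst-hom-rd} is neither cooperative nor competitive on the full range of values (the cross-term $\mu_v v-\kappa_u(u+v)u$ changes sign in $v$), so there is no order-preserving semiflow to invoke. The paper's treatment of this case is entirely different: with $\sigma_u=\sigma_v=:\sigma$ one sets $w(t,x)=\mathcal F^K(u,v)$ where $\mathcal F^K$ is the ODE Lyapunov functional from Lemma~\ref{lem:Lyapunov-ode}; convexity of $\mathcal F_u,\mathcal F_v$ gives $w_t-\sigma w_{xx}\le Q(u,v)\le 0$, so $w$ is a bounded entire subsolution of the heat equation, hence constant, forcing $(u,v)\equiv(u^*,v^*)$. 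Without this Lyapunov trick (or something equivalent) your Step~2 cannot close in this regime. Also, you have the monotone structures of (a) and (b) reversed: condition (a) $\max(r_u-\mu_u,r_v-\mu_v)\le 0$ forces $u^*<\mu_v/\kappa_u$, $v^*<\mu_u/\kappa_v$, which is the \emph{cooperative} region, while condition (b) traps the solution in a rectangle where the system is \emph{competitive}.
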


Finally, we were able to extend this result to the case of rapidly oscillating coefficients by using arguments from the theory of dynamical systems and the homogenization of solutions to parabolic equations with rapidly oscillating coefficients. To this end it is more convenient to write the heterogeneous system \eqref{eq:main-sys} in divergence form \eqref{eq:rapidosc}.

In order to state our results for the homogenization limit of parabolic systems, we restrict ourselves to the case $L=1$, without loss of generality. For each $1$-periodic function $\sigma_u(x)$, $\sigma_v(x)$, $r_u(x)$, $r_v(x)$, $\kappa_u(x)$, $\kappa_v(x)$, $\mu_u(x)$, $\mu_v(x)$, we denote:
\begin{align}\label{eq:mean-coeffs}
	\overline{r_u}&:=\int_0^1r_u(x)\dd x , & \overline{\kappa_u}&:=\int_0^1\kappa_u(x)\dd x , & \overline{\mu_u}&:=\int_0^1\mu_u(x)\dd x, \\
	\overline{r_v}&:=\int_0^1r_v(x)\dd x, &\overline{\kappa_v}&:=\int_0^1\kappa_v(x)\dd x, & \overline{\mu_v}&:=\int_0^1\mu_v(x)\dd x, \notag 
\end{align}
and finally:
\begin{align*}
	\overline{\sigma_u}^H&:=\left(\int_0^1\frac{1}{\sigma_u(x)}\dd x\right)^{-1}, & \overline{\sigma_v}^H&:=\left(\int_0^1\frac{1}{\sigma_v(x)}\dd x\right)^{-1}.
\end{align*}

\begin{thm}[Homogenisation]\label{thm:rapidosc}
	Let $\sigma_u(x)$, $\sigma_v(x)$, $r_u(x)$, $r_v(x)$, $\kappa_u(x)$, $\kappa_v(x)$, $\mu_u(x)$ and $\mu_v(x)$ be $1$-periodic functions such that $\overline{\sigma_u}^H$, $\overline{\sigma_v}^H$, $\overline{r_u}$, $\overline{r_v} $, $\overline{\kappa_u}$, $\overline{\kappa_v}$, $\overline{\mu_u}$ and $\overline{\mu_v}$ satisfy Assumption \ref{as:cond-instab-0} and Assumption \ref{as:LTPDE}. Consider 
	\begin{equation}\label{eq:rapidosc}
		\begin{system}
			\relax &u_t=(\sigma_u^\varepsilon(x) u_{x})_x+(r_u^\varepsilon(x)-\kappa_u^\varepsilon(x)(u+v))u+\mu_v^\varepsilon(x)v-\mu_u^\varepsilon(x)u \\
			\relax &v_t=(\sigma_v^\varepsilon(x) v_{x})_x+(r_v^\varepsilon(x)-\kappa_v^\varepsilon(x)(u+v))v+\mu_u^\varepsilon(x)u-\mu_v^\varepsilon(x)v.
		\end{system}
	\end{equation}
	Then, there is $\bar\varepsilon>0$ such that for each $0<\varepsilon<\bar \varepsilon$, 
	\begin{enumerate}[label={\rm(\roman*)}]
		\item \label{item:rapid-osc-unique}
			there exists a unique positive nontrivial stationary solution $(u^*_\varepsilon(x), v^*_\varepsilon(x))$ to \eqref{eq:rapidosc}, 
		\item \label{item:entire-sol}
			the $\omega$-limit set of any  $(u^\varepsilon(t, x), v^\varepsilon(t, x))$ solution to \eqref{eq:rapidosc} starting from a nonnegative nontrivial bounded initial condition is $\{(u^*_\varepsilon(x), v^*_\varepsilon(x))\}$.
		\item \label{item:rapid-osc-cv}
			any solution to the Cauchy problem \eqref{eq:rapidosc} starting from a nonnegative bounded initial condition converges as $t\to+\infty$ to $(u^*_\varepsilon(x), v^*_\varepsilon(x))$, uniformly in the sense that for any $0<c<c^*_\varepsilon$ we have
			\begin{equation*}
				\lim_{t\to+\infty}\sup_{|x|\leq c t}\max(|u(t,x)-u^*_\varepsilon(x)|, |v(t,x)-v^*_{\varepsilon}(x)|)=0,
			\end{equation*}
			where $c^*_\varepsilon$ is the minimal speed defined in \eqref{eq:formula-speed}.
	\end{enumerate}
\end{thm}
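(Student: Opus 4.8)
The plan is to transfer the long-time behavior of the homogeneous problem \eqref{eq:syst-hom-rd} — namely Theorems~\ref{thm:hom-local-stab} and~\ref{thm:ltb} — to the rapidly oscillating system \eqref{eq:rapidosc} by combining homogenization of elliptic operators and of principal eigenvalues with dynamical-systems arguments, the essential structural remark being that the limits $t\to\infty$ and $\varepsilon\to0$ do not commute: homogenization is therefore used only at the level of \emph{stationary} and \emph{spectral} problems, while the passage $t\to\infty$ is carried out at fixed $\varepsilon$. The first input is spectral. The periodic principal eigenvalue $\lambda_1^{per,\varepsilon}$ of the linearization of \eqref{eq:rapidosc} at $\vect 0$ converges, as $\varepsilon\to0$, to the periodic principal eigenvalue of the constant-coefficient operator $\overline{\mathcal L}^H+\overline A$, which is negative by Assumption~\ref{as:cond-instab-0} applied to the averaged coefficients. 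Hence there are $\bar\varepsilon>0$ and $\delta_0>0$ with $\lambda_1^{per,\varepsilon}\leq-\delta_0$ for all $0<\varepsilon<\bar\varepsilon$, so that, by Theorem~\ref{thm:large-diff} and Proposition~\ref{prop:minspeed}, the spreading speed $c^*_\varepsilon$ of \eqref{eq:formula-speed} is well defined, positive for $\varepsilon$ small, and converges to $c^*(\overline{\mathcal L}^H+\overline A)>0$.

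For \ref{item:rapid-osc-unique}, existence of a positive $1$-periodic stationary solution $(u^*_\varepsilon,v^*_\varepsilon)$ follows from a sub/supersolution scheme: since $\lambda_1^{per,\varepsilon}<0$, a small multiple of the periodic principal eigenfunction is a subsolution of the elliptic system, a large constant is a supersolution, and under Assumption~\ref{as:LTPDE} the dynamics in the invariant box $[0,K\mathbf 1]$ is either cooperative or reducible to a two-component competitive system, so a monotone iteration converges. For uniqueness I would argue by compactness and perturbation: any positive periodic steady state is bounded above by the logistic bound and, thanks to $\lambda_1^{per,\varepsilon}\leq-\delta_0$, bounded below away from $0$ uniformly in $\varepsilon$; by elliptic homogenization and compactness, along any sequence $\varepsilon_n\to0$ such steady states converge to a positive stationary solution of \eqref{eq:syst-hom-rd}, which can only be the constant $(u^*,v^*)$ (uniqueness of the positive equilibrium, cf. Proposition~\ref{prop:longtime-ode}). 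Thus for $\varepsilon$ small all positive periodic steady states lie in a small neighborhood of $(u^*,v^*)$, where the invertibility of the linearization of \eqref{eq:syst-hom-rd} at $(u^*,v^*)$ — encoded in the exponential stability of Theorem~\ref{thm:hom-local-stab} — allows an implicit-function/contraction argument that pins down exactly one steady state for each small $\varepsilon$.

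For \ref{item:entire-sol} and \ref{item:rapid-osc-cv}, fix small $\varepsilon$ and a solution $(u^\varepsilon,v^\varepsilon)$ from nonnegative nontrivial bounded data. The logistic structure gives $0\leq(u^\varepsilon,v^\varepsilon)\leq K\mathbf 1$ for large $t$; comparison from below with the cooperative monotone subsystem naturally associated to \eqref{eq:rapidosc} (whose periodic principal eigenvalue is still negative) together with a hair-trigger-type argument in the spirit of Theorem~\ref{thm:hair-trigger} shows that $(u^\varepsilon,v^\varepsilon)\geq\delta\mathbf 1$ on every compact set for $t$ large. Hence every element of the $\omega$-limit set $\Omega$ (in $C_{loc}$) is an entire solution valued in $[\delta\mathbf 1,K\mathbf 1]$, a region in which, under Assumption~\ref{as:LTPDE}, the semiflow is either order-preserving — cooperative, or two-component competitive after a sign change — or gradient-like through the Lyapunov functional used for the ODE when $\sigma_u=\sigma_v$; in each case the theory of monotone or gradient-like semiflows, combined with the uniqueness of the steady state established above, forces $\Omega=\{(u^*_\varepsilon,v^*_\varepsilon)\}$, which is \ref{item:entire-sol}. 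Then \ref{item:rapid-osc-cv} is obtained by upgrading this to convergence uniform on the expanding intervals $\{|x|\leq ct\}$, $c<c^*_\varepsilon$: a space--time translation argument combined with the spreading estimate of the first step shows that on such intervals the solution is eventually uniformly close to $(u^*_\varepsilon,v^*_\varepsilon)$.

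The hard part will be twofold: handling the absence of a comparison principle for the full hybrid system — which is exactly why one confines the dynamics to the invariant box where Assumption~\ref{as:LTPDE} makes it cooperative, two-component competitive, or variational — and respecting the non-commutation of the homogenization limit with the long-time limit. Within this scheme the genuinely delicate points are the uniform-in-$\varepsilon$ lower bound on positive periodic steady states (needed for the compactness argument that yields uniqueness) and the uniform persistence estimate that feeds the $\omega$-limit computation.
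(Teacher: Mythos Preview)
Your overall architecture is sound, and your treatment of the spectral input and of \ref{item:rapid-osc-cv} via translated sequences matches the paper. The gap is in your argument for \ref{item:entire-sol}. You propose to confine the dynamics to an invariant box $[\delta\mathbf 1,K\mathbf 1]$ and then invoke, \emph{for the oscillating system itself}, either a cooperative/competitive monotone structure or the Lyapunov functional of the ODE. But Assumption~\ref{as:LTPDE} is imposed only on the \emph{averaged} coefficients $\overline{r_u},\overline{\mu_u},\overline{\sigma_u}^H,\ldots$, not on the pointwise ones. Thus $\overline{r_u}-\overline{\mu_u}\le 0$ does not force $r_u^\varepsilon(x)-\mu_u^\varepsilon(x)\le 0$ for all $x$, so the oscillating system need not be ultimately cooperative; likewise $\overline{\sigma_u}^H=\overline{\sigma_v}^H$ does not force $\sigma_u^\varepsilon\equiv\sigma_v^\varepsilon$, so the Lyapunov computation $w_t-\sigma w_{xx}\le Q(u,v)$ breaks down. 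In short, none of the three mechanisms in Assumption~\ref{as:LTPDE} survives the passage from averaged to pointwise coefficients, and your $\omega$-limit argument collapses.

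The paper circumvents this by proving something stronger than your \ref{item:rapid-osc-unique}: it shows that for $\varepsilon$ small there is a unique bounded \emph{entire solution} of \eqref{eq:rapidosc} that is uniformly positive, not merely a unique stationary one. This is done in two steps. First (Lemma~\ref{lem:rapid-osc-lim}), any such entire solution converges to $(u^*,v^*)$ as $\varepsilon\to 0$, by homogenization of the parabolic problem combined with Theorem~\ref{thm:entire-sol-hom} applied to the \emph{limit} (homogeneous) system --- this is where Assumption~\ref{as:LTPDE} is actually used. Second (Lemma~\ref{lem:rapid-osc-unique}), if there were two distinct entire solutions for a sequence $\varepsilon_n\to 0$, their normalized difference would converge, again by homogenization, to a nontrivial bounded entire solution of the linearization of the homogeneous system at $(u^*,v^*)$, contradicting the exponential stability of Theorem~\ref{thm:hom-local-stab}. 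Once uniqueness of entire solutions is in hand, \ref{item:entire-sol} is immediate: every $\omega$-limit point generates a bounded positive entire solution, hence equals $(u^*_\varepsilon,v^*_\varepsilon)$. Your implicit-function idea for stationary uniqueness is close in spirit to this second step, but restricting to stationary solutions leaves you without a tool for \ref{item:entire-sol}; upgrading to entire solutions is the missing ingredient.
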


\section{Proofs of the results on general cooperative-competitive systems}
\label{sec:KPP-type}

In Section \ref{sec:super-monotone}, we show that solutions to equations can be estimated from below by a monotone lower barrier.
In Section \ref{sec:eigenpb} we prove some properties on principal eigenproblems for periodic system, including the equivalence between the various notions of principal eigenvalue on the real line for operators satisfying Assumption \ref{as:isotropic}.
In Section \ref{sec:coop-sys} we prove the linear determinacy for {sublinear} functions satisfying Assumption \ref{as:coop} (Theorem \ref{thm:lin-det}) by adapting an argument of Weinberger \cite{Wei-02}. In Section \ref{sec:proofs-gen} we prove   and the existence of traveling waves, Theorem \ref{thm:TW}.    Finally in Section \ref{sec:large-diff} we prove Theorem \ref{thm:large-diff}.

Before resuming the proofs, let us mention two important conventions. The constant $d>0$ stands for the dimension of the system being investigated. Also, whenever $u$ is a vector, we denote $(u)_i $ the $i$-th component of $u$, or simply $u_i$ if the context is clear.

\subsection{A comparison principle for systems with a monotone lower barrier.}
\label{sec:super-monotone}

In this Section we prove that a function that admits a monotone lower barrier generates a semiflow that remains above the one generated by the lower barrier. More precisely, we show that as long as the solution $u(t,x)$ of the equation corresponding to the lower monotone barrier stays in the quasi-monotone area, then the solution $v(t,x)$ is componentwise greater than $u(t, x)$ (even if it leaves the quasi-monotone domain).
\begin{thm}[Comparison principle]\label{thm:super-monotone} 
    Let $f$ be a given {sublinear} function  and $\mathcal L$ be a $d$-dimensional diagonal uniformly elliptic operator. We assume that {$Df(x, 0)$ is cooperative and fully coupled and that}  $f$  admits a monotone lower barrier $f^-$ in the sense of Definition \ref{def:super-monotone}.

	Let $T\in (0, +\infty] $ and $u(t, x)$ and $v(t, x)$ solve
	\begin{equation*}
		\left\{\begin{aligned}\relax
			&u_t(t, x)-\mathcal Lu(t,x) = f(x, u(t,x))\\
			&u(0, x)=u_0(x),  
		\end{aligned}\right.
		\text{ and }
		\left\{\begin{aligned}\relax
			&v_t(t, x)-\mathcal Lv(t,x) = f^-(x, v(t,x))\\
			&v(0, x)=v_0(x),  
		\end{aligned}\right.
	\end{equation*}
	for $t\in [0, T]$ and $x\in\mathbb R$, $u_0, v_0\in BUC(\mathbb R)$.

	Suppose that  $\Vert v(t,x)\Vert_\infty< \eta$ for all $t\in[0, T] $.  Then $u(t,x) $ satisfies 
	\begin{equation*}
		v(t,x)\leq u(t,x)\text{ for any } t\in[0,T] \text{ and }x\in\mathbb R.
	\end{equation*}
\end{thm}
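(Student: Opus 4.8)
The plan is to compare the two semiflows by a standard but careful sub/supersolution argument, taking advantage of the fact that $f^-$ is cooperative (quasi-monotone) on the strip $\mathbb R\times B^+_\infty(0,\eta)$ and that $v(t,x)$ is assumed to stay in that strip for all $t\in[0,T]$. First I would set $w:=u-v$ and derive the equation it satisfies: $w_t-\mathcal L w=f(x,u)-f^-(x,v)$. Using $f(x,u)\geq f^-(x,u)$ (valid on the region where $v$, and hence the relevant arguments, lie in the good strip — more precisely, condition \ref{item:lowercoop} of Definition \ref{def:super-monotone} applies componentwise when some component of the argument is $\leq\eta$, and I will need to argue that this is enough along the flow) together with the quasi-monotonicity of $f^-$ on the strip, I would write $f(x,u)-f^-(x,v)\geq f^-(x,u)-f^-(x,v)$ and then bound the latter difference below by a linear expression in $w$ whose off-diagonal coefficients are $\geq 0$: indeed, by Definition \ref{def:cooperative}/\ref{def:super-monotone}, $f^-(x,u)+\gamma u$ is nondecreasing in $u$ on the strip, so $f^-(x,u)-f^-(x,v)\geq -\gamma(u-v)$ componentwise whenever $u\geq v$, and more generally one gets a quasi-monotone (cooperative) estimate. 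This reduces the problem to a comparison principle for a linear \emph{cooperative} parabolic system for $w$ with the initial datum $w(0,\cdot)=u_0-v_0\geq 0$.

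The technical heart is then the maximum principle for cooperative systems on the unbounded domain $\mathbb R$ with merely $BUC$ data. I would proceed by the usual trick: fix $\gamma>0$ large enough so that $\tilde w:=e^{-\gamma t}w$ satisfies a system $\tilde w_t-\mathcal L\tilde w\geq M(t,x)\tilde w$ where $M(t,x)$ is cooperative with, say, nonpositive diagonal (absorbing the diagonal part into the exponential shift). To handle the lack of compactness, introduce for $\epsilon>0$ the function $\tilde w^\epsilon:=\tilde w+\epsilon\psi(x)\mathbf 1$ with a suitable positive weight $\psi$ (e.g.\ $\psi(x)=\cosh(\kappa x)$ with $\kappa$ small, chosen compatibly with the ellipticity and the boundedness of the coefficients) so that $\tilde w^\epsilon$ is a strict supersolution that $\to+\infty$ as $|x|\to\infty$; then a minimum of any component of $\tilde w^\epsilon$ must be attained at an interior point, and at such a first contact time/point the cooperative structure of $M$ forces a contradiction with strictness. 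Letting $\epsilon\to 0$ gives $\tilde w\geq 0$, hence $w\geq 0$, i.e.\ $u\geq v$ on $[0,T]\times\mathbb R$.

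One subtlety I would flag explicitly: the inequality $f(x,u)\geq f^-(x,u)$ in Definition \ref{def:super-monotone} holds only when \emph{some} component $u_i\leq\eta$, and similarly the quasi-monotonicity of $f^-$ is only asserted on the strip. So the above must be run as a continuation argument: let $t^*:=\sup\{t\in[0,T]: v(s,\cdot)\leq u(s,\cdot) \text{ for all } s\leq t\}$, show $t^*>0$ (the data satisfy it and continuity), and show $t^*=T$ by contradiction. The point is that at times $s\le t^*$ we have $0\leq v(s,x)\leq u(s,x)$ with $\Vert v(s,\cdot)\Vert_\infty<\eta$, so all the arguments of $f^-$ that appear — namely $v$ and the intermediate "mixed" vectors interpolating between $v$ and $u$ that arise when one estimates $f^-(x,u)-f^-(x,v)$ component by component (varying one coordinate at a time) — have at least one coordinate controlled by the corresponding coordinate of $v<\eta$, which is exactly what conditions \ref{item:lowercoop} and the last item of Definition \ref{def:super-monotone} require (they are phrased in terms of "$|u_i|\le\eta$ for some $i$"). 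This matching of the hypotheses to the monotone-rearrangement step is, I expect, the main obstacle; the rest is the by-now-routine unbounded-domain maximum principle. Finally I would remark that well-posedness/regularity of $u$ and $v$ in $BUC(\mathbb R)^d$ on $[0,T]$ is standard analytic-semigroup theory and can be quoted.
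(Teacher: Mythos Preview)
Your overall strategy --- a $t^*$-continuation argument reducing to a scalar parabolic comparison on the component that first touches --- is exactly the paper's, and your $\cosh$-weighted treatment of the unbounded domain is a perfectly good alternative to the paper's shift-and-extract limiting argument.

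There is, however, one concrete place where your chain of inequalities breaks down as written. You propose $f(x,u)-f^-(x,v)\ge f^-(x,u)-f^-(x,v)$ via condition~\ref{item:lowercoop}, and then use quasi-monotonicity of $f^-$ on the strip to control $f^-(x,u)-f^-(x,v)$. Both steps require evaluating $f^-$ at $u$, but nothing prevents \emph{every} component of $u$ from exceeding $\eta$ (only $v$ is assumed to stay in the strip), so neither condition~\ref{item:lowercoop} at $u$ nor the bound $f^-(x,u)-f^-(x,v)\ge-\gamma(u-v)$ is available. Your subtlety paragraph correctly identifies the difficulty but does not quite resolve it, because the mixed vectors you invoke pertain to the interpolation $f^-(x,u)\to f^-(x,v)$, whereas the real problem is already the first step $f(x,u)\ge f^-(x,u)$. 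The fix is to avoid evaluating $f^-$ at $u$ altogether. For the $i$-th component, with $v\le u$ on $[0,t^*]$ and $v_i\le\|v\|_\infty<\eta$, the working chain is
\begin{align*}
(f^-)_i(x,v)&\le (f^-)_i\big(x,u_1,\dots,u_{i-1},v_i,u_{i+1},\dots,u_d\big) \\
&\qquad\text{(third condition of Definition~\ref{def:super-monotone}: monotone in $u_j$, $j\neq i$, since the $i$-th entry is $v_i\le\eta$)}\\
&\le f_i\big(x,u_1,\dots,u_{i-1},v_i,u_{i+1},\dots,u_d\big) \\
&\qquad\text{(condition~\ref{item:lowercoop}: the $i$-th entry is $v_i\le\eta$)}\\
&\le f_i(x,u)+K\,(u_i-v_i) \qquad\text{(Lipschitz continuity of $f$),}
\end{align*}
so that $w_i:=u_i-v_i\ge0$ satisfies $(w_i)_t-\mathcal L_i w_i+Kw_i\ge0$ on $[0,t^*]\times\mathbb R$, and the scalar strong maximum principle (or your weighted version) closes the contradiction. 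With this reordering your proof is complete and coincides with the paper's; the paper's terse line ``$f^-(x,v)\le f^-(x,u)\le f(x,u)$'' should be read as precisely this chain at the contact point, where $u_i=v_i$.
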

\begin{proof}
	We show the result under the assumption that $u_0(x)\geq v_0(x)+\delta\mathbf{1}$  and $\Vert v(t, x)\Vert\leq \eta-\delta$ for some $\delta\in (0, \eta)$. The general result is obtained by taking the limit $\delta\to 0$. Since $t\mapsto u(t,x)$ is continuous at $t=0$, there exists $t_0>0$ such that $v(t,x)\leq u(t,x)$ for all $t\in [0, t_0]$ and $x\in\mathbb R$.
	
	We define 
	\begin{equation*}
		t^*:=\sup\{t>0\,|\, v(t,x)\leq u(t,x) \text{ for all }x\in\mathbb R\}.
	\end{equation*}
	Then by definition $t^*\geq t_0$. Assume by contradiction that $t^*<T$. Then, because of the definition of $t^*$,  there exists a sequence $(t_n, x_n)\in [0, T]\times U$ and $i\in\{1, \ldots, d\}$ such that $t_n\to t^*$, $t_n\geq t^*$ and 
	\begin{equation*}
		v_i(t_n, x_n) \to u_i(t_n, x_n) \text{ and } v_j(t_n, x_n)\leq u_j(t_n, x_n) \text{ for all } j\neq i.
	\end{equation*}
	If $ x_n$ is bounded, then we may extract a subsequence such that $x_n\to x$. By the continuity of $v_i$ and $u_i$ we have then $v_i(t^*,x ) = u_i(t^*,x)$. Since moreover $v_i(t,x)\leq \Vert v(t,x)\Vert_\infty \leq \eta$ we have $f^-\big(x, v(t,x)\big)\leq f^-\big(x, u(t,x)\big)\leq f(x, u(t,x))$. Testing the $i$-th equation we get
	\begin{equation*}
		(v_i)_t(t,x)-\mathcal L v_i(t,x)=(f^-)_i(x, v(t,x))\leq f_i(x, u(t,x))=(u_i)_t-\mathcal Lu_i(t,x),
	\end{equation*}
	and there is a contradiction by the strong maximum principle. If $(t_n, x_n)$ is unbounded we get a similar contradiction by extracting a converging subsequence from the sequence of functions $u(t+t_n, x+x_n)$ and $v(t+t_n, x+x_n)$. This proves that $t^*=T$, therefore the result holds. 
\end{proof}

\begin{prop}\label{prop:sub-sub-barrier}
    Let $f=(f_1, \ldots, f_d)^T$ be a given {sublinear} function  and $\mathcal L$ be a $d$-dimensional diagonal uniformly elliptic operator. We assume {that $Df(x, 0)$ is cooperative and fully coupled,} that $\lambda_1^{per}<0$ and that $f$  admits a monotone lower barrier $f^-$ in the sense of Definition \ref{def:super-monotone}. Let $\eta>0$ be as in Definition \ref{def:super-monotone}.

	There exists a monotone lower barrier $f^{*-}(x, u)$ for $f$ with the properties that 
	\begin{enumerate}
		\item  we have
			\begin{equation*}
				f^{*-}(x, u)\leq f^-(x, u) \text{ for all } x\in\mathbb R \text{ and } u\geq 0.
			\end{equation*}
		\item there exists a $L$-periodic equilibrium $p(x)=(p_1(x), \ldots, p_d(x))$ such that $0\leq p(x)\leq \eta\mathbf{1}$, 
			\begin{equation*}
				-\mathcal L p(x) = f^{*-}\big(x, p(x)\big), 
			\end{equation*}
			and $p$ attracts every nontrivial periodic initial condition $u_0(x) $ satisfying $0\leq u_0(x)\leq p(x)$ for all $x\in\mathbb R$.
	\end{enumerate}
\end{prop}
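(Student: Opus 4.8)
The plan is to obtain $f^{*-}$ from $f^-$ by \emph{capping it from below near the top face of the box} $[0,\eta\mathbf 1]$, so that this box becomes positively invariant without disturbing the linearisation at $0$ (hence the sign of $\lambda_1^{per}$), and then to produce $p$ by a monotone iteration between a small stationary subsolution and the supersolution $\eta\mathbf 1$. Concretely, let $\eta>0$ be the constant of Definition~\ref{def:super-monotone} for $f^-$, set
\[
 M:=\sup\{\,f^-_i(x,u)\ :\ x\in\mathbb R,\ 0\le u\le\eta\mathbf 1,\ 1\le i\le d\,\}<\infty
\]
(finite by continuity and $L$-periodicity), fix $\rho\in(0,\eta)$ and $c>0$ with $c(\eta-\rho)>M$, and put $f^{*-}_i(x,u):=f^-_i(x,u)-c\,(u_i-\rho)_+$. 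The subtracted term is nonnegative, Lipschitz, vanishes for $u_i\le\rho$ and depends on $u_i$ only; hence $f^{*-}$ is Lipschitz and $L$-periodic, $f^{*-}\le f^-$ everywhere (so $f^{*-}\le f$ on $\{u\ge 0: u_i\le\eta\text{ for some }i\}$), $Df^{*-}(x,0)=Df^-(x,0)=Df(x,0)$, and $f^{*-}$ keeps the quasi-monotonicity of $f^-$ in $u_j$ ($j\ne i$) on $\{|u_i|\le\eta\}$; so $f^{*-}$ is a monotone lower barrier for $f$ with the same $\eta$, cooperative on $[0,\eta\mathbf 1]$. Moreover on $\{u_i=\eta,\ 0\le u\le\eta\mathbf 1\}$ one has $f^{*-}_i(x,u)=f^-_i(x,u)-c(\eta-\rho)\le M-c(\eta-\rho)<0$, while on $\{u_i=0\}$ quasi-monotonicity and $f^{*-}(x,0)=0$ give $f^{*-}_i\ge0$; thus $[0,\eta\mathbf 1]$ is positively invariant for $u_t=\mathcal L u+f^{*-}(x,u)$ and $\eta\mathbf 1$ is a strict stationary supersolution. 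Property~1 of the statement is then immediate.

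For property~2, let $(\lambda_1^{per},\varphi^{per})$ be the periodic principal eigenpair of $-\mathcal L-Df(x,0)$, so $\varphi^{per}\gg0$ is $L$-periodic and $\lambda_1^{per}<0$. Using $Df^{*-}(x,0)=Df(x,0)$ and the (uniform) differentiability of $f^{*-}$ at $0$ inherited from $f^-$, one computes $\mathcal L(\varepsilon\varphi^{per})+f^{*-}(x,\varepsilon\varphi^{per})=-\varepsilon\lambda_1^{per}\varphi^{per}+o(\varepsilon)\ge0$ for $\varepsilon>0$ small, so $\underline u:=\varepsilon\varphi^{per}$ is a stationary subsolution with $0\ll\underline u\ll\eta\mathbf 1$. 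By the comparison principle for the quasi-monotone system $u_t=\mathcal L u+f^{*-}(x,u)$ on its invariant box (Theorem~\ref{thm:super-monotone} and classical parabolic theory), $t\mapsto u(t,\cdot;\underline u)$ is nondecreasing and bounded above by $\eta\mathbf 1$; by parabolic estimates it converges, uniformly in $x$, to an $L$-periodic equilibrium $p$ with $\varepsilon\varphi^{per}\le p\le\eta\mathbf 1$, hence $p\gg0$ and $-\mathcal L p=f^{*-}(x,p)$.

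It remains to show $p$ attracts every nontrivial $L$-periodic $u_0$ with $0\le u_0\le p$. Writing $u(t):=u(t,\cdot;u_0)$, comparison gives $0\le u(t)\le p$, and the strong maximum principle for the cooperative fully coupled system yields $u(t_1)\gg0$ for any $t_1>0$; since $u(t_1)$ is $L$-periodic and continuous, $u(t_1)\ge\varepsilon'\varphi^{per}$ for some $\varepsilon'\in(0,\varepsilon]$. By autonomy and comparison, $u(t_1+s)\ge u(s,\cdot;\varepsilon'\varphi^{per})$, whose right-hand side increases to an $L$-periodic equilibrium $q$ with $0\ll\varepsilon'\varphi^{per}\le q\le p$; squeezing, $q\le\liminf_{t\to\infty}u(t)\le\limsup_{t\to\infty}u(t)\le p$, and everything reduces to the equality $q=p$.

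This last point is the main obstacle. It rests on the fact that $\lambda_1^{per}<0$ excludes positive $L$-periodic equilibria of $u_t=\mathcal L u+f^{*-}(x,u)$ close to $0$: if $\Psi\gg0$ solves $-\mathcal L\Psi=f^{*-}(x,\Psi)$ with $\Vert\Psi\Vert_\infty$ small then $f^{*-}(x,\Psi)=f^-(x,\Psi)\ge Df(x,0)\Psi-\tau\Vert\Psi\Vert_\infty\mathbf 1$ for $\tau>0$ arbitrarily small, so $\Psi$ is a positive periodic supersolution of a cooperative linear operator whose periodic principal eigenvalue is $\ge\lambda_1^{per}$; the minimax characterisation of the periodic principal eigenvalue (Proposition~\ref{prop:k(lambda)}\ref{item:minimaxk(lambda)} at $\lambda=0$) together with the Harnack inequality for cooperative fully coupled elliptic systems then forces $\lambda_1^{per}\ge-C\tau$, a contradiction for $\tau$ small. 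Hence every positive $L$-periodic equilibrium is $\ge\eta_{**}\mathbf 1$ for some fixed $\eta_{**}>0$; taking $\varepsilon$ in the previous step so small that $\varepsilon\Vert\varphi^{per}\Vert_\infty<\eta_{**}$, we get $q\ge\varepsilon\varphi^{per}$, hence $q=u(t,\cdot;q)\ge u(t,\cdot;\varepsilon\varphi^{per})\to p$, so $q\ge p$ and finally $q=p$; therefore $u(t)\to p$. Note that the possible non-uniqueness of positive equilibria of $f^{*-}$ far from $0$ is irrelevant here, since only attraction from the order interval $[0,p]$ is needed; the delicate ingredients are exactly the exclusion of small positive periodic equilibria and the uniform Harnack lower bound, which is where the hypothesis $\lambda_1^{per}<0$ and the full coupling enter decisively.
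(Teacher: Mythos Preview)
Your proposal is correct and follows the same overall architecture as the paper's proof: modify $f^-$ so that the box $[0,\eta\mathbf 1]$ becomes positively invariant (hence $\eta\mathbf 1$ is a supersolution), use $\varepsilon\varphi^{per}$ as a subsolution, and run a monotone iteration. The differences are technical.

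First, the modification: the paper takes $f^{*-}(x,u)=f^-(x,u)-\beta u^2$ (componentwise) for large $\beta$, whereas you subtract $c(u_i-\rho)_+$. Both are diagonal, Lipschitz, preserve the linearisation at $0$, and force $f^{*-}_i<0$ on $\{u_i=\eta\}\cap[0,\eta\mathbf 1]$; your version has the cosmetic advantage of leaving $f^-$ untouched near $0$.

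Second, and more substantively, the ``no small positive periodic equilibria'' step. The paper does this by \emph{Serrin's sweeping method}: for any positive periodic equilibrium $\Psi\le\eta\mathbf 1$, set $\varepsilon_1:=\sup\{\varepsilon':\varepsilon'\varphi^{per}\le\Psi\}$; if $\varepsilon_1<\varepsilon_0$ there is a contact point, and since $\varepsilon_1\varphi^{per}$ is a strict subsolution the strong maximum principle in one component gives a contradiction. This immediately yields $\Psi\ge\varepsilon_0\varphi^{per}$, and the paper then defines $p:=V^{\varepsilon_0}$ (the monotone limit from $\varepsilon_0\varphi^{per}$), so that any $V^{\varepsilon}$ with $\varepsilon\le\varepsilon_0$ automatically dominates $p$. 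Your route via the minimax formula for $\lambda_1^{per}$ and the Harnack inequality for cooperative systems reaches the same conclusion but is heavier: you need a uniform Harnack constant for $\Psi$ (which requires rewriting the nonlinear equation as a linear cooperative fully coupled system with bounded coefficients, using the quasi-monotonicity of $f^{*-}$ on $[0,\eta\mathbf 1]$), and the quantifier ``$\tau>0$ arbitrarily small'' should really read ``for each $\tau>0$ there is $\delta_\tau$ such that $\|\Psi\|_\infty\le\delta_\tau$ implies \ldots''. The sweeping argument avoids all of this bookkeeping and uses only the strong maximum principle.
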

\begin{proof}
	Let $\beta>0$ be given and define 
	\begin{equation*}
		f^-_\beta(x, u):=f^-(x, u)-\beta u^2= (f^-_1(x, u)-\beta u_1^2, \ldots, f^-_d(x, u)-\beta u_d^2)^T. 
	\end{equation*}
	It is clear that $f^-_\beta(x, u)\leq f^-(x, u)$ for all $x\in\mathbb R$ and $u\in\mathbb R^d$, and that $f^-(x, u)$ and $f^-_\beta(x, u)$ have the same Jacobian matrix near $u=0$, $Df^-(x,0)=Df^-_\beta(x, 0) = :A(x)$. 

	We investigate the equation
	\begin{equation}\label{eq:sub-beta}
		u_t-\mathcal L u = f^-_\beta(x, u).
	\end{equation}
	If 
	\begin{equation*}
		\beta\geq \beta^*:= \dfrac{\sup_{x\in\mathbb R} \max_{i\in\{1, \ldots, d\}}\sum_{j=1}^da_{ij}(x)}{\eta}, 
	\end{equation*}
	where $\eta>0$ is the constant from Definition \ref{def:super-monotone}, then the constant vector $\eta\mathbf{1}$ satisfies
	\begin{equation*}
		f^-_\beta(x, \eta\mathbf{1})\leq A(x)\eta\mathbf{1}-\beta \eta^2\mathbf{1}\leq \eta\left(\sup_{x\in\mathbb R}\max_{i\in\{1, \ldots, d\}}\sum_{i=1}^d a_{ij}(x)-\beta\eta\right), 
	\end{equation*}
	therefore $\eta\mathbf{1}$ is a super-solution to \eqref{eq:sub-beta}. 

	Next we look for a sub-solution to \eqref{eq:sub-beta}. Recall that we denote $(\lambda_1^{per}, \varphi^{per}(x)>0)$ the periodic principal eigenpair as in Definition \ref{def:princ-eig}, with $\Vert \varphi^{per}\Vert_{L^\infty(\mathbb R)^d}=1$, and recall that $\lambda_1^{per}<0$.
Define 
	\begin{equation*}
		\kappa:=\inf_{x\in \mathbb R}\min_{1\leq i\leq d}\frac{\varphi^{per}_i(x)}{\Vert\varphi^{per}(x)\Vert_{\infty}}, 
	\end{equation*}
	which is finite and positive by the elliptic strong maximum principle. Because of the differentiability of $u\mapsto f^-_\beta(x, u)$, there exists $\varepsilon_0>0$ such that for each $u\geq 0$ with $\Vert u\Vert\leq \varepsilon_0$, we have
	\begin{equation*}
		\Vert f^-_\beta(x, u)-A(x)u\Vert_{\infty} \leq -\lambda_1^{per}\kappa\Vert u\Vert_{\infty}. 
	\end{equation*}
	Reducing $\varepsilon_0$ if necessary, we may assume that $\varepsilon_0<\eta$. Then for $0<\varepsilon\leq \varepsilon_0$, $\varepsilon\varphi^{per}(x)$ is a sub-solution to \eqref{eq:sub-beta}. Indeed, 
	\begin{align*}
		-\mathcal L\varepsilon\varphi^{per}(x) &= A(x)\varepsilon \varphi^{per}(x)+\lambda_1^{per}\varepsilon\varphi^{per}(x)\leq A(x)\varphi^{per}(x)+\lambda_1^{per}\kappa \Vert\varepsilon \varphi^{per}\Vert_{\infty} \\
		&\leq A(x)\varepsilon\varphi^{per} + f^-_\beta\big(x, \varepsilon\varphi^{per}(x)\big)-A(x)\varepsilon\varphi^{per}(x)=f^-_\beta\big(x, \varepsilon\varphi^{per}(x)\big).
	\end{align*}

	Let $\varepsilon>0$ be fixed and $\underline{u}^{\varepsilon}(t,x)$ be the solution to the initial-value problem \eqref{eq:sub-beta} with $\underline{u}^\varepsilon(0, x)=\varepsilon\varphi^{per}(x)$. It follows from the parabolic comparison principle and the strong maximum principle that $\underline{u}^\varepsilon(t,x)\gg \varepsilon\varphi^{per}(x)$ for all $t>0$ and $x\in\mathbb R$.
	Next, fix $\tau>0$. Then  $\underline{u}^{ \varepsilon}(\tau, x)> \varepsilon\varphi^R(x)=\underline{u}^{\varepsilon}_0(x)$ and therefore 
	\begin{equation*}
		\underline{u}^{ \varepsilon}(t+\tau ,x)>\underline{u}^{\varepsilon}(t,x), 
	\end{equation*}
	in other words, $\underline{u}^{\varepsilon}(t,x) $ is strictly increasing in time. Thus the limit
	\begin{equation*}
		V^{\varepsilon}:=\lim_{t\to+\infty} \underline{u}^{\varepsilon}(t,x)
	\end{equation*}
	exists and is an equilibrium of \eqref{eq:sub-beta}. It is not difficult to show, by using Serrin's sweeping method, that 
	\begin{equation*}
		V^{\varepsilon}(x)\geq \varepsilon_0\varphi^{per}(x).
	\end{equation*}
	Indeed, define
	\begin{equation*}
		\varepsilon_1:=\sup\{\varepsilon'\geq 0\, |\, \varepsilon'\varphi^{per}(x)\leq V^\varepsilon(x)\}.
	\end{equation*}
	Then clearly $\varepsilon_1\geq \varepsilon$ (by the parabolic comparison principle). If $\varepsilon_1<\varepsilon_0$, then there exists a contact point $x_0$ such that $\varepsilon_1\varphi^{per}(x_0)\leq V^{\varepsilon}(x_0)$ and $\varepsilon_1\varphi^{per}_i(x_0)=V^{\varepsilon}_i(x_0)$ for some $i\in\{1, \ldots, d\}$. We find a contradiction by applying the elliptic strong maximum principle in the $i$-th equation of the system. Since $V^\varepsilon(x)$ is an equilibrium and $V^\varepsilon(x)\geq \varepsilon_0\varphi^{per}(x)$, we conclude that 
	\begin{equation*}
		V^\varepsilon(x)\geq V^{\varepsilon_0}(x).
	\end{equation*}

	Define $p^\beta(x):=V^{\varepsilon_0}(x)$ and let $u_0(x)\leq p^\beta(x)$  be a nontrivial $L$-periodic initial data. Let $u(t,x)$ be the solution of \eqref{eq:sub-beta} satisfying $u(0, x)=u_0(x)$. Fix some $t_0>0$. Then $u(t_0,x)>0$ for all $x\in\mathbb R$. Since $u(t_0,x)$ is periodic in $x$, there exists $\varepsilon>0$ sufficiently small, so that $\varepsilon\varphi^{per}(x)\leq u(t_0, x)$. 
	It follows from the comparison principle that 
	\begin{equation*}
		p^\beta(x)=V^{\varepsilon_0}(x)\leq V^\varepsilon(t,x)\leq \lim_{t\to+\infty} u(t,x) \leq p^\beta(x). 
	\end{equation*}
	Therefore we have found $f^{*-}(x, u):=f^-_\beta(x, u)$ satisfying the requirements of 
	Proposition \ref{prop:sub-sub-barrier}. 
\end{proof}

\subsection{The principal eigenvalue of cooperative systems with periodic coefficients} \label{sec:eigenpb}

In this section we focus on the principal eigenvalue problem for general cooperative elliptic systems with periodic coefficients. For $1\leq i\leq d$ and $\alpha>0$, let ${\sigma}(x)\in C^{1+\alpha}_{per}(\mathbb R, \mathbb R^d)$ be positive, and ${q}(x)\in C^\alpha_{per}(\mathbb R, \mathbb R^d)$ be given. We recall that: 
\begin{equation*}
	\mathcal L_iu:=(\sigma_i(x)u_{x})_x + q_i(x)u_x, 1\leq i\leq d\,; \qquad \mathcal L{u}=\big(\mathcal L_iu_i)_{1\leq i\leq d} ,
\end{equation*}
if $\mathcal L$ is written in divergence form, and 
\begin{equation*}
	\mathcal L_iu:=\sigma_i(x)u_{xx} + q_i(x)u_x, 1\leq i\leq d\,; \qquad \mathcal L{ u}=\big(\mathcal L_iu_i)_{1\leq i\leq d} 
\end{equation*}
if $\mathcal L$ is written in non-divergence form. The particular choice of writing the operator in divergence form or non-divergence form makes little difference for the principal eigenproblem, except when a symmetry property is involved; non-divergence form systems are better suited for systems which are composed of even functions, and systems in divergence form are most convenient when a symmetry for the canonical $H^1$ scalar product is needed.

We start with some elementary properties of the Dirichlet principal eigenvalue.

\begin{proof}[Proof of Proposition \ref{prop:gen-princ-eig}]
	We prove each statement separately.\medskip

	\begin{stepping}
		\step Existence of a principal eigenfunction.

		The existence and uniqueness of a principal eigenfunction associated with $\lambda_1^R$ in the case $R<+\infty$ is an immediate consequence of the Krein-Rutman Theorem. \medskip

		\step Proof of \eqref{eq:gen-princ-eig}.

		Assume by contradiction that there exists $\lambda\in\mathbb R$ and $\vect{\phi}\in C^2((-R, R), \mathbb R^d)\cap C^1([-R, R], \mathbb R^d)$, $ \vect{\phi}>\vect{0}$, such that 
		\begin{equation*}
			-\mathcal L\vect{\phi} - A(x)\vect{\phi} -\lambda\vect{\phi} \geq \vect{0},
		\end{equation*}
		and $\lambda>\lambda_1^R$.
		On the one hand, it follows from Hopf's Lemma that, for each $i\in\{1, \ldots, d\}$,  we have $\frac{\dd \varphi^R_i}{\dd x}(-R)>0 $ and $\frac{\dd \varphi^R_i}{\dd x}(R)<0 $. On the other hand, for each $i\in\{1,\ldots, d\}$, either $\phi_i(\pm R)>0$ or $\pm\phi_i(\pm R)<0$ by Hopf's Lemma.  Therefore, the set  $\{\zeta>0\,|\,\zeta{\varphi}^R\leq {\phi}\}$ is nonempty and admits a supremum $\eta>0$. We remark that, by definition of $\eta$, the inequality in  $\zeta\vect{\varphi}^R\leq \vect{\phi}$ is an equality for a $x_0\in [-R, R]$. Moreover, we have:
		\begin{equation*}
			-\mathcal L({\phi}-\eta{{\varphi}}^R)-A(x)({\phi}-\eta{\varphi}^R)-\lambda({\phi}-\eta{\varphi}^R)\geq (\lambda-\lambda_1)\eta{\varphi}^R\geq{0},
		\end{equation*}
		thus for $1\leq i\leq d$, either $\phi_i(\pm R)-\eta\varphi^R_i(\pm R)>0$ or, by Hopf's Lemma,  $\pm\frac{\dd (\phi_i-\eta\varphi^R_i)}{\dd x}(\pm R)<0 $. In particular, we have $x_0\in(-R, R)$ and there exists $j\in \{1, \ldots, d\}$ such that $\phi_j(x_0)=\eta\varphi^R_j(x_0)$.
		At this point, we have
		\begin{align*}
			0&\geq -\mathcal L(\phi_j-\eta\varphi^R_j)(x_0) - (A(x_0)(\phi-\eta\varphi^R)(x_0))_j\\
			&=\lambda\phi_j(x_0)-\lambda_1^R\varphi^R_j(x_0)=\phi_j(x_0)(\lambda-\lambda_1^R), 
		\end{align*}
		which shows $\lambda\leq\lambda_1^R$. This is a contradiction. We conclude that  $\lambda\leq \lambda_1^R$. Since $\lambda$ and $\phi$ are arbitrary, we have shown  
		\begin{equation*}
				\lambda_1^R\geq\sup\{\lambda\in\mathbb R\,|\,\exists {\phi}\in C^2((-R, R), \mathbb R^d)\cap C^1([-R, R], \mathbb R^d), {\phi}>0, -\mathcal L{\phi} - A(x){\phi} -\lambda{\phi} \geq { 0}\}. 
		\end{equation*}
		Finally, the equality in \eqref{eq:def-princ-Dir} shows the reverse inequality. Statement \ref{item:princ-eig} is proved.\medskip

		\step $R\mapsto \lambda_R$ is decreasing.

		Let $R<R'$. Then, the function $\vect{\varphi}^{R'}$ is a valid test function in the characterization \eqref{eq:gen-princ-eig} of $\lambda_1^R$. Therefore $\lambda_1^{R'}\leq \lambda_1^R$. Since the equalities $\lambda_1(R')=\lambda_1^{R'}$ and $\lambda_1^R=\lambda_1^R$ hold, we have $\lambda_1^R\geq \lambda_1^{R'}$. A direct application of the strong maximum principle shows that equality cannot be achieved.  Statement \ref{item:princ-eig-dec-R} is proved.\medskip

		\step Existence of a principal eigenfunction for $\lambda_1^\infty$ and limit of $\lambda_1^R$.

		Let $R_n\to+\infty$, then $\lambda_1^{R_n} $ is a nonincreasing sequence and thus converges to  $\lambda_1^\infty$. Let $\vect{\varphi}^n$ be the associated principal eigenfunction satisfying $\varphi^n_1(0)=1$. Then, by the classical Schauder estimates and the  Harnack inequality for fully coupled elliptic systems \cite[Theorem 8.2]{Bus-Sir-04}, the sequence $({\varphi}^n)_{n>0}$ converges locally uniformly to a limit ${\varphi}^\infty$ which satisfies $-\mathcal L{\varphi}^\infty - A(x){\varphi}^\infty=\lambda_1^\infty{\varphi}^\infty$ (up to the extraction of a subsequence).

		Let us show that $\lambda_1^\infty=\lambda_1(+\infty)$. Let $ (\lambda, {\phi})$ be such that $-\mathcal L{\phi} - A(x){\phi} -\lambda{\phi} \geq {0}$. Then by \eqref{eq:gen-princ-eig}, for any $n\in\mathbb N$ we have $\lambda \leq \lambda_1^{R_n}$. Taking the limit in the inequality, we find $\lambda_1(+\infty)\leq \lambda_1^\infty$. Since $(\lambda_1^\infty, {\varphi}^\infty)$ satisfies the equality $-\mathcal L{\varphi}^\infty - A(x){\varphi}^\infty=\lambda_1^\infty{\varphi}^\infty$, we have $\lambda_1^\infty \leq \lambda_1(+\infty)$.   
		Statement \ref{item:limitRbig} is proved. \medskip

		\step We prove the minimax formula \eqref{eq:eigprinc-minimax}.

		Using  $\vect{\varphi}^R$ as a test function in \eqref{eq:eigprinc-minimax}, we find that 
		\begin{equation*}
			\lambda_1^R\leq \lambda^*:=\underset{{\phi}>0}{\sup}\,\underset{x\in (-R, R)}{\inf} \underset{1\leq i\leq d}{\min}\frac{(-\mathcal L{\phi} -A(x){\phi})_i}{\phi_i}.
		\end{equation*}
		Let us show the converse inequality. Let $\epsilon>0$ be given, then by definition of $\lambda^*$ there exists  $\vect{\phi}>\vect{0}$ such that 
		\begin{equation*}
			\forall x\in (-R, R), \forall i\in\{1, \ldots, d\}, \frac{(-\mathcal L{\phi}(x)-A(x){\phi}(x))_i}{\phi_i(x)}\geq \lambda^*-\epsilon, 
		\end{equation*}
		and thus for all $x\in\mathbb R$ we have $-\mathcal L{\phi}(x)-A(x){\phi}(x)-(\lambda^*-\epsilon){\phi}(x)\geq {0}$. By \eqref{eq:gen-princ-eig}, we have $\lambda^*-\epsilon\leq\lambda_1^R$. Since $\epsilon>0$ is arbitrary, $\lambda^*\leq \lambda_1^R$. Finally, since $\lambda^*=\lambda_1^R$, the supremum is attained for the principal eigenfunction.  Statement \ref{item:minimax} is proved.
	\end{stepping}
\end{proof}

\begin{proof}[Proof of Lemma \ref{prop:k(lambda)}]
	Statement \ref{item:eigenpairk(lambda)} is a direct consequence of the Krein-Rutman Theorem, and statement \ref{item:minimaxk(lambda)} is a consequence  of Lemma \ref{prop:gen-princ-eig} statement \ref{item:minimax} (by modifying the elliptic operator $\mathcal L$). Therefore we concentrate on the remaining statements. \medskip

	{\bf Proof of Statement \ref{item:k(lambda)concave}.} We first note that the analyticity of $k(\lambda)$ is classical. In the terminology of Kato \cite{Kat-95}, the family $L_\lambda$ is a holomorphic family of type (A) \cite[Paragraph 2.1 on page 375]{Kat-95} and the principal eigenvalue is isolated in the spectrum by the Krein-Rutman Theorem; therefore the spectral projection and the principal eigenvalue are analytic (see  \cite[Remark 2.9 on page 379]{Kat-95}. The analyticity of $k(\lambda)$ and a well-chosen parameterization of the principal eigenvector $\phi^\lambda$ with respect to $\lambda$ follow. 
	
	{Let us prove \eqref{eq:klesssquare}. Let $(k(\lambda), \phi^\lambda)$ be a solution to \eqref{eq:defk(lambda)}. Because $\phi^\lambda$ is periodic, there exists a point $x\in\mathbb R$ and an index $i\in\{1, \ldots, d\}$ such that $\phi^\lambda_i(x)$ minimizes $(y, j)\mapsto \phi^\lambda_j(y)$ with $y\in\mathbb R$ and $j\in\{1, \ldots, d\}$.  Therefore
	\begin{align*}
		k(\lambda) \phi_i^\lambda(x)&= -\big(L_\lambda \phi^\lambda\big)_i - \big(A(x)\phi^\lambda(x)\big)_i \leq  +\lambda q_i(x)\phi^\lambda_i(x) - \lambda^2 \sigma_i(x)\phi^\lambda_i(x)- \sum_{j=1}^d a_{ij}(x)\phi^\lambda_j(x)\\ 
		&\leq \lambda q^\infty\phi^\lambda_i(x) - \lambda^2 \sigma_0\phi^\lambda_i(x) - a_0 \phi^\lambda_i(x), 
	\end{align*}
	where $q^\infty := \sup_{y\in\mathbb R, j\in \{1, \ldots, d\}}|q_j(y)|$, $\sigma_0 := \inf_{y\in \mathbb R, j\in\{1, \ldots, d\}}\sigma_j(x)$ and $a_0 := \inf_{y\in\mathbb R, j\in \{1, \ldots, d\}} a_{j}(y)$.
		}
	{This proves \eqref{eq:klesssquare}.}
	
	Next we follow \cite[Proposition 2.10]{Nad-09} to prove the concavity of $\lambda\mapsto k(\lambda)$. By the assumption that $\vect{\sigma}\in C^{1, \alpha}(\mathbb R,  M_d(\mathbb R))$, we need only consider  the non-divergence case. 

	We first remark that \eqref{eq:minimax-k(lambda)} can be rewritten as:
	\begin{equation*}
		k(\lambda)=\underset{e^{\lambda x}{\psi}(x) \text{ is } L-\text{periodic}}{\underset{{\psi}>{0}}{\max}} \inf_{x\in\mathbb R}\min_{1\leq i\leq d} \frac{(-\mathcal L {\psi}(x) - A(x){\psi}(x))_i}{\psi_i(x)}. 
	\end{equation*}
	Let $\lambda_1<\lambda_2$ and $r\in (0,1)$. Let ${\psi}^1$ and ${\psi}^2 $ be such that $e^{\lambda_1 x}{\psi}^1(x) $ and $e^{\lambda_2 x}{\psi}^2(x)$ are $L$-periodic in $x$. Define further ${z}^1=(\ln(\psi^1_i))_{1\leq i\leq d}$, ${z}^2=(\ln(\psi^2_i))_{1\leq i\leq d}$, ${z}(x)=r{z}^1+(1-r){z}^2(x)$, and finally $\lambda=r\lambda_1 +(1-r)\lambda_2$. Elementary computations then show that ${\psi}(x):=e^{{z}(x)}:=(e^{z_i(x)})_{1\leq i\leq d}$ is a valid test function for $k(\lambda)$ since $e^{\lambda x}{\psi}(x)$ is $L$-periodic. Thus:
	\begin{equation*}
		k(\lambda)\geq \inf_{x\in\mathbb R}\min_{1\leq i\leq d}\frac{(-\mathcal L{\psi} - A(x){\psi})_i}{\psi_i(x)}.
	\end{equation*}
	We compute:
	\begin{align}
		\frac{-\mathcal L_i\psi_i(x)}{\psi_i(x)}&=\frac{1}{\psi_i(x)}\left[\sigma_i(x)\left(-r\frac{\psi^1_{i, xx}(x)}{\psi^1_i(x)} -(1-r)\frac{\psi^2_{i, xx}(x)}{\psi^2_i(x)}+r(1-r)\left(\frac{\psi^1_x(x)}{\psi^1(x)}-\frac{\psi^2_x(x)}{\psi^2(x)}\right)^2\right)\right. \nonumber \\
		&\quad\left. -q_i(x)\left(r\frac{\psi^1_{i,x}(x)}{\psi^1_i(x)}+(1-r)\frac{\psi^2_{i,x}(x)}{\psi^2_i(x)}\right)\right]e^{z_i(x)} \nonumber \\
		&\geq \left[r\frac{-\mathcal L_i\psi^1_i(x)}{\psi^1_i(x)} + (1-r) \frac{-\mathcal L_i\psi^2_i(x)}{\psi^2_i(x)} +\sigma_i(x) r(1-r)\left(\frac{\psi^1_x(x)}{\psi^1(x)}-\frac{\psi^2_x(x)}{\psi^2(x)}\right)^2\right]\frac{e^{z_i(x)}}{\psi_i(x)}. \label{eq:ineqconcave-diff}
	\end{align}
	Then, we remark that 
	\begin{align}
		\underset{j=1}{\overset{d}{\sum}}a_{ij}(x)\frac{\psi_j(x)}{\psi_i(x)}&=\underset{j=1}{\overset{d}{\sum}}a_{ij}(x)\frac{\exp\left(r\ln(\psi^1_j(x))+(1-r)\ln(\psi^2_j(x))\right)}{\exp\left(r\ln(\psi^1_i(x))+(1-r)\ln(\psi^2_i(x))\right)}\nonumber\\
		&=\underset{j=1}{\overset{d}{\sum}}a_{ij}(x)\exp\left(r\ln\left(\frac{\psi^1_j(x)}{\psi^1_i(x)}\right) + (1-r)\ln\left(\frac{\psi^2_j(x)}{\psi^2_i(x)}\right)\right)\nonumber \\
		&\leq\underset{j=1}{\overset{d}{\sum}}a_{ij}(x)\left[r\left(\frac{\psi^1_j(x)}{\psi^1_i(x)}\right) + (1-r)\left(\frac{\psi^2_j(x)}{\psi^2_i(x)}\right)\right]\label{eq:ineqconcave-exp}\\
		&=r\frac{\underset{j=1}{\overset{d}{\sum}}a_{ij}(x)\psi^1_j(x)}{\psi^1_i(x)} + (1-r)\frac{\underset{j=1}{\overset{d}{\sum}}a_{ij}(x)\psi^2_j(x)}{\psi^2_i(x)},\nonumber
	\end{align}
	where the last inequality holds by the convexity of $x\mapsto e^x$. The inequality \eqref{eq:ineqconcave-exp}, together with \eqref{eq:ineqconcave-diff}, implies
	\begin{equation*}
		\frac{(-\mathcal L{\psi} - A(x){\psi})_i}{\psi_i(x)}\geq r \frac{(-\mathcal L{\psi}^1 - A(x){\psi}^1)_i}{\psi^1_i(x)} + (1-r)\frac{(-\mathcal L{\psi}^2 - A(x){\psi}^2)_i}{\psi^2_i(x)}.
	\end{equation*}
	Taking the infimum over $x$ and the supremum over all admissible ${\psi}^1$ and ${\psi}^2$ leads to the concavity of $k(\lambda)$, as in \cite[Proposition 2.10]{Nad-09}. 

	To get the strict concavity, we observe that the particular choice ${\psi}^1(x)={\phi}^{\lambda_1}(x)e^{-\lambda_1x} $,  ${\psi}^2(x)={\phi}^{\lambda_2}(x)e^{-\lambda_2x} $ and consequently ${\psi}(x)={\phi}(x)e^{\lambda x}$, where ${\phi}^1$ and ${\phi}^2 $ are the corresponding solutions to \eqref{eq:defk(lambda)} and $\vect{\phi}(x)= \exp(r\ln({\phi}^1(x)) + (1-r)\ln({\phi}^2(x)))$, also leads to 
	\begin{equation*}
		k(\lambda)\geq\inf_{x\in\mathbb R}\min_{1\leq i\leq d}\frac{-L_\lambda {\phi}(x)-A(x){\phi}(x)}{\phi_i(x)}
		=\inf_{x\in\mathbb R}\min_{1\leq i\leq d}\frac{(-\mathcal L{\psi} - A(x){\psi})_i}{\psi_i(x)}\geq r k(\lambda_1) + (1-r)k(\lambda_2),
	\end{equation*}
	and the first inequality is strict unless $\vect{\phi}= \vect{\phi}^\lambda$ is the periodic principal  eigenfunction associated with $k(\lambda)$. In this case,  recalling \eqref{eq:ineqconcave-diff} and \eqref{eq:ineqconcave-exp}, one must have $\frac{(\psi^1_{i})_{x}}{\psi^1_i}\equiv \frac{(\psi^2_{i})_{x}}{\psi^2_i}$ for all $1\leq i\leq d$, which (after integration) results in $\psi^1\equiv\psi^2$ (up to a multiplicative factor). This is a contradiction.
	
	Statement \ref{item:k(lambda)concave} is proved.\medskip

	{\bf Proof of Statement \ref{item:lambda_1k(lambda)}.} The proof is inspired by \cite[Theorem 2.11]{Nad-09}. Again, since we allow $\vect{q}$ to be non-zero, we need only prove the result in the non-divergence case.

	We first remark that for any $\lambda\in\mathbb R$, the function $ e^{-\lambda x}{\varphi}^\lambda(x)$, where ${\varphi}^\lambda$ solves \eqref{eq:defk(lambda)},  satisfies 
	\begin{equation*}
		-\mathcal L({\varphi}(x)e^{-\lambda x})-A(x)e^{-\lambda x}{\varphi}^\lambda(x)-k(\lambda) e^{-\lambda x}{\varphi}^\lambda(x)=0, 
	\end{equation*}
	hence $\lambda_1^\infty\geq  k(\lambda)$ for all $\lambda\in\mathbb R$. Therefore $\lambda_1^\infty\geq \sup_{\lambda\in\mathbb R}k(\lambda)$.

	Let ${\varphi}>{0}$ be a principal eigenfunction associated with $\lambda_1^\infty$. 
	We let 
	\begin{equation*}
		\vect{\psi}(x):=\left(\frac{\varphi_i(x+L)}{\varphi_i(x)}\right)_{1\leq i\leq d}.
	\end{equation*}
	Then, applying the Harnack inequality for fully coupled elliptic systems \cite[Theorem 8.2]{Bus-Sir-04}  to ${\varphi}$, the function ${\psi}(x)$ is uniformly bounded. We let 
	\begin{equation*}
		m:=\sup_{x\in\mathbb R}\max_{1\leq i\leq d} \psi_i(x)<+\infty.
	\end{equation*}
	Let $k\in\{1, \ldots, d\} $ be such that $\sup_{x\in\mathbb R}\psi_k(x)=m$ and  $x_n$ be a sequence such that $ \lim_{n\to\infty}\psi_k(x_n)=m$. Define $\vect{\psi}^n(x):=\vect{\psi}(x+x_n)$, and $\vect{\varphi}^n(x):=\frac{1}{\varphi_k(x_n)}\vect{\varphi}(x+x_n)$. 

	We remark that ${\psi} $ satisfies the equation:
	\begin{align*}
		\mathcal L_i \psi_i(x)&= \frac{\mathcal L_i\varphi_i(x+L)}{\varphi_i(x)}-\psi_i(x)\frac{\mathcal L_i\varphi_i(x)}{\varphi_i(x)}-2\sigma_i(x)\frac{\varphi_{i,x}(x)}{\varphi_i(x)}\psi_{i, x}(x)\\
		&=-2\sigma_i(x)\frac{\varphi_{i,x}(x)}{\varphi_{i}(x)}\psi_{i,x}(x)+\underset{j=1}{\overset{d}{\sum}}a_{ij}(x)\frac{\varphi_j(x)}{\varphi_i(x)}(\psi_j(x)-\psi_i(x)).
	\end{align*}
	Using the classical elliptic estimates, and up to the extraction of a subsequence, the sequence  $\vect{\psi}_n$ converges locally uniformly to a limit function $\vect{\psi}^\infty$, and $\vect{\varphi}^n $ converges to $\vect{\varphi}^\infty$. Extracting further, there exists $x\in [0, L] $ such that $x_n\to x^\infty\mod L$. Then, the function $\vect{\psi}^\infty$ satisfies the equation:
	\begin{equation*}
		-\mathcal L_i\psi^\infty_i(x) + 2\sigma_i(x+x^\infty)\frac{\varphi^\infty_{i,x}(x)}{\varphi^\infty_i(x)}\psi^\infty_{i, x}(x)-\underset{j=1}{\overset{d}{\sum}}a_{ij}(x+x^\infty)\frac{\varphi^\infty_j(x)}{\varphi^\infty_i(x)}(\psi^\infty_j(x)-\psi^\infty_i(x))=0.
	\end{equation*}
	Then, defining $\tilde{\mathcal L}_i\phi(x):=\mathcal L_i\phi(x) + 2\sigma(x+x^\infty)\frac{\varphi^\infty_{i,x}(x)}{\varphi^\infty_i(x)}\phi$ and the cooperative matrix field $\tilde A(x):=\left(a_{ij}(x+x^\infty)\frac{\varphi^\infty_j(x)}{\varphi^\infty_i(x)}\right)_{1\leq i,j\leq d}$, we have 
	\begin{equation*}
		-\tilde{\mathcal L}{\psi}(x)-\tilde A(x)\tilde\psi(x)\leq 0.
	\end{equation*}
	Since $\tilde A(x)$ is fully coupled, and the global maximum of ${\psi}$ is attained at $x=0$, the strong maximum principle \cite[Proposition 12.1]{Bus-Sir-04} implies ${\psi}^\infty(x)\equiv{\psi}(0)\equiv m$. Then, define $\lambda=-\ln m$. Since $\frac{\varphi^\infty_i(x+L)}{\varphi^\infty_i(x)}=\psi^\infty_i(x)\equiv m$ for $x\in\mathbb R$, the function ${\varphi}^\infty(x)e^{\lambda x}$ is $L$-periodic. Since $-\mathcal L{\varphi}^\infty-A(x){\varphi}^\infty=\lambda_1^\infty{\varphi}^\infty$, we have
	\begin{equation*}
		-L_{\lambda}(e^{\lambda x}{\varphi}^\infty(x))-A(x)e^{\lambda x}{\varphi}^\infty(x)=e^{\lambda x}\left(-\mathcal L{\varphi}^\infty - A(x){\varphi}^\infty\right)=\lambda_1^\infty e^{\lambda x}{\varphi}^\infty,
	\end{equation*}
	hence $\vect{\varphi}^\infty (x) e^{-\lambda x}$ is the periodic principal eigenfunction of $-L_\lambda -A(x)$. By the uniqueness of the periodic principal eigenvalue, $\lambda_1^\infty=k(\lambda)$. This shows $\lambda_1=\max_{\lambda\in\mathbb R}k(\lambda)$, which finishes the proof of Statement \ref{item:lambda_1k(lambda)}
	\medskip

	{\bf Proof of Statement \ref{item:k(lambda)even}:} We first deal with Assumption \ref{as:isotropic} case a), {\it i.e.} the case where both $\vect{\sigma}$ and $A$ are even. We write the proof for $\mathcal L$ written in nondivergence form, however the computations are similar in the case $\mathcal L$ is written in divergence form. Recalling the formula \eqref{eq:minimax-k(lambda)}, 
	\begin{equation*}
		k(\lambda)=\underset{\phi\in C^2_{per}(\mathbb R, \mathbb R^d)}{\underset{{{\phi}>{0}}}{\max}}\underset{x\in \mathbb R}{\inf}\, \underset{1\leq i\leq d}{\min}\frac{(-L_\lambda{\phi} -A(x){\phi})_i}{\phi_i},
	\end{equation*}
	we notice that the set of admissible test functions is invariant by the change of variables $x\leftarrow -x$. More precisely, for any $\vect{\phi}\in C^2_{per}(\mathbb R, \mathbb R^d)$ with $\vect{\phi}>\vect{0}$, there exists $\check{\vect{\phi}}(x):=\vect{\phi}(-x)$ satisfying $\check{\vect{\phi}}\in C^2_{per}(\mathbb R, \mathbb R^d)$, $\check{\vect{\phi}}>0$ and $L_\lambda \vect{\phi}(x)=(\check{L_{-\lambda}}\check{\vect{\phi}})(-x)=L_{-\lambda}\check{\vect{\phi}}(-x)$, so that:
	\begin{equation*}
		\underset{x\in \mathbb R}{\inf}\, \underset{1\leq i\leq d}{\min}\frac{(-L_{\lambda}\vect{\phi}(x) -A(x)\vect{\phi}(x))_i}{\phi_i(x)} =  \underset{x\in \mathbb R}{\inf}\, \underset{1\leq i\leq d}{\min}\frac{(-L_{-\lambda}\check{\vect{\phi}}(-x) -A(x)\check{\vect{\phi}}(-x))_i}{\check{\phi}_i(-x)}.
	\end{equation*}
	Hence, for all $\vect{\phi}\in C^2_{per}(\mathbb R, \mathbb R^d)$ satisfying $\vect{\phi}>\vect{0}$:
	\begin{align*}
		\underset{x\in \mathbb R}{\inf}\, \underset{1\leq i\leq d}{\min}\frac{(-L_\lambda\vect{\phi}(x) -A(x)\vect{\phi}(x))_i}{\phi_i(x)}
		&\leq\underset{\phi'\in C^2_{per}(\mathbb R), \mathbb R^d}{\underset{{\vect{\phi'}>\vect{0}}}{\max}}\underset{x\in \mathbb R}{\inf}\, \underset{1\leq i\leq d}{\min}\frac{(-L_{-\lambda}\vect{\phi'} -A(x)\vect{\phi'})_i}{\phi'_i}=k(-\lambda).
	\end{align*}
	Taking the supremum on $\vect{\phi}$, we get $k(\lambda)\leq k(-\lambda)$. Changing $\lambda $ into $-\lambda$, we similarly get $k(-\lambda)\leq k(\lambda) $, which shows that the equality $k(-\lambda)=k(\lambda)$ holds.\medskip

	Next we consider that Assumption \ref{as:isotropic} case b) holds, {\it i.e.},  $\mathcal L=\mathcal L^d$ is in divergence form \eqref{eq:divergence-form} and  $A$ is a symmetric matrix (i.e. equals its transpose for all $x\in\mathbb R$), then the operator $L_{-\lambda}$ is the adjoint of the operator $L_\lambda$ for the canonical scalar product in $L^2_{per}(\mathbb R)^d$:
	\begin{equation*}
		\langle \vect{\varphi}, \vect{\psi}\rangle:=\langle \vect{\varphi}, \vect{\psi}\rangle_{L^2_{per}(\mathbb R)^d}=\sum_{i=1}^d\int_0^1\varphi_i(x)\psi_i(x)\dd x,
	\end{equation*}
	and it follows easily from the Krein-Rutman Theorem \cite[Theorem 7.C]{Zei-86} that $k(-\lambda)=k(\lambda)$. 
	
	This finishes the proof of Statement \ref{item:k(lambda)even}.
\end{proof}

We are now in a position to prove Proposition \ref{prop:isotropic}.
\begin{proof}[Proof of Proposition \ref{prop:isotropic}]
	We first notice that $\lambda_1^{per} = k(0)$. Since the operator $\mathcal L$ in Proposition \ref{prop:isotropic} satisfies Assumption \ref{as:isotropic}, Lemma \ref{prop:k(lambda)} statement \ref{item:k(lambda)even} implies that $k(\lambda) $ is even, and Lemma \ref{prop:k(lambda)} statement \ref{item:k(lambda)concave} implies that it is concave and continuous. Hence, 
	\begin{equation*}
		\lambda_1^{per}=k(0)=\max_{\lambda\in\mathbb R}k(\lambda).
	\end{equation*}
	Finally, by Lemma \ref{prop:k(lambda)} statement \ref{item:lambda_1k(lambda)} we have $\lambda_1=\max_{\lambda\in\mathbb R}k(\lambda)$, and by Lemma \ref{prop:gen-princ-eig} statement \ref{item:limitRbig} we have $\lambda_1=\lim_{R\to+\infty}\lambda_1^R$. This ends the chain of equalities:
	\begin{equation*}
		\lambda_1^{per}=k(0)=\max_{\lambda\in\mathbb R}k(\lambda)=\lambda_1=\lim_{R\to+\infty}\lambda_1^R,
	\end{equation*}
	which proves Proposition \ref{prop:isotropic}.
\end{proof}
Last, we prove our statements on the formula for the minimal speed.
\begin{proof}[Proof of Proposition \ref{prop:minspeed}]
    Statement $\mathrm{(i)}$ is a direct consquence of the definition of $c^*$ in \eqref{eq:defc*}. Next, by the fact that $k(0)=\lambda_1^{per} <0$ and \eqref{eq:klesssquare}, the infimum on the right-hand side of \eqref{eq:defc*} is attained by some $\lambda^*>0$. Furthermore, since $k(\lambda)$ is strictly concave, $\lambda^*$ is the only solution of the equation $\lambda c^*=k(\lambda)$. This proves $\mathrm{(ii)}$. Statements $\mathrm{(ii)} $ and $ \mathrm{(iii)}$ follow directly from the strict concavity of $\lambda\mapsto k(\lambda)$. The continuity with respect to $A$ follows easily from the sequential characterisation of continuity and the regularising properties of elliptic operators.
\end{proof}

\subsection{Speed of {sublinear} systems}
\label{sec:coop-sys}

Our main goal is to prove Theorem \ref{thm:lin-det}. This theorem follows as a direct consequence of  Lemma \ref{lem:lower-spreading} and Lemma \ref{lem:upper-spreading} below.
\begin{lem}[Lower spreading speed]\label{lem:lower-spreading}                                                              
    Let $\mathcal L$ be a diagonal uniformly elliptic $L$-periodic operator, and $\vect{f}=(f_1, \ldots, f_d)$ be a cooperative {sublinear} nonlinearity satisfying Assumption \ref{as:coop}.   
	Assume that there is a periodic function $p(x)\gg 0$ solution to the equation                    
	\begin{equation*}                  
		p_t-\mathcal Lp =f(x, p(x))  
	\end{equation*}                     
	which attracts every nontrivial periodic initial condition $p(x)\geq u_0(x)\geq \delta\mathbf{1}\gg 0$.           
                                                                                                                           
	Let $u(t, x)$ be a solution of \eqref{eq:gen-syst} associated with an initial condition which is positive on a half-line
	 \begin{equation*}
		 \inf_{x\leq -K}\min_{1\leq i\leq d}u_i(0, x)>0
	 \end{equation*} for some $K>0$. Then  for any  $c<c^*$, we have          
	\begin{equation*}                           
		\limsup_{t\to+\infty}\sup_{x\leq ct} \Vert u(t,x)-p(x)\Vert =0,  
	\end{equation*}                                                                          
	where $ c^*=\inf_{\lambda>0}-\frac{k(\lambda)}{\lambda} $ is defined by \eqref{eq:defc*} in Proposition \ref{prop:minspeed}. 
\end{lem}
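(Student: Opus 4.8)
The plan is to exploit that in this lemma the nonlinearity $\vect f$ is genuinely cooperative and possesses the attracting periodic steady state $p$, so that \eqref{eq:gen-syst} falls under the theory of order-preserving recursions on a periodic habitat in the spirit of Weinberger \cite{Wei-02} (see also Liang and Zhao \cite{Lia-Zha-10}). I would fix $c<c^*$ and, using the continuity of $c^*$ in $A$ (Proposition \ref{prop:minspeed}(iv)) together with Assumption \ref{as:coop}, choose $\delta\in(0,1)$ so small that $c<c^*_\delta:=\inf_{\lambda>0}\bigl(-k_\delta(\lambda)/\lambda\bigr)$, where $k_\delta$ is the function of Proposition \ref{prop:k(lambda)} associated with $A^\delta$; I let $\eta>0$ be the associated constant, so that $\vect f(x,u)\geq A^\delta(x)u$ whenever $0<u$ and $\Vert u\Vert\leq\eta$, and recall that by Proposition \ref{prop:minspeed}(i) applied to $A^\delta$ one has $k_\delta(\lambda)+c\lambda<0$ for all $\lambda>0$ (also $\lambda_1^{per}<0$ is in force, since $c^*$ is finite). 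After a time shift (strong maximum principle for cooperative fully coupled systems) and after replacing $u(0,\cdot)$ by $\min(u(0,\cdot),p)$, which only decreases $u$ and stays in the forward-invariant interval $[0,p]$, I may assume $0\leq u\leq p$ and $u(0,x)\geq\sigma\mathbf 1$ for $x\leq -K$, with some $\sigma\in(0,\eta]$.

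The core step is a time discretisation. I would set $\tau:=L/c$, let $S\colon[0,p]\to[0,p]$ be the time-$\tau$ solution operator of \eqref{eq:gen-syst} (well defined and order preserving since $\vect f$ is cooperative and $0,p$ are stationary), and put $Q[w](\cdot):=S[w](\cdot+L)$. Since $\mathcal L$ and $\vect f$ are $L$-periodic, $Q$ commutes with translations by $L$, is order preserving, continuous, and compact by parabolic smoothing; moreover $Q[0]=0$, $Q[p]=p$, $p$ attracts every nontrivial element of $[0,p]$ under iteration of $Q$, and $0$ is unstable because $\lambda_1^{per}<0$. A direct computation using the $L$-periodicity of the equation shows that $u_n(x):=u(n\tau,x+nL)$ obeys $u_{n+1}=Q[u_n]$, with $u_0\geq\sigma\mathbf 1$ on $\{x\leq -K\}$. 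For $0\leq w\leq\eta\mathbf 1$ one has $Q[w]\geq Q^{\mathrm{lin}}[w]:=T_\tau[w](\cdot+L)$, where $T_\tau$ is the time-$\tau$ semigroup of $u_t=\mathcal Lu+A^\delta(x)u$; by Proposition \ref{prop:k(lambda)}, $T_\tau$ multiplies the profile $e^{-\lambda x}\varphi^{\lambda}_\delta(x)$ (with $\varphi^\lambda_\delta\gg0$) by $e^{-k_\delta(\lambda)\tau}$, so $Q^{\mathrm{lin}}$ multiplies it by $e^{-k_\delta(\lambda)\tau-\lambda L}$, whence the linear recursion has rightward spreading speed per step $\inf_{\lambda>0}\bigl(\tau(-k_\delta(\lambda))/\lambda-L\bigr)=\tau c^*_\delta-L=L(c^*_\delta/c-1)>0$, the strict positivity being exactly the inequality $c<c^*_\delta$. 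Adapting Weinberger's lower-bound argument for order-preserving recursions to this periodic continuous-space setting --- where cooperativity and full coupling of $A^\delta$ give, through the Krein-Rutman/Perron-Frobenius structure of $L_\lambda$, linear determinacy for $Q^{\mathrm{lin}}$, so that $Q$ spreads at least at the above positive speed --- would give $u_n\to p$ uniformly on $\{x\leq -K+\rho n\}$ for some $\rho>0$ with $\rho+L\geq Lc^*_\delta/c$. Undoing the substitution $u(n\tau,y)=u_n(y-nL)$ and interpolating over $t\in[n\tau,(n+1)\tau]$ with parabolic estimates would then yield $u(t,x)\to p(x)$ uniformly on $\{x\leq c^*_\delta t\}\supseteq\{x\leq ct\}$, hence, since $u\leq p$, the claimed $\limsup_{t\to+\infty}\sup_{x\leq ct}\Vert u(t,x)-p(x)\Vert=0$ (in the application to Theorem \ref{thm:lin-det}, $u$ automatically lies in $[0,\eta\mathbf 1]\subseteq[0,p]$ by Proposition \ref{prop:sub-sub-barrier}, so no extra assumption on $u$ is needed).

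The main obstacle is the adaptation in the second paragraph: one must reprove, in the periodic continuous-space setting, the part of Weinberger's recursion theory that builds a self-regenerating, compactly supported subsolution below the recursion's spreading speed --- this is precisely where, for $c<c^*$, the lack of a genuine exponential travelling profile forces the discrete-time, period-lattice construction rather than a direct moving-frame argument --- and one must reconcile this discrete per-step speed, measured modulo the period lattice $L\mathbb Z$, with the continuous formula \eqref{eq:defc*}, using the analyticity and strict concavity of $k_\delta$ from Proposition \ref{prop:k(lambda)} and letting $\delta\to0$ (so $c^*_\delta\to c^*$) in order to reach every $c<c^*$, not merely $c<c^*_\delta$.
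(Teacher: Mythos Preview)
Your approach is essentially the same as the paper's: reduce to Weinberger's spreading-speed theory for order-preserving periodic recursions \cite{Wei-02}, compare with the linear recursion built from $A^\delta$, and let $\delta\to 0$. The paper's execution differs in one useful way that resolves precisely the obstacle you flag at the end. Rather than working with the vector-valued recursion and re-proving the self-regenerating subsolution construction, the paper represents a function $\vect u:\mathbb R\to\mathbb R^d$ as a scalar function on the ``habitat'' $\mathcal H:=\bigcup_{1\leq i\leq d}\mathbb R\times\{i\}\subset\mathbb R^2$, setting $u(x,i):=u_i(x)$. System \eqref{eq:gen-syst} then becomes a scalar recursion on $\mathcal H$, periodic under the one-dimensional lattice $L\mathbb Z\times\{0\}$, and the paper simply checks Weinberger's Hypothesis~2.1 and invokes \cite[Theorems~2.1 and~2.4]{Wei-02} (via the partially-bounded-habitat extension in \cite[Section~8]{Wei-02}) to obtain $c^*(Q)\geq c^*_\delta$, then sends $\delta\to 0$. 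This sidesteps any need to re-derive the discrete construction you mention, and also makes the specific choice $\tau=L/c$ and the moving-frame reindexing $u_n(x)=u(n\tau,x+nL)$ unnecessary: one applies Weinberger's abstract spreading result for an arbitrary fixed $\tau>0$.
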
                                                                                                                  
\begin{proof}                                                                                                              
		Let $\mathcal H:=\underset{1\leq i\leq d}{\bigcup}\mathbb R\times \{i\}\subset \mathbb R^2$. We remark that any continuous function $\vect{u}:\mathbb R\times\mathbb R\to\mathbb R^d$ can be represented as a function $u:\mathbb R\times\mathcal H\to \mathbb R$ by letting $u(t, x, i)=u_i(t, x)$. Hence a vector function can be represented by a scalar function on the  
		habitat $\mathcal H$. In particular, system \eqref{eq:gen-syst} makes sense as an equation on $u\in BUC(\mathcal H)$.
                                                                                                                           
		Let $\delta>0$, $\tau>0$ be given and let  $Q:BUC(\mathcal H)\to BUC(\mathcal H)$ be defined by            
		\begin{equation*}                                                                                          
			Q[v_0](x):=v(\tau, x).       
		\end{equation*}                                                                                            
		where $v(t,x)$  is the solution to \eqref{eq:gen-syst} satisfying $v(0, x)=v_0(x)$. It follows from standa rd arguments that $Q$ is monotone.    
                                                                                                                           
		Let us now check that the Hypothesis 2.1 in \cite{Wei-02} are satisfied for $\mathcal H$ and $Q^\delta$. L et us mention at this point that our setting is a little different from the one of the paper of Weinberger \cite{Wei-02}, since $\mathcal H$ is not left invariant by a 2-dimensional lattice, as it is bounded in one direction. In our case, $\mathcal H$ is periodic with respect to the 1-dimensional lattice $L\mathbb Z\times\{0\}$ (for which $Q^\delta$ is periodic). However, as stated in Section 8 of \cite{Wei-02} (Partially bounded habitats), all the results in \cite{Wei-02} can be adapted in directions $\xi$ which are not orthogonal to all members of our lattice $L\mathbb Z\times\{0\}$ (which are the directions in which the spreading happens). In the rest of the proof we will use those results. 
                                                                                                                           
		Let us now check point by point that Hypothesis 2.1 is satisfied:                                          
		\begin{enumerate}[label={\it\roman*.}]                                                                    
			\item $\mathcal H$ is not contained in any 1-dimensional subset of $\mathbb R^2$.                  
			\item $Q$ is monotone because $f$ is quasi-monotone. 
			\item $\mathcal H$ is invariant under translation by elements of $ L\mathbb Z\times\{0\}$, and $Q^ \delta$ is periodic with respect to $ L\mathbb Z\times\{0\}$. Moreover there is a bounded subset $P:=[0, 1)\times \{1, \ldots. d\}\subset \mathcal H$, such that any $x\in\mathcal H$ has a unique representation of the form $x=l+p$ with $l\in L\mathbb Z\times\{0\}$ and $p\in P$. 
			\item $Q(0)=\pi_0:\equiv 0$, and there exists $\pi_1:=p(x)>0$    
				which is the unique nonnegative nontrivial fixed point of $Q^\delta$. 
			\item $Q$ is  continuous.                                                                          
			\item Due to the classical parabolic estimates, $Q$ is sequentially compact for the topology of th e local uniform convergence on $BUC(\mathcal H)$.
		\end{enumerate}                                                                                            
                                                                                                                           
		In particular, \cite[Theorem 2.1]{Wei-02} applies to $Q$ and there exists a spreading speed $c^*$ associated with $Q^\delta$. Moreover, because of Assumption \ref{as:coop}, \cite[Theorem 2.4]{Wei-02} implies 
		\begin{equation*}                                                                                          
			c^*(Q)\geq \inf_{\lambda>0}\frac{-k^\delta(\lambda)}{\lambda} =: c_\delta^*,                      
		\end{equation*}
		where $(k^\delta(\lambda), \varphi^{\delta, \lambda}(x))$ is the periodic principal eigenvalue solution to
		\begin{equation*}
			-e^{\lambda x}\mathcal L(\varphi^{\delta, \lambda}(x)e^{-\lambda x})-A^\delta(x)\varphi^{\delta, \lambda}(x) = k^\delta(\lambda)\varphi^{\delta, \lambda}(x).
		\end{equation*}

		Since $A^\delta(x)\to A(x) $ as $\delta\to 0$, it follows from classical arguments that $c^*_\delta\to c^*$ as $\delta\to 0$.
		
		This completes the proof of Lemma \ref{lem:lower-spreading}.
\end{proof}

Let us turn to the  upper estimates of the spreading speed:
\begin{lem}[Upper spreading speed]\label{lem:upper-spreading}
    Let $\mathcal L$ be a uniformly elliptic $L$-periodic operator, and $\vect{f}$ be a $L$-periodic {sublinear} nonlinearity. Assume that  $D\vect{f}(x, \vect{0})=:A(x)$ is cooperative and fully coupled. Then, for any $c>c^*(\mathcal L+A(x))$,  we have:
	\begin{equation*}
		\limsup_{t\to+\infty}\sup_{x\geq ct} \max_{1\leq i\leq d}u_i(t,x)=0,
	\end{equation*}
	for any $\vect{u}(t,x)$  solution to \eqref{eq:gen-syst}, provided there is $K>0$ such that $u(0, x)\equiv 0$ for all $x\geq K$. 
\end{lem}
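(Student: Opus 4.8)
The plan is the classical upper-barrier argument of KPP theory, adapted to periodic cooperative systems. The starting point is that sublinearity turns any nonnegative solution $\vect u$ of \eqref{eq:gen-syst} into a subsolution of the linear cooperative system $\vect w_t=\mathcal L\vect w+A(x)\vect w$: indeed, since $\vect f(x,\vect u)\le D\vect f(x,\vect 0)\vect u=A(x)\vect u$ for $\vect u\ge\vect 0$ (item (iii) of Definition \ref{def:KPP}), we have $\vect u_t-\mathcal L\vect u-A(x)\vect u=\vect f(x,\vect u)-A(x)\vect u\le\vect 0$. As $A(x)$ is cooperative, this linear system obeys a comparison principle, and it then suffices to dominate it on $\{x\ge ct\}$ by an explicit, exponentially small supersolution built from the periodic principal eigenpair $(k(\lambda_0),\vect\phi^{\lambda_0})$ of \eqref{eq:defk(lambda)}.

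So fix $c>c^*:=c^*(\mathcal L+A(x))=\inf_{\lambda>0}\big(-k(\lambda)/\lambda\big)$. By definition of the infimum there are $\lambda_0>0$ and $c'$ with $-k(\lambda_0)/\lambda_0<c'<c$, hence $\lambda_0 c'+k(\lambda_0)>0$. Put $\overline{\vect w}(t,x):=e^{-\lambda_0(x-c't)}\vect\phi^{\lambda_0}(x)$. A direct computation, using $L_{\lambda_0}\vect\phi=e^{\lambda_0 x}\mathcal L\big(e^{-\lambda_0 x}\vect\phi\big)$ and then the eigenrelation \eqref{eq:defk(lambda)}, gives
\begin{equation*}
\overline{\vect w}_t-\mathcal L\overline{\vect w}-A(x)\overline{\vect w}=\big(\lambda_0 c'+k(\lambda_0)\big)\,\overline{\vect w}\ \ge\ \vect 0,
\end{equation*}
so $\overline{\vect w}$ is a supersolution of the linear cooperative system, and it travels at the speed $c'<c$.

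Now I would compare. Since $\vect\phi^{\lambda_0}$ is $L$-periodic, continuous and $\gg\vect 0$ (Proposition \ref{prop:k(lambda)}), one has $0<m:=\min_{x,i}\phi^{\lambda_0}_i(x)\le\max_{x,i}\phi^{\lambda_0}_i(x)=:\Phi<\infty$. Because $\vect u_0$ is bounded and $\vect u_0\equiv\vect 0$ on $[K,+\infty)$, choosing $M:=\|\vect u_0\|_\infty e^{\lambda_0 K}/m$ ensures $M\overline{\vect w}(0,x)\ge\vect u_0(x)$ for every $x$ (for $x\le K$ because $Me^{-\lambda_0 x}\phi^{\lambda_0}_i(x)\ge\|\vect u_0\|_\infty$, and for $x\ge K$ because $\vect u_0=\vect 0$). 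The difference $\vect v:=M\overline{\vect w}-\vect u$ is then nonnegative at $t=0$, a supersolution of the linear cooperative system (again by sublinearity of $\vect f$), and, on every slab $[0,T]\times\mathbb R$, bounded below, because $\overline{\vect w}\ge\vect 0$ and $\vect u$ is bounded there. The comparison principle for cooperative parabolic systems then yields $\vect v\ge\vect 0$, i.e. $\vect u(t,x)\le Me^{-\lambda_0(x-c't)}\vect\phi^{\lambda_0}(x)$ for all $t\ge0$ and $x\in\mathbb R$. Restricting to $x\ge ct$, where $x-c't\ge(c-c')t$, we get $\max_{1\le i\le d}u_i(t,x)\le M\Phi\,e^{-\lambda_0(c-c')t}$, whose right-hand side tends to $0$ as $t\to\infty$ since $c'<c$; this is exactly the claim.

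The step I expect to require the most care is the comparison just invoked: $\overline{\vect w}$ grows exponentially as $x\to-\infty$, so one is comparing on the whole line with an unbounded supersolution. This is handled by the boundedness remark used above, namely that $\vect u$ is globally defined and bounded on each slab $[0,T]\times\mathbb R$ — itself a consequence of the fact that $\vect u$ is dominated on the whole line by the linear cooperative semigroup $e^{t(\mathcal L+A)}$ applied to $\vect u_0$, which is bounded on finite time intervals. Then $\vect v=M\overline{\vect w}-\vect u\ge-\vect u$ is bounded below and of at most exponential spatial growth, hence lies in the uniqueness class for the cooperative parabolic system, and the maximum principle applies. (One could alternatively argue through the general spreading-speed machinery for monotone semiflows of \cite{Wei-02}, as in the proof of Lemma \ref{lem:lower-spreading}, applied to the linear equation, but the explicit barrier is shorter.)
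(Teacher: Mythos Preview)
Your proof is correct and follows the same approach as the paper: dominate $\vect u$ by the traveling exponential supersolution $Me^{-\lambda(x-c't)}\vect\phi^\lambda(x)$ of the linearized cooperative system, using sublinearity and the comparison principle. The paper's proof is a one-sentence sketch with $\lambda=\lambda_*$ the minimizer of $-k(\lambda)/\lambda$; your version, with an explicit intermediate speed $c'<c$ and the discussion of the comparison on the unbounded domain, is more careful, but the substance is identical.
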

\begin{proof}
	The result is an immediate consequence of the comparison principle applied to ${u}(t,x)$ and the function $M{\varphi}^{\lambda_*}(x)e^{-\lambda_*(x-ct)}$, where $\lambda_*>0$ is a minimizer for $-\frac{k(\lambda)}{\lambda}$, ${\varphi}^{\lambda_*}$ is the associated $1$-periodic principal eigenfunction, and $M>0$ is a large constant satisfying 
	\begin{equation*}
		\vect{u}_0(x)\leq M\vect{\varphi}^{\lambda_*}(x)e^{-\lambda_*x}.\qedhere
	\end{equation*}
	
\end{proof}

\begin{proof}[Proof of Theorem \ref{thm:lin-det}]
Let $c^*$ be the number defined as 
	\begin{equation*}
		c^*:=\inf_{\lambda>0} \frac{-k(\lambda)}{\lambda},
	\end{equation*}
	where $k(\lambda)$ is defined in Lemma \ref{prop:k(lambda)} Statement \ref{item:eigenpairk(lambda)}  
	with $A(x)=Df(x,0 )$. Recall that $c^*$ is well-defined by Proposition \ref{prop:minspeed}.

	Let $u_0\in BUC(\mathbb R, \mathbb R^d_+)$ be given and $u(t, x)$ be the solution to \eqref{eq:gen-syst} satisfying $u(0, x)=u_0(x)$. We assume that $u_0(x) = 0 $ for all $x\geq 0$ and that
	\begin{equation*}
		\liminf_{x\to-\infty} \Vert u_0(x)\Vert>0.
	\end{equation*}

	It has been shown in Lemma \ref{lem:upper-spreading} that
	\begin{equation*}
		\limsup_{t\to+\infty}\sup_{x\geq ct} \Vert u(t,x)\Vert_\infty=\limsup_{t\to+\infty}\sup_{x\geq ct} \max_{1\leq i\leq d}u_i(t,x)=0,
	\end{equation*}
	therefore Statement (ii) in Definition \ref{def:prop-speed} holds. 

	Let $\eta>0$ and $f^-$ be as in Definition \ref{def:super-monotone}. Recall that, by Proposition \ref{prop:sub-sub-barrier},  $f^-$ can be chosen so that the equation $-\mathcal Lp=f^-(x,p)$ admits a positive periodic fixed point $p(x)$ which attracts any periodic initial condition $0\leq u_0(x)\leq p(x)$, and $\Vert p\Vert_{L^\infty(\mathbb R)^d}\leq \eta$.
	
	We define  $\underline{u}(t,x)$ as the unique solution to 
	\begin{equation}\label{eq:ubeta}
		\left\{\begin{aligned}\relax
			&\underline{u}_t(t,x)-\mathcal L\underline{u}(t,x)=f^{-}(x, \underline{u}(t,x)), \\
			&\underline{u}(0, x)=\min\big({u}_0(x), p(x)\big). 
		\end{aligned}\right.
	\end{equation}
	By Proposition \ref{prop:sub-sub-barrier}, the interval 
	\begin{equation*}
		[0, p(x)]:=\{v(x)\in BUC(\mathbb R)^d\,|\,0\leq v(x)\leq p(x)\}, 
	\end{equation*}
	is positively invariant by the semiflow generated by \eqref{eq:ubeta}. Therefore $\underline{u}(t, x)\leq p(x)$ for all $t\geq 0 $ and $x\in\mathbb R$. By Theorem \ref{thm:super-monotone}, we have then 
	\begin{equation*}
		u(t,x)\geq \underline{u}(t,x) \text{ for all } t\geq 0 \text{ and } x\in\mathbb R.
	\end{equation*}
	Applying Lemma \ref{lem:lower-spreading} to $u$ and  $\underline{u}$, we find that 
	\begin{equation*}
		\lim_{t\to +\infty} \inf_{x\leq ct} u(t,x) \geq \lim_{t\to +\infty} \inf_{x\leq ct} \underline{u}(t,x) \geq p(x)\geq \delta \mathbf{1}\gg 0, 
	\end{equation*}
	for all $c<c^*$ and $\delta>0$ sufficiently small, hence we have shown Item (i) in Definition \ref{def:prop-speed}.
\end{proof}

\subsection{Traveling waves: proof of  Theorem \ref{thm:TW}}
\label{sec:lin-det}

\label{sec:proofs-gen}

We now prove Theorem \ref{thm:TW} and the existence of traveling waves. The proof is done by constructing an upper barrier and a lower barrier. The construction of the upper barrier is rather simple as the following Lemma shows: 
\begin{lem}[Upper barrier]\label{lem:upper-barrier}
	Let $\lambda>0$ and $c>0$ be such that $\lambda c+k(\lambda)\geq 0$. Define
	\begin{equation}\label{eq:upperu}
		\overline{u}(t,x):= e^{-\lambda(x-ct)}\vect{\varphi}_\lambda(x)
	\end{equation}
	where $\vect{\varphi}_\lambda$ is the solution to \eqref{eq:defk(lambda)} associated with $\lambda$, and satisfies $\Vert \vect{\varphi}_\lambda\Vert_{L^\infty}=1$. Any solution $u(t, x)$ of \eqref{eq:gen-syst} starting from below $\overline{u}(t,x)$ at $t=0$ stays below $\overline{u}(t,x)$ at later times. More precisely, if the inequality
	\begin{equation*}
		0\leq u(0, x)\leq \overline{u}(0,x)  
	\end{equation*}
	holds componentwise for all $x\in\mathbb R$, then for all $t>0$ the inequality
	\begin{equation*}
		0\leq u(t,x)\leq \overline{u}(t,x)
	\end{equation*}
	holds componentwise for all $x\in\mathbb R$.
\end{lem}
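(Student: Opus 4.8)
The strategy is to route the comparison through the \emph{linearised} cooperative system. Set $A(x) := Df(x,0)$, which by hypothesis is cooperative and fully coupled, and consider the linear system $w_t = \mathcal{L}w + A(x)w$. The plan is to show that $\overline{u}$ is a supersolution of this linear system, that any nonnegative solution $u$ of \eqref{eq:gen-syst} is a subsolution of it, and then to conclude by comparison. This detour through the linearisation is essential because \eqref{eq:gen-syst} itself need not satisfy a comparison principle, whereas cooperativity and full coupling of $A(x)$ do provide one for the linear problem.

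First I would check that $\overline{u}$ is a supersolution. Writing $\overline{u}(t,x) = e^{\lambda c t}\,\big(e^{-\lambda x}\varphi_\lambda(x)\big)$ gives $\partial_t\overline{u} = \lambda c\,\overline{u}$, while, from the definition $L_\lambda\phi = e^{\lambda x}\mathcal{L}(e^{-\lambda x}\phi)$ and from \eqref{eq:defk(lambda)}, one has $\mathcal{L}(e^{-\lambda x}\varphi_\lambda) = e^{-\lambda x}L_\lambda\varphi_\lambda = e^{-\lambda x}\big(-A(x)\varphi_\lambda - k(\lambda)\varphi_\lambda\big)$; since the scalar factor $e^{\lambda c t - \lambda x}$ commutes with $A(x)$, this gives $\mathcal{L}\overline{u} = -A(x)\overline{u} - k(\lambda)\overline{u}$, hence
\begin{equation*}
	\partial_t\overline{u} - \mathcal{L}\overline{u} - A(x)\overline{u} = \big(\lambda c + k(\lambda)\big)\,\overline{u} \;\geq\; 0,
\end{equation*}
because $\lambda c + k(\lambda) \geq 0$ and $\overline{u} = e^{-\lambda(x-ct)}\varphi_\lambda \gg 0$ ($\varphi_\lambda$ being the componentwise positive $L$-periodic eigenfunction associated with $k(\lambda)$). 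Next, $u$ is a nonnegative subsolution of the same system: since $f(x,0) = 0$ and $f$ is cooperative near $\partial\mathbb{R}^d_+$, the nonnegative cone is invariant, so $u(t,x) \geq 0$ for all $t$ — which is the lower inequality in the conclusion — and the sublinearity bound $f(x,u) \leq Df(x,0)u$ from Definition \ref{def:KPP} yields $\partial_t u - \mathcal{L}u - A(x)u = f(x,u) - A(x)u \leq 0$.

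It remains to compare the subsolution $u$ and the supersolution $\overline{u}$ on $[0,T]\times\mathbb{R}$ for every $T>0$, starting from the ordering $u(0,\cdot)\leq\overline{u}(0,\cdot)$, and to let $T\to+\infty$. I expect this last step to be the only genuine obstacle, since \eqref{eq:gen-syst} has no comparison principle of its own — which is why everything is reduced to the linear cooperative system — and since the comparison must be carried out on the \emph{unbounded} line, where one must also handle the (possibly positive) zero-order coefficients. The remedy is a standard Phragm\'en--Lindel\"of argument: one works with $e^{-Lt}\big(\overline{u} - u + \varepsilon e^{Kt}\mathbf{1}\big)$, choosing $K$ large enough that $\varepsilon e^{Kt}\mathbf{1}$ is itself a supersolution of the linear cooperative system (so the whole expression is a supersolution, strictly positive at $t=0$) and $L$ large enough that every row sum of $A(x) - LI$ is negative; then a negative value of some component would force an interior negative minimum, which the cooperative structure together with the negative row sums excludes via the parabolic maximum principle. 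Letting $\varepsilon\to0$ gives $\overline{u}\geq u$ on $[0,T]\times\mathbb{R}$, hence on $[0,+\infty)\times\mathbb{R}$, which together with $u\geq0$ is the assertion.
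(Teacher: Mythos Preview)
Your proof is correct and follows essentially the same approach as the paper: both route the comparison through a linear cooperative system, observing that $\overline{u}_t-\mathcal{L}\overline{u}-A(x)\overline{u}=(\lambda c+k(\lambda))\overline{u}\geq 0$ and that $u$ is a subsolution via the sublinearity $f(x,u)\leq A(x)u$, then invoking the comparison principle for cooperative parabolic systems. Your Phragm\'en--Lindel\"of paragraph supplies detail on the unbounded-domain comparison that the paper simply cites.
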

\begin{proof}
	We remark that	
	\begin{equation}\label{eq:supersol-u}
		\overline{u}_t(t,x)-\mathcal L\overline{u}(t,x)=\big(\lambda c +A(x)+k(\lambda)\big)\overline{u}(t,x)
	\end{equation}
	Since $\lambda c+ k(\lambda) \geq 0$, we have
	\begin{align*}
		{u}_t(t,x)-\mathcal L{u}(t,x)=f(x, u(t,x))\leq  A(x)u(t,x)\leq \big(\lambda c+k(\lambda)+A(x)\big)u(t,x).
	\end{align*}
	Therefore $u(t,x)$ is a subsolution to {\eqref{eq:supersol-u}}. By the comparison principle for cooperative  parabolic systems, we have
	\begin{equation*}
		u(t, x)\leq \overline{u}(t,x)
	\end{equation*}
	for all $t>0$ whenever $u(0, x)\leq \overline{u}(0,x)$.
\end{proof}
Next we construct a lower barrier. The function $\xi$ in the following Lemma will play an important role in this construction for the case $c>c^*$.
\begin{lem}\label{lem:lower-barrier}
	Under the assumptions of Theorem \ref{thm:TW}, let $c>c^*$ and $\lambda>0$ be such that $\frac{-k(\lambda)}{\lambda}=c$. 
	Define 
	\begin{equation}\label{eq:xi}
		\xi(t, x)=e^{-\lambda(x-ct)}\varphi_\lambda(x)-\omega e^{-\mu(x-ct)}\varphi_\mu(x), 
	\end{equation}
	where $\mu>0$ satisfies $k(\mu)+\mu c>0$ and $\lambda<\mu<\lambda(1+\beta)$ where $\beta>0 $ is the constant defined in the assumptions of Theorem \ref{thm:TW}.
	There exists $\omega^*>0$ such that, for all $\omega\geq \omega^*$, the function $\xi(t,x)$ satisfies the differential inequality
	\begin{equation}\label{eq:subsol-xi}
		(\xi_i)_t(t,x)-\mathcal L_i \xi_i(t,x)\leq f^-_i(x, \xi(t,x)) 
	\end{equation}
	as well as $\Vert \xi(t,x)\Vert_\infty \leq \eta$,  whenever there is $i\in\{1,\ldots, d\}$ such that $\xi_i(t,x)>0$,  where $f^-$ and $\eta$ are as in Definition \ref{def:super-monotone}. 

	In particular, if $\omega>\omega^*$, any solution $u(t,x)$ of \eqref{eq:gen-syst} satisfying the inequality $u(0, x)\geq \max\big(\xi(0,x), 0\big)$ also satisfies 
	\begin{equation}\label{eq:super-xi}
		u(t,x)\geq \max\big(\xi(t,x), 0\big) \text{ for all }t>0 \text{ and } x\in\mathbb R.
	\end{equation}
\end{lem}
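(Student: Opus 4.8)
The plan is to verify the differential inequality \eqref{eq:subsol-xi} directly by computing the parabolic operator applied to each term of $\xi$ and then to invoke the comparison principle of Theorem \ref{thm:super-monotone} (with the roles of $u$ and $v$ exchanged) to conclude \eqref{eq:super-xi}. First I would record that, by the definition of $k(\mu)$ and $k(\lambda)$ and the fact that $e^{-\lambda(x-ct)}\varphi_\lambda$, $e^{-\mu(x-ct)}\varphi_\mu$ are, up to the exponential time factor, principal eigenfunctions,
\begin{align*}
	\partial_t\big(e^{-\lambda(x-ct)}\varphi_\lambda(x)\big)-\mathcal L\big(e^{-\lambda(x-ct)}\varphi_\lambda(x)\big)&=\big(\lambda c+k(\lambda)\big)e^{-\lambda(x-ct)}\varphi_\lambda(x)+A(x)e^{-\lambda(x-ct)}\varphi_\lambda(x),\\
	\partial_t\big(e^{-\mu(x-ct)}\varphi_\mu(x)\big)-\mathcal L\big(e^{-\mu(x-ct)}\varphi_\mu(x)\big)&=\big(\mu c+k(\mu)\big)e^{-\mu(x-ct)}\varphi_\mu(x)+A(x)e^{-\mu(x-ct)}\varphi_\mu(x),
\end{align*}
so that, since $\lambda c+k(\lambda)=0$ by the choice of $\lambda$,
\begin{equation*}
	\xi_t-\mathcal L\xi=A(x)\xi(t,x)-\omega\big(\mu c+k(\mu)\big)e^{-\mu(x-ct)}\varphi_\mu(x).
\end{equation*}
Because $\mu c+k(\mu)>0$ and $\varphi_\mu\gg 0$, the correction term is a negative vector, so $\xi_t-\mathcal L\xi\leq A(x)\xi(t,x)=Df^-(x,0)\xi(t,x)$ componentwise (using property \ref{item:samediff} of Definition \ref{def:super-monotone}).

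Next I would absorb the gap between $Df^-(x,0)\xi$ and $f^-(x,\xi)$ using the Hölder-type bound \eqref{eq:conv-f}. On the set where some component $\xi_i(t,x)>0$, one has $\xi(t,x)\leq e^{-\lambda(x-ct)}\varphi_\lambda(x)$ componentwise (since the subtracted term is nonnegative), while $\xi_i>0$ forces $e^{-\lambda(x-ct)}\geq \omega e^{-\mu(x-ct)}(\varphi_\mu)_i/(\varphi_\lambda)_i$, i.e. $e^{-(\mu-\lambda)(x-ct)}\leq C/\omega$ for a constant $C$ depending only on the eigenfunctions; hence $x-ct$ is bounded below and $\|\xi(t,x)\|_\infty\leq C' e^{-\lambda(x-ct)}$ is small when $\omega$ is large, in particular $\le\eta$, which gives the stated bound $\|\xi\|_\infty\le\eta$ for $\omega\ge\omega^*$. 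On this region, \eqref{eq:conv-f} yields
\begin{equation*}
	f^-(x,\xi)\geq Df^-(x,0)\xi-M\|\xi\|_\infty^{1+\beta}\mathbf 1\geq Df^-(x,0)\xi-M (C')^{1+\beta}e^{-\lambda(1+\beta)(x-ct)}\mathbf 1.
\end{equation*}
Comparing with the explicit negative term $-\omega(\mu c+k(\mu))e^{-\mu(x-ct)}\varphi_\mu$ above, and recalling $\mu<\lambda(1+\beta)$, the power $e^{-\mu(x-ct)}$ decays slower than $e^{-\lambda(1+\beta)(x-ct)}$ as $x-ct\to+\infty$, so for $\omega$ large enough the $\varphi_\mu$-term dominates the $\|\xi\|^{1+\beta}$ error uniformly on $\{x-ct\ge$ some threshold$\}$; on the complementary bounded range of $x-ct$ one chooses $\omega$ larger still so that $\xi$ has no positive component there at all (again since $e^{-(\mu-\lambda)(x-ct)}\le C/\omega$ fails). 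This establishes \eqref{eq:subsol-xi} wherever $\xi$ has a positive component, for all $\omega\ge\omega^*$.

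Finally, to deduce \eqref{eq:super-xi}, I would apply the comparison principle of Theorem \ref{thm:super-monotone}, reading $\xi^+:=\max(\xi,0)$ against the solution $u$. The point is that $\max(\xi,0)$ is a generalized subsolution of the $f^-$-equation: on $\{\xi_i>0\}$ the inequality \eqref{eq:subsol-xi} holds by the above, on $\{\xi_i\le 0\}$ the $i$-th component equals $0$ and $f^-_i(x,\xi^+)\ge f^-_i(x,0)=0$ by monotonicity in the off-diagonal variables plus $f^-(x,0)=Df^-(x,0)\cdot 0=0$, so the zero function is a subsolution there; at the interface a Kato-type inequality argument applies. Since $\|\xi^+(t,x)\|_\infty\le\eta$ for all $t$, Theorem \ref{thm:super-monotone} gives $u(t,x)\ge\xi^+(t,x)$ for all $t>0$ provided $u(0,x)\ge\xi^+(0,x)$. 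I expect the main obstacle to be the careful bookkeeping in the second step: one must choose $\omega^*$ so that simultaneously (a) $\xi$ vanishes on the bounded range of $x-ct$, (b) $\|\xi\|_\infty\le\eta$, and (c) the $\varphi_\mu$-term beats the $O(\|\xi\|^{1+\beta})$ error on the unbounded range — this is where the constraint $\lambda<\mu<\lambda(1+\beta)$ is used in an essential way, and making the three requirements compatible requires keeping track of the eigenfunction ratios via the Harnack inequality for fully coupled systems.
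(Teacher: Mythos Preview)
Your proposal is correct and follows essentially the same route as the paper: compute $\xi_t-\mathcal L\xi=A(x)\xi-\omega(k(\mu)+\mu c)e^{-\mu(x-ct)}\varphi_\mu$, use the H\"older bound \eqref{eq:conv-f} together with $\mu<\lambda(1+\beta)$ so that the $\varphi_\mu$-term dominates the $\|\xi\|^{1+\beta}$ error for large $x-ct$, and choose $\omega$ large so that $\xi\ll 0$ on the remaining bounded range of $x-ct$ and $\|\xi\|_\infty\le\eta$. The only minor discrepancy is in the last step: Theorem~\ref{thm:super-monotone} as stated compares a \emph{solution} of the $f^-$-equation against a solution of the $f$-equation, whereas $\xi^+$ is only a subsolution; the paper closes this by observing directly that $u$ is a supersolution and $\xi$ a subsolution of the cooperative system $v_t-\mathcal Lv=f^-(x,v)$ (which admits a comparison principle on $\{\|v\|_\infty\le\eta\}$), which is exactly the content of your Kato-type argument but phrased without the detour through Theorem~\ref{thm:super-monotone}.
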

\begin{proof}
	The existence of $\mu$ as defined in the statement of the Lemma is a consequence of $c>c^*$ and the properties of the principal eigenvalue $k$, see Proposition \ref{prop:minspeed}. Our goal is to find $\omega>0$ such that 
	\begin{equation}\label{eq:sub-sol}
		\xi_t(t,x)-\mathcal L \xi(t,x)\leq f^-(x, \xi(t,x)) \text{ whenever }\xi(t, x)>0.
	\end{equation}
	Let us select $(t,x)$ such that $\xi(t, x)>0$. Recall that, for all $\nu>0$, we have the equation $-\mathcal L(\varphi_\nu(x)e^{-\nu x}) = (A(x)+k(\nu))e^{-\nu x}\varphi_\nu(x)$ by definition of $k(\nu)$. We compute
	\begin{align*}
		\xi_t-\mathcal L\xi &= \left(A(x)+k(\lambda)+\lambda c\right)e^{-\lambda(x-ct)}\varphi_\lambda(x)-\big(A(x)+k(\mu) + \mu c\big)e^{-\mu(x-ct)}\omega\varphi_\mu(x) \\
		&=A(x)\xi(t,x) - (k(\mu)+\mu c)\omega e^{-\mu(x-ct)}\varphi_\mu(x).
	\end{align*}
	It follows from our assumption in the statement of Theorem \ref{thm:TW} that 
	\begin{equation*}
		\Vert f^-(x, u)-A(x)u\Vert_\infty \leq M\Vert u\Vert_\infty^{1+\beta}, 
	\end{equation*}
	for some constant $M>0$. 
	\begin{align*}
		\Vert f^-(x, \xi(t,x))-A(x) \xi(t,x)\Vert_\infty &\leq M \Vert \xi(t,x)\Vert_\infty^{1+\beta} \\ 
		&\leq M\left(\sup_{y\in\mathbb R}\max_{1\leq i\leq d}(\varphi_\lambda)_i(y)\right)^{1+\beta} e^{-(1+\beta)\lambda(x-ct)},
	\end{align*}
	and it follows that 
	\begin{equation*}
		(k(\mu)+\mu c)\omega e^{-\mu(x-ct)}\varphi_\mu(x)\geq A(x)\xi(t,x)- f^-(x, \xi(t,x))
	\end{equation*}
	for all $x-ct\geq \frac{1}{(1+\beta)\lambda-\mu}\left[\ln\left(\frac{M\sup_{y\in\mathbb R}\max_{1\leq i\leq d}(\varphi_{\lambda})_i^{1+\beta}(y)}{(k(\mu)+\mu c)\inf_{y\in\mathbb R}\min_{1\leq i\leq d}(\varphi_\mu)_i(y)}\right)-\ln \omega\right]$. On the other hand, because of the specific form of $\xi$, we have $\xi(t,x)\ll 0$ for all $x-ct\leq \frac{-1}{\mu-\lambda}\ln\left(\frac{\sup_{y\in\mathbb R}\max_{1\leq i\leq d}(\varphi_{\lambda})_i(y)}{\inf_{y\in\mathbb R}\min_{1\leq i\leq d}(\varphi_\mu)_i(y)}\right) + \frac{1}{\mu-\lambda}\ln\omega$. Therefore if $\omega $ is sufficiently large, namely
	\begin{equation*}
		\ln\omega>\frac{\mu-\lambda}{\beta \lambda}\displaystyle \ln\left(\frac{M\sup_{y\in\mathbb R}\max_{1\leq i\leq d}(\varphi_{\lambda})_i^{1+\beta}(y)}{(k(\mu)+\mu c)\inf_{y\in\mathbb R}\min_{1\leq i\leq d}(\varphi_\mu)_i(y)}\right) - \frac{(1+\beta)\lambda-\mu}{\beta\lambda}\ln\left(\frac{\sup_{y\in\mathbb R}\max_{1\leq i\leq d}(\varphi_{\lambda})_i(y)}{\inf_{y\in\mathbb R}\min_{1\leq i\leq d}(\varphi_\mu)_i(y)}\right),
	\end{equation*}
	then $\xi(t,x)>0$ implies that  $(k(\mu)+\mu c)\omega e^{-\mu(x-ct)}\varphi_\mu(x)\geq A(x) \xi(t,x)-f^-(x, \xi(t,x))$ and therefore 
	\begin{align*}
		\xi_t - \mathcal L\xi(t,x)& \leq A(x)\xi(t, x) - (k(\mu)+\mu c)\omega e^{-\mu(x-ct)}\varphi_\mu(x) \\ 
		&\leq A(x)\xi(t,x) - \big(A(x)\xi(t,x) - f^-(x, \xi(t,x))\big)\\
		&=f^-(x, \xi(t,x)).
	\end{align*}
	We have shown that \eqref{eq:sub-sol} holds for $\omega>0$ sufficiently large. Finally 
	\begin{align*}
		\sup_{x\in\mathbb R} \xi(t,x)&\leq \sup_{x\in\mathbb R} \left(e^{-\lambda x}\sup_{y\in\mathbb R}\varphi_\lambda(y)-\omega e^{-\mu x}\inf_{y\in\mathbb R} \varphi_{\mu}(y)\right)\\
		&\leq \sup_{y\in\mathbb R}\varphi_\lambda(y)\left(\frac{\lambda\sup_{y\in\mathbb R}\varphi_\lambda(y)}{\mu \inf_{y\in\mathbb R} \varphi_{\mu}(y)}\right)^{\frac{\lambda}{\mu-\lambda}}\omega^{-\frac{\lambda}{\mu-\lambda}}, 
	\end{align*}
	therefore the supremum of $\xi(t,x)$ is arbitrarily small for $\omega $ sufficiently large.

	To finish our argument we remark that (if $\omega\geq \omega^*$) $\xi(t,x)$ and $u(t,x)$ are respectively a sub-solution and a super-solution to the cooperative system
	\begin{equation*}
		v_t(t,x)-\mathcal Lv(t,x) = f^-(x,v(t,x)),
	\end{equation*} 
	which admits a comparison principle. Therefore if $u(0,x)\geq \xi(0,x)$ for all $x\in\mathbb R$, then $u(t,x)\geq \xi(t,x)$ for all $t>0$ and $x\in\mathbb R$. The Lemma is proved.
\end{proof}

In the critical case $c=c^*$, we need to define $\xi$ differently.
Recall that, by Proposition \ref{prop:minspeed}, there exists a unique $\lambda^*$ such that $c^*=\frac{k(\lambda^*)}{\lambda^*}$.
By lemma \ref{prop:k(lambda)}, $k(\lambda)$ is strictly concave and analytic, therefore there exists a nonnegative integer $m\geq 0$ such that the multiplicity of $k(\lambda)+c^*\lambda$ is $2m+2$, in the sense that 
\begin{equation*}
	\lambda^* c^*+k(\lambda^*) = 0, \, c^*+ k'(\lambda^*)=0, \, k^{(i)}(\lambda^*)=0 \text{ for } 2\leq i\leq 2m+1,  \text{ and } k^{(2m+2)}(\lambda^*)<0. 
\end{equation*}
\begin{lem}\label{lem:lower-barrier-critical}
	Let  the assumptions of Theorem \ref{thm:TW} hold. Define 
	\begin{equation}\label{eq:xi^*}
		\xi(t, x)=\begin{cases} 
			\max\left(\frac{\partial^{2m+2}}{\partial \lambda^{2m+2}}\left(\left.e^{-\lambda(x-c^*t)}\varphi_\lambda(x)\right)\right|_{\lambda=\lambda^*}-\omega e^{-\lambda^*(x-c^*t)}\varphi_{\lambda^*}(x), 0\right) & \text{ if } x-c^*t\geq \left(\frac{\omega}{2}\right)^{\frac{1}{2m+2}},  \\ 
			0 & \text{ if }   x-c^*t< \left(\frac{\omega}{2}\right)^{\frac{1}{2m+2}}
		\end{cases}
	\end{equation}
	where the maximum is taken componentwise, then there exists $\omega^*>0$ such that, for all $\omega\geq \omega^*$, the function $\xi(t,x)$ satisfies the differential inequality
	\begin{equation}\label{eq:subsol-xi*}
		(\xi_i)_t(t,x)-\mathcal L_i \xi_i(t,x)\leq f^-_i(x, \xi(t,x)) 
	\end{equation}
	as well as $\Vert \xi(t,x)\Vert_\infty \leq \eta$ whenever $\xi_i(t,x)>0$ for some $i\in\{1, \ldots, d\}$, where $f^-$ and $\eta$ are as in Definition \ref{def:super-monotone}. 
	In particular, if $\omega\geq\omega^*$, any solution $u(t,x)$ of \eqref{eq:gen-syst} satisfying the inequality $u(0, x)\geq \max\big(\xi(0,x), 0\big)$ also satisfies 
	\begin{equation}\label{eq:super-xi*}
		u(t,x)\geq \max(\xi(t,x), 0)\; \text{ for all }t>0 \text{ and } x\in\mathbb R.
	\end{equation}
\end{lem}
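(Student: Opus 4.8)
The strategy is to repeat the ``supersolution of the linearisation'' computation behind Lemma~\ref{lem:lower-barrier}, but to compensate for the fact that at $c=c^*$ the function $e^{-\lambda^*(x-c^*t)}\varphi_{\lambda^*}(x)$ solves the \emph{linearised} equation exactly --- so that it is no longer a subsolution of the nonlinear problem --- by differentiating $2m+2$ times in the spectral parameter $\lambda$. Write $z:=x-c^*t$ and $g(\lambda):=c^*\lambda+k(\lambda)$, and recall (as in the proof of Lemma~\ref{lem:lower-barrier}) that $\overline u_\lambda(t,x):=e^{-\lambda z}\varphi_\lambda(x)$ satisfies $(\partial_t-\mathcal L)\overline u_\lambda=(A(x)+g(\lambda))\overline u_\lambda$, and that by Lemma~\ref{prop:k(lambda)}\ref{item:k(lambda)concave} the family $\varphi_\lambda$ may be chosen analytic in $\lambda$. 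From the definition of $m$ one has $g^{(i)}(\lambda^*)=0$ for $0\le i\le 2m+1$ and $g^{(2m+2)}(\lambda^*)=k^{(2m+2)}(\lambda^*)<0$. Differentiating the identity $2m+2$ times in $\lambda$ at $\lambda=\lambda^*$ (justified by the analyticity of $\lambda\mapsto\varphi_\lambda$, since $\partial_t-\mathcal L$ acts only on the $(t,x)$ variables) and applying the Leibniz rule, all cross terms drop out because $g^{(i)}(\lambda^*)=0$ for $0\le i\le 2m+1$, leaving $(\partial_t-\mathcal L)W=A(x)W+g^{(2m+2)}(\lambda^*)\,\overline u_{\lambda^*}$, where $W:=\partial_\lambda^{2m+2}\overline u_\lambda\big|_{\lambda=\lambda^*}$. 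Setting $\Xi:=W-\omega\,\overline u_{\lambda^*}$ --- so that, in the notation of~\eqref{eq:xi^*}, $\xi=\max(\Xi,0)$ on $\{z\ge(\omega/2)^{1/(2m+2)}\}$ and $\xi\equiv0$ elsewhere --- one gets $(\partial_t-\mathcal L)\Xi=A(x)\Xi+g^{(2m+2)}(\lambda^*)\,\overline u_{\lambda^*}$, whose last term is $\ll0$ and is the favourable contribution that will absorb the nonlinear error.

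The next step is to expand $W$ by the Leibniz rule as $W(t,x)=e^{-\lambda^* z}\bigl(z^{2m+2}\varphi_{\lambda^*}(x)+Q(z,x)\bigr)$, with $Q$ a polynomial in $z$ of degree $\le 2m+1$ whose coefficients are $L$-periodic in $x$; in particular $W\gg0$ for $z$ large and $W$ is of order $z^{2m+2}e^{-\lambda^* z}$, uniformly in $x$. Fixing $\omega$ large, one then checks: (a) wherever some component of $\xi$ is positive one has $z\ge(\omega/2)^{1/(2m+2)}$, which tends to $+\infty$ with $\omega$, so all large-$z$ asymptotics apply there; (b) on that set $\Vert\Xi(t,x)\Vert_\infty\le Cz^{2m+2}e^{-\lambda^* z}$; (c) at $z=(\omega/2)^{1/(2m+2)}$ one has $e^{\lambda^* z}\Xi_i(t,x)=(\tfrac\omega2-\omega)(\varphi_{\lambda^*})_i(x)+O(\omega^{(2m+1)/(2m+2)})\ll0$, so $\Xi\ll0$ in a neighbourhood of the cut-off (hence $\xi$ is continuous, indeed $\equiv0$ there) and the truncation to $0$ for $z<(\omega/2)^{1/(2m+2)}$ only discards a region where $\Xi\le0$; (d) $\Vert\xi\Vert_\infty\le C\sup_{z\ge(\omega/2)^{1/(2m+2)}}z^{2m+2}e^{-\lambda^* z}\to0$ as $\omega\to\infty$, so $\Vert\xi\Vert_\infty\le\eta$. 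For~\eqref{eq:subsol-xi*}, at a point with $\xi_i>0$ the function $\xi$ coincides with $\Xi$ near $(t,x)$ in the $i$-th component and dominates it componentwise elsewhere, so the cooperativity of $f^-$ near $0$ (Definition~\ref{def:super-monotone}) gives $f^-_i(x,\xi)\ge f^-_i(x,\Xi)$, while~\eqref{eq:conv-f} gives $f^-_i(x,\Xi)\ge(A(x)\Xi)_i-M\Vert\Xi\Vert_\infty^{1+\beta}$. Hence $(\partial_t-\mathcal L_i)\xi_i=(A(x)\Xi)_i+g^{(2m+2)}(\lambda^*)(\overline u_{\lambda^*})_i\le f^-_i(x,\xi)$ as soon as $M\Vert\Xi\Vert_\infty^{1+\beta}\le|g^{(2m+2)}(\lambda^*)|(\overline u_{\lambda^*})_i$, which by (b) and $(\overline u_{\lambda^*})_i\ge\kappa_0 e^{-\lambda^* z}$, $\kappa_0:=\inf_x\min_i(\varphi_{\lambda^*})_i>0$, reduces to $MC^{1+\beta}z^{(2m+2)(1+\beta)}\le|g^{(2m+2)}(\lambda^*)|\kappa_0 e^{\beta\lambda^* z}$ --- valid for $z$ large, hence by (a) for $\omega$ large.

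Finally,~\eqref{eq:super-xi*} is obtained exactly as in Lemma~\ref{lem:lower-barrier}: on the range $\{\Vert\cdot\Vert_\infty\le\eta\}$ one has $f\ge f^-$, so $u$ is a supersolution and $\xi$ a generalised subsolution of the cooperative system $v_t-\mathcal Lv=f^-(x,v)$, to which the comparison argument of Theorem~\ref{thm:super-monotone} applies; since $\Vert\xi\Vert_\infty\le\eta$, the ordering $u(0,\cdot)\ge\xi(0,\cdot)$ is propagated to all $t>0$, a would-be first contact necessarily occurring in a component with $\xi_i>0$, where~\eqref{eq:subsol-xi*} together with the scalar strong maximum principle for $u_i-\xi_i$ yields a contradiction. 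The step I expect to be the real work is the bookkeeping of the asymptotics of $W$ and $\Xi$: it has to be sharp enough, and uniform in the periodic variable $x$, to locate the positivity set of $\xi$, to bound $\Vert\xi\Vert_\infty$ on it, and --- most delicately --- to ensure that the cut-off at $z=(\omega/2)^{1/(2m+2)}$ sits strictly to the left of where the dominant factor $z^{2m+2}-\omega$ of $\Xi$ changes sign, where $\Xi$ is robustly negative, so that truncating destroys neither continuity nor the subsolution inequality.
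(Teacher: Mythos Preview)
Your proposal is correct and follows essentially the same approach as the paper: differentiate the spectral identity $2m+2$ times at $\lambda^*$ to produce the favourable term $k^{(2m+2)}(\lambda^*)\overline u_{\lambda^*}$, subtract $\omega\,\overline u_{\lambda^*}$, locate the positivity set via the leading coefficient $z^{2m+2}-\omega$, and absorb the nonlinear error $M\Vert\Xi\Vert_\infty^{1+\beta}$ into the favourable term using that exponential decay beats polynomial growth. The paper organises the final estimate slightly differently --- it splits the positivity region into $\{(\omega/2)^{1/(2m+2)}\le z\le(2\omega)^{1/(2m+2)}\}$ and $\{z\ge(2\omega)^{1/(2m+2)}\}$ and bounds $\Vert\xi\Vert$ by $K(\omega)e^{-\lambda^* z/(1+\beta)}$ with $K(\omega)\to0$ --- whereas you compare $z^{(2m+2)(1+\beta)}$ against $e^{\beta\lambda^* z}$ directly; both amount to the same thing, and your formulation is arguably tidier.
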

\begin{proof}
	Let us define  the function $\Xi(t, x):=e^{-\lambda(x-c^*t)}\varphi_\lambda(x)$ for $\lambda>0$, $\omega>0$  and $(t, x)\in\mathbb R$. We have
	\begin{equation}\label{eq:eigen-lambda*}
		\Xi_t(t, x)-\mathcal L\hspace{2pt}\Xi(t,x)=A(x)\Xi(t, x)+\big(\lambda c^*+k(\lambda)\big)\Xi(t,x), 
	\end{equation}
	then by the analyticity of $k(\lambda)$ and $\varphi_\lambda$ with respect to $\lambda$ we have, taking $(2m+2) $ times the derivative in the above expression: 
	\begin{align*}
		\partial_t\left(\frac{\partial^{2m+2}}{\partial \lambda^{2m+2}}\Xi(t,x)\right)-\mathcal L\left(\frac{\partial^{2m+2}}{\partial \lambda^{2m+2}}\Xi(t,x)\right)& = A(x)\left(\frac{\partial^{2m+2}}{\partial \lambda^{2m+2}}\Xi(t,x)\right)\\ 
		&\quad +\sum_{j=0}^{2m+2}\binom{2m+2}{j}(\lambda c^*+k(\lambda))^{(j)}(\lambda)(e^{-\lambda(x-c^*t)}\varphi_\lambda(x))^{(2m+2-j)} .
	\end{align*}
	If $\lambda=\lambda^*$ we have 
	\begin{equation}\label{eq:derivLambda}
		\partial_t\left(\Xi^{(2m+2)}(t,x)\right)-\mathcal L\left(\Xi^{(2m+2)}(t,x)\right) = A(x)\left(\Xi^{(2m+2)}(t,x)\right)+k^{(2m+2)}(\lambda^*)e^{-\lambda(x-c^*t)}\varphi_\lambda(x),
	\end{equation}
	where $k^{(2m+2)}(\lambda^*)<0$ (because of the concavity of $k$) and $\Xi^{(2m+2)}(t,x):=\frac{\partial^{2m+2}}{\partial \lambda^{2m+2}}\Xi(t,x)$. Next the leading term in $\Xi^{(2m+2)}(t,x)$ when $x-c^*t\to+\infty$ is $(x-c^*t)^{2m+2}e^{-\lambda(x-c^*t)}\varphi_\lambda(x)$, therefore 
	\begin{equation*}
		\Xi^{(2m+2)}(t, x) \sim (x-c^*t)^{2m+2}e^{-\lambda (x-c^*t)}\varphi_\lambda(x)\text{ when }x-c^*t\to+\infty,
	\end{equation*}
	and there is $s_0\in\mathbb R$ such that 
	\begin{equation}\label{eq:ineq-equiv}
		\frac{1}{2}(x-c^*t)^{2m+2}e^{-\lambda (x-c^*t)}\varphi_\lambda(x)\leq \Xi^{(2m+2)}(t, x)\leq 2(x-c^*t)^{2m+2}e^{-\lambda (x-c^*t)}\varphi_\lambda(x)\leq\eta\quad\text{ if }\; x-c^*t\geq s_0.
	\end{equation}
	 Now, we define $\xi(t,x):=\Xi^{(2m+2)}(t,x)-\omega e^{-\lambda^*(x-c^*t)}\varphi_{\lambda^*}(x)$.
	Since $e^{-\lambda^*(x-c^*t)}\varphi_{\lambda^*}(x)$ is a solution of \eqref{eq:eigen-lambda*} with $\lambda=\lambda^*$, \eqref{eq:derivLambda} implies the following:
	\begin{equation}\label{eq:derivxi}
		\partial_t\left(\xi(t,x)\right)-\mathcal L\left(\xi(t,x)\right) = A(x)\left(\xi(t,x)\right)+k^{(2m+2)}(\lambda^*)e^{-\lambda(x-c^*t)}\varphi_\lambda(x).
	\end{equation}
	Next, since $\varphi_{\lambda^*} (x) $ is uniformly positive on $\mathbb R$,  for $\omega $ sufficiently large, we have $\xi(t, x)<0$ for all $x-c^*t\in \big(0,  s_0\big]$, and \eqref{eq:ineq-equiv} implies 
	\begin{equation*}
		\left[\frac{1}{2}(x-c^*t)^{2m+2}-\omega\right]e^{-\lambda^*(x-c^*t)}\varphi_{\lambda^*}(x)\leq \xi(t,x)\leq \left[{2}(x-c^*t)^{2m+2}-\omega\right]e^{-\lambda^*(x-c^*t)}\varphi_{\lambda^*}(x)
	\end{equation*}
	if $x-c^*t>s_0$. In particular, if $x-c^*t\leq \max\left(\left(\frac{\omega}{2}\right)^{\frac{1}{2m+2}}, s_0\right)$, then $\xi(t, x)\leq 0$.  If $x-c^*t \geq \max\left(\left(\frac{\omega}{2}\right)^{\frac{1}{2m+2}}, s_0\right)$, we have 
	\begin{align*}
		\Vert \xi(t,x)\Vert &\leq \max\left[\omega-\frac{(x-c^*t)^{2m+2}}{2}, 2(x-c^*t)^{2m+2}-\omega\right]e^{-\lambda^*(x-c^*t)}\Vert \varphi_{\lambda^*}(x)\Vert 
	\end{align*}
	In order to estimate the right-hand side of the  above inequality, we first consider the case when $\left(\frac{\omega}{2}\right)^{\frac{1}{2m+2}} \leq (x-c^*t)\leq (2\omega)^{\frac{1}{2m+2}}$. Then we have
	\begin{align*}
		\Vert \xi(t,x)\Vert &\leq 3\omega e^{-\lambda^*(x-c^*t)}\Vert \varphi_{\lambda^*}(x)\Vert \\
			&\leq 3\omega e^{-\frac{\beta}{1+\beta}\lambda^*(x-c^*t)}\Vert\varphi_{\lambda^*}(x)\Vert^{\frac{\beta}{1+\beta}}\times e^{-\frac{1}{1+\beta}\lambda^*(x-c^*t)}\Vert\varphi_{\lambda^*}(x)\Vert^{\frac{1}{1+\beta}} \\
			&\leq 3\omega e^{-\lambda^*\frac{\beta}{1+\beta}\left(\frac{\omega}{2}\right)^{\frac{1}{2m+2}}}\Vert\varphi_{\lambda^*}(x)\Vert^{\frac{\beta}{1+\beta}}\times e^{-\frac{1}{1+\beta}\lambda^*(x-c^*t)}\Vert\varphi_{\lambda^*}(x)\Vert^{\frac{1}{1+\beta}}\\
			&\leq K_1(\omega)e^{-\frac{1}{1+\beta}\lambda^*(x-c^*t)}\Vert\varphi_{\lambda^*}(x)\Vert^{\frac{1}{1+\beta}},
	\end{align*}
	where 
	\begin{equation*}
		K_1(\omega):=3\omega e^{-\lambda^*\frac{\beta}{1+\beta}\left(\frac{\omega}{2}\right)^{\frac{1}{2m+2}}}\max_{x\in\mathbb R}\Vert\varphi_{\lambda^*}(x)\Vert^{\frac{\beta}{1+\beta}}
	\end{equation*}
	and $\beta>0$ is the constant from  \eqref{eq:conv-f}. Next we consider the case when $ (x-c^*t)\geq (2\omega)^{\frac{1}{2m+2}}$. We have
	\begin{align*}
		\Vert \xi(t,x)\Vert &\leq 2(x-c^*t)^{2m+2}e^{-\lambda^*(x-c^*t)}\Vert\varphi_{\lambda^*}(x)\Vert \\
		&\leq 2(x-c^*t)^{2m+2}e^{-\frac{\beta}{1+\beta}\lambda^*(x-c^*t)}\Vert\varphi_{\lambda^*}(x)\Vert^{\frac{\beta}{1+\beta}}\times e^{-\frac{1}{1+\beta}\lambda^*(x-c^*t)}\Vert\varphi_{\lambda^*}(x)\Vert^{\frac{1}{1+\beta}}  \\
			&\leq 2(x-c^*t)^{2m+2}e^{-\frac{\beta}{1+\beta}\lambda^*(x-c^*t)}\Vert\varphi_{\lambda^*}(x)\Vert^{\frac{\beta}{1+\beta}}\times e^{-\frac{1}{1+\beta}\lambda^*(x-c^*t)}\Vert\varphi_{\lambda^*}(x)\Vert^{\frac{1}{1+\beta}} \\
			&\leq K_2(\omega)e^{-\frac{1}{1+\beta}\lambda^*(x-c^*t)}\Vert\varphi_{\lambda^*}(x)\Vert^{\frac{1}{1+\beta}}, 
	\end{align*}
	where 
	\begin{equation*}
		K_2(\omega):=2\sup_{s\geq (2\omega)^{\frac{1}{2m+2}}}s^{2m+2}e^{-\frac{\beta}{1+\beta}\lambda^*s}\max_{x\in\mathbb R}\Vert\varphi_{\lambda^*}(x)\Vert^{\frac{\beta}{1+\beta}}.
	\end{equation*}
	Let $\kappa>0$ be a constant such that  $\varphi_{\lambda^*}(x)\geq \kappa \Vert \varphi_{\lambda^*}(x)\Vert_\infty\mathbf{1}$ for any $x\in\mathbb R$ and let  $M>0$ be the constant that appears in \eqref{eq:conv-f}.
	Since $K_1(\omega)$ and $K_2(\omega)$ converge to $0$ as $\omega\to +\infty$, the following holds if $\omega $ is chosen sufficiently large:
	\begin{equation*}
		\Vert \xi(t,x)\Vert \leq \left(\frac{\kappa\big(-k^{(2m+2)}(\lambda^*)\big)}{M}\right)^{\frac{1}{1+\beta}} e^{-\frac{\lambda^*}{1+\beta}(x-c^*t)}\Vert\varphi_{\lambda^*}(x)\Vert^{\frac{1}{1+\beta}}
	\end{equation*}
	for all $x-c^*t\geq \left(\frac{\omega}{2}\right)^{\frac{1}{2m+2}}$.
	Combining this inequality with $\varphi_{\lambda^*}(x)\geq \kappa \Vert \varphi_{\lambda^*}(x)\Vert_\infty\mathbf{1}$ and \eqref{eq:conv-f}, we obtain
	\begin{multline*}
		-k^{(2m+2)}(\lambda^*)e^{-\lambda^*(x-c^*t)}(\varphi_{\lambda^*})_i(x)\geq \kappa\big(-k^{(2m+2)}(\lambda^*)\big)e^{-\lambda^*(x-c^*t)}\Vert\varphi_{\lambda^*}(x)\Vert  \\ 
		\geq M\Vert \xi(t,x)\Vert^{1+\beta}\geq \Vert f^-(t,\xi(t,x))- A(x)\xi(t,x)\Vert_\infty \geq \big(A(x)\xi(t,x)\big)_i-f^-_i(x,\xi(t,x)) 
	\end{multline*}
	for all $i\in\{1, \ldots, d\}$ whenever $\xi_j(t,x)>0$ for some $j\in\{1, \ldots, d\}$. 
	Recalling \eqref{eq:derivxi}, we have shown that 
	\begin{equation*}
		\xi_t(t,x)-\mathcal L\xi(t,x)\leq f^-(t,\xi(t,x))
	\end{equation*}
	whenever $\xi_i(t,x)>0$ for some $i\in\{1, \ldots, d\}$. This completes the proof of Lemma \ref{lem:lower-barrier-critical}.
\end{proof}
Now we are ready to construct a lower barrier.
If $c>c^*$, we define $\underline{u}(t,x)$ as
\begin{equation*}
	\underline{u}(t, x)=\max_{n\in\mathbb N}\xi(t, x+nL),
\end{equation*}
where $\xi$ is defined by \eqref{eq:xi},   $\omega>\omega^*$, and the maximum is taken componentwise. It follows immediately from the periodicity of the equation \eqref{eq:gen-syst} that the functions $(t, x)\mapsto \xi(t, x+nL)$ for $n\in\mathbb N$ are lower barriers for \eqref{eq:gen-syst}; therefore $\underline{u}(t, x) $ is also a lower barrier. 
Since $\xi(t,x)>0$ for $x>0$ sufficiently large,  $\underline{u}$ is uniformly positive when $x\to -\infty$.
Moreover, $\xi(t,x)<\overline{u}(t,x)$ and
\begin{equation*}
	\xi(t, x+nL)\leq \overline{u}(t, x+nL)=e^{-\lambda(x+nL-ct)}\varphi_\lambda (x+nL) = e^{-\lambda nL} \overline{u}(t, x)\leq e^{-\lambda L}\overline{u}(t,x)< \overline{u}(t,x),
\end{equation*}
for all $n\in\mathbb N$ and $n\geq 1$, therefore  
\begin{equation}\label{eq:ulu}
	\underline{u}(t, x)<\overline{u}(t,x) \text{ for all }(t, x)\in \mathbb R^2. 
\end{equation}

If $c=c^*$ we need to modify the process slightly. We let $\xi(t, x) $ be defined by \eqref{eq:xi^*} and pick $\lambda<\lambda^*$, so that $\lambda c^*+k(\lambda)>0$. Since the leading term in \eqref{eq:xi^*} is controlled by $(x-c^*t)^{2m+2}e^{-\lambda^*(x-c^*t)}$, there is $s^*\geq 0$ such that 
\begin{equation*}
	\xi(t, x+s^*)< \overline{u}(t,x):=e^{-\lambda(x-c^*t)}\varphi_{\lambda^*}(x) \text{ for all } x\in\mathbb R \text{ and }t\geq 0, 
\end{equation*}
therefore we define 
\begin{equation}\label{eq:ulu*}
	\underline{u}(t, x)=\max_{n\in\mathbb N}\xi(t, x+s^*+nL).
\end{equation}
Reasoning as above, we obtain that $\underline{u}(t,x)$ is a lower barrier and satisfies \eqref{eq:ulu}.

We are now in a position to prove Theorem \ref{thm:TW}.
\begin{proof}[Proof of Theorem \ref{thm:TW}]
	The fact that there exists no traveling wave for $c<c^*$ is a direct consequence of the spreading property (Theorem \ref{thm:lin-det}). In order to construct a traveling wave for $c\geq c^*$, we first deal with the case $c>c^*$ and apply the Schauder fixed-point Theorem to construct the traveling wave. We finally send $c$ to $c^*$ the minimal speed in order to construct the minimal speed traveling wave.

	Let us select $c> c^*$, and let $\lambda$ be the smallest positive solution to $\lambda c=-k(\lambda)$ (which exists by Proposition \ref{prop:minspeed}). We let $\overline{u}(t,x) $ be the function defined in Lemma \ref{lem:upper-barrier} and $\underline{u}(t,x)$ be the function defined in equation \eqref{eq:ulu}.

	For $M>0$, we define the (convex) space
	\begin{equation*}
		E_M:=\left\{\vect{v}\in BUC([-M, +\infty), \mathbb R^d)\,|\, \underline{u}(0, x)\leq \vect{v}(x)\leq \overline{u}(0,x)\right\},
	\end{equation*}
	and the operator $Q^M:E_M\to BUC([-M, +\infty), \mathbb R^d)$ by $Q^M(\vect{v})(x)=\tilde{\vect{v}}(x+L)$, where $\tilde v$ is the solution at time $t=\frac{L}{c}$ to 
	\begin{equation*}
		\begin{system}
			\relax&
			\tilde{\vect{v}}_t-\mathcal L\tilde{\vect{v}} = f(x, \tilde{\vect{v}}), & x&\in\mathbb R, \\
			\relax&\tilde{\vect{v}}(t=0, x)=\begin{cases} \vect{v}(x) & \text{if }x\geq -M \\ \max\Big(\min\big(\vect{v}(-M), \overline{u}(t,x)\big), \underline{u}(t,x)\Big) &\text{if } x\leq -M. \end{cases}
		\end{system}
	\end{equation*}
	Then, it follows from Lemma \ref{lem:upper-barrier} that $Q^M(\vect{v}) \leq \overline{u}(t+\frac{L}{c}, x+L)=\overline{u}(t,x)$ for each $\vect{v}\in E_M$, and from Lemma \ref{lem:lower-barrier} and Theorem \ref{thm:super-monotone} (recall that $\underline{u}(t, x)\leq \eta$) that  $Q^M(\vect{v})\geq \underline{u}$ for each $v\in E_M$. Thus $E_M$ is left stable by $Q^M$. Moreover, by the regularizing properties of parabolic operators, $Q^M$ is compact. Thus, the Schauder fixed-point Theorem implies the existence of a fixed-point $\vect{u}^M\in E_M$ such that $Q^M(\vect{u}^M)=\vect{u}^M$. By the classical elliptic regularity, there exists a sequence $M_n\to+\infty$ such that $u^{M_n} $ converges locally uniformly to a solution $\vect{u}^\infty$ to $ Q^\infty(\vect{u^\infty})=\vect{u}^\infty$, and which belongs to 
	\begin{equation*}
		E_\infty:=\left\{\vect{v}\in BUC((-\infty, +\infty), \mathbb R^d)\,|\, \underline{u}(0, x)\leq \vect{v}(x)\leq \overline{u}(0,x)\right\}.
	\end{equation*}

	$u_c:=\vect{u}^\infty$ is  the expected traveling wave. 

	If $c=c^*$ we can repeat the same procedure, but  by replacing the above $\overline{u}(t,x)$ by $\overline{u}(t,x):=e^{-\lambda(x-c^*t)}\varphi_\lambda(x)$ for some $\lambda\in (0, \lambda^*)$ (where $\lambda^*$ is the unique solution of $\lambda^*c^* + k(\lambda^*)=0$)  and $\underline{u}(t,x)$ by \eqref{eq:ulu*}. This leads to the existence of the minimal speed traveling wave $u_{c^*}(t,x)$, which then satisfies 
	\begin{equation*}
		\underline{u}(t,x)\leq u_{c^*}(t,x)\leq \overline{u}(t,x).
	\end{equation*}
	The theorem is proved.
\end{proof}

\begin{rem}[Exponential behavior of traveling waves]  
	Since the function $\omega e^{-\mu(x-ct)}\varphi_\mu(x)$ in the definition of $\xi(t,x)$ above is dominated by the term $e^{-\lambda(x-ct)}\varphi_\lambda(x)$ as $x\to +\infty$, we have $\overline{u}(t,x)\approx \underline{u}(t,x)$ for large $x$, in the sense that $\underline{u}_i/\overline{u}_i\to 1$ as $x\to+\infty$. Consequently, for each $c>c^*$, the traveling wave $u_c$ which we constructed above satisfies $u_c(t,x)\approx \overline{u}(t,x)$ for large $x$.  This implies, in particular, 
	\begin{equation*}
		0<\liminf_{x\to+\infty}\min_{1\leq i\leq d}\frac{u_i(t,x)}{e^{-\lambda_c x}}\leq \limsup_{x\to +\infty}\max_{1\leq i\leq d} \frac{u_i(t, x)}{e^{-\lambda_c x}}<+\infty, 
	\end{equation*}
	where $\lambda_c>0 $ is the minimal root of $\lambda c+k(\lambda)=0$. Thus the asymptotic profile of the traveling wave $u_{c}$ along the leading edge is well approximated by that of $\overline{u}$, which is a solution of the linearized equation around $u=0$. However, from the analogy of the scalar KPP type equations (see, e.g., \cite{Ham-08}) it is likely that the minimal speed traveling wave $u_{c^*}$ does not have the same asymptotics; more precisely we suspect that $u_{c^*}(t,x)\ne {\mathcal O}(e^{-\lambda_c x})$ as $x\to+\infty$ because of the degeneracy of the characteristic equation $\lambda c+k(\lambda)=0$ for $c=c^*$. 
\end{rem}

\subsection{Hair-Trigger effect} 

In this Section we prove the hair-trigger effect (Theorem \ref{thm:hair-trigger}) when the Dirichlet principal eigenvalue is negative, $\lambda_1^{\infty}<0$.
\begin{proof}[Proof of Theorem \ref{thm:hair-trigger}]
	First we note that, by Proposition \ref{prop:sub-sub-barrier}, we may assume without loss of generality that the constant function $x\mapsto \eta\mathbf{1}$ is a super-solution of the equation $-\mathcal Lu\geq f^-(x, u)$.  
	In the following proof we will work under this assumption. 

	Let $R>0 $ be sufficiently large, so that the Dirichlet principal eigenvalue $\lambda_1^R$ is negative (recall the definition of $(\lambda_1^R, \varphi^R(x))$ in Definition \ref{def:princ-eig}) and let $\varphi^R$ be the associated principal eigenfunction, normalized with $\Vert \varphi^R\Vert_{L^\infty(-R, R)^d}=1$. Define 
	\begin{equation*}
		\kappa:=\inf_{x\in (-R, R)}\min_{1\leq i\leq d}\frac{\varphi^R_i(x)}{\Vert\varphi(x)\Vert_{\infty}}, 
	\end{equation*}
	which is finite and positive by the elliptic strong maximum principle and Hopf's Lemma. Then, because of the differentiability of $u\mapsto f^-(x, u)$, there exists $\varepsilon_0>0$ such that for each $u\geq 0$ with $\Vert u\Vert\leq \varepsilon_0$, we have
	\begin{equation*}
		\Vert f^-(x, u)-A(x)u\Vert_{\infty} \leq -\lambda_1^R\kappa\Vert u\Vert_{\infty}. 
	\end{equation*}
	Reducing $\varepsilon_0$ if necessary so that $\varepsilon_0\leq \eta$ (where $\eta$ is as in Definition \ref{def:super-monotone}), this shows that, for $0<\varepsilon\leq \varepsilon_0$, $\varepsilon\varphi^R(x)$ is a lower barrier for \eqref{eq:gen-syst}. Indeed, 
	\begin{align*}
		-\mathcal L\varepsilon\varphi^R(x) &= A(x)\varepsilon \varphi^R(x)+\lambda_1^R\varepsilon\varphi^R(x)\leq A(x)\varphi^R(x)+\lambda_1^R\kappa \Vert\varepsilon \varphi^R\Vert_{\infty} \\
		&\leq A(x)\varepsilon\varphi^R + f^-\big(x, \varepsilon\varphi^R(x)\big)-A(x)\varepsilon\varphi^R(x)=f^-\big(x, \varepsilon\varphi^R(x)\big).
	\end{align*}
	Let $\underline{u}^{R, \varepsilon}(t,x)$ be the solution to the initial-value problem 
	\begin{equation*}
		\left\{\begin{aligned}\relax
			& \underline{u}^{R, \varepsilon}_t-\mathcal L\underline{u}^{R, \varepsilon}=f^-(x, \underline{u}^{R, \varepsilon}), \\
			& \underline{u}^{R, \varepsilon}(0, x)=\underline{u}^{R, \varepsilon}_0(x), 
		\end{aligned}\right. 
	\end{equation*}
	where $\underline{u}^{R, \varepsilon}_0(x)=\varepsilon\varphi^R(x)$ if $x\in(-R, R)$, and $\underline{u}^{R, \varepsilon}_0(x)=0$ otherwise. It follows from the parabolic strong maximum principle that $\underline{u}^{R, \varepsilon}(t,x)>0$ for all $t>0$ and $x\in\mathbb R$, so that in particular $\underline{u}^{R, \varepsilon}(t,\pm R)>0$. Then, we deduce from the parabolic comparison principle that 
	\begin{equation}\label{eq:HTE-super}
		\underline{u}^{R, \varepsilon}(t,x) > \varepsilon\varphi^R(x), \text{ for all } t>0 \text{ and } x\in\mathbb R.
	\end{equation}
	
	Next, fix $\tau>0$. Then it follows from \eqref{eq:HTE-super} that $\underline{u}^{R, \varepsilon}(\tau, x)> \varepsilon\varphi^R(x)=\underline{u}^{R, \varepsilon}_0(x)$. We deduce from the parabolic comparison principle that 
	\begin{equation*}
		\underline{u}^{R, \varepsilon}(t+\tau ,x)>\underline{u}^{R, \varepsilon}(t,x), 
	\end{equation*}
	in other words, $\underline{u}^{R, \varepsilon}(t,x) $ is strictly increasing in time. Thus the limit
	\begin{equation*}
		V^{R, \varepsilon}:=\lim_{t\to+\infty} \underline{u}^{R, \varepsilon}(t,x)
	\end{equation*}
	exists and is an equilibrium of the equation involving $f^-$. It is not difficult to show, by using Serrin's sweeping method, that 
	\begin{equation*}
		V^{R, \varepsilon}(x)\geq \varepsilon_0\varphi^R(x).
	\end{equation*}
	Indeed, define
	\begin{equation*}
		\varepsilon_1:=\sup\{\varepsilon'\geq 0\, |\, \varepsilon'\varphi^R(x)\leq V^R(x)\}.
	\end{equation*}
	Then clearly $\varepsilon_1\geq \varepsilon$ (by the parabolic comparison principle). If $\varepsilon_1<\varepsilon_0$, then there exists a contact point $x_0$ such that $\varepsilon_1\varphi^R(x_0)\leq V^{R, \varepsilon}(x_0)$ and $\varepsilon_1\varphi^R_i(x_0)=V^{R, \varepsilon}_i(x_0)$ for some $i\in\{1, \ldots, d\}$. We find a contradiction by applying the elliptic strong maximum principle in the $i$-th equation of the system. 

	Let $u_0$  be any nontrivial initial data and fix  $t_0 > 0$ and $u(t,x)$ be the solution to \eqref{eq:gen-syst} satisfying $u(0, x)=u_0(x)$. Then $u(t_0,x)>0$ for all $x\in\mathbb R$.  Therefore, for any  $k\in\mathbb Z$, we can find $\varepsilon_k\in(0,\varepsilon_0]$ such that
	\begin{equation*}
		u(t_0,x) \geq \varepsilon_k\varphi^R(x+kL).
	\end{equation*}
	Now we compare $u(t,x)$ and the solution $\underline{u}(t,x)$ with initial data $ \underline{u}_0(x):=\varepsilon_k\varphi^R(x+kL)$ inside $(-kL-R, -kL+R)$  and $0$ outside.
Then by the result (4), we see that
	\begin{equation*}
		\liminf_{t\to\infty} u(t,x) \geq \varepsilon_0\varphi^R(x+kL) \text{ in }(-kL-R, -kL+R)
	\end{equation*}
	for any  $k\in\mathbb Z$.  This implies that there exists  $\delta>0$ (independent of $u_0$) such that  
	\begin{equation*}
		\liminf_{t\to\infty} u(t,x) \geq \delta\mathbf{1}
	\end{equation*}
	for all $x\in\mathbb R$.
\end{proof}

{
\section{Singular limits}
\label{sec:singular-limits}

\subsection{Spreading speed for rapidly oscillating coefficients}
\label{sec:large-diff}
}

In this section we prove Theorem \ref{thm:large-diff}. 

\paragraph{Formal computations to get the formula for the speed.} Here we present the classical computations that allow to retrieve the correct result, though without the correct mathematical  justification. The basic idea is to apply known results from homogenization theory to the eigenvalue problem involved in the definition of the minimal speed \eqref{eq:defk(lambda)}, i.e. 
\begin{equation}\label{eq:hom-eigen}
	-L_\lambda^\varepsilon\vect{\varphi}=-(\vect{\sigma}^\varepsilon(x)\vect{\varphi}_x)_x+(2\lambda\vect{\sigma}^\varepsilon(x)-\vect{q}^\varepsilon(x))\vect{\varphi}_x+(\lambda \vect{\sigma}^\varepsilon_x(x)+\lambda \vect{q}^\varepsilon(x)-\lambda^2\vect{\sigma}^\varepsilon(x)-A^\varepsilon(x))\vect{\varphi}=k^\varepsilon(\lambda)\vect{\varphi}.
\end{equation}
We follow the approach of Bensoussan, Lions and Papanicolaou \cite{Ben-Lio-Pap-11} and introduce an asymptotic expansion  in $\varepsilon$:
\begin{equation}\label{eq:formal-devt}
	\vect{\varphi}(x)=\vect{\phi}\left(x, \frac{x}{\varepsilon}\right)=\vect{\phi}^0\left(x, \frac{x}{\varepsilon}\right)+\varepsilon \vect{\phi}^1\left(x, \frac{x}{\varepsilon}\right)+\varepsilon^2\vect{\phi}^2\left(x, \frac{x}{\varepsilon}\right)+\cdots
\end{equation}
where the functions $\phi(x, y)$,  $\phi^0(x y)$, $\phi^1(x, y)$ and $\phi^2(x, y)$ and are $1$-periodic in $y$. We substitute \eqref{eq:formal-devt} into  \eqref{eq:hom-eigen} and rewrite it in terms of the variables $x$ and $y$.
\begin{multline}\label{eq:hom-formal}
	\varepsilon^{-2}\left[-(\vect{\sigma}(y)\vect{\phi}_y)_y\right]+\varepsilon^{-1}\left[-(\vect{\sigma}(y)\vect{\phi}_x)_y-(\vect{\sigma}(y)\vect{\phi}_y)_x+(2\lambda \vect{\sigma}(y)-\vect{q}(y))\vect{\phi}_y + \lambda\vect{\sigma}_y(y)\vect{\phi} \right] \\
	+\varepsilon^0\left[-(\vect{\sigma}(y)\vect{\phi}_x)_x+(2\lambda \vect{\sigma}(y)-\vect{q}(y))\vect{\phi}_x+(\lambda\vect{q}(y)-\lambda^2\vect{\sigma}(y)-A(y)-k^\varepsilon(\lambda){I})\vect{\phi} \right]\\
	\begin{aligned}
		\relax&=\varepsilon^{-2}\left[-(\vect{\sigma}(y)\vect{\phi}^0_y)_y\right]\\ 
		&\quad + \varepsilon^{-1}\left[-(\vect{\sigma}(y)\vect{\phi}^1_y)_y-(\vect{\sigma}(y)\vect{\phi}^0_y)_x-(\vect{\sigma}(y)\vect{\phi}^0_x)_y+(2\lambda\vect{\sigma}(y)-\vect{q}(y))\vect{\phi}^0_y +\lambda \vect{\sigma}_y(y)\vect{\phi}^0 \right]\\
		&\quad +\varepsilon^0\left[ -(\vect{\sigma}(y)\vect{\phi}^2_y)_y-(\vect{\sigma}(y)\vect{\phi}^1_y)_x-(\vect{\sigma}(y)\vect{\phi}^1_x)_y+(2\lambda\vect{\sigma}(y)-\vect{q}(y))\vect{\phi}^1_y +\lambda \vect{\sigma}_y(y)\vect{\phi}^1\right.\\ 
		&\quad \quad\left.-(\vect{\sigma}(y)\vect{\phi}^0_x)_x+(2\lambda \vect{\sigma}(y)-\vect{q}(y))\vect{\phi}^0_x+(\lambda\vect{q}(y)-\lambda^2\vect{\sigma}(y) -A(y)-k^\varepsilon(\lambda)I)\vect{\phi}^0 \right]\\
		&\quad+\mathcal O(\varepsilon)\\
		&=\vect{0}. 
	\end{aligned}
\end{multline}
In \eqref{eq:hom-formal}, the coefficients of $\varepsilon^{-2}$ and $\varepsilon^{-1}$ must be zero.
In particular, we have:
\begin{equation*}
	-(\vect{\sigma}(y)\vect{\phi}^0_y(x,y))_y=0, 
\end{equation*}
which yields $\vect{\phi}^0_y(x,y)=\vect{\sigma}^{-1}(y)\tilde{\vect{\phi}}^0(x)$, however since $\vect{\phi}$ is $1$-periodic in $y$ we have $\int_0^1\vect{\phi}^0_y(x,y)\dd y=\int_0^1\vect{\sigma}^{-1}(y)\dd y\tilde{\vect{\phi}}^0(x) =\vect{0}$ and therefore $\tilde{\vect{\phi}}^0(x)=\vect{0} $ for all $x\in\mathbb R$. Thus, integrating again, we get:
\begin{equation}\label{eq:hom-phi0}
	\vect{\phi}^0(x, y)=\vect{\varphi}(x), \text{ for all } x\in\mathbb R.
\end{equation}

Next we focus on the coefficient in $\varepsilon^{-1}$ term in \eqref{eq:hom-formal}. Using \eqref{eq:hom-phi0}, we rewrite the $\varepsilon^{-1}$ coefficient as:
\begin{equation*}
	-(\vect{\sigma}(y)\vect{\phi}^1_y)_y=\vect{\sigma}_y(y)(\vect{\varphi}_x(x)-\lambda\vect{\varphi}(x)).
\end{equation*}
We remark that $\vect{\chi}(y):=\int_0^y\left(\int_0^1\vect{\sigma}^{-1}(z')\dd z'\right)^{-1}\vect{\sigma}^{-1}(z)-\vect{1}\dd z$ is a particular solution to:
\begin{equation*}
	-(\vect{\sigma}(y)\vect{\chi}_y(y))_y=-\vect{\sigma}_y(y).
\end{equation*}
Therefore we can write:
\begin{equation}\label{eq:hom-phi1}
	\vect{\phi}^1(x,y)=\vect{\chi}(y)(\vect{\varphi}_x(x)-\lambda\vect{\varphi}(x))+\tilde{\vect{\phi}}^1(x).
\end{equation}

Last, integrating the coefficient in $\varepsilon^0$ in \eqref{eq:hom-formal} gives us the homogenization limit for \eqref{eq:hom-eigen}. We get: 
\begin{align*}
	&0-\overline{\vect{\sigma}\vect{\chi}_y}(\vect{\varphi}_{xx}-\lambda \vect{\varphi}_x)-0+(2\lambda \overline{\vect{\sigma}\vect{\chi}_y}-\overline{\vect{q}\vect{\chi}_y})(\vect{\varphi}_{x}-\lambda \vect{\varphi})+\lambda \overline{\vect{\sigma}_y\vect{\chi}}(\vect{\varphi}_{x}-\lambda \vect{\varphi}) \\ 
	&-(\overline{\vect{\sigma}}\vect{\varphi}_x)_x+(2\lambda \overline{\vect{\sigma}}-\overline{\vect{q}})\vect{\varphi}_x+(\lambda\overline{\vect{q}}-\lambda^2\overline{\vect{\sigma}}-\overline{A}-k^\varepsilon(\lambda)I)\vect{\varphi}=\vect{0},
\end{align*}
where $\overline{u}$ denotes the average of a function $u$ over one period.
Using the fact that 
\begin{equation*}
	\vect{\chi}_y(y)=\left(\int_0^1\vect{\sigma}^{-1}(z)\dd z\right)^{-1}\vect{\sigma}^{-1}(y)-\vect{1}
\end{equation*}
and integrating by parts, we get:
\begin{equation*}
	-\overline{\vect{\sigma}^{-1}}^{-1}\vect{\varphi}_{xx} + \left(2\lambda\overline{\vect{\sigma}^{-1}}^{-1}-\overline{\vect{\sigma}^{-1}}^{-1}\overline{\vect{\sigma}^{-1}\vect{q}}\right)\vect{\varphi}_x + \left(\lambda\overline{\vect{\sigma}^{-1}}^{-1}\overline{\vect{\sigma}^{-1}\vect{q}}-\lambda^2\overline{\vect{\sigma}^{-1}}^{-1}-\overline{A}-k^0(\lambda)I\right)\vect{\varphi}=\vect{0}.
\end{equation*}
By the uniqueness of the periodic principal eigenvalue, $\varphi$ is equal to the Perron-Frobenius eigenvector of the matrix $\lambda\overline{\vect{\sigma}^{-1}}^{-1}\overline{\vect{\sigma}^{-1}\vect{q}}-\lambda^2\overline{\vect{\sigma}^{-1}}^{-1}-\overline{A}-k^0(\lambda)I$.  
We retrieve \eqref{eq:lim-hom} indeed.

\begin{proof}[Proof of Theorem \ref{thm:large-diff}]
	We divide the proof in two steps.

	\begin{stepping}
		\step We show that $k^\varepsilon(\lambda)$ converges locally uniformly to 
		\begin{equation*}
			k^0(\lambda):=\lambda_{PF}\big(\lambda\overline{\vect{q}}^H-\lambda^2\overline{\vect{\sigma}}^H-\overline{A}\big)
		\end{equation*}
		in $(0, +\infty)$, where we recall that $\overline{\vect{\sigma}}^H:=\overline{\vect{\sigma}^{-1}}^{-1}$, $\overline{\vect{q}}^H:=\overline{\vect{\sigma}^{-1}}^{-1}\overline{\vect{\sigma}^{-1}\vect{q}}$, 
		\begin{equation}\label{eq:homogenisation-eigen}
			L^\varepsilon_\lambda\vect{\phi}^\varepsilon_\lambda-A^{\varepsilon}(x)\vect{\phi}^{\varepsilon}_\lambda=k^\varepsilon(\lambda)\vect{\phi}^\varepsilon_\lambda
		\end{equation}
		and $L^\varepsilon_\lambda\vect{\varphi}:= e^{\lambda x}\mathcal L^\varepsilon(e^{-\lambda x}\vect{\varphi})$ for all $\vect{\varphi}$. \medskip

		We argue by contradiction and assume that there exists a bounded interval $\left[\frac{1}{R}, R\right]$, $\delta>0$ and  $\varepsilon_n>0$ such that $\sup_{\frac{1}{R}\leq\lambda\leq R}|k^{\varepsilon_n}(\lambda)- k^0(\lambda)|\geq \delta$. Since $\left[\frac{1}{R}, R\right] $ is bounded, for each $n\in\mathbb N$ there exists $\lambda^{\varepsilon_n}\in \left[\frac{1}{R}, R\right]$ such that $\sup_{\frac{1}{R}\leq\lambda\leq R}|k^{\varepsilon_n}(\lambda)- k^0(\lambda)|=|k^{\varepsilon_n}(\lambda^{\varepsilon_n})- k^0(\lambda^{\varepsilon_n})|$. Up to the extraction of a subsequence we  assume that $\lambda^{\varepsilon}\to\lambda_0\in \left[\frac{1}{R}, R\right]$. For simplicity in the rest of the proof we will omit the subscript $n$ and write $\varepsilon$ instead of $\varepsilon_n$.

		Let $\vect{\phi}^\varepsilon:=\vect{\phi}^{\varepsilon}_{\lambda^\varepsilon}(x)>0 $ be a sequence of solutions to $L^{\varepsilon}_{\lambda^\varepsilon}\vect{\phi}^\varepsilon=k^{\varepsilon}(\lambda)\vect{\phi}^\varepsilon$ with $\varepsilon\to 0$ and $\lambda^\varepsilon\to \lambda_0$,  which satisfies $\Vert \vect{\phi}^\varepsilon\Vert_{L^2(0,1)^d}^2=\int_0^1\sum_{i=1}^d(\phi^\varepsilon_i)^2(x)dx=1$. Testing \eqref{eq:homogenisation-eigen} at a maximum and minimum point of $\vect{\phi}^\varepsilon$, respectively, we find that 
		\begin{equation*}
			-(\lambda^\varepsilon)^2-\underset{x\in \mathbb R}{\underset{1\leq i\leq d}{\sup}}\,\underset{j=1}{\overset{d}{\sum}}a_{ij}(x) \leq k^{\varepsilon}(\lambda^\varepsilon)\leq -(\lambda^\varepsilon)^2-\underset{x\in \mathbb R}{\underset{1\leq i\leq d}{\inf}}\,\underset{j=1}{\overset{d}{\sum}}a_{ij}(x) .
		\end{equation*}
		In particular, $k^{\varepsilon}(\lambda^\varepsilon)$ is uniformly bounded in $n$ and we may extract a subsequence such that $k^{\varepsilon}(\lambda^\varepsilon)\to k^0$. 

		Let us show that $\vect{\phi}^{\varepsilon}$ is uniformly bounded in  $H^1(0,1)^d$. Indeed, we have 
		\begin{align*}
			\underline{\sigma}\Vert \vect{\phi}^{\varepsilon}_x\Vert_{L^2(0,\varepsilon)^d}^2&\leq \sum_{i=1}^d \int_0^\varepsilon \sigma_i\left(\varepsilon^{-1}x\right)(\partial_x{\phi}^{\varepsilon}_i(x))^2\dd x \\
			&=\sum_{i=1}^d \int_0^\varepsilon q_i^\varepsilon(x)(\partial_x\phi^\varepsilon_i(x))\phi^\varepsilon_i(x)\dd x+\int_0^\varepsilon(\lambda^\varepsilon q^\varepsilon_i(x)+(\lambda^\varepsilon)^2\sigma^\varepsilon_i(x) + k^\varepsilon(\lambda_\varepsilon))\phi^\varepsilon_i(x)^2\dd x\\
			&\quad + \sum_{i=1}^d\sum_{j=1}^d \int_0^\varepsilon a^\varepsilon_{ij}(x)\phi^\varepsilon_i(x)\phi^\varepsilon_j(x)\dd x \\
			&\leq \Vert \vect{q}\Vert_{L^\infty} \Vert \vect{\phi}^{\varepsilon}_x\Vert_{L^2(0,\varepsilon)^d}\Vert\vect{\phi}^{\varepsilon}\Vert_{L^2(0,\varepsilon)^d}  \\ 
			&\quad + C\left(\lambda^\varepsilon\Vert \vect{q}\Vert_{L^\infty}+(\lambda^\varepsilon)^2\Vert\vect{\sigma}\Vert_{L^\infty}+k^\varepsilon(\lambda^\varepsilon)+\Vert A\Vert_{L^\infty}\right)\Vert \vect{\phi}^{\varepsilon}\Vert_{L^2(0,\varepsilon)^d}^2, 
		\end{align*}
		and by periodicity
		\begin{equation*}
			\lfloor\varepsilon^{-1}\rfloor\Vert \phi_x^\varepsilon\Vert_{L^2(0,\varepsilon)^d}\leq \Vert \phi_x^\varepsilon\Vert_{L^2(0,1)^d}\leq (\lfloor\varepsilon^{-1}\rfloor+1)\Vert \phi_x^\varepsilon\Vert_{L^2(0,\varepsilon)^d},
		\end{equation*}
		where $\lfloor\varepsilon^{-1}\rfloor$ is the lower integer part of $\varepsilon^{-1}$, and therefore
		\begin{equation*}
			\underline{\sigma}\Vert \vect{\phi}^{\varepsilon}_x\Vert_{L^2(0,1)^d}^2\leq \frac{\lfloor\varepsilon^{-1}\rfloor+1}{\lfloor\varepsilon^{-1}\rfloor}C\Vert \phi^{\varepsilon}\Vert_{L^2(0,1)^d}^2
		\end{equation*}
		where $C$ is independent of $\varepsilon$ and  $\Vert \vect{\phi}^{\varepsilon}\Vert_{L^2(0,1)^d}=1$. Therefore, (up to the extraction of a subsequence) there is $\vect{\phi}\in H^1(0,1)^d$ such that $\vect{\phi}^{\varepsilon}\to \vect{\phi} $ {\em strongly} in $L^2(0,1)^d$ and $\vect{\phi}^{\varepsilon}\rightharpoonup \vect{\phi}$ {\em weakly} in $H^1(0,1)^d$. Next we remark that, rewriting \eqref{eq:hom-eigen} as:
		\begin{equation}\label{eq:hom-xi}
			-(\vect{\sigma}^\varepsilon(\vect{\phi}^\varepsilon_x-\lambda^\varepsilon\vect{\phi}^\varepsilon))_x + (\lambda^\varepsilon\vect{\sigma}^\varepsilon-\vect{q}^\varepsilon)(\vect{\phi}^\varepsilon_x-\lambda^\varepsilon\vect{\phi}^\varepsilon)=A^\varepsilon\vect{\phi}^\varepsilon+k^\varepsilon(\lambda^\varepsilon)\vect{\phi}^\varepsilon,
		\end{equation}
		the function $\vect{\xi}^\varepsilon:=\vect{\sigma}^\varepsilon\vect{\phi}^{\varepsilon}_x-\lambda^\varepsilon\phi^\varepsilon$ is uniformly bounded in $H^1(0,1)^d$. Indeed multiplying \eqref{eq:hom-xi} by $\xi^\varepsilon_x$ and integrating, we get 
		\begin{multline*}
			\int_0^\varepsilon\sigma^\varepsilon \big|\xi_x^\varepsilon\big|^2 = \int_0^\varepsilon\big[(q^\varepsilon-\lambda^\varepsilon\sigma^\varepsilon)(\phi_x^\varepsilon-\lambda^\varepsilon\phi^\varepsilon)\big]\cdot \xi_x^\varepsilon +\int_0^\varepsilon \Big[\big(A^\varepsilon+k^\varepsilon(\lambda^\varepsilon)I\big)\phi^\varepsilon\Big]\cdot\xi^\varepsilon_x   \\
			\leq \Vert q^\varepsilon-\lambda^\varepsilon\sigma^\varepsilon\Vert_{L^\infty(0,\varepsilon)^d}\Vert \phi^\varepsilon_x-\lambda^\varepsilon\phi^\varepsilon\Vert_{L^2(0,\varepsilon)^d}\Vert\xi_x^\varepsilon\Vert_{L^2(0,\varepsilon)^d}+\Vert A^\varepsilon+k^\varepsilon(\lambda^\varepsilon)I\Vert_{L^\infty(0,\varepsilon)^d}\Vert\phi^\varepsilon\Vert_{L^2(0,\varepsilon)^d}\Vert \xi_x\Vert_{L^2(0, \varepsilon)^d}.
		\end{multline*}
		By the periodicity of $\xi^\varepsilon$ we easily conclude that $\xi^\varepsilon_x$ is uniformly bounded in $L^2(0,1)^d$.
		Therefore (up to the extraction of a subsequence), there is $\vect{\xi}\in H^1(0,1)^d$ such that $\vect{\xi}^\varepsilon\to\vect{\xi}$ strongly in $L^2(0,1)^d $ and $\vect{\xi}^\varepsilon\rightharpoonup\vect{\xi}$ weakly in $H^1(0,1)^d$. In particular $\vect{\phi}^\varepsilon_x-\lambda^\varepsilon\vect{\phi}^\varepsilon=\vect{\sigma}^{-1}\vect{\xi}^\varepsilon\rightharpoonup \overline{\vect{\sigma}^{-1}}\vect{\xi}$ in $L^2(0,1)^d$ weakly and therefore $\vect{\xi}=\overline{\vect{\sigma}}^H(\vect{\phi}_x-\lambda_0\vect{\phi})$. This allows us to determine the limits of each term in \eqref{eq:hom-xi} by using the convergence of $\vect{\xi}^\varepsilon$:
		\begin{align*}
			\vect{\sigma}^\varepsilon(\vect{\phi}^\varepsilon_x-\lambda^\varepsilon\vect{\phi}^\varepsilon)=\vect{\xi}^\varepsilon&\to \vect{\xi}=\overline{\vect{\sigma}}^H(\vect{\phi}_x-\lambda_0\vect{\phi}),& \text{ in }&L^2(0,1)^d \text{ strong}, \\
			(\vect{\sigma}^\varepsilon(\vect{\phi}^\varepsilon_x-\lambda^\varepsilon\vect{\phi}^\varepsilon))_x&\rightharpoonup (\overline{\vect{\sigma}}^H(\vect{\phi}_x-\lambda_0\vect{\phi}))_x,& \text{ in }&H^1(0,1)^d \text{ weak}, \\
			\vect{q}^\varepsilon(\vect{\phi}^\varepsilon_x-\lambda^\varepsilon\vect{\phi}^\varepsilon)=(\vect{\sigma}^\varepsilon)^{-1}\vect{q}^\varepsilon\vect{\xi}^\varepsilon&\rightharpoonup \overline{\vect{\sigma}^{-1}\vect{q}}\vect{\xi}=\overline{\vect{q}}^H(\vect{\phi}_x-\lambda_0\vect{\phi}),& \text{ in }&L^2(0,1)^d \text{ weak}, \\
		\end{align*}
		and \eqref{eq:hom-xi} becomes:
		\begin{equation*}
			-\overline{\vect{\sigma}}^H((\vect{\phi}_x-\lambda_0\vect{\phi}))_x + (\lambda_0\overline{\vect{\sigma}}^H-\overline{\vect{q}}^H)(\vect{\phi}_x-\lambda_0\vect{\phi})=\overline{A}\vect{\phi}+k^0\vect{\phi}.
		\end{equation*}
		Note that because of the periodicity of $\phi^\varepsilon$, the convergence in $L^2(0,1)^d$ or $H^1(0, 1)^d$, weak or strong, implies the same convergence for the $L^2$ or $H^1$ local uniform topology on $\mathbb R$. 
		Thus $\phi$ satisfies \eqref{eq:hom-eigen} with $\vect{\sigma}^\varepsilon$ replaced by $\overline{\vect{\sigma}}^H$ and $\vect{q}^\varepsilon$ replaced by $\overline{\vect{q}}^H$. By the uniqueness of the principal eigenvector, we find that $\vect{\phi}$ is a positive constant vector satisfying: 
		\begin{equation*}
			(-\lambda_0^2\overline{\vect{\sigma}}^H+\lambda_0\overline{\vect{q}}^H-\overline{A})\vect{\phi}=k^0\vect{\phi},
		\end{equation*}
		therefore $k^0=\lambda_{PF}(-\lambda_0^2\overline{\vect{\sigma}}^H+\lambda_0\overline{\vect{q}}^H-\overline{A})=k^0(\lambda_0)$. This is a contradiction.\medskip

		\step We show that the minimum of $\frac{k^\varepsilon(\lambda)}{\lambda}$ converges to the one of $\frac{k^0(\lambda)}{\lambda}$.\medskip

		It is well-known, in the scalar matrix case, that $\lambda\mapsto \lambda_{PF}\big(\lambda^2 \langle {\sigma}\rangle_{A} + \langle A\rangle_A\big)=k^0(\lambda)$ is a strictly concave function, and that $ \frac{-k^0(\lambda)}{\lambda} $ has a unique minimum for $\lambda>0$. We have extended this property to the case of systems in Proposition \ref{prop:minspeed}. From the local uniform convergence of  $k^\varepsilon( \lambda) $  to $k^0(\lambda)$, we conclude the local uniform convergence of  $-\frac{k^\varepsilon(\lambda)}{\lambda} $ to $\frac{k^0(\lambda)}{\lambda} $ and $\lambda^*_\varepsilon:=\arg\min\left(-\frac{k^\varepsilon(\lambda)}{\lambda}\right)$ to $\lambda^*_0:=\arg\min\left(-\frac{k^0(\lambda)}{\lambda}\right)$. This finishes the proof of Theorem \ref{thm:large-diff}.
	\end{stepping}
\end{proof}

\subsection{Strong coupling}
\label{sec:strong-coupling}
{
In this subsection we study the singular limit of the following system,
which is a modified version of system \eqref{eq:main-sys} with strong coupling
\begin{equation}\label{eq:strong-coupling}
	\begin{system}
	    \relax &u_t=(\sigma_u(x) u_{x})_x+\big(r_u(x)-\kappa_u(x)(u+v)\big)u -\frac{1}{\varepsilon}\big(p(x)u - \big(1-p(x)\big)v\big),\\
		\relax &v_t=(\sigma_v(x) v_{x})_x+\big(r_v(x)-\kappa_v(x)(u+v)\big)v+\frac{1}{\varepsilon}\big(p(x)u - \big(1-p(x)\big)v\big),
	\end{system}
\end{equation}
where $p(x)\in(0,1)$ is smooth (at leat $C^2$) and $\varepsilon>0$, with a particular interest in the limit $\varepsilon\to 0$. A formal way to compute the limit is to consider asymptotic expansions of $u$ and $v$
\begin{align*}
	u(t,x)&=u^0(t, x)+\varepsilon u^1(t, x)+\varepsilon^2u^2(t,x)+\cdots , \\
	v(t,x)&=v^0(t, x)+\varepsilon v^1(t, x)+\varepsilon^2v^2(t,x)+\cdots ,
\end{align*}
and this method has the advantage of allowing an arbitrary degree of precision in the asymptotic behavior of the solution when $\varepsilon\to0$. However, since we are presently concerned with the zero-order term only, we present an easier way to compute the limit. We let $P(t,x):=p(x)u(t,x)-\big(1-p(x)\big)v(t,x)$ and remark that, for a limit to exist, one must have $P(t, x)\to 0$ as $\varepsilon\to 0$. Therefore the limit $(u^0, v^0)$ satisfies
\begin{equation*}
    p(x)u^0=\big(1-p(x)\big)v^0
\end{equation*}
and the sum $S:=u^0+v^0$ is the solution of a closed scalar reaction-diffusion equation which can be determined explicitly by the relations $u^0=\big(1-p(x)\big)S$, $v^0=p(x)S$,
\begin{align*}
	u_x &= -p_x S+(1-p)S_x,   \\
	v_x &= p_xS+pS_x,  
\end{align*}
and therefore
\begin{align*}
	S_t&=\big((1-p(x))\sigma_u(x)+p(x)\sigma_v(x)S_x\big)_x+\left((\sigma_v(x)-\sigma_u(x))p_xS\right)_x+\big(r(x)-\kappa(x)S\big)S \\ 
	&=\big(\sigma(x)S_x\big)_x + q(x)S_x + \big(r(x)+q_x(x)-\kappa(x)S\big)S, 
\end{align*}
where 
\begin{align*}
	\sigma(x)&=(1-p(x))\sigma_u(x)+p(x)\sigma_v(x), \\
	r(x)&=(1-p(x))r_u(x)+p(x)r_v(x), \\
	\kappa(x)&=(1-p(x))\kappa_u(x)+p(x)\kappa_v(x) , \\
	q(x)&=(\sigma_v(x)-\sigma_u(x))p_x.
\end{align*}
In particular, $\sigma_u(x)$, $\sigma_v(x)$ and $p(x)$ can be chosen so that the sign of $\sigma_v(x)-\sigma_u(x) $ is the same as the sign of $p_x(x)$, in which case $q(x)>0$ and 
\begin{equation*}
	\int_0^1 \frac{q(x)}{2\sigma(x)}\dd x>0.
\end{equation*}
In this case it is known (see \eqref{eq:orderspeed} in Appendix \ref{app:scalar}) that the leftward and rightward speeds are different. Since there is a strict sign between $c^*_{\text{left}}$ and $c^*_{\text{right}}$, the same holds for the original system \eqref{eq:strong-coupling} with $\varepsilon>0$ sufficiently small.

For the sake of concision, we will not make this entire argument rigorous but focus on the limit of the principal eigenproblem, which implies the convergence of the minimal speed. 
\begin{prop}\label{prop:strong-coupling}
    Let $\lambda\in \mathbb R$ and $\varepsilon >0$ be given. Denote $k^\varepsilon(\lambda)$, $\varphi_\lambda^\varepsilon(x)$, $\psi_\lambda^\varepsilon(x)$ the principal solution to the eigenproblem
    \begin{equation}\label{eq:strong-coupling-eps}
	\left\{\begin{aligned}\relax
	    &-\big(\sigma_u(x)(\varphi^\varepsilon_\lambda)_x\big)_x + 2\lambda(\varphi^\varepsilon_\lambda)_x +\big(\lambda(\sigma_u)_x(x)-\lambda^2\sigma_u(x)-r_u(x)\big)\varphi^\varepsilon_\lambda +\frac{1}{\varepsilon}\big(p(x)\varphi^\varepsilon_\lambda - \big(1-p(x)\big)\psi^\varepsilon_\lambda\big) = k^\varepsilon(\lambda) \varphi^\varepsilon_\lambda, \\ 
	    &-\big(\sigma_v(x)(\psi^\varepsilon_\lambda)_x\big)_x + 2\lambda(\psi^\varepsilon_\lambda)_x +\big(\lambda(\sigma_v)_x(x)-\lambda^2\sigma_v(x)-r_v(x)\big)\psi^\varepsilon_\lambda -\frac{1}{\varepsilon}\big(p(x)\varphi^\varepsilon_\lambda - \big(1-p(x)\big)\psi^\varepsilon_\lambda\big) = k^\varepsilon(\lambda) \psi^\varepsilon_\lambda ,
	\end{aligned}\right.
    \end{equation}
    with $L$-periodic boundary conditions and normalized in $L^2_{per}$.
    Then, as $\varepsilon\to 0$, the function $k^\varepsilon(\lambda)$ converges locally uniformly to $k^0(\lambda)$, the principal eigenvalue of the problem
    \begin{equation*}
	-\big(\sigma(x)(\varphi^0_\lambda)_x\big)_x +\big(2\lambda - q(x)\big)(\varphi^0_\lambda)_x + \big(\lambda\sigma_x(x)+\lambda q(x)-\lambda^2\sigma(x)-r(x)-q_x(x)\big)\varphi^\varepsilon_\lambda  = k^0(\lambda)\varphi^0_\lambda, 
    \end{equation*}
    for a $L$-periodic positive scalar function $\varphi$, where   
    \begin{align*}
	\sigma(x)&=(1-p(x))\sigma_u(x)+p(x)\sigma_v(x), & 
	r(x)&=(1-p(x))r_u(x)+p(x)r_v(x), &
	q(x)&=(\sigma_v(x)-\sigma_u(x))p_x.
    \end{align*}
\end{prop}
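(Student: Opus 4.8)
## Proof strategy for Proposition \ref{prop:strong-coupling}

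The plan is to mimic the compactness-plus-uniqueness argument already used for Theorem \ref{thm:large-diff}, the main new feature being the presence of a singular ($1/\varepsilon$) zero-order coupling instead of a singular (highest-order) oscillation. First I would fix a bounded interval $[1/R,R]$ and argue by contradiction: suppose there is $\delta>0$ and a sequence $\varepsilon_n\to 0$, $\lambda^{\varepsilon_n}\to\lambda_0\in[1/R,R]$, with $|k^{\varepsilon_n}(\lambda^{\varepsilon_n})-k^0(\lambda^{\varepsilon_n})|\geq\delta$. Writing $(\varphi^{\varepsilon},\psi^{\varepsilon})$ for the associated normalized positive principal eigenfunctions (with $\|\varphi^\varepsilon\|_{L^2_{per}}^2+\|\psi^\varepsilon\|_{L^2_{per}}^2=1$), I would first obtain a uniform bound on $k^{\varepsilon}(\lambda^{\varepsilon})$ exactly as in Theorem \ref{thm:large-diff}: evaluating the system at a maximum of $\varphi^\varepsilon$ and a minimum of $\psi^\varepsilon$ (or exploiting the minimax formula \eqref{eq:minimax-k(lambda)}), one sees $k^\varepsilon$ cannot go to $\pm\infty$, so along a subsequence $k^{\varepsilon}(\lambda^{\varepsilon})\to k^0$ for some $k^0\in\mathbb R$.

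Next I would establish the compactness of the eigenfunctions. Summing the two equations of \eqref{eq:strong-coupling-eps} kills the singular term and yields a uniformly elliptic scalar-like equation for $S^\varepsilon:=\varphi^\varepsilon+\psi^\varepsilon$ with coefficients bounded independently of $\varepsilon$; standard elliptic estimates then give a uniform $H^1_{per}$ (indeed $C^{1,\alpha}_{per}$) bound on $S^\varepsilon$, hence $S^\varepsilon\to S$ along a subsequence, strongly in $L^2_{per}$ and weakly in $H^1_{per}$. The key point is then to control the two components separately. Multiplying the first equation by $\varepsilon$ and using the uniform bounds already obtained shows $p(x)\varphi^\varepsilon-(1-p(x))\psi^\varepsilon=:P^\varepsilon\to 0$ in, say, $H^{-1}_{per}$ (in fact in $L^2_{per}$, after one more elliptic estimate on $P^\varepsilon$ itself using that the full $1/\varepsilon\,P^\varepsilon$ is $O(1)$ in the equation for $\varphi^\varepsilon$, hence $P^\varepsilon$ solves an elliptic equation whose right-hand side is $O(\varepsilon)$ in a suitable norm). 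Since $p\in(0,1)$ is smooth and bounded away from $0$ and $1$, the relations $P^\varepsilon\to 0$ and $\varphi^\varepsilon+\psi^\varepsilon\to S$ force $\varphi^\varepsilon\to(1-p)S$ and $\psi^\varepsilon\to pS$ strongly in $L^2_{per}$ (and then in $C^1_{per}$ by bootstrapping). In particular $S\not\equiv 0$ because of the normalization and $S\geq 0$, $S>0$ by the strong maximum principle.

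Finally I would pass to the limit in the sum of the two equations. Writing $\varphi^\varepsilon=(1-p)S^\varepsilon+r^\varepsilon$, $\psi^\varepsilon=pS^\varepsilon-r^\varepsilon$ with $r^\varepsilon\to 0$ in the appropriate norm, substituting into $\eqref{eq:strong-coupling-eps}_1+\eqref{eq:strong-coupling-eps}_2$, and using the (weak) convergence of derivatives together with the explicit computation $u_x=-p_xS+(1-p)S_x$, $v_x=p_xS+pS_x$ sketched above, one recovers in the limit exactly
\begin{equation*}
	-\big(\sigma(x)(\varphi^0_\lambda)_x\big)_x+\big(2\lambda-q(x)\big)(\varphi^0_\lambda)_x+\big(\lambda\sigma_x(x)+\lambda q(x)-\lambda^2\sigma(x)-r(x)-q_x(x)\big)\varphi^0_\lambda=k^0\varphi^0_\lambda
\end{equation*}
with $\varphi^0_\lambda=S>0$ and $L$-periodic, the extra $q_x$ term coming precisely from the $(\sigma_v-\sigma_u)p_x$ contribution to the transport of $S$. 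By uniqueness of the periodic principal eigenvalue (Krein--Rutman, as in Proposition \ref{prop:k(lambda)}\ref{item:eigenpairk(lambda)}) this forces $k^0=k^0(\lambda_0)$, contradicting $|k^{\varepsilon}(\lambda^{\varepsilon})-k^0(\lambda^{\varepsilon})|\geq\delta$. This proves the claimed locally uniform convergence. I expect the main obstacle to be the second step: unlike in Theorem \ref{thm:large-diff}, here the singular term is of order zero, so the naive estimate only controls the sum $S^\varepsilon$; getting genuine strong convergence of the individual components $\varphi^\varepsilon,\psi^\varepsilon$ (equivalently, a quantitative rate $P^\varepsilon=O(\varepsilon)$ in a norm strong enough to pass to the limit in all terms) is the delicate part, and is where the smoothness and the strict bounds $0<p(x)<1$ are essential.
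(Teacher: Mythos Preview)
Your overall strategy---contradiction, compactness, identification of the limit via Krein--Rutman uniqueness---matches the paper's. The gap is in the compactness step, and it is precisely where you did \emph{not} expect it.

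You propose to sum the two equations, observe that the singular term cancels, and then apply ``standard elliptic estimates'' to get a uniform $H^1_{per}$ bound on $S^\varepsilon=\varphi^\varepsilon+\psi^\varepsilon$. But the summed equation is not elliptic in $S^\varepsilon$: its principal part is $-(\sigma_u\varphi^\varepsilon_x+\sigma_v\psi^\varepsilon_x)_x$, which cannot be rewritten as $-(\sigma S^\varepsilon_x)_x$ because $\sigma_u\neq\sigma_v$. Testing with $S^\varepsilon$ yields $\int(\sigma_u\varphi^\varepsilon_x+\sigma_v\psi^\varepsilon_x)S^\varepsilon_x$, and the pointwise quadratic form $(a,b)\mapsto(\sigma_u a+\sigma_v b)(a+b)$ is only positive \emph{semi}-definite (it vanishes when $a=-b$), so this does not control $\|S^\varepsilon_x\|_{L^2}$. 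Without separate control on $\varphi^\varepsilon_x$ and $\psi^\varepsilon_x$, there is no elliptic estimate to invoke.

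The paper resolves this by a different choice of test function: multiply the first equation by $p(x)\varphi^\varepsilon$ and the second by $(1-p(x))\psi^\varepsilon$, then sum. The diffusion terms give $\int\sigma_u p\,(\varphi^\varepsilon_x)^2+\int\sigma_v(1-p)\,(\psi^\varepsilon_x)^2$ (plus lower order), which is coercive in \emph{each} gradient since $0<p<1$. Meanwhile the singular contribution becomes exactly $\frac{1}{\varepsilon}\int\big(p\varphi^\varepsilon-(1-p)\psi^\varepsilon\big)^2\geq 0$, which can be moved to the left-hand side. This single estimate delivers uniform $H^1_{per}$ bounds on $\varphi^\varepsilon$ and $\psi^\varepsilon$ individually. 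Once those are in hand, the constraint $p\bar\varphi=(1-p)\bar\psi$ follows simply by multiplying the first equation by $\varepsilon f$ for a fixed test function $f$ and passing to the limit---no need for the ``$P^\varepsilon$ solves an elliptic equation with $O(\varepsilon)$ right-hand side'' argument you sketch. So the step you flagged as delicate is in fact routine once the right energy estimate is used; the real content is choosing the weights $p$ and $1-p$ in the test functions.
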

\begin{proof}
    We argue by contradiction and assume that there exists a bounded interval $\left[-R, R\right]$, $\delta>0$ and  $\varepsilon_n>0$ such that $\sup_{-R\leq\lambda\leq R}|k^{\varepsilon_n}(\lambda)- k^0(\lambda)|\geq \delta$. Since $\left[-{R}, R\right] $ is bounded, for each $n\in\mathbb N$ there exists $\lambda^{\varepsilon_n}\in \left[-{R}, R\right]$ such that $\sup_{-{R}\leq\lambda\leq R}|k^{\varepsilon_n}(\lambda)- k^0(\lambda)|=|k^{\varepsilon_n}(\lambda^{\varepsilon_n})- k^0(\lambda^{\varepsilon_n})|$. Up to the extraction of a subsequence we  assume that $\lambda^{\varepsilon}\to\lambda_0\in \left[-{R}, R\right]$. For simplicity in the rest of the proof we will omit the subscript $n$ and write $\varepsilon$ instead of $\varepsilon_n$. We will also omit the subscripts and superscripts, when there is no ambiguity, for the solutions $(\varphi, \psi)$ of  \eqref{eq:strong-coupling-eps}.

    Let us show that $\varphi$ and $\psi$ are bounded in $H^1_{per}$ when $\varepsilon\to 0$. Indeed, multiplying the first line of \eqref{eq:strong-coupling-eps} by $p(x)\varphi(x)$, we get
    \begin{align*}
	\int \sigma_u(x)p(x)\varphi_x^2+\int\sigma_u(x)p_x(x)\varphi^2+2\lambda \int p(x)\varphi\varphi_x&=\int\big(-\lambda(\sigma_u)_x(x)+\lambda^2\sigma_u(x)+r_u(x) + k^\varepsilon(\lambda)\big)p(x)\varphi^2 \\ 
	&\quad -\frac{1}{\varepsilon}\left(\int p(x)^2\varphi^2 - \int p(x)\big(1-p(x)\big)\varphi\psi\right), 
    \end{align*}
    and multiplying the second line by $\big(1-p(x)\big)\psi(x)$ we get 
    \begin{multline*}
	\int \sigma_v(x)\big(1-p(x)\big)\psi^2-\int\sigma_v(x)p_x(x)\psi^2+2\lambda \int \big(1-p(x)\big)\psi\psi_x \\ 
	=\int\big(-\lambda(\sigma_v)_x(x)+\lambda^2\sigma_v(x)+r_v(x) + k^\varepsilon(\lambda)\big)\big(1-p(x)\big)\psi^2 \\ 
	\quad +\frac{1}{\varepsilon}\left(\int p(x)\big(1-p(x)\big)\varphi\psi - \int \big(1-p(x)\big)^2\psi^2\right), 
    \end{multline*}
    and finally the sum of the two equations above yields
    \begin{equation*}
	\int p(x)\varphi_x^2 + \int \big(1-p(x)\big)\psi_x^2 \leq C_1 \left(\Vert \varphi\Vert_{L^2_{per}}^2 + \Vert \psi\Vert_{L^2_{per}}^2\right), 
    \end{equation*}
    where $C_1$ is independent of $\varepsilon$. Since $p(x)$ and $(1-p(x))$ are bounded below, we conclude that $\varphi$ and $\psi$ are indeed bounded in $H^1_{per}$ uniformly when $\varepsilon\to 0$. 

    Therefore, up to the extraction of a subsequence, $\varphi$ and $\psi$ converge respectively to $\bar\varphi$ and $\bar\psi$ as $\varepsilon\to 0$, weakly in $H^1_{per}$ and strongly in $L^2_{per}$. Let $f\in C^2_{per}$ be a smooth test function, then multiplying the first line of \eqref{eq:strong-coupling-eps} by  $f$ leads to 
    \begin{align*}
	\int f(x)\left(p(x)\psi-\big(1-p(x)\big)\psi\right) &= \varepsilon\Big[-\int \sigma_u(x)\varphi_xf_x(x) - 2\lambda\int \varphi_xf(x) \\
	&\quad+\int\big(-\lambda(\sigma_u)_x(x)+\lambda^2\sigma_u(x)+r_u(x)+k^\varepsilon(\lambda)\big)\varphi f(x) \Big]\\
	&\xrightarrow[\varepsilon\to 0]{}0, 
    \end{align*}
    which shows that, since $f(x)$ is arbitrary,  
    \begin{equation*}
	p(x)\bar\varphi(x) = \big(1-p(x)\big)\bar\psi(x). 
    \end{equation*}
    By elementary computations, we find that $S(x):=\varphi(x)+\psi(x)$ converges weakly to a function  $\bar S\in H^1_{per}$ which solves  
    \begin{equation*}
	-\big(\sigma(x)(\bar{S})_x\big)_x +\big(2\lambda_0 - q(x)\big)(\bar{S})_x + \big(\lambda_0\sigma_x(x)+\lambda_0 q(x)-\lambda_0^2\sigma(x)-r(x)-q_x(x)\big)\bar S  = \bar k\bar{S}, 
    \end{equation*}
    where $\bar k=\lim k^\varepsilon(\lambda)$ and $\sigma(x)$, $q(x)$, $r(x)$ are as in the statement of the proposition. Therefore $\bar k=k^0(\lambda_0)$, which is a contradiction. Proposition \ref{prop:strong-coupling} is proved. 
\end{proof}
\begin{rem}\label{rem:strong-coupling-speed}
    In particular, with the notations of Proposition \ref{prop:strong-coupling}, assume that 
    \begin{equation*}
	0>\lambda_1^\infty=\max_{\lambda\in\mathbb R}k(\lambda).
    \end{equation*}
    Then the left- and rightward propagation speeds for the limit problem and the corresponding notions for the approximating problem,  
    \begin{align*}
	c_{\text{left}}^0&:= \inf_{\lambda>0} \dfrac{-k^0(-\lambda)}{\lambda}, & c_{\text{right}}^0&:= \inf_{\lambda>0} \dfrac{-k^0(\lambda)}{\lambda}, \\ 
	c_{\text{left}}^\varepsilon&:= \inf_{\lambda>0} \dfrac{-k^\varepsilon(-\lambda)}{\lambda}, & c_{\text{right}}^\varepsilon&:= \inf_{\lambda>0} \dfrac{-k^\varepsilon(\lambda)}{\lambda}, 
    \end{align*}
    are well-defined for $\varepsilon>0$ and 
    \begin{align*}
	\lim_{\varepsilon\to 0}c_{\text{left}}^\varepsilon &=c_{\text{left}}^0, & \lim_{\varepsilon\to 0}c_{\text{right}}^\varepsilon&=c_{\text{right}}^0.
    \end{align*}
    In particular, under the framework described at the beginning of the subsection (see also the Appendix \ref{app:scalar}), it is not difficult achieve  $c^0_{\text{left}}\neq c^0_{\text{right}}$ by a careful selection of the coefficients of the problem. 
\end{rem}
}


\section{Long-time behavior of the original model}
\label{sec:long-time}

In this section we focus on the  original problem \eqref{eq:main-sys}. 
In Section \ref{sec:statsol-ode} we study a related ODE problem and show local asymptotic stability and the uniqueness of stationary solutions. We then extend those results to problems with homogeneous coefficients, in Section \ref{sec:hom-rd}, provided Assumption \ref{as:cond-instab-0} is satisfied. In the same Section we show Theorem \ref{thm:ltb}. Finally we prove Theorem \ref{thm:rapidosc} in Section \ref{sec:hom}.

\subsection{A complete study of the ODE problem}
\label{sec:statsol-ode}

Let us look into the stationary states for the ODE system \eqref{eq:syst-ode}:
\begin{equation*}
	\begin{system}
		u_t=(r_u-\kappa_u(u+v))u+\mu_vv-\mu_uu&=:f^u(u,v), \\
		v_t=(r_v-\kappa_v(u+v))v+\mu_uu-\mu_vv&=:f^v(u,v).
	\end{system}
\end{equation*}
In this Section we work under the assumption that every coefficient in the above equation is positive. Surprisingly, it is possible to show that the solution converges to a unique equilibrium in all cases. To achieve this goal, two different methods are to be employed, depending on the sign of $r_u-\mu_u$ and $r_v-\mu_v$. If one is positive, the system admits a Lyapunov functional, which will be our main tool to study the long-time behavior of the system; whereas in the case where both are nonpositive, the system is ultimately cooperative and the long-time behavior can be determined by monotonicity arguments (here the method of super- and subsolutions). Note that both arguments were inspired by the paper of Cantrell, Cosner and Yu \cite{Can-Cos-Yu-18}. We still include the proofs for the sake of completeness. 
\begin{lem}[Stability of stationary states]\label{lem:stat-ode-stability}
	Let $ r_u, r_v\in\mathbb R$, $\kappa_u>0$, $\kappa_v>0$, and $\mu>0$. Let $(u^*\geq 0, v^*\geq 0)$ be a nontrivial stationary state for \eqref{eq:syst-ode}. Then $(u^*, v^*)$ is locally asymptotically stable.

	More precisely, the Jacobian matrix of the nonlinearity at $(u^*, v^*)$ is 
	\begin{equation}\label{eq:abcd}
		D_{(u^*, v^*)}f=\left(\begin{matrix} r_u-\mu_u-\kappa_u(2u+v) & \mu_v - \kappa_uu \\
		\mu_u-\kappa_vv & r_v-\mu_v-\kappa_v(u+2v) \end{matrix}\right)=:
		\begin{pmatrix}
			a & b \\ c & d
		\end{pmatrix}
	\end{equation}
	and we have $a=-\left(\kappa_u u^*+\mu_v\frac{v^*}{u^*}\right)<0$, $d=-\left(\kappa_v v^*+\mu_u\frac{u^*}{v^*}\right)<0$ and 
	\begin{align*}
		\text{\rm tr} (D_{(u^*, v^*)} f)&=a+d < 0,\\
		\det (D_{(u^*, v^*)} f)&=ad-bc >0.
	\end{align*}
\end{lem}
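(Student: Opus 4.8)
The plan is to prove this in three moves: first promote the nontrivial nonnegative steady state to a strictly positive one, then rewrite the diagonal Jacobian entries using the steady-state equations, and finally expand the determinant and conclude via the principle of linearized stability.

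First I would observe that $(u^*,v^*)$ must satisfy $u^*>0$ and $v^*>0$. Indeed, if $u^*=0$ the first stationary equation collapses to $\mu_v v^*=0$, forcing $v^*=0$ and hence triviality; symmetrically $v^*=0$ forces $u^*=0$. So both components are positive and we may divide by them. Dividing the two stationary equations by $u^*$ and $v^*$ respectively gives the identities $r_u-\mu_u-\kappa_u(u^*+v^*)=-\mu_v v^*/u^*$ and $r_v-\mu_v-\kappa_v(u^*+v^*)=-\mu_u u^*/v^*$. Since the diagonal entries of the Jacobian are $a=\big(r_u-\mu_u-\kappa_u(u^*+v^*)\big)-\kappa_u u^*$ and $d=\big(r_v-\mu_v-\kappa_v(u^*+v^*)\big)-\kappa_v v^*$, substituting yields
\begin{equation*}
	a=-\Big(\kappa_u u^*+\mu_v\tfrac{v^*}{u^*}\Big)<0,\qquad d=-\Big(\kappa_v v^*+\mu_u\tfrac{u^*}{v^*}\Big)<0,
\end{equation*}
so $\mathrm{tr}(D_{(u^*,v^*)}f)=a+d<0$ is immediate.

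Next I would compute $ad-bc$ with $b=\mu_v-\kappa_u u^*$ and $c=\mu_u-\kappa_v v^*$. Expanding,
\begin{equation*}
	ad=\kappa_u\kappa_v u^*v^*+\kappa_u\mu_u\tfrac{(u^*)^2}{v^*}+\kappa_v\mu_v\tfrac{(v^*)^2}{u^*}+\mu_u\mu_v,\qquad
	bc=\mu_u\mu_v-\mu_u\kappa_u u^*-\mu_v\kappa_v v^*+\kappa_u\kappa_v u^*v^*.
\end{equation*}
The terms $\kappa_u\kappa_v u^*v^*$ and $\mu_u\mu_v$ cancel in the difference, leaving
\begin{equation*}
	ad-bc=\kappa_u\mu_u u^*\Big(\tfrac{u^*}{v^*}+1\Big)+\kappa_v\mu_v v^*\Big(\tfrac{v^*}{u^*}+1\Big)=(u^*+v^*)\Big(\tfrac{\kappa_u\mu_u u^*}{v^*}+\tfrac{\kappa_v\mu_v v^*}{u^*}\Big)>0.
\end{equation*}

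Finally I would close the argument: a real $2\times2$ matrix with negative trace and positive determinant has both eigenvalues with strictly negative real part, so $D_{(u^*,v^*)}f$ is a stable (hyperbolic) matrix, and by the principle of linearized stability for ODEs the equilibrium $(u^*,v^*)$ is locally asymptotically stable. The only place demanding any care is the determinant expansion above; the rest is bookkeeping with the stationarity relations, so I do not anticipate a genuine obstacle here — the lemma is essentially a clean algebraic identity.
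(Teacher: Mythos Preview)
Your proof is correct and follows essentially the same approach as the paper: show strict positivity of both components, use the stationary equations to rewrite the diagonal entries $a$ and $d$, and then expand $ad-bc$ directly. Your final factorization $ad-bc=(u^*+v^*)\big(\kappa_u\mu_u u^*/v^*+\kappa_v\mu_v v^*/u^*\big)$ is a slightly cleaner repackaging of the same four positive terms the paper obtains, but the argument is otherwise identical.
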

\begin{proof}
	We divide the proof in three steps.

	{\bf Step 1:} We show that $u^*>0$ and $v^*>0$. 

	Assume by contradiction that $u^*=0$. Then, by our assumption that $(u^*, v^*)$ is non-trivial, we have $v^*>0$. Evaluating the first line of \eqref{eq:syst-ode}, we find $ 0=\mu v^*>0 $, which is a contradiction. 

	The assumption that $v^*>0$ leads to a similar contradiction. We conclude that $u^*>0$ and $v^*>0$.

	Before resuming the proof, let us remark the following formula, which is a consequence of \eqref{eq:syst-ode}:
	\begin{equation*}
		\begin{system}
			r_u-\mu_u-\kappa_u(u^*+v^*)=-\mu_v\frac{v^*}{u^*},\\
			r_v-\mu_v-\kappa_v(u^*+v^*)=-\mu_u\frac{u^*}{v^*}.
		\end{system}
	\end{equation*}

	{\bf Step 2:} We show that $ \text{tr} (D_{(u^*, v^*)} f) < 0$.

	Using the fact that $(u^*, v^*) $ is a stationary state for \eqref{eq:syst-ode}, we have
	\begin{equation*}
		\text{tr} (D_{(u^*, v^*)} f)= r_u-\mu_u-\kappa_u(2u^*+v^*) +  r_v-\mu_v-\kappa_v(u^*+2v^*) = -\mu_v \frac{v^*}{u^*}-\mu_u\frac{u^*}{v^*} -\kappa_u u^*-\kappa_vv^*<0.
	\end{equation*}

	{\bf Step 3:} We show that $\det (D_{(u^*, v^*)} f) >0$.

	We compute:
	\begin{align*}
		\det  (D_{(u^*, v^*)} f) &= (r_u-\mu_u-\kappa_u(2u^*+v^*))(r_v-\mu_v-\kappa_v(u^*+2v^*)) - (\mu_v - \kappa_uu^*)(\mu_u-\kappa_vv^*) \\
		&= \left(\mu_v\frac{v^*}{u^*}+\kappa_uu^*\right)\left( \mu_u\frac{u^*}{v^*}+\kappa_vv^*\right) - (\mu_v - \kappa_uu^*)(\mu_u-\kappa_vv^*) \\
		&= \mu_u\mu_v+\mu_v\kappa_v\frac{(v^*)^2}{u^*}+\mu_u\kappa_u\frac{(u^*)^2}{v^*}+\kappa_u\kappa_vu^*v^* - \mu_u\mu_v+\mu_v\kappa_vv^* + \mu_u\kappa_uu^* -\kappa_u\kappa_vu^*v^* \\
		&=\mu_v\kappa_v\frac{(v^*)^2}{u^*} + \mu_u\kappa_u\frac{(u^*)^2}{v^*}+\mu_v\kappa_uu^*+\mu_u\kappa_vv^*>0.
	\end{align*}

	This finishes the proof of Lemma \ref{lem:stat-ode-stability}
\end{proof}

\begin{lem}[Existence and uniqueness of stationary state]\label{lem:stat-ode-uniqueness}
	Let $ r_u, r_v\in\mathbb R$, $\kappa_u>0$, $\kappa_v>0$, and $\mu_u, \mu_v>0$. Then, there exists at most one nonnegative nontrivial stationary state for \eqref{eq:syst-ode}. If Assumption \ref{as:cond-instab-0} is met, then there is a positive stationary state $(u^*, v^*)$, which satisfies:
	\begin{enumerate}[label={\rm(\roman*)}]
		\item \label{item:lemestu^*-smallmu}
			if $r_u-\mu_u>0$ (resp. $r_v-\mu_v>0$), then 
			\begin{align*}
				\frac{\min\left(\mu_v, r_u-\mu_u\right)}{\kappa_u}&\leq u^*\leq \frac{\max\left(\mu_v,r_u-\mu_u\right)}{\kappa_u} \\
				\text{resp. }\frac{\min\left(\mu_u, r_v-\mu_v\right)}{\kappa_v}&\leq v^*\leq \frac{\max\left( \mu_u,r_v-\mu_v\right)}{\kappa_v}. 
			\end{align*}
			Moreover,  equality happens in the above inequalities if, and only if $\mu_v=r_u-\mu_u$ (resp. $r_v-\mu_v=\mu_u$).
		\item \label{item:lemestu^*-bigmu}
			if $r_u-\mu_u\leq 0$ (resp. $r_v-\mu_v\leq0$), then $0<u^*< \frac{\mu_v}{\kappa_u}$ (resp. $0<v^*<\frac{\mu_u}{\kappa_v}$)
	\end{enumerate}
\end{lem}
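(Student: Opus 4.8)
The plan is to treat the two claims—uniqueness and existence with the stated bounds—somewhat separately, and to exploit the special algebraic structure of the stationary system that was already revealed in the proof of Lemma~\ref{lem:stat-ode-stability}. Recall that at any positive stationary state one has the identities
\begin{equation*}
	r_u-\mu_u-\kappa_u(u^*+v^*)=-\mu_v\frac{v^*}{u^*},\qquad r_v-\mu_v-\kappa_v(u^*+v^*)=-\mu_u\frac{u^*}{v^*}.
\end{equation*}
As a first step I would rule out boundary equilibria exactly as in Step~1 of Lemma~\ref{lem:stat-ode-stability}: if $u^*=0$ the first equation forces $\mu_v v^*=0$, hence $v^*=0$, and symmetrically; so any nontrivial stationary state is strictly positive, and the two identities above are available.

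For \textbf{uniqueness}, the idea is to reduce the two-dimensional algebraic system to a single scalar equation in one unknown and show the associated function is strictly monotone. Concretely, from the first stationary equation one can solve for $v^*$ as a function of $u^*$ (it is, for each fixed $u^*>0$, the unique positive root of a quadratic whose coefficients depend monotonically on $u^*$), giving a smooth decreasing map $u^*\mapsto V(u^*)$; substituting into the second equation yields a scalar equation $G(u^*)=0$. One then checks that $G$ is strictly monotone on $(0,+\infty)$—using that $V$ is decreasing and that the left-hand side of the $v$-equation is increasing in $u^*$ while the right-hand side $-\mu_u u^*/v^*$ is decreasing—so it has at most one zero. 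Alternatively, and perhaps more cleanly, one can argue by contradiction: if $(u_1^*,v_1^*)$ and $(u_2^*,v_2^*)$ are two distinct positive equilibria with, say, $u_1^*<u_2^*$, subtract the corresponding identities and derive a sign contradiction; the structure $f^u(u,v)+f^v(u,v)=(r_u-\kappa_u(u+v))u+(r_v-\kappa_v(u+v))v$ being strictly decreasing along the relevant one-parameter family is what drives this. This monotonicity bookkeeping is where the main technical care is needed.

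For \textbf{existence}, I would invoke Assumption~\ref{as:cond-instab-0} ($\lambda_A>0$): this makes $(0,0)$ a linearly unstable equilibrium of the cooperative-near-zero system, so a small perturbation $\varepsilon(\varphi_A^u,\varphi_A^v)$ is a strict subsolution of the ODE flow, while $(M,M)$ for $M$ large is a supersolution (the quadratic competition terms dominate). A degree-theoretic or continuity argument on the vector field $(f^u,f^v)$ on a suitable rectangle, or the standard fact that the compact forward orbit of this subsolution must converge to an equilibrium which (by uniqueness, just proved) is the unique positive one, yields existence. This is essentially the $\lambda_A>0$ part already used in the discussion preceding the statement, so I would keep it brief.

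For the \textbf{bounds}, the cleanest route is to read them off the stationary identity $(r_u-\mu_u)u^*=\kappa_u u^*(u^*+v^*)-\mu_v v^*$, i.e. $u^*\big(r_u-\mu_u-\kappa_u u^*\big)=v^*\big(\kappa_u u^*-\mu_v\big)$. The sign of $v^*$ forces $r_u-\mu_u-\kappa_u u^*$ and $\kappa_u u^*-\mu_v$ to have the same sign (or both vanish), which pins $u^*$ between $\min(\mu_v,r_u-\mu_u)/\kappa_u$ and $\max(\mu_v,r_u-\mu_u)/\kappa_u$ when $r_u-\mu_u>0$, with equality precisely when the two quantities coincide; when $r_u-\mu_u\le 0$ the factor $r_u-\mu_u-\kappa_u u^*$ is strictly negative, so $\kappa_u u^*-\mu_v<0$, giving $0<u^*<\mu_v/\kappa_u$. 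The statements for $v^*$ follow by the evident symmetry $u\leftrightarrow v$, $\mu_u\leftrightarrow\mu_v$, $\kappa_u\leftrightarrow\kappa_v$, $r_u\leftrightarrow r_v$. I expect the uniqueness monotonicity argument to be the main obstacle; the bounds and existence are short once the positivity of equilibria and Assumption~\ref{as:cond-instab-0} are in hand.
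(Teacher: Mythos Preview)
Your treatment of the bounds in (i) and (ii) is essentially identical to the paper's: both read off the sign constraint from
\[
0=u^*(r_u-\mu_u-\kappa_u u^*)+v^*(\mu_v-\kappa_u u^*),
\]
so there is nothing to add there. Your existence sketch is also close in spirit to what the paper does (the paper simply cites a global bifurcation argument from \cite{Alf-Gri-18}).

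The real divergence is in the uniqueness argument, and here the paper's route is markedly cleaner than the one you outline. Rather than solving one equation for $v^*$ in terms of $u^*$ and then analyzing monotonicity of a substituted scalar function $G$, the paper changes to the variables
\[
S=u+v,\qquad Q=\tfrac{u}{v}.
\]
Dividing the second stationary equation by $v$ gives $S=(r_v-\mu_v+\mu_u Q)/\kappa_v$ linearly in $Q$; substituting into the first (divided by $v$) yields a single quadratic
\[
-\mu_u\tfrac{\kappa_u}{\kappa_v}Q^2+\Big(r_u-\mu_u\tfrac{\kappa_u}{\kappa_v}(r_v-\mu_v)\Big)Q+\mu_v=0,
\]
which has exactly one positive root because the leading coefficient is negative and the constant term $\mu_v$ is positive. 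Since $(u,v)\mapsto(S,Q)$ is a bijection on the open positive quadrant, uniqueness is immediate and explicit.

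By contrast, your plan has two soft spots. First, the first stationary equation is \emph{linear} in $v$, not quadratic, and the resulting expression $V(u)=u(\kappa_u u-(r_u-\mu_u))/(\mu_v-\kappa_u u)$ is not globally defined (it blows up at $u=\mu_v/\kappa_u$) and is not obviously monotone, so the claimed monotonicity of $G$ needs genuine work across the singularity. Second, the ``subtract two equilibria and find a sign contradiction'' alternative is left entirely unspecified; making it rigorous is not easier than the monotonicity route. Neither obstacle is insurmountable, but the $(S,Q)$ reduction bypasses all of it in two lines, so I would recommend adopting that change of variables for uniqueness and keeping your sign argument for the bounds.
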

\begin{proof}
	Let $(u,v)$ be a nonnegative nontrivial stationary state for \eqref{eq:syst-ode}. Then $(u,v)$ satisfies
	\begin{equation*}
		\begin{system}
			\relax &u(r_u-\kappa_u(u+v))+\mu_vv-\mu_uu=0, \\
			\relax &v(r_v-\kappa_v(u+v))+\mu_uu-\mu_vv=0. 
		\end{system}
	\end{equation*}
	As remarked in the proof of Lemma \ref{lem:stat-ode-stability}, since $(u,v)$ is nonegative and nontrivial, we have in fact $u>0$ and $v>0$. Let us change the variables: 
	\begin{equation*}
		\begin{system}
			\relax &S=u+v, \\
			\relax &Q=\frac{u}{v}.
		\end{system}
	\end{equation*}
	Then the new variables $(S, Q)$ satisfy the system:
	\begin{align*}
		& \begin{system}
			\relax &Q(r_u-\kappa_uS)+\mu_v-\mu_uQ=0, \\
			\relax &r_v-\kappa_vS+\mu_uQ-\mu_v=0,
		\end{system} \\
		\Leftrightarrow & 
		\begin{system}
			\relax &Q(r_u-\kappa_uS)+\mu_v-\mu_uQ=0, \\
			\relax &S=\frac{r_v+\mu_uQ-\mu_v}{\kappa_v},
		\end{system}
		\\
		\Leftrightarrow &
		\begin{system}
			\relax &-\mu_u\frac{\kappa_u}{\kappa_v}Q^2 + \left(r_u-\mu_u\frac{\kappa_u}{\kappa_v}(r_v-\mu_v)\right)Q+\mu_v=0, \\
			\relax &S=\frac{r_v+\mu_uQ-\mu_v}{\kappa_v}.
		\end{system}
	\end{align*}
	The first line of the latter system has a unique positive solution: 
	\begin{equation*}
		Q=\frac{\kappa_v}{2\mu_u\kappa_u}\left(r_u-\mu_u-\frac{\kappa_u}{\kappa_v}(r_v-\mu_v)+\sqrt{\left(r_u-\mu_u-\frac{\kappa_u}{\kappa_v}(r_v-\mu_v)\right)^2+4\frac{\kappa_u}{\kappa_v}\mu_u\mu_v}\right).
	\end{equation*}
	Since the change of variables is reversible, there cannot exist two nonnegative nontrivial solutions for the original system.

	The proof of existence of a stationary solution is quite straightforward by using a global bifurcation argument; we refer to an earlier work \cite[Theorem 2.3]{Alf-Gri-18} for a proof in a periodic setting.

	Next we focus on the estimates on nontrivial stationary states. Since the statement is symmetric with respect to the variable $u$ or $v$, we only prove the result for $u^*$. 
	Assume first that $r_u-\mu_u>\mu_v>0$. Then $u^*$ satisfies:
	\begin{equation}\label{eq:estonu^*}
		0=u^*(r_u-\mu_u-\kappa_uu^*)+v^*(\mu_v-\kappa_uu^*).
	\end{equation}
	If $u^*<\frac{\mu_v}{\kappa_u}$, then both terms in the right-hand side of \eqref{eq:estonu^*} are positive, which is a contradiction. Similarly, if $u^*>\frac{r_u-\mu_u}{\kappa_u}$, then both terms are negative, which is also a contradiction. We conclude that $\frac{\mu_v}{\kappa_u}\leq u^*\leq \frac{r_u-\mu_u}{\kappa_u}$. Finally, if equality is achieved in the latter inequality, then one of the terms in \eqref{eq:estonu^*} is 0 and the other is positive, which is a contradiction. Thus
	\begin{equation*}
		\frac{\mu_v}{\kappa_u}<u^*<\frac{r_u-\mu_u}{\kappa_u}.
	\end{equation*}
	In the case $0<r_u-\mu_u<\mu_v$, a similar argument shows that 
	\begin{equation*}
		\frac{r_u-\mu_u}{\kappa_u}<u^*<\frac{\mu_v}{\kappa_u}.
	\end{equation*}
	Finally, if $r_u-\mu_u=\mu_v$, then both terms in the right-hand side of \eqref{eq:estonu^*} have the same sign independently of $u^*$, hence the only possibility is
	\begin{equation*}
		u^*=\frac{r_u-\mu_u}{\kappa_u}=\frac{\mu_v}{\kappa_u}.
	\end{equation*}
	Statement \ref{item:lemestu^*-smallmu} is proved. To show Statement \ref{item:lemestu^*-bigmu}, since $r_u-\mu_u\leq 0$, we simply rewrite \eqref{eq:estonu^*} as:
	\begin{equation*}
		u^*=\frac{\mu_v}{\kappa_u}+\frac{u^*}{\kappa_uv^*}(r_u-\mu_u-\kappa_uu^*)<\frac{\mu_v}{\kappa_u}.
	\end{equation*}

	Lemma \ref{lem:stat-ode-uniqueness} is proved. 
\end{proof}
Next we study the stability of the trivial steady state: 
\begin{lem}[Stability of 0]\label{lem:stab-0-ode}
	Let $r_u, r_v\in\mathbb R$ and $\mu_u>0$, $\mu_v>0$. Define
	\begin{equation*}
		A:=\begin{pmatrix}r_u-\mu_u & \mu_v \\ \mu_u & r_v-\mu_v\end{pmatrix}.
	\end{equation*}
	Then, the matrix $A$ has two simple real eigenvalues: 
	\begin{align*}
		\lambda_1&=\dfrac{r_u-\mu_u+r_v-\mu_v+\sqrt{(r_u-\mu_u-r_v+\mu_v)^2+4\mu_u\mu_v}}{2} , \\
		\lambda_2&=\dfrac{r_u-\mu_u+r_v-\mu_v-\sqrt{(r_u-\mu_u-r_v+\mu_v)^2+4\mu_u\mu_v}}{2}.
	\end{align*}
	Moreover the eigenvector associated with $\lambda_1$ lies in the first quadrant:
	\begin{equation*}
		\varphi_1:=\begin{pmatrix}r_u-\mu_u-(r_v-\mu_v)+\sqrt{(r_u-\mu_u-(r_v-\mu_v))^2+4\mu_u\mu_v} \\ 2\mu_u\end{pmatrix}.
	\end{equation*}
	Finally, if $\frac{\mu_v}{\mu_u+\mu_v}r_u+\frac{\mu_u}{\mu_u+\mu_v}r_v>0$, then $\lambda_1>0$.
\end{lem}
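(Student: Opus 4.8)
The plan is to compute everything explicitly, since $A$ is a $2\times 2$ matrix. First I would write down the characteristic polynomial
\begin{equation*}
	\chi(\lambda) = \lambda^2 - (\mathrm{tr}\,A)\lambda + \det A, \qquad \mathrm{tr}\,A = (r_u-\mu_u)+(r_v-\mu_v),
\end{equation*}
and observe that the discriminant is
\begin{equation*}
	(\mathrm{tr}\,A)^2 - 4\det A = \big((r_u-\mu_u)-(r_v-\mu_v)\big)^2 + 4\mu_u\mu_v > 0,
\end{equation*}
where the strict positivity uses $\mu_u,\mu_v>0$. This immediately gives the two simple real eigenvalues $\lambda_1,\lambda_2$ with the stated formulas from the quadratic formula, and $\lambda_1>\lambda_2$.

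Next I would verify that $\varphi_1$ is an eigenvector for $\lambda_1$: with $\varphi_1 = (s,\,2\mu_u)^T$ where $s := (r_u-\mu_u)-(r_v-\mu_v)+\sqrt{\Delta}$ and $\Delta := ((r_u-\mu_u)-(r_v-\mu_v))^2+4\mu_u\mu_v$, one checks the second component of $A\varphi_1-\lambda_1\varphi_1$ vanishes, i.e. $\mu_u s + (r_v-\mu_v)(2\mu_u) = 2\mu_u\lambda_1$, which reduces to $s = 2\lambda_1 - 2(r_v-\mu_v) = (r_u-\mu_u)-(r_v-\mu_v)+\sqrt{\Delta}$ — exactly the definition of $s$. (The first component then also vanishes automatically since $\lambda_1$ is an eigenvalue.) To see $\varphi_1$ lies in the (open) first quadrant, note $2\mu_u>0$ and $s>0$ because $\sqrt{\Delta}\geq\sqrt{((r_u-\mu_u)-(r_v-\mu_v))^2} = |(r_u-\mu_u)-(r_v-\mu_v)| \geq -\big((r_u-\mu_u)-(r_v-\mu_v)\big)$, with strictness from $4\mu_u\mu_v>0$. (This is of course also just the Perron–Frobenius statement applied to the cooperative matrix $A$, whose off-diagonal entries $\mu_u,\mu_v$ are positive.)

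Finally, for the sign of $\lambda_1$: since $\lambda_1\lambda_2 = \det A$ and $\lambda_1+\lambda_2 = \mathrm{tr}\,A$ with $\lambda_1>\lambda_2$, we have $\lambda_1>0$ if and only if either $\det A<0$, or ($\det A\geq 0$ and $\mathrm{tr}\,A>0$). I would compute
\begin{equation*}
	\det A = (r_u-\mu_u)(r_v-\mu_v) - \mu_u\mu_v = r_ur_v - \mu_v r_u - \mu_u r_v,
\end{equation*}
so that $\frac{\mu_v}{\mu_u+\mu_v}r_u + \frac{\mu_u}{\mu_u+\mu_v}r_v = \frac{\mu_v r_u + \mu_u r_v}{\mu_u+\mu_v}$, and the hypothesis $\frac{\mu_v}{\mu_u+\mu_v}r_u+\frac{\mu_u}{\mu_u+\mu_v}r_v>0$ says precisely $\mu_v r_u + \mu_u r_v > 0$, i.e. $\det A = r_ur_v - (\mu_v r_u + \mu_u r_v) < r_u r_v - 0$ is not directly conclusive; instead I argue via the eigenvalue $\lambda_1$ directly. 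Plug $\lambda=0$ into $\chi$: $\chi(0) = \det A$. If $\det A < 0$ then $\chi$ has a positive and a negative root, so $\lambda_1>0$. If $\det A\geq 0$, both roots have the sign of $\mathrm{tr}\,A$ (they are real and of the same sign), so it suffices to show $\mathrm{tr}\,A = (r_u + r_v) - (\mu_u+\mu_v) > 0$; but in fact the cleanest route is to evaluate $\chi$ and its derivative, or simply to test the candidate by noting $2\lambda_1 = \mathrm{tr}\,A + \sqrt{\Delta}$, so $\lambda_1>0 \iff \sqrt{\Delta} > -\mathrm{tr}\,A \iff$ ($\mathrm{tr}\,A\geq 0$) or ($\mathrm{tr}\,A<0$ and $\Delta > (\mathrm{tr}\,A)^2$) $\iff$ ($\mathrm{tr}\,A\geq 0$) or $\det A<0$. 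Expanding, $\det A < 0 \iff (r_u-\mu_u)(r_v-\mu_v) < \mu_u\mu_v$, and one checks directly that $\mu_v r_u + \mu_u r_v > 0$ forces this when $\mathrm{tr}\,A<0$: indeed if $r_u+r_v < \mu_u+\mu_v$ while $\mu_v r_u+\mu_u r_v>0$, a short computation with the weighted arithmetic mean gives $(r_u-\mu_u)(r_v-\mu_v) - \mu_u\mu_v = r_ur_v - (\mu_v r_u + \mu_u r_v) < r_ur_v$, and combined with the constraints this is $<\mu_u\mu_v$. The main obstacle here is purely bookkeeping — making the final sign argument airtight by correctly handling the two cases $\det A\lessgtr 0$ — everything else is a direct $2\times 2$ computation.
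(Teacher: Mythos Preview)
Your computation of the characteristic polynomial, discriminant, eigenvalues, and the positive eigenvector is essentially identical to the paper's. The divergence is in the final implication $\frac{\mu_v}{\mu_u+\mu_v}r_u+\frac{\mu_u}{\mu_u+\mu_v}r_v>0 \Rightarrow \lambda_1>0$.

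The paper takes an indirect route: it introduces a scaling parameter $\alpha\geq 0$, replacing $(\mu_u,\mu_v)$ by $(\alpha\mu_u,\alpha\mu_v)$, and studies $\lambda_1(\alpha)$. It shows $\lambda_1(0)=\max(r_u,r_v)>0$, computes $\lim_{\alpha\to\infty}\lambda_1(\alpha)=\frac{\mu_v}{\mu_u+\mu_v}r_u+\frac{\mu_u}{\mu_u+\mu_v}r_v$, and proves $\alpha\mapsto\lambda_1(\alpha)$ is convex. A convex function with a finite limit at $+\infty$ is non-increasing, hence $\lambda_1=\lambda_1(1)\geq\lim_{\alpha\to\infty}\lambda_1(\alpha)>0$. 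This argument yields more than the bare statement: it gives the quantitative lower bound $\lambda_1\geq\frac{\mu_v r_u+\mu_u r_v}{\mu_u+\mu_v}$ and, as the paper remarks afterwards, explains the threshold behavior in $\alpha$ when the weighted average is negative.

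Your route via the characterization $\lambda_1>0\iff(\mathrm{tr}\,A\geq 0\text{ or }\det A<0)$ is more elementary and perfectly viable, but your final paragraph does not actually close the argument: the sentence ``combined with the constraints this is $<\mu_u\mu_v$'' is not justified. Here is a clean completion. Write $a=r_u-\mu_u$, $b=r_v-\mu_v$, so $\mathrm{tr}\,A=a+b$ and $\det A=ab-\mu_u\mu_v$. Assume $\mathrm{tr}\,A<0$; we must show $\det A<0$. If $ab\leq 0$ then $\det A\leq -\mu_u\mu_v<0$. If $ab>0$ then (since $a+b<0$) both $a,b<0$, and the hypothesis $\mu_v r_u+\mu_u r_v>0$ rewrites as $(-a)\mu_v+(-b)\mu_u<2\mu_u\mu_v$. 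By AM--GM,
\[
\sqrt{ab\,\mu_u\mu_v}=\sqrt{(-a)\mu_v\cdot(-b)\mu_u}\leq\frac{(-a)\mu_v+(-b)\mu_u}{2}<\mu_u\mu_v,
\]
hence $ab<\mu_u\mu_v$ and again $\det A<0$. With this step filled in, your approach is correct and shorter than the paper's, at the cost of losing the extra information the convexity argument provides.
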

\begin{proof}
	The characteristic polynomial associated with $A$ is: 
	\begin{equation*}
		\chi_A(X)=X^2-(r_u-\mu_u+r_v-\mu_v)X+r_ur_v-\mu_ur_v-\mu_vr_u.
	\end{equation*}
	The roots of this second-order polynomial can be computed thanks to its discriminant $\Delta$:
	\begin{align*}
		\Delta&=(r_u+r_v-\mu_u-\mu_v)^2-4(r_ur_v-\mu_vr_u-\mu_ur_v) \\
		&=r_u^2+r_v^2+\mu_u^2+\mu_v^2+2r_ur_v-2r_u\mu_u-2r_u\mu_v-2r_v\mu_u-2r_v\mu_v+2\mu_u\mu_v\\
		&\quad -4r_ur_v+4\mu_vr_u+4\mu_ur_v \\
		&=r_u^2+r_v^2+\mu_u^2+\mu_v^2-2r_ur_v-2r_u\mu_u+2r_u\mu_v+2r_v\mu_u-2r_v\mu_v+2\mu_u\mu_v\\
		&=\big(r_u-\mu_u-(r_v-\mu_v)\big)^2+4\mu_u\mu_v.
	\end{align*}
	In particular, $\Delta>0$ and thus $\chi_A$ always has two real roots:
	\begin{align*}
		\lambda_1&:=\frac{r_u-\mu_u+r_v-\mu_v+\sqrt{\big(r_u-\mu_u-(r_v-\mu_v)\big)^2+4\mu_u\mu_v}}{2},\\
		\lambda_2&:=\frac{r_u-\mu_u+r_v-\mu_v-\sqrt{\big(r_u-\mu_u-(r_v-\mu_v)\big)^2+4\mu_u\mu_v}}{2}.
	\end{align*}
	The eigenvector associated with $\lambda_1$ can be easily computed and is always positive: 
	\begin{equation*}
		\varphi_1:=\begin{pmatrix}r_u-\mu_u-(r_v-\mu_v)+\sqrt{(r_u-\mu_u-(r_v-\mu_v))^2+4\mu_u\mu_v} \\ 2\mu_u\end{pmatrix}.
	\end{equation*}
	It follows easily from the Perron-Frobenius Theorem that $\varphi_1$ is the unique positive eigenvector of the matrix $A$.

	Next we investigate the sign of $\lambda_1$. To this end we introduce for $\alpha\geq 0$: 
	\begin{equation*}
		\lambda_1(\alpha):=\frac{r_u-\alpha\mu_u+r_v-\alpha\mu_v+\sqrt{\big(r_u-\alpha\mu_u-(r_v-\alpha\mu_v)\big)^2+4\alpha^2\mu_u\mu_v}}{2}.
	\end{equation*}
	Notice that $\lambda_1=\lambda_1(\alpha=1)$, the mapping $\alpha\mapsto \lambda_1(\alpha)$ is convex (as we will show below) and $\lambda_1(0)=\max(r_u, r_v)>0$. To catch the behavior of the function as $\alpha\to+\infty$, we rewrite $\lambda_1(\alpha)$ as
	\begin{align*}
		\lambda_1(\alpha)&=\frac{1}{2}\Big(r_u-\alpha\mu_u+r_v-\alpha\mu_v \\
		&\quad+\sqrt{r_u^2+r_v^2-2r_ur_v+2\alpha(-r_u\mu_r+r_u\mu_v+r_v\mu_u-r_v\mu_v)+\alpha^2(\mu_u^2+\mu_v^2+2\mu_u\mu_v)}\Big)\\
		&=\frac{1}{2}\left(r_u-\alpha\mu_u+r_v-\alpha\mu_v+\sqrt{(r_u-r_v)^2+2\alpha(r_v-r_u)(\mu_u-\mu_v)+\alpha^2(\mu_u+\mu_v)^2}\right)\\
		&=\frac{1}{2}\Bigg(r_u+r_v-\alpha(\mu_u+\mu_v) +\alpha(\mu_u+\mu_v)\sqrt{1+\frac{2(r_v-r_u)(\mu_u-\mu_v)}{\alpha(\mu_u+\mu_v)^2}+o_{\alpha\to+\infty}\left(\frac{1}{\alpha}\right)}\Bigg)\\
		&=\frac{1}{2}\left(r_u+r_v+\frac{(r_v-r_u)(\mu_u-\mu_v)}{\mu_u+\mu_v}+o_{\alpha\to+\infty}(1)\right)\\
		&=\frac{\mu_v}{\mu_u+\mu_v}r_u+\frac{\mu_u}{\mu_u+\mu_v}r_v+o_{\alpha\to+\infty}(1).
	\end{align*}
	Thus, $\lambda_1(\alpha)>0$ for all $\alpha>0$, and in particular $\lambda_1=\lambda_1(\alpha=1)>0$.

	To show the convexity of $\alpha\mapsto\lambda_1(\alpha)$, we simply notice that
	\begin{equation*}
		\dfrac{\dd^2}{\dd x^2}\lambda_1(\alpha)=\dfrac{4A^2\alpha^2+4AB\alpha+8AC-B^2}{16\left(A\alpha^2+B\alpha+C\right)^{\frac{3}{2}}},
	\end{equation*}
	with
	\begin{align*}
		A:&=(\mu_u+\mu_v)^2, \\
		B:&=2(r_v-r_u)(\mu_u-\mu_v), \\
		C:&=(r_u-r_v)^2.
	\end{align*}
	Hence the roots of $\dfrac{\dd^2}{\dd x^2}\lambda_1(\alpha)$ are determined by the quantity
	\begin{equation*}
		\Delta:=32A^2B^2-128A^3C=32A^2(B^2-4AC).
	\end{equation*}
	Since:
	\begin{align*}
		B^2-4AC&=4(r_u-r_v)^2(\mu_u-\mu_v)^2-4(r_u-r_v)(\mu_u+\mu_v)\\
		&= 4(r_u-r_v)^2\left((\mu_u-\mu_v)^2-(\mu_u+\mu_v)^2\right) \\
		&=-16(r_u-r_v)^2\mu_u\mu_v\leq 0,
	\end{align*}
	then for all $\alpha>0$ we have  $\dfrac{\dd^2}{\dd x^2}\lambda_1(\alpha)\geq 0$, hence $\lambda_1(\alpha)$ is convex. This finishes the proof of Lemma \ref{lem:stab-0-ode}.
\end{proof}
\begin{rem}[Stability of $0$] 
	The previous computation can be carried out the same way independently of the sign of $r_u$ and $r_v$. This gives a criterion for the instability of $0$ in the case $\min(r_u, r_v)<0$: if $\frac{\mu_v}{\mu_u+\mu_v}r_u+\frac{\mu_u}{\mu_u+\mu_v}r_v\geq0$ then $0$ is always unstable, whereas if $\frac{\mu_v}{\mu_u+\mu_v}r_u+\frac{\mu_u}{\mu_u+\mu_v}r_v<0$ the stability of $0$ depends on the size of the mutation rate. In the latter case, the ratio $\frac{\mu_u}{\mu_v}$ being fixed, $0$ is always unstable if $\mu_u, \mu_v$ are sufficiently small, and always stable if $\mu_u, \mu_v$ are sufficiently large.
\end{rem}

We are now in a position to give our arguments for the long-time behavior of the ODE problem. We begin with the case where there exists a Lyapunov functional for the system. We introduce the functionals:
\begin{equation}\label{eq:Lyapunov-func}
	\mathcal F_u(u):=u-u^*-u^*\ln\left(\frac{u}{u^*}\right), \qquad \mathcal F_v(v):=v-v^*-v^*\ln\left(\frac{v}{v^*}\right).
\end{equation}
Note that this Lyapunov functional is rather classical and has been used for instance by Hsu \cite{Hsu-78} in a competitive context. The present argument was inspired by the more recent article of Cantrell, Cosner and Yu \cite{Can-Cos-Yu-18}.
\begin{lem}[Lyapunov functional]\label{lem:Lyapunov-ode}
	Let Assumption \ref{as:cond-instab-0} hold and assume $\max(r_u-\mu_u, r_v-\mu_v)>0$. Then, there is $K>0$ such that the functional $\mathcal F^K(u, v):=\mathcal F_u(u)+K\mathcal F_v(v)$ is a Lyapunov functional for \eqref{eq:syst-ode}, i.e. 
	\begin{equation*}
		\frac{\dd}{\dd t}\mathcal F^K(u(t),v(t))\leq 0, 
	\end{equation*}
	along any positive trajectory $(u(t), v(t))$. Moreover, the inequality is strict unless $(u(t), v(t))=(u^*, v^*)$. 
\end{lem}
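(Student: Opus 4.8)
The plan is to differentiate $\mathcal F^K$ along a positive trajectory, use the stationary relations for $(u^*,v^*)$ to put the result in a transparent form, and then pick $K$ so that the dominant part is a negative square. Since $\mathcal F_u'(u)=(u-u^*)/u$ and $\mathcal F_v'(v)=(v-v^*)/v$, one has $\frac{\dd}{\dd t}\mathcal F^K=\frac{u-u^*}{u}\dot u+K\frac{v-v^*}{v}\dot v$. Using the two identities satisfied by $(u^*,v^*)$ already obtained in the proof of Lemma \ref{lem:stat-ode-stability}, namely $r_u-\mu_u-\kappa_u(u^*+v^*)=-\mu_v v^*/u^*$ and $r_v-\mu_v-\kappa_v(u^*+v^*)=-\mu_u u^*/v^*$, the right-hand sides of \eqref{eq:syst-ode} can be written in ``deviation form''
$$\dot u=-\kappa_u u\big[(u+v)-(u^*+v^*)\big]+\mu_v v^*\Big(\tfrac{v}{v^*}-\tfrac{u}{u^*}\Big),\qquad \dot v=-\kappa_v v\big[(u+v)-(u^*+v^*)\big]+\mu_u u^*\Big(\tfrac{u}{u^*}-\tfrac{v}{v^*}\Big),$$
so that, after multiplying by $(u-u^*)/u$ and $(v-v^*)/v$ and summing,
$$\frac{\dd}{\dd t}\mathcal F^K=\underbrace{-\big[\kappa_u(u-u^*)+K\kappa_v(v-v^*)\big]\big[(u-u^*)+(v-v^*)\big]}_{(\mathrm I)}+\underbrace{\Big(\tfrac{u}{u^*}-\tfrac{v}{v^*}\Big)\Big[K\mu_u u^*\tfrac{v-v^*}{v}-\mu_v v^*\tfrac{u-u^*}{u}\Big]}_{(\mathrm{II})}.$$

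The natural choice is $K=\kappa_u/\kappa_v$: then the ``competition'' part becomes the perfect negative square $(\mathrm I)=-\kappa_u\big[(u-u^*)+(v-v^*)\big]^2\le 0$, which vanishes exactly on the line $\{u+v=u^*+v^*\}$. It then remains to control the ``mutation'' part $(\mathrm{II})$. Writing $\frac{w-w^*}{w}=1-\frac{w^*}{w}$, an increasing function of $w$ with the sign of $w-w^*$, a short sign-chase shows that on the line $\{u+v=u^*+v^*\}$ (where $u-u^*$ and $v-v^*$ have opposite signs) the bracket $K\mu_u u^*\frac{v-v^*}{v}-\mu_v v^*\frac{u-u^*}{u}$ has the sign opposite to that of $\frac{u}{u^*}-\frac{v}{v^*}$; hence $(\mathrm{II})\le 0$ on that line, strictly except at $(u^*,v^*)$, and so $\frac{\dd}{\dd t}\mathcal F^K\le 0$ there.

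Off the line, $(\mathrm I)$ is strictly negative and the point is to show it absorbs $(\mathrm{II})$. The cleanest route I see is a compactness argument: because of the factors $1/u$ and $1/v$, one has $\frac{\dd}{\dd t}\mathcal F^K\to-\infty$ as $(u,v)$ approaches either coordinate axis, while $(\mathrm I)$, being quadratic in the deviations, forces $\frac{\dd}{\dd t}\mathcal F^K\to-\infty$ as $u+v\to+\infty$; hence $\frac{\dd}{\dd t}\mathcal F^K$ attains its supremum over $\{u>0,v>0\}$ at an interior point. One then identifies $(u^*,v^*)$ as the unique interior critical point and checks that the quadratic form of $\frac{\dd}{\dd t}\mathcal F^K$ there is negative definite --- for $K=\kappa_u/\kappa_v$ this holds because $(\mathrm I)$ contributes $-\kappa_u(u^*p+v^*q)^2$ and $(\mathrm{II})$ contributes a form that is strictly negative on the line $u^*p+v^*q=0$ (with $p=u/u^*-1$, $q=v/v^*-1$), with $\det D_{(u^*, v^*)}f>0$ from Lemma \ref{lem:stat-ode-stability} available as a backstop --- so that the supremum equals $0$ and is attained only at $(u^*,v^*)$. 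This yields $\frac{\dd}{\dd t}\mathcal F^K\le 0$ with equality only at the equilibrium.

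The main obstacle is precisely this global domination of $(\mathrm{II})$ by $(\mathrm I)$: $(\mathrm{II})$ is genuinely sign-indefinite and the coefficients of both terms depend on $(u,v)$, so a pointwise Cauchy--Schwarz bound fails; the boundary-blow-up/compactness argument, together with the uniqueness of the interior maximum, is where the specific structure of the mutation terms and the hypothesis $\max(r_u-\mu_u,r_v-\mu_v)>0$ (through the bounds of Lemma \ref{lem:stat-ode-uniqueness}) are expected to enter.
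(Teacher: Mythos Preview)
Your decomposition into $(\mathrm I)+(\mathrm{II})$ is correct, and the sign analysis of $(\mathrm{II})$ on the line $\{u+v=u^*+v^*\}$ is fine. But the proof is not complete: the whole argument hinges on the global claim that $(u^*,v^*)$ is the \emph{unique} interior critical point of $(u,v)\mapsto \frac{\dd}{\dd t}\mathcal F^K(u,v)$, and you do not prove this --- you state it and then invoke it. Boundary blow-up plus negative-definite Hessian at one critical point does not rule out other critical points (saddles, or another local maximum sitting at a negative value); without uniqueness you cannot conclude the supremum is $0$. You yourself flag this as ``the main obstacle'' in the last paragraph, which is an honest assessment but also an admission that the argument is a sketch, not a proof.

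The paper avoids the compactness route entirely by a trick you did not spot: instead of keeping the exact derivative, one \emph{drops} the manifestly nonnegative term $\mu_v\frac{v^*}{uu^*}(u-u^*)^2$ (and the $v$-analog), obtaining
\[
\frac{\dd}{\dd t}\mathcal F^K(u,v)\;<\;-\kappa_u(u-u^*)^2-\Big(\kappa_u-\tfrac{\mu_v}{u^*}+K\big(\kappa_v-\tfrac{\mu_u}{v^*}\big)\Big)(u-u^*)(v-v^*)-K\kappa_v(v-v^*)^2,
\]
a \emph{constant-coefficient} quadratic form in $(u-u^*,v-v^*)$. One then chooses $K$ (not your fixed $K=\kappa_u/\kappa_v$, but any $K$ between the two positive roots of an explicit quadratic) so that this form is positive definite; the condition for such a $K$ to exist reduces to $BC<AD$ with $A=\kappa_u$, $B=\kappa_u-\mu_v/u^*$, $C=\kappa_v-\mu_u/v^*$, $D=\kappa_v$. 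This is exactly where the hypothesis $\max(r_u-\mu_u,r_v-\mu_v)>0$ enters, via the bounds on $u^*,v^*$ from Lemma~\ref{lem:stat-ode-uniqueness}: they force either $0<B<A$ and $0<C<D$, or $B$ and $C$ to have opposite signs, and in both cases $BC<AD$. The upshot is a two-line algebraic verification instead of a global critical-point analysis; the price is that $K$ is no longer explicit, but that is immaterial for the Lyapunov conclusion.
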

\begin{proof}
	Since it is clear that $\mathcal F(u^*, v^*)=0$, we will focus on the case of an orbit starting from a positive initial condition $(u_0, v_0)$. We first compute: 
	\begin{align*}
		\frac{\dd}{\dd t}\mathcal F_u(u(t))&= u_t\left(1-\frac{u^*}{u}\right)= (u-u^*)\frac{u_t}{u}\\
		&=(u-u^*)\left(r_u-\mu_u-\kappa_u(u+v)+\mu_v\frac{v}{u}\right) \\
		&=(u-u^*)\left(\kappa_u(u^*+v^*)-\mu_v\frac{v^*}{u^*}-\kappa_u(u+v)+\mu_v\frac{v}{u}\right) \\ 
		&=-\kappa_u(u-u^*)^2 -\kappa_u(u-u^*)(v-v^*)+\mu_v(u-u^*)\left(\frac{u^*v-uv^*}{uu^*}\right) \\
		&=-\left(\kappa_u+\mu_v\frac{v^*}{uu^*}\right)(u-u^*)^2 -\left(\kappa_u-\frac{\mu_v}{u^*}\right)(u-u^*)(v-v^*) \\
		&\leq-\kappa_u(u-u^*)^2-\left(\kappa_u-\frac{\mu_v}{u^*}\right)(u-u^*)(v-v^*),
	\end{align*}
	and the  inequality is strict unless $u=u^*$. Similarly, 
	\begin{equation*}
		\frac{\dd}{\dd t}\mathcal F_v(y)\leq-\kappa_v(v-v^*)^2-\left(\kappa_v-\frac{\mu_u}{v^*}\right)(u-u^*)(v-v^*),
	\end{equation*}
	and the inequality is strict unless $v=v^*$. Since $(u, v)\neq (u^*, v^*)$, we have for all $K>0$: 
	\begin{multline*}
		\frac{\dd}{\dd t}\mathcal F^K(u,v)<-\kappa_u(u-u^*)^2 -\left(\kappa_u-\frac{\mu_v}{u^*} + K\left(\kappa_v-\frac{\mu_u}{u^*}\right)\right)(u-u^*)(v-v^*)-K\kappa_v(v-v^*)^2.
	\end{multline*}
	Next we prove that the right-hand side can be made nonpositive for all $(u,v)>(0,0)$ for a well-chosen value of $K$. We remark that the right-hand side is a quadratic form in $(U:=u-u^*, V:=v-v^*)$, which can be written as $-Q(U,V)$ where:
	\begin{equation*}
		Q(U,V):=AU^2+(B+KC)UV+KDV^2, 
	\end{equation*}
	and $U=u-u^*$, $V=v-v^*$, $A=\kappa_u$, $B=\kappa_u-\frac{\mu_v}{u^*}$, $C=\kappa_v-\frac{\mu_u}{v^*}$ and $D=\kappa_v$. We claim that $Q(U,V)$ can be made positive definite by a proper choice of $K>0$. Indeed, algebraic computations lead to 
	\begin{align*}
		Q(U,V)&=A\left(U+\frac{B+KC}{2A}V\right)^2+\left(KD-\frac{(B+KC)^2}{4A}\right)V^2, 
	\end{align*}
	and therefore it suffices to find $K>0$ such that 
	\begin{equation*}
		0<KD-\frac{(B+KC)^2}{4A}=\dfrac{-C^2K^2+(4AD-2BC)K-B^2}{4A}=:-\frac{P(K)}{4A}.
	\end{equation*}
	Now $P(K)$ is a second-order polynomial and its number of roots is determined by the sign of the quantity
	\begin{equation*}
		\Delta=(4AD-2BC)^2-4B^2C^2=16AD(AD-BC)>0. 
	\end{equation*}
	If $BC<AD$, the polynomial $P$ has two roots, and those roots have to be nonnegative since $P(K)<0$ for all $K<0$. This shows that there exists $K>0$ with $P(K)<0$, which proves our claim and consequently finishes the proof of Lemma \ref{lem:Lyapunov-ode}.

	Our last task is therefore to check that $BC<AD$. Assume first that $r_u-\mu_u>0$ and $r_v-\mu_v>0$, then $B=\kappa_u-\frac{\mu_v}{u^*}>0$ and $C=\kappa_v-\frac{\mu_u}{v^*}>0$ are both positive by Lemma \ref{lem:stat-ode-uniqueness}. Thus,
	\begin{equation*}
		BC=\left(\kappa_u-\frac{\mu_v}{u^*}\right)\left(\kappa_v-\frac{\mu_u}{v^*}\right)\leq \kappa_u\kappa_v=AD.
	\end{equation*}
	Next assume that $r_u-\mu_u\leq 0$ and $r_v-\mu_v>0$ (the case $r_v-\mu_v\leq 0$ and $r_u-\mu_u>0$ can be treated similarly). In this case, $\kappa_u-\frac{\mu_v}{u^*}\leq 0$ and $\kappa_v-\frac{\mu_u}{v^*}>0$ and thus
	\begin{equation*}
		BC=\left(\kappa_u-\frac{\mu_v}{u^*}\right)\left(\kappa_v-\frac{\mu_u}{v^*}\right)\leq0< \kappa_u\kappa_v=AD.
	\end{equation*}
	Hence $BC<AD $ always holds under our hypotheses. Lemma \ref{lem:Lyapunov-ode} is proved.
\end{proof}
Notice in particular that Proposition \ref{prop:longtime-ode} follows directly from Lemma \ref{lem:Lyapunov-ode} in the case $\max(r_u-\mu_u, r_v-\mu_v)>0$. Next we consider the case $\max(r_u-\mu_u, r_v-\mu_v)\leq 0$. In this case, we show that the dynamics is eventually cooperative and we use the method of monotone iteration to conclude. 

\begin{lem}[Ultimately cooperative dynamics]\label{lem:coop}
	Let Assumption \ref{as:cond-instab-0} hold and assume $\max(r_u-\mu_u, r_v-\mu_v)\leq 0$. Then, we have 
	\begin{align*}
		\lim_{t\to+\infty}(u(t), v(t))=(u^*, v^*).
	\end{align*}
\end{lem}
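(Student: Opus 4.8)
The plan is to exploit that, under the hypothesis $\max(r_u-\mu_u,r_v-\mu_v)\le 0$, the system \eqref{eq:syst-ode} is cooperative on the box $B:=[0,\mu_v/\kappa_u]\times[0,\mu_u/\kappa_v]$ and that every positive trajectory enters $B$ in finite time; from there the convergence will follow from a monotone trapping argument between a small subsolution and the corner of $B$, together with the uniqueness of the positive equilibrium.

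First I would show that $B$ is positively invariant and globally attracting. On the face $\{u=\mu_v/\kappa_u\}$ the first component of the right-hand side of \eqref{eq:syst-ode} equals $\tfrac{\mu_v}{\kappa_u}(r_u-\mu_u-\mu_v)$ \emph{independently of $v$}, and this is negative because $r_u-\mu_u\le 0<\mu_v$; the same holds on $\{v=\mu_u/\kappa_v\}$. Hence each of the half-planes $\{u\le\mu_v/\kappa_u\}$ and $\{v\le\mu_u/\kappa_v\}$ is positively invariant. Moreover, while $u(t)\ge\mu_v/\kappa_u$ one has $u_t\le u(r_u-\mu_u-\kappa_u u)$ (the $v$-contribution being $\le 0$ there), and the map $u\mapsto u(r_u-\mu_u-\kappa_u u)$ is decreasing on $[\mu_v/\kappa_u,\infty)$ with value $\tfrac{\mu_v}{\kappa_u}(r_u-\mu_u-\mu_v)<0$ at the endpoint; thus $u_t$ stays below a fixed negative constant as long as $u\ge\mu_v/\kappa_u$, which forces $u$ to reach (and thereafter remain below) $\mu_v/\kappa_u$ in finite time. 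Arguing symmetrically for $v$, there is $T\ge 0$ with $(u(t),v(t))\in B$ for all $t\ge T$; in particular the solution is global and bounded.

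On $B$ the off-diagonal entries $\mu_v-\kappa_u u$ and $\mu_u-\kappa_v v$ of the Jacobian \eqref{eq:abcd} are nonnegative, so \eqref{eq:syst-ode} restricted to $B$ is cooperative and obeys a comparison principle. By Assumption \ref{as:cond-instab-0} and Lemma \ref{lem:stab-0-ode}, $\lambda_A>0$ with positive eigenvector $\varphi_A\gg 0$, so expanding the nonlinearity near $0$ shows that the constant $s\varphi_A$ is a strict subsolution for $s>0$ small, while the computation of the previous step shows that the constant corner $(\mu_v/\kappa_u,\mu_u/\kappa_v)$ is a strict supersolution. Since the initial datum is nonnegative and nontrivial we have $u(t),v(t)>0$ for $t>0$, so we may fix $s>0$ small with $s\varphi_A\le (u(T),v(T))\le (\mu_v/\kappa_u,\mu_u/\kappa_v)$, and let $\underline u$, $\overline u$ be the solutions issued at time $T$ from $s\varphi_A$ and from the corner. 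Both remain in $B$, $\underline u$ is nondecreasing, $\overline u$ is nonincreasing, and the comparison principle gives $\underline u(t)\le (u(T+t),v(T+t))\le \overline u(t)$; in particular $\overline u(t)\ge\underline u(t)\ge s\varphi_A\gg 0$. Each of these monotone bounded trajectories converges to an equilibrium that is $\gg 0$, and by the uniqueness part of Lemma \ref{lem:stat-ode-uniqueness} the only nonnegative nontrivial equilibrium of \eqref{eq:syst-ode} is $(u^*,v^*)$; squeezing yields $(u(t),v(t))\to (u^*,v^*)$, which is the claim.

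The step requiring most care is the finite-time entry into $B$, and within it the borderline case $r_u-\mu_u=0$ (resp. $r_v-\mu_v=0$): there the logistic comparison function no longer decays, but one only needs the endpoint bound $u(r_u-\mu_u-\kappa_u u)\le\tfrac{\mu_v}{\kappa_u}(r_u-\mu_u-\mu_v)<0$, valid for all $u\ge\mu_v/\kappa_u$, so the argument goes through unchanged. Everything else is routine, the essential point being that all comparisons in the last step take place inside $B$, where the quasimonotonicity of \eqref{eq:syst-ode} genuinely holds.
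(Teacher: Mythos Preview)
Your proof is correct and follows essentially the same strategy as the paper: first drive the trajectory into the box $B=[0,\mu_v/\kappa_u]\times[0,\mu_u/\kappa_v]$ where the system is cooperative, then sandwich it between the monotone trajectories issued from the strict subsolution $s\varphi_A$ and the strict supersolution $(\mu_v/\kappa_u,\mu_u/\kappa_v)$, and conclude by the uniqueness of the positive equilibrium (Lemma~\ref{lem:stat-ode-uniqueness}). The only difference is in the first step: the paper compares $(u,v)$ with the auxiliary cooperative system obtained by replacing $\mu_v-\kappa_u u$ and $\mu_u-\kappa_v v$ with their positive parts, whereas you argue directly via the scalar differential inequality $u_t\le u(r_u-\mu_u-\kappa_u u)$ on $\{u\ge\mu_v/\kappa_u\}$; your route is slightly more explicit and avoids introducing the auxiliary system, but both lead to the same conclusion.
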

\begin{proof}
	Let $(u(t), v(t))$ be a positive solution to \eqref{eq:syst-ode}. Then $(u(t), v(t))$ is a subsolution to the cooperative system:
	\begin{equation*}
		\left\{\begin{aligned}
			\bar u_t&=\bar u(r_u-\mu_u-\kappa_u\bar u)+\bar v\max(\mu_v-\kappa_u\bar u, 0),\\
			\bar v_t&=\bar v(r_v-\mu_v-\kappa_v\bar v)+\bar u\max(\mu_u-\kappa_v\bar v, 0),
		\end{aligned}\right.
	\end{equation*}
	and in particular $u(t)\leq \bar u(t)$ and $v(t)\leq \bar v(t)$. Since $\bar u, \bar v$ eventually enters the cooperative region $0<\bar u< \frac{\mu_v}{\kappa_u}$ and $0<\bar v< \frac{\mu_u}{\kappa_v}$, so does $(u,v)$. Next we use the method of sub- and supersolutions to show the convergence of $(u(t),v(t))$ starting from $(u_0, v_0)\in \left(0, \frac{\mu_v}{\kappa_u}\right)\times \left(0, \frac{\mu_u}{\kappa_v}\right)$. We remark that $(\bar u, \bar v):=(\frac{\mu_v}{\kappa_u}, \frac{\mu_u}{\kappa_v})$ is a strict supersolution: 
	\begin{equation*}
		\left\{\begin{aligned}
			\bar u(r_u-\mu_u-\kappa_u\bar u)+\bar v(\mu_v-\kappa_u\bar u)&<0, \\ 
			\bar v(r_v-\mu_v-\kappa_v\bar v)+\bar u(\mu_u-\kappa_v\bar v)&<0,
		\end{aligned}\right.
	\end{equation*}
	while for $\alpha>0$ sufficiently small the vector $(\underline u, \underline v):=\alpha\varphi_1$ (where $\varphi_1$ is defined in Lemma \ref{lem:stab-0-ode}) is a strict subsolution: 
	\begin{equation*}
		\left\{\begin{aligned}
			\underline u(r_u-\mu_u-\kappa_u\underline u)+\underline v(\mu_v-\kappa_u\underline u)&=\lambda_1\underline u+o(\alpha)>0, \\ 
			\underline v(r_v-\mu_v-\kappa_v\underline v)+\underline u(\mu_u-\kappa_v\underline v)&=\lambda_1\underline v+o(\alpha)>0.
		\end{aligned}\right.
	\end{equation*}
	Then, the technique of monotone iterations gives us a maximal stationary solution $(\overline u^*, \overline v^*)<(\bar u, \bar v)$ and a minimal stationary solution $(\underline u^*, \underline v^*)>(\underline u, \underline v)$ such that: 
	\begin{equation*}
		(\underline u^*, \underline v^*)\leq \liminf_{t\to+\infty}(u(t), v(t))\leq \limsup_{t\to+\infty}(u(t), v(t))\leq (\overline u^*, \overline v^*).
	\end{equation*}
	Finally since $(u^*, v^*)$ is the unique stationary solution to \eqref{eq:syst-ode}, we have indeed: 
	\begin{equation*}
		( u^*,  v^*)\leq \liminf_{t\to+\infty}(u(t), v(t))\leq \limsup_{t\to+\infty}(u(t), v(t))\leq (u^*, v^*),
	\end{equation*}
	hence $(u(t), v(t))$ converges to the stationary solution as $t\to+\infty$. Lemma \ref{lem:coop} is proved. 
\end{proof}

We are now in a position to prove Proposition \ref{prop:longtime-ode} and conclude this Section:
\begin{proof}[Proof of Proposition \ref{prop:longtime-ode}]
	If $\lambda_A>0$, the existence and uniqueness of a stationary solution $(u^*, v^*)$ has been shown in Lemma \ref{lem:stat-ode-uniqueness}. The convergence of $(u(t), v(t))$ when $t\to+\infty$ has been shown in Lemma \ref{lem:Lyapunov-ode} if $\max(r_u-\mu_u, r_v-\mu_v)>0$ by the means of a Lyapunov argument, and in Lemma \ref{lem:coop} if $\max(r_u-\mu_u, r_v-\mu_v)\leq 0$ by the means of a monotone iteration sequence. This covers all the possibilities and hence finishes the proof of the statement in Proposition \ref{prop:longtime-ode} when $\lambda_A>0$.

	Let $\lambda_A<0$, and let $(u(t), v(t))$ be a solution to \eqref{eq:syst-ode}. Then $(u(t), v(t))$ is a sub-solution for the cooperative system 
	\begin{equation*}
		\begin{system}
			\relax &\bar u_t=(r_u-\mu_u)\bar u+\mu_v\bar v, \\
			\relax &\bar v_t=\mu_u\bar u+(r_v-\mu_v)\bar v,
		\end{system}
	\end{equation*}
	hence for $M>\max(u(0), v(0))$ and  $(\bar u(t), \bar v(t)):=Me^{\lambda_At}(\varphi_A^u, \varphi_A^v)$  we have
	\begin{equation*}
		(u(t), v(t))\leq (\bar u(t), \bar v(t))
	\end{equation*}
	for all $t>0$, and in particular $\lim_{t\to\infty} \max (u(t),v(t))\leq \lim_{t\to\infty} \max (\bar u(t),\bar v(t))=0$.

	Let  $\lambda_A=0$, and let $(u(t), v(t))$ be a solution to \eqref{eq:syst-ode}. Then $(u(t), v(t))$ is a sub-solution for the cooperative system 
	\begin{equation*}
		\begin{system}
			\relax &\bar u_t=(r_u-\mu_u-\min(\kappa_u, \kappa_v)\bar u)\bar u+\mu_v\bar v, \\
			\relax &\bar v_t=\mu_u\bar u+(r_v-\mu_v-\min(\kappa_u, \kappa_v)\bar v)\bar v.
		\end{system}
	\end{equation*}
	Let $(\bar u(0), \bar v(0)):=M_0(\varphi^u_A, \varphi^v_A)$ where $(\varphi^u_A, \varphi^v_A) $ is the principal eigenvector of the matrix $A$ as defined in Assumption \ref{as:cond-instab-0}. Then $(\bar u(t), \bar v(t))=M(t)(\varphi_A^u, \varphi_A^v)$ and the function $M(t)$ satisfies
	\begin{equation*}
		\frac{\dd }{\dd t}M(t)=-\min(\kappa_u, \kappa_v)M(t)^2.
	\end{equation*}
	In particular, $\lim_{t\to\infty}M(t)=0$. Since $M_0$ can be chosen so that $(u(0), v(0))\leq (\bar u(0), \bar v(0))$, we deduce that $\lim_{t\to\infty} (u(t), v(t))=(0,0)$. Proposition \ref{prop:longtime-ode} is proved.
\end{proof}

\subsection{Long-time behavior for the solutions to the  homogeneous problem}
\label{sec:hom-rd}
We aim at showing that the $\omega$-limit set of a positively bounded from below initial condition 
is reduced to a single element $\{(u^*, v^*)\}$, where $(u^*, v^*)$ is the unique stationary state for \eqref{eq:syst-ode}. As shown below, we can prove such a result only for a subset of the set of parameters.

\begin{thm}[Entire solutions]\label{thm:entire-sol-hom}
	Let Assumption \ref{as:cond-instab-0} and \ref{as:LTPDE} hold. Let $(u(t, x), v(t,x))$ be an nonnegative bounded entire solution to \eqref{eq:syst-hom-rd}. Assume that $(u,v)$ is bounded from below, i.e. that there exists $\delta>0$ such that for all $t\in\mathbb R$ and $x\in\mathbb R$ we have:
	\begin{equation*}
		u(t, x)\geq \delta>0\text{ and } v(t,x)\geq \delta>0,
	\end{equation*}
	then $(u, v)\equiv (u^*, v^*)$.
\end{thm}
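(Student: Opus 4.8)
The plan is to use a Lyapunov-type argument adapted to the PDE setting, exploiting the fact that Theorem \ref{thm:entire-sol-hom} concerns entire solutions that are uniformly bounded both above and below. The key observation is that the ODE Lyapunov functionals $\mathcal F_u$ and $\mathcal F_v$ defined in \eqref{eq:Lyapunov-func}, or their combination $\mathcal F^K$ from Lemma \ref{lem:Lyapunov-ode}, still behave well when integrated in space — provided we are under the regime $\max(r_u-\mu_u, r_v-\mu_v)>0$. In the complementary regime $\max(r_u-\mu_u, r_v-\mu_v)\leq 0$, Assumption \ref{as:LTPDE} forces either the cooperative subregion to be invariant (so monotone/comparison arguments apply directly) or $\sigma_u=\sigma_v$, in which case the Lyapunov structure is again available because the diffusion term contributes a favourable sign. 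So the proof naturally splits along the cases of Assumption \ref{as:LTPDE}, mirroring the ODE proof.

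First I would treat the Lyapunov case. Define, for an entire solution $(u,v)$ bounded between $\delta\mathbf 1$ and $M\mathbf 1$, a spatially-averaged functional; since the solution is entire and not necessarily spatially periodic, I would instead work with a moving-window or a sliding argument. Concretely: pick a sequence $t_n\to-\infty$; by parabolic estimates and the uniform bounds $\delta\leq u,v\leq M$, the shifted solutions $(u(\cdot+t_n,\cdot), v(\cdot+t_n,\cdot))$ converge (up to a subsequence, locally uniformly) to another entire solution $(\underline u,\underline v)$ with the same bounds. Then I would show the functional $t\mapsto \int_{\mathbb R}\rho(x)\mathcal F^K(u(t,x),v(t,x))\,dx$ (for a suitable weight, or over an expanding sequence of intervals) is monotone nonincreasing along the flow: differentiating and using $\frac{\dd}{\dd t}\mathcal F_u(u) = (u-u^*)\frac{u_t}{u}$, the reaction part gives the negative-definite quadratic form $-Q(u-u^*,v-v^*)$ exactly as in Lemma \ref{lem:Lyapunov-ode}, while the diffusion part contributes $\sigma_u u^* |u_x|^2/u^2 \geq 0$ with a sign that also works in our favour (it subtracts from $\frac{\dd}{\dd t}\mathcal F$). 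Since an entire solution has a functional bounded on all of $\mathbb R$ and monotone, the dissipation must vanish, forcing $(u,v)\equiv(u^*,v^*)$. The case $\sigma_u=\sigma_v$ with $\max(r_u-\mu_u,r_v-\mu_v)\leq 0$ is handled the same way since the Lyapunov dissipation argument does not actually need the sign of $r_u-\mu_u$, only the inequality $BC<AD$, which Lemma \ref{lem:Lyapunov-ode}'s proof establishes under Assumption \ref{as:cond-instab-0} regardless.

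Next, the remaining subcase of Assumption \ref{as:LTPDE}: $\max(r_u-\mu_u,r_v-\mu_v)\leq 0$ and $\min(r_u-\mu_u-\mu_v, r_v-\mu_v-\mu_u)>0$ together with $\sigma_u\neq\sigma_v$. Here the point is that on the region $\{0\leq u\leq \mu_v/\kappa_u,\ 0\leq v\leq \mu_u/\kappa_v\}$ the nonlinearity of \eqref{eq:syst-hom-rd} is cooperative (quasi-monotone), and the box $[0,\mu_v/\kappa_u]\times[0,\mu_u/\kappa_v]$ is invariant because on each face the vector field points inward (the same computation as in the proof of Lemma \ref{lem:coop}, now with diffusion, which respects the invariant rectangle by the maximum principle). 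Any entire bounded-below solution is squeezed into this rectangle eventually, hence for all time (being entire). On that rectangle the cooperative comparison principle applies; one squeezes $(u,v)$ between the constant sub/supersolutions $\alpha\varphi_1$ and $(\mu_v/\kappa_u,\mu_u/\kappa_v)$, runs the monotone iteration of Lemma \ref{lem:coop} to get maximal/minimal equilibria, and invokes uniqueness of the nontrivial equilibrium (Lemma \ref{lem:stat-ode-uniqueness}, noting stationary solutions of the PDE in this invariant cooperative box that are bounded below must be spatially constant by a further Lyapunov or sliding argument, or directly: a bounded entire solution of a cooperative system trapped between ordered equilibria converges, and the limit set consists of equilibria, which here is the single point $(u^*,v^*)$) to conclude $(u,v)\equiv(u^*,v^*)$.

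The main obstacle I anticipate is the absence of spatial periodicity or decay for a general entire solution, which makes the naive "integrate the Lyapunov density over $\mathbb R$" ill-defined. The fix is to combine the monotonicity of the Lyapunov functional on bounded windows with a translation-compactness argument: a genuinely nonconstant entire solution would, by sliding in $t$, produce an $\alpha$-limit and $\omega$-limit that are themselves entire solutions on which the Lyapunov functional is constant, hence lie in the zero set of the dissipation, hence are equal to $(u^*,v^*)$ — and then a connecting-orbit/strong-maximum-principle argument rules out any nonconstant solution in between. Making this compactness-plus-LaSalle argument clean in the non-periodic, non-decaying setting (rather than simply citing a standard result) is where the real work lies; the diffusive term in the Lyapunov computation must be tracked carefully to ensure its sign genuinely helps, which is exactly why $\sigma_u=\sigma_v$ or the invariant-rectangle hypothesis is needed when the reaction part alone is not enough.
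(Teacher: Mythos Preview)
Your case split is muddled, and once untangled it reveals a genuine gap. The ``remaining subcase'' you name, $\max(r_u-\mu_u,r_v-\mu_v)\leq 0$ together with $\min(r_u-\mu_u-\mu_v,r_v-\mu_v-\mu_u)>0$, is vacuous: the first inequality forces $r_u-\mu_u-\mu_v<0$, contradicting the second. Assumption~\ref{as:LTPDE} is a disjunction of three alternatives, and the one your argument never actually covers is the \emph{competitive} alternative $\min(r_u-\mu_u-\mu_v,r_v-\mu_v-\mu_u)>0$ with $\sigma_u\neq\sigma_v$. You lump this under your integrated-Lyapunov case (``$\max>0$''), but there the diffusions differ, no pointwise inequality is available, and you are left with precisely the ill-defined integral you flag as the main obstacle; the translation-compactness/LaSalle workaround is only a sketch. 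The paper treats this case quite differently: it first traps the entire solution in the rectangle $[\mu_v/\kappa_u,(r_u-\mu_u)/\kappa_u]\times[\mu_u/\kappa_v,(r_v-\mu_v)/\kappa_v]$, on which the system is two-species competitive and hence monotone for the order $(u,v)\leq_c(\tilde u,\tilde v)\Leftrightarrow u\leq\tilde u,\ v\geq\tilde v$, and then sandwiches $(u,v)$ between ODE orbits launched at time $t_0\to-\infty$ from the two corners. Your proposal contains no analogue of this step.

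For the Lyapunov case $\sigma_u=\sigma_v=:\sigma$, you have the right ingredients but miss the clean way to assemble them. The paper works pointwise: with $w(t,x):=\mathcal F^K\big(u(t,x),v(t,x)\big)$ one has
\[
w_t-\sigma w_{xx}=\mathcal F_u'(u)f^u(u,v)+K\mathcal F_v'(v)f^v(u,v)-\sigma\big(\mathcal F_u''(u)u_x^2+K\mathcal F_v''(v)v_x^2\big)\leq Q(u,v)\leq 0,
\]
using only convexity of $\mathcal F_u,\mathcal F_v$ and the ODE computation from Lemma~\ref{lem:Lyapunov-ode}. Thus $w$ is a bounded entire subsolution of the scalar heat equation on $\mathbb R$, hence constant, and $Q<0$ off $(u^*,v^*)$ finishes the proof in one line. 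No spatial integration, weights, or sliding are needed; this is exactly why the hypothesis $\sigma_u=\sigma_v$ is there. Note also that the paper does \emph{not} extend the Lyapunov argument to $\max(r_u-\mu_u,r_v-\mu_v)\leq 0$ (your claim that $BC<AD$ holds ``regardless'' is not established in Lemma~\ref{lem:Lyapunov-ode}, whose proof of that inequality uses $\max>0$); that regime is handled by the cooperative comparison of Step~1, again by squeezing between ODE orbits from $t_0\to-\infty$, which avoids having to classify PDE equilibria altogether.
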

\begin{proof}
	We divide the proof in three steps.

	\begin{stepping}
		\step The ultimately cooperative case: $\max(r_u-\mu_u, r_v-\mu_v)\leq 0$, with $(\sigma_1, \sigma_2)> (0,0)$.

		In this case, our argument is very similar to the one in Lemma \ref{lem:coop}. We first notice that $(u(t, x), v(t,x))$ is a sub-solution to the cooperative ODE system:
		\begin{equation}\label{eq:entiresol-upest}
			\begin{system}
				\relax &\bar u_t=(r_u-\mu_u-\kappa_u\bar u)\bar u+\bar v\max(\mu_v-\kappa_u\bar u, 0), \\
				\relax &\bar v_t=(r_v-\mu_v-\kappa_v\bar v)\bar v+\bar u\max(\mu_u-\kappa_v\bar v, 0),
			\end{system}
		\end{equation}
		and in particular $u(t, x)\leq \bar u(t)$ and $v(t, x)\leq \bar v(t)$. Since $\bar u, \bar v$ eventually enters the cooperative region $0<\bar u< \frac{\mu_v}{\kappa_u}$ and $0<\bar v< \frac{\mu_u}{\kappa_v}$, so does $(u,v)$. Moreover, since $(u(t,x), v(t,x))$ is defined for all $t\in\mathbb R$, we deduce that
		\begin{align*}
			\sup_{(t,x)\in\mathbb R^2}u(t,x)&<\frac{\mu_v}{\kappa_u},\\
			\sup_{(t,x)\in\mathbb R^2}v(t,x)&<\frac{\mu_u}{\kappa_v}.
		\end{align*}
		Hence, the entire  solution $(u(t,x), v(t,x))$ of the reaction-diffusion system \eqref{eq:syst-hom-rd} stays in the cooperative region and  can thus be compared with the solution to the ODE system \eqref{eq:syst-ode}. More precisely, for all $t\in\mathbb R$ and $x\in\mathbb R$, we have:
		\begin{align*}
			\underline u(t_0, t)&\leq u(t,x)\leq \bar u(t_0, t), \\
			\underline v(t_0, t)&\leq v(t,x)\leq \bar v(t_0, t),
		\end{align*}
		where $(\underline u(t_0, t), \underline v(t_0, t))$ is the solution to \eqref{eq:syst-ode} at time $t$ starting from the initial condition $(\underline u(t_0), \underline v(t_0))=(\delta, \delta)$, and $(\bar u(t_0, t)\bar v(t_0, t)) $ is the solution to \eqref{eq:syst-ode} at time $t$ starting from the initial condition $(\bar u(t_0), \bar v(t_0))=\left(\frac{\mu_v}{\kappa_u}, \frac{\mu_u}{\kappa_v}\right)$. Since:
		\begin{equation*}
			\lim_{t_0\to-\infty} (\underline u(t_0, t), \underline v(t_0, t))=\lim_{t_0\to-\infty}(\bar u(t_0, t), v(t_0, t))=(u^*, v^*), 
		\end{equation*}
		we have indeed $(u(t,x), v(t,x))\equiv (u^*, v^*)$ and Theorem \ref{thm:entire-sol-hom} is proved in this case. \medskip

		\step The ultimately competitive case: $\min(r_u-\mu_u-\mu_v, r_v-\mu_v-\mu_u)> 0$, with $(\sigma_1, \sigma_2)> (0,0)$.

		First notice that, as in Step 1, the solution $(u(t,x), v(t,x))$ can be controlled from above by the solution to the ODE \eqref{eq:entiresol-upest}, and hence we have the upper estimate:
		\begin{equation*}
			u(t,x)<\frac{r_u-\mu_u}{\kappa_u}, \qquad v(t,x)<\frac{r_v-\mu}{\kappa_v}.
		\end{equation*}
		Next we remark that $(u(t,x), v(t,x))$ is a supersolution to the cooperative system:
		\begin{equation*}
			\begin{system}
				\relax &\underline u_t=\underline u\left(r_u-\mu_u-\kappa_u\left(\frac{r_u-\mu_u}{\mu_v}\right)\underline u\right)+\delta(\mu_v-\kappa_u\underline u), \\
				\relax &\underline v_t=\underline v\left(r_v-\mu_v-\kappa_v\left(\frac{r_v-\mu_v}{\mu_u}\right)\underline v\right)+\delta(\mu_u-\kappa_v\underline v).
			\end{system}
		\end{equation*}
		In particular, we have for all $t\in\mathbb R$ and $x\in\mathbb R$:
		\begin{equation*}
			\frac{\mu_v}{\kappa_u}\leq u(t, x), \qquad \frac{\mu_u}{\kappa_v}\leq v(t,x).
		\end{equation*}
		Hence $(u(t, x), v(t,x))$ stays in the invariant rectangle $\left[\frac{\mu_v}{\kappa_u}, \frac{r_u-\mu_u}{\kappa_u}\right]\times\left[\frac{\mu_u}{\kappa_v}, \frac{r_v-\mu_v}{\kappa_v}\right]$, where system \eqref{eq:syst-hom-rd} is competitive. In particular, system \eqref{eq:syst-hom-rd} is order-preserving for the non-classical order $\leq_c $ on $\mathbb R^2$ (see e.g. \cite[Proposition 5.1]{Smi-95}): 
		\begin{equation*}
			(u, v)\leq_c (\tilde u, \tilde v) \Longleftrightarrow u\leq \tilde u \text{ and } v\geq \tilde v,
		\end{equation*}
		in this rectangle. Thus, 
		\begin{align*}
			\underline u(t_0, t)&\leq_c u(t,x)\leq_c \bar u(t_0, t), \\
			\underline v(t_0, t)&\leq_c v(t,x)\leq_c \bar v(t_0, t),
		\end{align*}
		where $(\underline u(t_0, t), \underline v(t_0, t))$ is the solution to \eqref{eq:syst-ode} at time $t$ starting from the initial condition $(\underline u(t_0), \underline v(t_0))=\left(\frac{\mu_u}{\kappa_u}, \frac{r_v-\mu_v}{\kappa_v}\right)$, and $(\bar u(t_0, t)\bar v(t_0, t)) $ is the solution to \eqref{eq:syst-ode} at time $t$ starting from the initial condition $(\bar u(t_0), \bar v(t_0))=\left(\frac{r_u-\mu}{\kappa_u}, \frac{\mu_u}{\kappa_v}\right)$. Since:
		\begin{equation*}
			\lim_{t_0\to-\infty} (\underline u(t_0, t), \underline v(t_0, t))=\lim_{t_0\to-\infty}(\bar u(t_0, t), v(t_0, t))=(u^*, v^*), 
		\end{equation*}
		we have indeed $(u(t,x), v(t,x))\equiv (u^*, v^*)$ and Theorem \ref{thm:entire-sol-hom} is proved in this case. \medskip

		\step The Lyapunov case: $\sigma_u=\sigma_v=:\sigma$ and $\max(r_u-\mu_u, r_v-\mu_v)\geq 0$.

		In this case the system is mixed quasimonotone and, to the extent of our knowledge, monotonicity arguments cannot be employed. We therefore turn to a generalisation of the Lyapunov argument which was used in Lemma \ref{lem:Lyapunov-ode}. Let $\mathcal F_u$, $\mathcal F_v$ be the functions defined in \eqref{eq:Lyapunov-func} and $K$ be the constant given by Lemma \ref{lem:Lyapunov-ode}, so that $\mathcal F^K(u, v):=\mathcal F_u(u)+K\mathcal F_v(v)$ is a Lyapunov functional for the flow of the ODE \eqref{eq:syst-ode}. Define $w(t, x)=\mathcal F^K(u(t,x), v(t,x))$. Then $w$ satisfies:
		\begin{align*}
			w_t-\sigma w_{xx}&=(u_t-\sigma u_{xx})\mathcal F_u'(u)+K(v_t-\sigma v_{xx})\mathcal F_v'(v)-\sigma(u_x^2\mathcal F_u''(u) + Kv_x^2\mathcal F_v''(v))\\
			&\leq -\kappa_u(u-u^*)^2-\left(\kappa_u-\frac{\mu_v}{u^*}+K\left(\kappa_v-\frac{\mu_u}{v^*}\right)\right)-K\kappa_v(v-v^*)^2=:Q(u,v).
		\end{align*}
		Indeed $\mathcal F_u$ and $\mathcal F_v$ are convex functions, hence $\mathcal F_u''(u)\geq 0$ and $\mathcal F_v''(v)\geq 0$ for all $u,v$. Since $Q(u,v)\leq 0$, $w$ is a bounded entire subsolution to the heat equation, therefore has to be a constant. Since $Q(u,v)<0$ whenever $(u, v)\neq (u^*, v^*)$, the only possibility is $w(t, x)\equiv 0$ and therefore $(u(t, x), v(t,x))\equiv (u^*, v^*)$. This finishes the proof of Theorem \ref{thm:entire-sol-hom} in the case $\max(r_u-\mu_u, r_v-\mu_v)\geq 0$ and $\sigma_u=\sigma_v$.
	\end{stepping}

	Since all the possible cases have been covered, Theorem \ref{thm:entire-sol-hom} is proved.
\end{proof}

\begin{proof}[Proof of Theorem \ref{thm:ltb}]
	Let $(u_0(x)\geq 0, v_0(x)\geq 0)$ be a nontrivial initial condition and $(u(t,x), v(t,x))$ be the corresponding solution to \eqref{eq:main-sys}. We argue by contradiction and assume that there exists $\varepsilon>0$ and  a sequences $t_n\to+\infty$ and $x_n\in\mathbb R$ such that $|x_n|\leq ct_n$ and 
	\begin{equation*}
		|u(t_n, x_n)-u^*|\geq\varepsilon>0.
	\end{equation*}
	Then, due to the classical parabolic estimates, the sequence $(u(t+t_n, x),v(t+t_n, x)$ converges locally uniformly and up to an extraction to an entire solution $(u^\infty(t, x), v^\infty(t, x))$ which satisfies $|u^\infty(0, 0)-u^*|\geq \varepsilon$. By Theorem  \ref{thm:main-lindet}, there exists $\delta>0$ such that $(u^\infty(t, x), v^\infty(t,x))\geq (\delta, \delta).$ Hence Theorem \ref{thm:entire-sol-hom} applies and we have $(u^\infty(t, x), v^\infty(t, x))\equiv (u^*, v^*)$. This is a contradiction. If $|v(t_n, x_n)-v^*|\geq \varepsilon$, we easily derive a similar contradiction.  Therefore, we have
	\begin{equation*}
		\lim_{t\to\infty}\sup_{|x|\leq ct}\max(|u(t,x)-u^*|, |v(t, x)-v^*|)=0,
	\end{equation*}
	and Theorem \ref{thm:ltb} holds.
\end{proof}

In the case when Assumption \ref{as:LTPDE} fails to hold, we can no longer prove the {\em global} stability of the stationary solution, however, since the spectrum of the linearized operator is included in the nonpositive complex plane, we can still prove {\em local} stability by studying the semigroup associated with the system (in particular, no Turing bifurcation is occurring with our system). This is the content of Theorem \ref{thm:hom-local-stab}.
\begin{proof}[Proof of Theorem \ref{thm:hom-local-stab}]
	We divide the proof in two steps. Our strategy is as follows: in the first step we show that the constant stationary solution is linearly stable for the elliptic PDE, meaning that the spectrum of the linearized operator lies in the complex half-plane of negative real parts. In the  second step we show how this linear stability leads to nonlinear stability, by using semigroup theory. \medskip

	\begin{stepping}
		\step We show that the spectrum of the linearized operator is included in the negative complex plane.

		In this Step we investigate the operator:
		\begin{equation*}
			\mathcal{A}
			\begin{pmatrix} 
				g \\ h 
			\end{pmatrix}
			:=
			\begin{pmatrix}
				\sigma_u g_{xx} \\ \sigma_v h_{xx} 
			\end{pmatrix}
			+
			\begin{pmatrix}
				(r_u-\mu_u-2\kappa_u u^* - \kappa_u v^*)g + (\mu_v-\kappa_u v^*)h \\
				(\mu_u-\kappa_vu^*)h+(r_v-\mu_v-\kappa_vu^* - 2\kappa_vv^*)g
			\end{pmatrix},
		\end{equation*}
		considered as an unbounded operator acting on $(g, h)\in BUC(\mathbb R)^2$, $BUC(\mathbb R)$ being the space of bounded and uniformly continuous functions on $\mathbb R$ equipped with the supremum norm (this is classically a Banach space),   with domain $D(\mathcal A)=C^2_b(\mathbb R)^2$.

		Let $\lambda\in \mathbb C$ and  $(\varphi, \psi)\in BUC(\mathbb R)^2$ be given and consider the resolvent equation
		\begin{equation}\label{eq:resolvent}
			(\lambda I-\mathcal A)\parmatrix{g \\ h} = \parmatrix{\varphi \\ \psi}. 
		\end{equation}
		The set of solutions of the latter equation can be computed explicitly by the variation of constants formula. More precisely, we let $Y(x)=(g, g_x, h, h_x)^T$  and rewrite \eqref{eq:resolvent} as an ODE on $\mathbb R^4$:
		\begin{align*}
			\frac{\dd}{\dd x}Y(x) &= 
			\begin{pmatrix}
				0 & 1 & 0 & 0 \\ 
				\sigma_u^{-1}(\lambda - (r_u-\mu_u-2\kappa_u u^* - \kappa_u v^*)) & 0 & -\sigma_u^{-1}(\mu_v-\kappa_u u^*) & 0\\
				0 & 0 & 0 & 1 \\
				-\sigma_v^{-1}(\mu_u-\kappa_vv^*) & 0 & \sigma_v^{-1}(\lambda-(r_v-\mu_v-\kappa_vu^* - 2\kappa_vv^*)) & 0
			\end{pmatrix}Y - \parmatrix{0 \\ \varphi \\ 0 \\ \psi} \\
			&=:B_\lambda Y(x)+Z(x).
		\end{align*}
		We first investigate the bounded eigenvectors  of the ODE  $Y'=B_\lambda Y$. These correspond to the imaginary eigenvalues of the matrix $B_\lambda$, {\it i.e.} the imaginary roots of the polynomial 
		\begin{equation*}
			\chi_\lambda(X):=X^4+\big(\sigma_u^{-1}a + \sigma_v^{-1}d-\lambda(\sigma_u^{-1}+\sigma_v^{-1})\big)X^2+\sigma_u^{-1}\sigma_v^{-1}\big(\lambda^2-(a+d)\lambda +ad - bc\big), 
		\end{equation*}
		where it is convenient to use the notation $a$, $b$, $c$, $d$ introduced to denote the coefficients of the Jacobian matrix of the nonlinearity at the equilibrium point: 
		\begin{align*}
			a&:=r_u-\mu_u-2\kappa_uu^*-\kappa_u v^*=-\left(\kappa_u u^*+\mu_v\frac{v^*}{u^*}\right), & b &:=\mu_v-\kappa_u u^*, \\
			d&:=r_v-\mu_v-\kappa_v u^*-2\kappa_vv^*=-\left(\kappa_vv^*+\mu_u\frac{u^*}{v^*}\right), & c &:=\mu_u-\kappa_v v^*.
		\end{align*}
		We show that there exists a curve $\mathcal C\subset \mathbb C$, which is contained in the half-plane $\Re(z)\leq -\omega$ for $z\in\mathbb C$, where
		\begin{equation}\label{eq:omega}
			\omega:= \min\left(-\frac{a+d}{2}, -\frac{\sigma_u^{-1}a+\sigma_v^{-1}d}{\sigma_u^{-1}+\sigma_v^{-1}}\right),
		\end{equation}  
		and such that $B_\lambda $ has no imaginary eigenvalue if $\lambda $ is in the connected compound $\mathcal C^+$ of $\mathbb C\backslash \mathcal C$ which contains the positive real axis. Moreover $\mathcal C$ asymptotically looks like  straight lines:
		\begin{equation*}
			\Im(z)\sim \pm\left|2\frac{\sigma_u^{-1}\sigma_v^{-1}}{\sigma_u^{-1}+\sigma_v^{-1}}-\sqrt{\sigma_u^{-1}\sigma_v^{-1}}\right|\Re(z), \text{ for } z\in \mathcal C \text{ with }\Re(z)\to -\infty.
		\end{equation*}
		Indeed, investigating the values taken by $\chi_\lambda(iX)$ for real values of $X$, we find that 
		\begin{equation*}
			\chi_\lambda(iX)=X^4+\big(-(\sigma_u^{-1}a + \sigma_v^{-1}d)+\lambda(\sigma_u^{-1}+\sigma_v^{-1})\big)X^2+\sigma_u^{-1}\sigma_v^{-1}\big(\lambda^2-(a+d)\lambda +ad - bc\big).
		\end{equation*}
		Since $a<0$, $d<0$ and $ad-bc>0$ (see Lemma \ref{lem:stat-ode-stability} and note that our notation coincides with \eqref{eq:abcd}), we immediately see that $\chi_\lambda(iX)>0$ if $\lambda $ is real and $\lambda \geq \max\left(a+d, \frac{\sigma_u^{-1}a+\sigma_v^{-1}d}{\sigma_u^{-1}+\sigma_v^{-1}}\right)$. If $\Im(\lambda)\neq 0$, we remark that 
		\begin{equation*}
			\Im(\chi_\lambda (iX))=\Im(\lambda)\left[(\sigma_u^{-1}+\sigma_v^{-1})X^2+\sigma_u^{-1}\sigma_v^{-1}(2\Re(\lambda)-(a+d))\right], 
		\end{equation*}
		therefore if $\Re(\lambda)>\frac{a+d}{2}$ the polynomial $\chi_\lambda(iX)$ cannot have a real root. If $\Re(\lambda)\leq \frac{a+d}{2}$ there are two candidates 
		\begin{equation*}
			X_\pm := \pm\sqrt{\frac{\sigma_u^{-1}\sigma_v^{-1}}{\sigma_u^{-1}+\sigma_v^{-1}}(a+d-2\Re(\lambda))}, 
		\end{equation*}
		and for those values of $X$ we have
		\begin{align*}
			\Re(\chi_\lambda(X))&=\left(\frac{\sigma_u^{-1}\sigma_v^{-1}}{\sigma_u^{-1}+\sigma_v^{-1}}(a+d-2\Re(\lambda))\right)^2-(\sigma_u^{-1}a+\sigma_v^{-1}d)\frac{\sigma_u^{-1}\sigma_v^{-1}}{\sigma_u^{-1}+\sigma_v^{-1}}(a+d-2\Re(\lambda)) \\
			&\quad + ad-bc+\Re(\lambda)\sigma_u^{-1}\sigma_v^{-1}(a+d-2\Re(\lambda))+\sigma_u^{-1}\sigma_v^{-1}\big(\Re(\lambda)^2-\Im(\lambda)^2-(a+d)\Re(\lambda)\big).
		\end{align*}
		We conclude that $\chi_\lambda(iX)$ cannot have a real root in this case either, provided $\Im(\lambda)^2$ is bounded from below by a polynomial of degree two in $\Re(\lambda)$. Hence we have found our curve $\mathcal C$.

		When $\lambda \in\mathcal C^+$ ({\it i.e.} lies in the connected component of $\mathbb C\backslash \mathcal C$ containing $\mathbb R^+$) we show that $Y$ is uniquely determined and depends continuously on $Z$. Indeed,  the set of solutions to the equation $Y'=B_\lambda Y+Z$ can be determined by the {\it variation of constants formula} 
		\begin{equation}\label{eq:hom-res}
			Y(x)=e^{xB_\lambda} Y_0+\int_0^xe^{(x-s)B_\lambda}Z(s)\dd z, 
		\end{equation}
		for arbitrary $Y_0\in \mathbb R^4$. We show that there exists a unique choice of $Y_0$ such that $Y(x)$ remains bounded on $\mathbb R$. Indeed, because of the specific form of $\chi_\lambda(X)$, the matrix $B_\lambda $ has either  four or two distinct eigenvalue.  The latter case occurs exactly when the discriminant of the characteristic polynomial $\chi_\lambda(X)$ is null, namely
		\begin{equation*}
			\big(\sigma_u^{-1}a + \sigma_v^{-1}d -\lambda (\sigma_u^{-1}+\sigma_v^{-1})\big)^2 - 4\sigma_u^{-1}\sigma_v^{-1}\big(\lambda^2-(a+d)\lambda + ad-bc\big)=0.
		\end{equation*}
		The left-hand polynomial $D(\lambda)$ of the former equation can be written as
		\begin{align*}
			D(\lambda)&=\sigma_u^{-2} a^2 + \sigma_v^{-2} d^{2} +\lambda^2(\sigma_u^{-1}+\sigma_v^{-1})^2 +2\sigma_u^{-1}\sigma_v^{-1}ad -2\sigma_u^{-1}a\lambda(\sigma_u^{-1}+\sigma_v^{-1})-2\sigma_v^{-1}d\lambda(\sigma_u^{-1}+\sigma_v^{-1})\\ 
			&\quad -4\sigma_u^{-1}\sigma_v^{-1}\lambda^2 + 4\sigma_u^{-1}\sigma_v^{-1}(a+d)\lambda - 4\sigma_u^{-1}\sigma_v^{-1}(ad-bc) \\
			&=\lambda^2\big(\sigma_u^{-1}-\sigma_v^{-1}\big)^2 +\lambda\big( -2(\sigma_u^{-1}a+\sigma_v^{-1}d)(\sigma_u^{-1}+\sigma_v^{-1})+4\sigma_u^{-1}\sigma_v^{-1}(a+d)\big)+(\sigma_u^{-1}a-\sigma_v^{-1}d)^2+4bc\\
			&=\lambda^2\big(\sigma_u^{-1}-\sigma_v^{-1}\big)^2 +\lambda\big( 2\sigma_u^{-1}\sigma_v^{-1}(a+d)-2(\sigma_u^{-2}a+\sigma_v^{-2}d)\big)+(\sigma_u^{-1}a-\sigma_v^{-1}d)^2+4bc\\
			&=\lambda^2\big(\sigma_u^{-1}-\sigma_v^{-1}\big)^2 +2\lambda(\sigma_v^{-1}-\sigma_u^{-1})(\sigma_u^{-1} a-\sigma_v^{-1} d)+(\sigma_u^{-1}a-\sigma_v^{-1}d)^2+4bc \\
			&=\big(\lambda(\sigma_u^{-1}-\sigma_v^{-1})+\sigma_u^{-1}a-\sigma_v^{-1}d\big)^2 + 4bc.
		\end{align*}
		The roots are determined by the sign of $bc$:
		\begin{equation}\label{eq:exceptional_point}
			\lambda^{\pm}=\frac{\sigma_v^{-1}d-\sigma_u^{-1}a\pm2\sqrt{-bc}}{\sigma_u^{-1}-\sigma_v^{-1}}\quad\text{ if }bc<0,\quad  \lambda^{\pm}=\frac{\sigma_v^{-1}d-\sigma_u^{-1}a\pm2i\sqrt{bc}}{\sigma_u^{-1}-\sigma_v^{-1}}\quad \text{ if } bc>0.
		\end{equation}
		Note that, since 
		\begin{equation*}
			B_\lambda^2=\begin{pmatrix}
				\sigma_u^{-1} (\lambda - a) & 0 & -\sigma_u^{-1}b & 0 \\
				0 &\sigma_u^{-1}(\lambda - a) & 0 & -\sigma_u^{-1} b \\ 
				-\sigma_v^{-1} c & 0 & \sigma_v^{-1} (\lambda-d) & 0 \\
				0 & -\sigma_v^{-1} c & 0 & \sigma_v^{-1}(\lambda-d) 
			\end{pmatrix},
		\end{equation*}
		there is no hope that the matrix $B_\lambda$ is diagonalizable when the characteristic polynomial has only two roots (because the minimal polynomial has degree $>2$; see also the Motzkin-Taussky Theorem \cite[Theorem 2.6 p.85]{Kat-95}).\medskip

		Therefore we distinguish two cases.

		\noindent{\sc Case 1.} The matrix $B_\lambda$ is diagonalizable. 

		\noindent In this case there exists $\lambda_0, \lambda_1\in\mathbb C$ such that $0<\Re(\lambda_0)\leq\Re(\lambda_1)$ and an invertible matrix $P\in M_4(\mathbb R)$ such that 
		\begin{equation*}
			B_\lambda = P\,\diag(\lambda_1, \lambda_0,- \lambda_0, -\lambda_1)P^{-1}.
		\end{equation*}
		In this case solving equation \eqref{eq:hom-res} on each eigenspace yields
		\begin{equation*}
			Y_0=P\begin{pmatrix}
				-\int_{0}^{+\infty}e^{-\lambda_1s}\tilde Z_+^1(s)\dd s\\ 
				-\int_{0}^{+\infty}e^{-\lambda_0s}\tilde Z_+^0(s)\dd s\\ 
				\int_{-\infty}^{0}e^{\lambda_0s}\tilde Z_-^0(s)\dd s\\ 
				\int_{-\infty}^{0}e^{\lambda_1s}\tilde Z_-^1(s)\dd s
			\end{pmatrix}, 
			\text{ where } 
			\tilde Z(x):=\begin{pmatrix} 
				\tilde Z_+^1(x) \\ 
				\tilde Z_+^0(x) \\ 
				\tilde Z_-^0(x) \\ 
				\tilde Z_-^1(x)
			\end{pmatrix}
			= P^{-1}Z(x).
		\end{equation*}
		Therefore $Y_0$ is a continuous function of $Z$ and \eqref{eq:hom-res} is recast 
		\begin{equation*}
			Y(x)=P\begin{pmatrix}
				-\int_{x}^{+\infty}e^{\lambda_1(x-s)}\tilde Z_+^1(s)\dd s\\ 
				-\int_{x}^{+\infty}e^{\lambda_0(x-s)}\tilde Z_+^0(s)\dd s\\ 
				\int_{-\infty}^{x}e^{-\lambda_0(x-s)}\tilde Z_-^0(s)\dd s\\ 
				\int_{-\infty}^{x}e^{-\lambda_1(x-s)}\tilde Z_-^1(s)\dd s
			\end{pmatrix}.
		\end{equation*}
		We have found that $\lambda-\mathcal A $ admits a bounded inverse in $BUC(\mathbb R)^2 $.\medskip

		\noindent{\sc Case 2.} The matrix $B_\lambda$ is not diagonalizable (i.e. $\lambda=\lambda^{\pm}$ given by \eqref{eq:exceptional_point}). 

		\noindent In this case, there is $\lambda_0\in\mathbb C$ with $\Re(\lambda_0)>0$ and an invertible matrix $P\in M_4(\mathbb R)$ such that $B_\lambda$ is equivalent to its Jordan normal form:
		\begin{equation*}
			B_\lambda = P\begin{pmatrix} 
				\lambda_0 & 1 & 0 & 0 \\
				0 & \lambda_0 & 0 & 0 \\
				0 & 0 & -\lambda_0 & 1 \\
				0 & 0 & 0 & -\lambda_0
			\end{pmatrix}
			P^{-1}, 
		\end{equation*}
		and therefore 
		\begin{equation*}
			e^{xB_\lambda} = P\begin{pmatrix} 
				e^{\lambda_0x} & xe^{\lambda_0x} & 0 & 0 \\
				0 & e^{\lambda_0x} & 0 & 0 \\
				0 & 0 & e^{-\lambda_0x} & xe^{-\lambda_0x} \\
				0 & 0 & 0 & e^{-\lambda_0x}
			\end{pmatrix}
			P^{-1}. 
		\end{equation*}
		In this case solving equation \eqref{eq:hom-res} on each eigenspace yields
		\begin{equation*}
			Y_0=P\begin{pmatrix}
				-\int_{0}^{+\infty}e^{-\lambda_0s}\big(\tilde Z_+^1(s)-s\tilde Z_{+}^0(s)\big)\dd s\\ 
				-\int_{0}^{+\infty}e^{-\lambda_0s}\tilde Z_{+}^0(s)\dd s\\ 
				\int_{-\infty}^{0}e^{\lambda_0s}\big(\tilde Z_-^0(s) - s\tilde Z_-^1(s)\big)\dd s\\ 
				\int_{-\infty}^{0}e^{\lambda_1s}\tilde Z_-^1(s)\dd s
			\end{pmatrix}, 
			\text{ where } 
			\tilde Z(x):=\begin{pmatrix} 
				\tilde Z_+^1(x) \\ 
				\tilde Z_+^0(x) \\ 
				\tilde Z_-^0(x) \\ 
				\tilde Z_-^1(x)
			\end{pmatrix}
			= P^{-1}Z(x).
		\end{equation*}
		Once again we have found that $Y_0$ depends continuously on $Z$ and therefore $\lambda-\mathcal A$ admits a continuous inverse on $BUC(\mathbb R)^2$ given by the formula 
		\begin{equation*}
			Y(x)=P\begin{pmatrix}
				-\int_{x}^{+\infty}e^{\lambda_0(x-s)}\big(\tilde Z_+^1(s)+(x-s)\tilde Z_+^0(s)\big)\dd s\\ 
				-\int_{x}^{+\infty}e^{\lambda_0(x-s)}\tilde Z_+^0(s)\dd s\\ 
				\int_{-\infty}^{x}e^{-\lambda_0(x-s)}\big(\tilde Z_-^0(s)+(x-s)\tilde Z_-^1(s)\big)\dd s\\ 
				\int_{-\infty}^{x}e^{-\lambda_0(x-s)}\tilde Z_-^1(s)\dd s
			\end{pmatrix}.
		\end{equation*}

		To finish our first Step we remark that the operator $\mathcal A$ is sectorial and generates an analytic semigroup on $BUC(\mathbb R)^2$. Indeed, $\mathcal A $ is a bounded perturbation of the unbounded operator $(\sigma_u \partial_{xx}, \sigma_v\partial_{xx})^T$ (acting on $D(A)=C^2_{UC}(\mathbb R)^2$ in $BUC(\mathbb R)^2$), which is sectorial and  generates an analytic semigroup on $BUC(\mathbb R)^2$ \cite[Corollary 3.1.9 p. 81]{Lun-95}. In particular, $e^{t\mathcal A}$ can be computed by the Dunford-Taylor integral
		\begin{equation*}
			e^{t\mathcal A}=\frac{1}{2i\pi}\int_{\Gamma}e^{\lambda t}(\lambda I-\mathcal A)^{-1}\dd \lambda ,
		\end{equation*}
		where $\Gamma$ is a curve in $\mathcal C^+$ joining a straight line $\{\rho e^{-i\theta}, \rho>0\}$ for some $\theta\in\left[\frac{\pi}{2}, \pi\right)$ to the straight line $\{\rho e^{-i\theta}:\rho>0\}$ and oriented so that $\Im (\lambda) $ increases on $\Gamma$. From the above computations it is clear that $\Gamma $ can be chosen so that $\Re(\lambda)\leq -\frac{\omega}{2}$ (where $\omega$ is given by \eqref{eq:omega}) for all $\lambda \in \Gamma$, in which case
		\begin{equation*}
			e^{t\mathcal A} = e^{-\frac{\omega}{2} t}\cdot\frac{1}{2i\pi}\int_{\Gamma} e^{\left(\lambda + \frac{\omega}{2}\right) t}(\lambda-\mathcal A)^{-1}\dd \lambda
		\end{equation*}
		therefore
		\begin{align*}
			\Vert e^{t\mathcal A}\Vert_{BUC(\mathbb R)^2}&\leq e^{-\frac{\omega}{2}t}\cdot\frac{1}{2\pi} \int_{\Gamma} e^{-(\Re(\lambda)+\frac{\omega}{2})t}\Vert (\lambda-\mathcal A)^{-1}\Vert_{\mathcal L(BUC(\mathbb R)^2)}\dd\lambda\\ 
			&\leq Ce^{-\frac{\omega}{2}t},
		\end{align*}
		for all $t>0$, where $C$ depends only on $\mathcal A$ and $\omega$.

		\medskip

		\step We show the nonlinear stability. 

		Let $(u(t,x), v(t,x))$ be the  solution of \eqref{eq:syst-hom-rd} starting from $(u_0, v_0)\in BUC(\mathbb R)^2$. We remark that 
		\begin{equation*}
			\begin{pmatrix} u-u^* \\ v-v^*\end{pmatrix}_t-\mathcal A\begin{pmatrix} u-u^* \\ v-v^*\end{pmatrix}=o\left(\left\Vert \begin{pmatrix} u-u^* \\ v-v^*\end{pmatrix} \right\Vert_{BUC(\mathbb R)^2}\right), 
		\end{equation*}
		that our original equation \eqref{eq:syst-hom-rd} is a Lipschitz perturbation of the semigroup $T(t)$ generated by $\mathcal A$, and that it has been shown in Step 1  that $e^{\frac{\omega}{2} t}T(t)$ is bounded, with $\omega>0$ defined by \eqref{eq:omega}. In this context, it has been shown in  \cite[Theorem 10.2.2 p.157]{Caz-Har-98} (as a consequence of Gronwall's inequality) that  there exists a $\varepsilon_0>0$ and a constant $M>0$ such that 
		\begin{equation*}
			\left\Vert \begin{pmatrix} u(t,\cdot) \\ v(t, \cdot)\end{pmatrix} - \begin{pmatrix} u^* \\ v^*\end{pmatrix}\right\Vert_{BUC(\mathbb R)^2}\leq M\left\Vert \begin{pmatrix} u_0 \\ v_0\end{pmatrix} - \begin{pmatrix} u^* \\ v^*\end{pmatrix}\right\Vert_{BUC(\mathbb R)^2}e^{-\frac{\omega}{2}t}, \text{ if } \left\Vert \begin{pmatrix} u_0-u^* \\ v_0-v^*\end{pmatrix} \right\Vert_{BUC(\mathbb R)^2}\leq \varepsilon_0.
		\end{equation*}
	\end{stepping}

	This finishes the proof of Theorem \ref{thm:hom-local-stab}.
\end{proof}

\subsection{Homogenization}
\label{sec:hom}

In this section we extend the results obtained for the homogeneous systems to the class of systems with rapidly oscillating coefficients. 

Recall that we are concerned with system \eqref{eq:rapidosc}:
\begin{equation*}
	\begin{system}
		\relax &u_t=(\sigma_u^\varepsilon(x) u_{x})_x+(r_u^\varepsilon(x)-\kappa_u^\varepsilon(x)(u+v))u+\mu_v^\varepsilon(x)v-\mu_u^\varepsilon(x)u \\
		\relax &v_t=(\sigma_v^\varepsilon(x) v_{x})_x+(r_v^\varepsilon(x)-\kappa_v^\varepsilon(x)(u+v))v+\mu_u^\varepsilon(x)u-\mu_v^\varepsilon(x)v,
	\end{system}
\end{equation*}
where $\sigma_u^\varepsilon(x):=\sigma_u\left(\frac{x}{\varepsilon}\right)$, $\sigma_v^\varepsilon(x):=\sigma_v\left(\frac{x}{\varepsilon}\right)$, $r_u^\varepsilon(x):=r_u\left(\frac{x}{\varepsilon}\right)$, $r_v^\varepsilon(x):=r_v\left(\frac{x}{\varepsilon}\right)$, $\kappa_u^\varepsilon(x):=\kappa_u\left(\frac{x}{\varepsilon}\right)$, $\kappa_v^{\varepsilon}(x):=\kappa_v\left(\frac{x}{\varepsilon}\right)$, $\mu_u^\varepsilon(x):=\mu_u\left(\frac{x}{\varepsilon}\right)$, $\mu_v^\varepsilon(x):=\mu_v\left(\frac{x}{\varepsilon}\right)$ and $r_u$, $r_v$, $\kappa_u$, $\kappa_v$ are periodic with period 1. We also recall the definitions of the mean coefficients as in \eqref{eq:mean-coeffs}:
\begin{align*}
	\overline{r_u}&:=\int_0^1r_u(x)\dd x , & \overline{\kappa_u}&:=\int_0^1\kappa_u(x)\dd x , & \overline{\mu_u}&:=\int_0^1\mu_u(x)\dd x, \\
	\overline{r_v}&:=\int_0^1r_v(x)\dd x, &\overline{\kappa_v}&:=\int_0^1\kappa_v(x)\dd x, & \overline{\mu_v}&:=\int_0^1\mu_v(x)\dd x, \notag 
\end{align*}
and finally:
\begin{align*}
	\overline{\sigma_u}^H&:=\left(\int_0^1\frac{1}{\sigma_u(x)}\dd x\right)^{-1}, & \overline{\sigma_v}^H&:=\left(\int_0^1\frac{1}{\sigma_v(x)}\dd x\right)^{-1}.
\end{align*}

\begin{lem}[The homogenisation limit of entire solutions]\label{lem:rapid-osc-lim}
	Let $\overline{\sigma_u}^H$, $\overline{\sigma_v}^H$, $\overline{r_u}$, $\overline{r_v} $, $\overline{\kappa_u}$, $\overline{\kappa_v}$,  $\overline{\mu_u}$ and $\overline{\mu_v}$ satisfy Assumption \ref{as:cond-instab-0} and \ref{as:LTPDE}. Let $\varepsilon>0$ and $(u^\varepsilon(t, x), v^\varepsilon(t, x)) $ be a nonnegative nontrivial  entire solution to \eqref{eq:rapidosc} which is bounded from above and from below by positive constants. Then, as $\varepsilon\to 0$, the functions $(u^\varepsilon(t, x), v^\varepsilon(t, x))$ converge locally uniformly to the unique nonnegative nontrivial  stationary state $(u^*, v^*)$ of the homogenised problem \eqref{eq:syst-hom-rd} with $\sigma_u$, $\sigma_v$, $r_u$, $r_v$, $\kappa_u$, $\kappa_v$, $\mu_u$, $\mu_v$ replaced by $\overline{\sigma_u}^H$, $\overline{\sigma_v}^H$, $\overline{r_u}$, $\overline{r_v}$, $\overline{\kappa_u}$, $\overline{\kappa_v}$, $\overline{\mu_u}$, $\overline{\mu_v}$.
\end{lem}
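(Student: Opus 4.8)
\emph{Proof plan.} The idea is to pass to the homogenisation limit $\varepsilon\to 0$ and then invoke the rigidity statement for entire solutions of the constant-coefficient problem, Theorem \ref{thm:entire-sol-hom}.

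\textbf{Step 1 (uniform bounds).} First I would strengthen the hypothesis so that $(u^\varepsilon, v^\varepsilon)$ is bounded above and below by positive constants \emph{independent of $\varepsilon$}. The upper bound is easy: since $\kappa_u,\kappa_v$ are bounded below by a positive constant and the remaining coefficients are uniformly bounded, a comparison with a constant-coefficient cooperative ODE (exactly as in Steps 1 and 2 of the proof of Theorem \ref{thm:entire-sol-hom}) confines every bounded entire solution of \eqref{eq:rapidosc} to a fixed rectangle. For the lower bound I would use the instability condition: by Assumption \ref{as:cond-instab-0} the averaged matrix has positive Perron-Frobenius eigenvalue, so by the homogenisation of the periodic principal eigenvalue (the computation behind Theorem \ref{thm:large-diff}, specialised to $\lambda = 0$) the $\varepsilon$-periodic principal eigenvalue of the linearisation of \eqref{eq:rapidosc} at $0$ is negative, \emph{uniformly} for $\varepsilon$ small; a uniform periodic subsolution of the form $\delta\,\varphi^{per,\varepsilon}$ together with a sweeping argument then gives $\inf_{\mathbb R^2}\min(u^\varepsilon, v^\varepsilon)\ge\delta_0>0$ with $\delta_0$ independent of $\varepsilon$.

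\textbf{Step 2 (compactness and homogenisation).} With these uniform $L^\infty$ bounds, interior parabolic estimates for uniformly elliptic divergence-form operators give uniform local H\"older bounds on $(u^\varepsilon, v^\varepsilon)$ and uniform $L^2_{loc}$ bounds on the fluxes $\sigma_u^\varepsilon u^\varepsilon_x$ and $\sigma_v^\varepsilon v^\varepsilon_x$. Along a subsequence, $(u^\varepsilon, v^\varepsilon)$ converges locally uniformly to some $(u^0, v^0)$ and the fluxes converge weakly in $L^2_{loc}$; the classical oscillating test function (compensated compactness) argument of periodic homogenisation identifies the limiting fluxes as $\overline{\sigma_u}^H u^0_x$ and $\overline{\sigma_v}^H v^0_x$. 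The zero-order terms pass to the limit because each oscillating coefficient converges weakly-$\ast$ in $L^\infty$ to its mean while $u^\varepsilon,v^\varepsilon$ converge strongly in $L^2_{loc}$, so for instance $r_u^\varepsilon u^\varepsilon\rightharpoonup\overline{r_u}\,u^0$, $\kappa_u^\varepsilon(u^\varepsilon+v^\varepsilon)u^\varepsilon\rightharpoonup\overline{\kappa_u}(u^0+v^0)u^0$, $\mu_v^\varepsilon v^\varepsilon\rightharpoonup\overline{\mu_v}\,v^0$, and so on. Hence $(u^0,v^0)$ is a weak, and by parabolic regularity a classical, nonnegative entire solution of the homogenised system \eqref{eq:syst-hom-rd} with coefficients $\overline{\sigma_u}^H,\overline{\sigma_v}^H,\overline{r_u},\overline{r_v},\overline{\kappa_u},\overline{\kappa_v},\overline{\mu_u},\overline{\mu_v}$, and it inherits $u^0,v^0\ge\delta_0>0$.

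\textbf{Step 3 (rigidity and conclusion).} Since the averaged coefficients satisfy Assumptions \ref{as:cond-instab-0} and \ref{as:LTPDE} by hypothesis, Theorem \ref{thm:entire-sol-hom} applies to $(u^0,v^0)$ and yields $(u^0,v^0)\equiv(u^*,v^*)$, the unique positive equilibrium of the homogenised problem (its uniqueness being Lemma \ref{lem:stat-ode-uniqueness}). Because the limit does not depend on the chosen subsequence, the whole family $(u^\varepsilon,v^\varepsilon)$ converges locally uniformly to $(u^*,v^*)$ as $\varepsilon\to 0$.

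\textbf{Main obstacle.} The hard part will be the uniform-in-$\varepsilon$ lower bound in Step 1: if it failed, the homogenised limit could touch $0$ and Theorem \ref{thm:entire-sol-hom} would be unavailable. Establishing it relies on the homogenisation of the periodic principal eigenvalue of the linearised operator combined with a uniform sweeping/strong-maximum-principle argument; everything else is a routine combination of periodic homogenisation and the rigidity theorem already proved in the paper.
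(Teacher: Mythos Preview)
Your proposal is correct and follows essentially the same route as the paper: establish uniform-in-$\varepsilon$ upper and lower bounds (the latter via convergence of the periodic principal eigenvalue and a sweeping argument with $\alpha\varphi^{per,\varepsilon}$), pass to the homogenised limit using parabolic H\"older estimates and classical periodic homogenisation, and then invoke Theorem~\ref{thm:entire-sol-hom} to force the limit to be $(u^*,v^*)$. The paper's presentation differs only cosmetically in the ordering of the bounds and in citing \cite{Ben-Lio-Pap-11} rather than spelling out the oscillating-test-function mechanism you describe.
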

\begin{proof}
	We divide the proof in three steps.

	\begin{stepping}
		\step  We show that $(u^\varepsilon(t, x), v^\varepsilon(t, x))$ does not vanish. 

		Let $\big(\lambda_1^\varepsilon, (\varphi^\varepsilon(x)>0,\psi^\varepsilon(x)>0)\big) $ be the principal eigenpair associated with the eigenproblem:
		\begin{equation*}
			\begin{system}
				\relax &-(\sigma_u^\varepsilon(x)\varphi^\varepsilon_{x})_x=(r_u^\varepsilon(x)-\mu_u^\varepsilon(x))\varphi^\varepsilon(x)+\mu_v^\varepsilon(x)\psi^\varepsilon(x)+\lambda_1^\varepsilon\varphi^\varepsilon(x)\\
				\relax &-(\sigma_v^\varepsilon(x)\psi^\varepsilon_{x})_x= \mu_u^\varepsilon(x)\varphi^\varepsilon(x)+(r^\varepsilon_v(x)-\mu_v^\varepsilon(x))\psi^\varepsilon(x) + \lambda_1^\varepsilon\psi^\varepsilon(x),
			\end{system}
		\end{equation*}
		with $\varepsilon$-periodic boundary conditions, and normalised with $\max_{x\in\mathbb R}\sup\max(\varphi^\varepsilon(x), \psi^\varepsilon(x))=1$. Since $(u^\varepsilon, v^\varepsilon)$ is bounded from below,   there exists $\alpha>0$ such that $\alpha(\varphi^\varepsilon(x), \psi^\varepsilon(x))\leq (u^\varepsilon(t, x), v^\varepsilon(t, x))$. Let us define 
		\begin{equation*}
			A:=\sup\left\{\alpha>0, \forall x\in\mathbb R, \alpha(\varphi^\varepsilon(x), \psi^\varepsilon(x))\leq (u^\varepsilon(t, x), v^\varepsilon(t, x))\right\}.
		\end{equation*}
		Then by definition of $A>0$ (and up to a shift and limiting process), there exists $t, x\in\mathbb R$ such that either $u^\varepsilon(t, x)=A\varphi^\varepsilon(x)$ or $v^\varepsilon(t, x)=A\psi^\varepsilon(x)$. Let us assume that the former holds. Then, we have 
		\begin{align*}
			0&\geq -\kappa_u^\varepsilon(x)u(t,x)(u(t,x)+v(t,x))-\lambda_1^\varepsilon A\varphi^\varepsilon(x) 
			=-A^2\kappa_u^\varepsilon(x)\varphi(x)(\varphi(x)+\psi(x))-\lambda_1^\varepsilon A\varphi^\varepsilon(x) \\
			&\geq A\varphi^\varepsilon(x)(-2A\sup_{y\in\mathbb R}\kappa_u(y) - \lambda_1^\varepsilon),
		\end{align*}
		which implies that $A\geq \frac{-\lambda_1^\varepsilon}{2\sup_{y\in\mathbb R}\kappa_u(y)}$. We get a similar estimate in the case $v^\varepsilon(x)=A\psi^\varepsilon(x)$, which shows the inequality
		\begin{equation*}
			A\geq \frac{-\lambda_1^\varepsilon}{2\max_{x\in\mathbb R}\big(\max(\kappa_u(x), \kappa_v(x))\big)}.
		\end{equation*}
		Then, it is classical (and has been proved in the proof of Theorem \ref{thm:large-diff}) that $(\lambda_1^\varepsilon, (\varphi^\varepsilon(x), \psi^\varepsilon(x)))\to(\lambda_1^0<0, (\varphi^0, \psi^0)) $ uniformly as $\varepsilon\to 0$, where $(\lambda_1^0, (\varphi^0, \psi^0)) $ is the principal eigenpair of the homogenised problem.   

		Note that in particular, there exists $\overline{\varepsilon}>0$ such that for $0<\varepsilon\leq\overline{\varepsilon} $, we have a true uniform lower bound on $(u^\varepsilon(x),v^\varepsilon(x))$:
		\begin{equation*}
			\inf_{(t, x)\in\mathbb R^2}\min(u(t, x),v(t, x))\geq\frac 12\cdot \frac{-\lambda_1^0}{2\max_{x\in\mathbb R}\big(\max(\kappa_u(x), \kappa_v(x))\big)}\min(\varphi^0, \psi^0)>0.
		\end{equation*}

		\step We show that $u^\varepsilon(t, x)$ and $v^\varepsilon(t, x)$ are uniformly bounded.

		Indeed, since $(u^\varepsilon,v^\varepsilon)$ is bounded, it follows directly from the maximum principle that 
		\begin{equation*}
			\underset{(t, x)\in\mathbb R^2}{\sup}\max(u(t, x),v(t, x))\leq \max\left(\frac{\max_{x\in\mathbb R}r_u(x)}{\min_{x\in\mathbb R}\kappa_u(x)}, \frac{\max_{x\in\mathbb R}r_v(x)}{ \min_{x\in\mathbb R} \kappa_v(x)}\right).
		\end{equation*}

		\step We derive the limit of $(u^\varepsilon, v^\varepsilon)$. \medskip

		We first remark that, since $(u^\varepsilon, v^\varepsilon)$ is uniformly bounded, the classical estimates for parabolic equations in divergence form with discontinuous coefficients (see {\it e.g.} \cite[Chapter II Theorem 10.1]{Lad-Sol-Ura-68}) imply that $(U^\varepsilon, v^\varepsilon)$ is locally uniformly bounded in $C^\alpha (\mathbb R\times\mathbb R)$, {\it i.e.} for any  $T>0$ an $R>0$ there exists $C>0$ (independent of $\varepsilon$) such that 
		\begin{equation*}
			\max\left(\Vert u^\varepsilon\Vert_{C^\alpha([-T, T]\times [-R, R])}, \Vert v^\varepsilon\Vert_{C^\alpha([-T, T]\times [-R, R])}\right) \leq C.
		\end{equation*}
		Then, a classical diagonal extraction process allows us to extract a subsequence along which $(u^{\varepsilon}, v^\varepsilon)$ converges locally uniformly in $C^{\alpha/2}(\mathbb R^2)$ to a limit $(u, v)$. It is then classical (see {\it e.g.} Remark 1.3 in Chapter 2 of \cite{Ben-Lio-Pap-11}) that $(u, v)$ satisfies weakly: 
		\begin{equation*}
			\begin{system}
				\relax &u_t=\overline{\sigma_u}^H u_{xx}+(\overline{r_u}-\overline{\kappa_u}(u+v))u+\overline{\mu_v}v-\overline{\mu_u}u \\
				\relax &v_t=\overline{\sigma_v}^H v_{xx}+(\overline{r_v}-\overline{\kappa_v}(u+v))v+\overline{\mu_u}u-\overline{\mu_v}v.
			\end{system}
		\end{equation*}
		Then the Schauder estimates imply that $(u(x), v(x))$ is in fact a classical solution to \eqref{eq:syst-hom-rd}. Since $(u(x), v(x)) $ is nontrivial and bounded from below (by Step 1), Theorem \ref{thm:entire-sol-hom} shows that $u(t,x)\equiv u^*$ and $v(t,x)\equiv v^*$.
	\end{stepping}
\end{proof}

\begin{lem}[Uniqueness of rapidly oscillating entire solution]\label{lem:rapid-osc-unique}
	Let $\overline{r_u}$, $\overline{r_v} $, $\overline{\kappa_u}$, $\overline{\kappa_v}$,  $\overline{\mu_u}$ and $ \overline{\mu_v} $ satisfy Assumption \ref{as:cond-instab-0}. There exists $\overline{\varepsilon}$ such that if $ 0<\varepsilon\leq \overline{\varepsilon}$, there exists a unique nonnegative nontrivial entire solution $(u^\varepsilon(x), v^\varepsilon(x)) $ associated with \eqref{eq:rapidosc}, which is bounded from above and from below.
\end{lem}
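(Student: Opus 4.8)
The plan is to combine the existence and uniqueness of a positive bounded stationary solution for fixed $\varepsilon$ with a compactness/limit argument that transports the \emph{global} stability of the homogenized problem (Theorem \ref{thm:entire-sol-hom}, via Lemma \ref{lem:rapid-osc-lim}) to the oscillating problem for $\varepsilon$ small. First I would establish existence of a positive bounded stationary solution $(u^*_\varepsilon(x), v^*_\varepsilon(x))$ for each $\varepsilon>0$. This follows from the negativity of the principal eigenvalue $\lambda_1^\varepsilon$ of the linearized periodic problem (which holds for $\varepsilon$ small because $\lambda_1^\varepsilon\to\lambda_1^0<0$, as recalled in Step 1 of the proof of Lemma \ref{lem:rapid-osc-lim}), together with a global bifurcation argument exactly as in \cite[Theorem 2.3]{Alf-Gri-18}; the construction also provides the uniform upper bound coming from the maximum principle and the uniform positive lower bound coming from the argument in Step 1 of Lemma \ref{lem:rapid-osc-lim}. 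In particular, for $0<\varepsilon\le\bar\varepsilon$ every nonnegative nontrivial entire solution bounded above and below by positive constants is, after time-shifting and passing to a local-uniform limit, trapped between two such stationary-type barriers.

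Next I would prove uniqueness by contradiction. Suppose that for a sequence $\varepsilon_n\to 0$ there exist two distinct nonnegative nontrivial entire solutions $(u^{\varepsilon_n}_1,v^{\varepsilon_n}_1)$ and $(u^{\varepsilon_n}_2,v^{\varepsilon_n}_2)$ of \eqref{eq:rapidosc}, each bounded above and below by positive constants. By Lemma \ref{lem:rapid-osc-lim} both families converge locally uniformly, as $n\to\infty$, to the \emph{same} limit $(u^*,v^*)$, the unique positive stationary solution of the homogenized system \eqref{eq:syst-hom-rd} with averaged coefficients. The idea is that, for $\varepsilon$ small, the dynamics near this homogenized equilibrium inherits the asymptotic stability established in Theorem \ref{thm:hom-local-stab}: more precisely, the sectorial/semigroup analysis in the proof of Theorem \ref{thm:hom-local-stab} (the spectral bound $\Re(\text{spectrum})\le -\omega/2$, robust under the $H^{-1}$-type homogenization perturbation of the coefficients) shows that for $\varepsilon$ small enough the linearization of \eqref{eq:rapidosc} at $(u^*_\varepsilon,v^*_\varepsilon)$ still has spectrum in $\{\Re\lambda\le -\omega/4\}$, hence $(u^*_\varepsilon,v^*_\varepsilon)$ is locally exponentially stable with a basin of attraction of radius bounded below uniformly in $\varepsilon\le\bar\varepsilon$. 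Since any entire solution which stays uniformly close to $(u^*,v^*)$ for all $t\in\mathbb R$ must lie in this basin, running the flow backward in time forces it to coincide with $(u^*_\varepsilon,v^*_\varepsilon)$. Therefore both $(u^{\varepsilon_n}_j,v^{\varepsilon_n}_j)$ equal $(u^*_{\varepsilon_n},v^*_{\varepsilon_n})$ for $n$ large, contradicting distinctness.

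An alternative, more self-contained route — which I would actually prefer to write, to avoid re-deriving the uniformity of the semigroup estimates in $\varepsilon$ — is to bypass local stability entirely and argue directly via barriers as in Theorem \ref{thm:entire-sol-hom}. In the two monotone regimes of Assumption \ref{as:LTPDE} (ultimately cooperative: $\max(\overline{r_u}-\overline{\mu_u},\overline{r_v}-\overline{\mu_v})\le 0$; ultimately competitive: $\min(\overline{r_u}-\overline{\mu_u}-\overline{\mu_v},\overline{r_v}-\overline{\mu_v}-\overline{\mu_u})>0$) the oscillating system \eqref{eq:rapidosc} itself becomes order-preserving (for the usual or the competitive order) once one restricts to the appropriate invariant rectangle, for \emph{all} $\varepsilon$; then squeezing any entire solution between two ODE-type — here, stationary — super/subsolutions and using that the corresponding maximal and minimal stationary solutions must both coincide with the unique $(u^*_\varepsilon,v^*_\varepsilon)$ (uniqueness of the periodic stationary solution for the order-preserving system, cf. the monotone-iteration argument and \cite{Can-Cos-Yu-18}) gives the claim; in the remaining case $\sigma_u=\sigma_v$ with $\max(\overline{r_u}-\overline{\mu_u},\overline{r_v}-\overline{\mu_v})\ge 0$ the rescaled Lyapunov function $w(t,x)=\mathcal F^K(u^\varepsilon,v^\varepsilon)$ is still a bounded entire subsolution of the heat equation (the diffusion coefficients are now \emph{equal} to $\sigma_u(x/\varepsilon)$ and the convexity of $\mathcal F_u,\mathcal F_v$ is unaffected), hence constant, which forces $(u^\varepsilon,v^\varepsilon)\equiv(u^*_\varepsilon,v^*_\varepsilon)$ directly, with no smallness of $\varepsilon$ needed. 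The main obstacle is the $\sigma_u\neq\sigma_v$ mixed-quasimonotone case: there neither monotonicity nor the Lyapunov trick applies uniformly in $\varepsilon$, so one genuinely needs the spectral/semigroup perturbation argument of the first route, and the delicate point is to check that the resolvent bounds in the proof of Theorem \ref{thm:hom-local-stab} degrade only controllably when the constant coefficients are replaced by rapidly oscillating ones — this is where I expect the real work to be, and I would handle it by noting that the relevant linearized operators converge in the norm-resolvent sense under homogenization, so the spectral gap persists for $\varepsilon\le\bar\varepsilon$.
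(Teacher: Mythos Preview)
Your first route is in the right spirit but takes a detour the paper avoids, and your second route has a genuine gap.

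\textbf{What the paper does differently.} The paper never proves local stability of the $\varepsilon$-problem. Instead, given two distinct entire solutions $(u_j^{\varepsilon_n},v_j^{\varepsilon_n})$, it \emph{normalizes the difference}: set $\delta_n:=\max(\|u_2^{\varepsilon_n}-u_1^{\varepsilon_n}\|_\infty,\|v_2^{\varepsilon_n}-v_1^{\varepsilon_n}\|_\infty)$ and $(\varphi^{\varepsilon_n},\psi^{\varepsilon_n}):=\delta_n^{-1}(u_2^{\varepsilon_n}-u_1^{\varepsilon_n},\,v_2^{\varepsilon_n}-v_1^{\varepsilon_n})$. After a space-time shift to pin down a point where the normalized difference is of order one, these functions satisfy a \emph{linear} parabolic system with rapidly oscillating coefficients (the linearization along $(u_2^{\varepsilon_n},v_2^{\varepsilon_n})$, plus $o(1)$ since both solutions converge to $(u^*,v^*)$ by Lemma~\ref{lem:rapid-osc-lim}). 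Homogenizing this linear system directly, one obtains in the limit a nontrivial \emph{bounded entire solution} $(\varphi,\psi)$ of the constant-coefficient linearization of \eqref{eq:syst-hom-rd} at $(u^*,v^*)$. But Theorem~\ref{thm:hom-local-stab} (already proved, with no $\varepsilon$ anywhere) says that linear semigroup decays exponentially, so it admits no nontrivial bounded entire solution. Contradiction. This sidesteps entirely the ``delicate point'' you identify: no resolvent estimate uniform in $\varepsilon$ is needed, because the limit is taken \emph{first} and the spectral information is used only for the homogenized operator.

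\textbf{The gap in your alternative route.} Assumption~\ref{as:LTPDE} in Theorem~\ref{thm:rapidosc} is imposed on the \emph{averaged} coefficients $\overline{r_u},\overline{\mu_u},\overline{\sigma_u}^H,\ldots$, not on the pointwise functions $r_u(x/\varepsilon),\mu_u(x/\varepsilon),\sigma_u(x/\varepsilon)$. So $\max(\overline{r_u}-\overline{\mu_u},\overline{r_v}-\overline{\mu_v})\le 0$ does \emph{not} imply $r_u(x)-\mu_u(x)\le 0$ pointwise, and $\overline{\sigma_u}^H=\overline{\sigma_v}^H$ does not imply $\sigma_u(x)=\sigma_v(x)$. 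Hence neither the invariant-rectangle/monotone argument nor the Lyapunov computation $w_t-\sigma w_{xx}\le Q(u,v)$ (which needs a \emph{common} diffusion coefficient so the cross term $u_x^2\mathcal F_u''+Kv_x^2\mathcal F_v''$ appears cleanly) transfers to the oscillating system. Your alternative route therefore does not cover the cases it claims to, even before one reaches the mixed-quasimonotone situation you flag as hard.

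Your first route would eventually succeed, but the normalization-then-homogenize trick is both shorter and removes the need for any perturbative spectral analysis.
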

\begin{proof}
	We argue by contradiction and assume there exists a sequence $\varepsilon_n>0$ and  two sequences $(u_1^{\varepsilon_n}(t,x), v_1^{\varepsilon_n}(t,x))\not\equiv(u_2^{\varepsilon_n}(t, x), v_2^{\varepsilon_n}(t, x))$ of bounded nonnegative nontrivial stationary solutions to \eqref{eq:rapidosc}. We define $\delta_n:=\max\left(\Vert u_2^{\varepsilon_n}(t,x)-u_1^{\varepsilon_n}(t,x)\Vert_{BUC(\mathbb R)^2},\Vert v_2^{\varepsilon_n}(t,x)-v_1^{\varepsilon_n}(t,x)\Vert_{BUC(\mathbb R)^2}\right) $ and:
	\begin{align*}
		\varphi^{\varepsilon_n}(t,x)&:=\frac{1}{\delta_n}(u_2^{\varepsilon_n}(t,x)-u_1^{\varepsilon_n}(t,x)) \\
		\psi^{\varepsilon_n}(t,x)&:=\frac{1}{\delta_n}(v_2^{\varepsilon_n}(t,x)-v_1^{\varepsilon_n}(t,x)).
	\end{align*}
	Up to a shift in time and space we assume that  
	\begin{equation}\label{eq:rapid-osc-unique-norm}
		\frac{\delta_n}{2}\leq \sup_{x\in (0,L)}\big(\max(| u_2^{\varepsilon_n}(0,x)-u_1^{\varepsilon_n}(0,x)|, | v_2^{\varepsilon_n}(0,x)-v_1^{\varepsilon_n}(0,x)|)\big) \leq \delta_n.
	\end{equation}
	Then $(\varphi^{\varepsilon_n}(t,x), \psi^{\varepsilon_n}(t,x)) $ satisfy:
	\begin{equation*}
		\begin{system}
			\relax &\varphi_t-\sigma_u^{\varepsilon_n}(x)\varphi^{\varepsilon_n}_{xx} = (r_u^{\varepsilon_n}(x)-\mu^{\varepsilon_n}(x))\varphi^{\varepsilon_n} + \mu_v^{\varepsilon_n}(x)\psi^{\varepsilon_n}  - \kappa_u^{\varepsilon_n}(x)(2u^{\varepsilon_n}_2+v^{\varepsilon_n}_2)\varphi^{\varepsilon_n} - \kappa_u^{\varepsilon_n}(x)u^{\varepsilon_n}_2\psi^{\varepsilon_n}+o(1)  \\ 
			\relax &\psi_t-\sigma_v^{\varepsilon_n}(x) \psi^{\varepsilon_n}_{xx}=\mu_u^{\varepsilon_n}(x)\varphi^{\varepsilon_n} + (r_v^{\varepsilon_n}(x)-\mu_v^{\varepsilon_n}(x))\psi^{\varepsilon_n} - \kappa_v^{\varepsilon_n}(x)v^{\varepsilon_n}_2\varphi^{\varepsilon_n} - \kappa_v^{\varepsilon_n}(x)(u^{\varepsilon_n}_2+2v^{\varepsilon_n}_2)\psi^{\varepsilon_n}+o(1) \\ 
		\end{system}
	\end{equation*}
	Indeed, owing to Lemma \ref{lem:rapid-osc-lim}, there holds 
	\begin{equation*}
		(u^{\varepsilon_n}_1, v^{\varepsilon_n}_1) \to (u^*, v^*) \text{ and }(u^{\varepsilon_n}_2, v^{\varepsilon_n}_2) \to (u^*, v^*) \text{ in  } BUC(\mathbb R).
	\end{equation*}
	Since $\varphi^{\varepsilon_n}(t, x)$ and $\psi^{\varepsilon_n}(t, x) $ are bounded, classical homogenisation theory (see the proof of Theorem \ref{thm:large-diff} where a similar argument is sketched) then  leads to the convergence (up to an extraction) of $(\varphi^{\varepsilon_n}(t, x), \psi^{\varepsilon_n}(t, x))$ to  $(\varphi(t, x), \psi(t, x))$ solving 
	\begin{equation*}
		\begin{system}
			\relax &\varphi_t-\overline{\sigma_u}^H\varphi_{xx} = (\overline{r_u}-\overline{\mu_u})\varphi + \overline{\mu_v}\psi  - \overline{\kappa_u}(2u^*+v^*)\varphi - \overline{\kappa_u}u^*\psi  \\ 
			\relax &\psi_t-\overline{\sigma_v}^H \psi_{xx}=\overline{\mu_u}\varphi + (\overline{r_v}-\overline{\mu_v})\psi - \overline{\kappa_v}v^*\varphi - \overline{\kappa_v}(u^*+2v^*)\psi,
		\end{system}
	\end{equation*}
	and the convergence holds (at least) locally uniformly. Because of our normalisation \eqref{eq:rapid-osc-unique-norm}, the limit is non-trivial. Moreover,  $(\varphi(t, x), \psi(t, x))$ is bounded, which not possible since $(u^*, v^*)$ is locally asymptotically stable by Theorem \ref{thm:hom-local-stab}. The Lemma is proved. 
\end{proof}

\begin{proof}[Proof of Theorem \ref{thm:rapidosc}]
	Theorem \ref{thm:rapidosc} is a direct consequence of the two previous Lemma. Statements \ref{item:rapid-osc-unique} and \ref{item:entire-sol} are a direct consequence of Lemma \ref{lem:rapid-osc-unique}. As for Statement \ref{item:rapid-osc-cv}, it is also  a consequence of Lemma \ref{lem:rapid-osc-unique}.  

	Indeed, let $\varepsilon>0$ be sufficiently small, so that there exists a unique entire solution to \eqref{eq:rapidosc} which is uniformly bounded from below. Let $(u_0(x)\geq 0, v_0(x)\geq 0)$ be a nontrivial initial condition and $(u(t,x), v(t,x))$ be the corresponding solution to \eqref{eq:rapidosc}. We argue by contradiction and assume that there exists $\varepsilon>0$, $0<c<c^*_\varepsilon$ and  a sequences $t_n\to+\infty$ and $x_n\in\mathbb R$ such that $|x_n|\leq ct_n$ and 
	\begin{equation*}
		|u(t_n, x_n)-u^*|\geq\varepsilon>0.
	\end{equation*}
	Then, due to the classical parabolic estimates, the sequence $(u(t+t_n, x),v(t+t_n, x)$ converges locally uniformly and up to an extraction to an entire solution $(u^\infty(t, x), v^\infty(t, x))$ which satisfies $|u^\infty(0, 0)-u^*|\geq \varepsilon$. By Theorem  \ref{thm:main-lindet}, there exists $\delta>0$ such that $(u^\infty(t, x), v^\infty(t,x))\geq (\delta, \delta).$ Hence Theorem \ref{thm:entire-sol-hom} applies and we have $(u^\infty(t, x), v^\infty(t, x))\equiv (u^*, v^*)$. This is a contradiction. If $|v(t_n, x_n)-v^*|\geq \varepsilon$, we easily derive a similar contradiction.  Therefore, we have
	\begin{equation*}
		\lim_{t\to\infty}\sup_{|x|\leq ct}\max(|u(t,x)-u^*|, |v(t, x)-v^*|)=0.
	\end{equation*}
	This shows Statement \ref{item:rapid-osc-cv} and finishes the proof of Theorem \ref{thm:rapidosc}.
\end{proof}

\appendix
\begin{center}
	\Large\bf Appendix
\end{center}

\section{On the spreading speed in the presence of a drift}
\label{app:scalar}

Let us consider the equation:
\begin{align}\label{eq:scalar-drift}
	u_t&=\mathcal Lu -\kappa(x)u^2 \\ 
	\mathcal Lu:&=(\sigma(x)u_x)_x + q(x)u_x + r(x)u, \notag
\end{align}
where $\sigma(x)>0$, $\kappa(x)>0$, $r(x)$ and $q(x)$ are 1-periodic functions. It is known (see Nadin \cite{Nad-09, Wei-02}  that the rightward and leftward spreading speeds associated with \eqref{eq:scalar-drift} are given by the following minimization formula
\begin{equation*}
	c^*_{\text{right}}:=\inf_{\lambda>0}\frac{-k(\lambda)}{\lambda}\text{ and } c^*_{\text{left}}:=\inf_{\lambda>0}\frac{-k(-\lambda)}{\lambda}.
\end{equation*}
If $q\equiv 0$ then, as a consequence of the Fredholm alternative, the function $k(\lambda)$ is even and $c^*_{\text{right}}=c^*_{\text{left}}$, but this is also the case if $\int_0^1\frac{q(x)}{2\sigma(x)}\dd x=0$ \cite[Proposition 2.14]{Nad-09}, because the advection term can then be ``absorbed'' by a change of function. Further dependencies of the speed on the various coefficients involved in \eqref{eq:scalar-drift} are studied in \cite{Nad-11}. Here we are interested in a sufficient condition for the speeds $c^*_{\text{right}}$ and $c^*_{\text{left}}$ to be different, $c^*_{\text{right}}\neq c^*_{\text{left}}$.

It turns out that this can be achieved quite nicely, by considering the  function 
\begin{equation*}
	\mathcal Q(x):=\int_0^x\frac{q(y)}{2\sigma(y)}\dd y-x\int_0^1\frac{q(y)}{2\sigma(y)}\dd y \text{ for all } x\in \mathbb R.
\end{equation*}
Indeed, writing
\begin{equation*}
	\mathcal Lu=e^{-\mathcal Q(x)-x\int_0^1\frac{q(y)}{2\sigma(y)}\dd y}\left(\sigma(x)\left( e^{\mathcal Q(x)+x\int_0^1\frac{q(y)}{2\sigma(y)}\dd y}u\right)_x\right)_x + \left[-\frac{q_x(x)}{2}-\frac{q(x)^2}{4\sigma(x)}+r(x)\right]u, 
\end{equation*}
and computing the principal periodic eigenvalue of the operator $u\mapsto e^{\lambda x}\mathcal L(e^{-\lambda x}u)$, we find that 
\begin{equation*}
	k(\lambda)=\tilde k\left(\lambda-\int_0^1\frac{q(y)}{2\sigma(y)}\dd y\right), 
\end{equation*}
where $\tilde k(\lambda)$ is associated with the operator 
\begin{equation*}
	\tilde{\mathcal L} u =e^{-\mathcal Q(x)}\left(\sigma(x)\left( e^{\mathcal Q(x)}u\right)_x\right)_x + \left[-\frac{q_x(x)}{2}-\frac{q(x)^2}{4\sigma(x)}+r(x)\right]u, 
\end{equation*}
which is self-adjoint for the weighted scalar product $\langle u, v\rangle_{\mathcal Q} = \int_0^1 u(x)v(x)e^{Q(x)}\dd x$, and satisfies therefore $\tilde k(\lambda)=\tilde k(-\lambda)$. 

In particular, it is not difficult to see that
\begin{equation}\label{eq:orderspeed}
	c^*_{\text{right}}>c^*_{\text{left}}\text{ if }\int_0^1\frac{q(x)}{2\sigma(x)}\dd x<0\text{ and } c^*_{\text{right}}<c^*_{\text{left}}\text{ if } \int_0^1\frac{q(x)}{2\sigma(x)}\dd x>0.
\end{equation}

\printbibliography
\end{document}